\newtheorem{thm}{Theorem}[subsection]
\newtheorem{lem}[thm]{Lemma}
\newtheorem{cor}[thm]{Corollary}
\newtheorem{prop}[thm]{Proposition}
\theoremstyle{definition}
\newtheorem{defn}[thm]{Definition}
\newtheorem{eg}[thm]{Example}
 \newtheorem{rem}[thm]{Remark}
\numberwithin{equation}{section}
\newcommand{\mat}[1]{\ensuremath{
\left[\begin{matrix}#1
\end{matrix}\right]
}}
\def\xrarrow{\xrightarrow} 
\def\then{\Rightarrow}
\def\into{\hookrightarrow}
\newcommand{\cofib}{\rightarrowtail}
\def\onto{\twoheadrightarrow}
\def\-{\text{-}}
\def\<{\left<}
\def\>{\right>}
\DeclareMathOperator{\Tr}{Tr}
\DeclareMathOperator{\Ext}{Ext}%
\DeclareMathOperator{\add}{add} 
\newcommand{\field}[1]{\mathbb{#1}}
\newcommand{\ZZ}{\ensuremath{{\field{Z}}}}
\newcommand{\CC}{\ensuremath{{\field{C}}}}
\newcommand{\RR}{\ensuremath{{\field{R}}}}
\newcommand{\NN}{\ensuremath{{\field{N}}}}
\newcommand{\double}{\ensuremath{^{(2)}}}  
\newcommand{\kk}{\ensuremath{{\mathbbm{k}}}}
\newcommand{\commentout}[1]{}
\newcommand{\Ainftyup}[1]{
\put(0,0){\qbezier(0,0)(.5,0)(.9,0)}
\put(0,0){\qbezier(0,0)(.25,.25)(.45,.45)}
\put(.45,.45){\qbezier(0,0)(.25,-.25)(.45,-.45)}
\put(.35,.14){#1}
}
\newcommand{\Ainftydown}[1]{
\put(0,0){\qbezier(0,0)(.5,0)(.9,0)}
\put(0,0){\qbezier(0,0)(.25,-.25)(.45,-.45)}
\put(.45,-.45){\qbezier(0,0)(.25,.25)(.45,.45)}
\put(.35,-.2){#1}
}
\newcommand{\Ainftyinfty}[1]{
\put(0,0){\qbezier(0,0)(.25,.25)(.45,.45)}
\put(.45,.45){\qbezier(0,0)(.25,-.25)(.45,-.45)}
\put(0,0){\qbezier(0,0)(.25,-.25)(.45,-.45)}
\put(.45,-.45){\qbezier(0,0)(.25,.25)(.45,.45)}
\put(.35,-.04){#1}
}
\def\el{\ell}
\def\ll{\lambda}
\newcommand{\cC}{\ensuremath{{\mathcal{C}}}}
\newcommand{\cMF}{\ensuremath{{\mathcal{MF}}}}
\newcommand{\cF}{\ensuremath{{\mathcal{F}}}}
\newcommand{\cM}{\ensuremath{{\mathcal{M}}}}
\newcommand{\cP}{\ensuremath{{\mathcal{P}}}}
\newcommand{\cT}{\ensuremath{{\mathcal{T}}}}
\newcommand{\cX}{\ensuremath{{\mathcal{X}}}}
\def\a{\alpha}
\def\b{\beta}
\def\d{\partial}
\def\e{\epsilon}
\def\s{\sigma}
\def\t{\tau}
\def\th{\theta}
\def\ul{\underline}
\title{Cluster categories coming from cyclic posets}
\author{Kiyoshi Igusa}
\address{Department of Mathematics, Brandeis University, Waltham, MA 02454}\email{igusa@brandeis.edu}
 \thanks{The first author is supported by NSA Grant \#H98230-13-1-0247}
\author{Gordana Todorov}
\address{Department of Mathematics, Northeastern University, Boston, MA 02115}
\email{g.todorov@neu.edu}
\thanks{The second author is supported by NSF Grant \#DMS-1103813}
\subjclass[2010]{
18E30:16G20}
\begin{document}

\begin{abstract} Cyclic poset are generalizations of cyclically ordered sets. In this paper we show that any cyclic poset gives rise to a Frobenius category over any discrete valuation ring $R$. The continuous cluster categories of \cite{IT09} are examples of this construction. If we twist the construction using an admissible automorphism of the cyclic poset, we generate other examples such as the $m$-cluster category of type $A_\infty$ ($m\ge3$).
\end{abstract}

\maketitle

\tableofcontents
 

\section*{Introduction}

In this paper we combine two concepts which have appeared in recent liturature: $\NN$-categories and matrix factorization categories in order to construct various triangulated categories including cluster categories of type $A$, continuous cluster categories and $m$-cluster categories of type $A_\infty$ for $m\ge3$. Matrix factorization was introduced by Eisenbud \cite{Eisenbud} and developed by Buchweitz in an unpublished paper \cite{Buchweitz} (see also \cite{BEH}), Orlov \cite{Orlov04}, \cite{Orlov12} and many others \cite{KhoRoz1}, \cite{Dycker}. $\NN$-categories are a version of the $\ZZ$-categories considerer by Drinfeld \cite{D} who, in turn, attributes it to Besser \cite{B} and Grayson \cite{G}. We take the multiplicative version which we call a $t^\NN$-category, very similar to a construction which occurs in van Roosmalen \cite{vanRoo12}. We call the basic underlying structure a ``cyclic poset.''

Given any set $X$, we define a cyclic partial ordering of $X$ to be an equivalence class of posets $\tilde X$ together with a free $\ZZ$ action and a bijection $\tilde X/\ZZ\cong X$ with the additional property of being \emph{recurrent}, i.e. that any $x,y\in X$ have liftings $\tilde x,\tilde y\in\tilde X$ so that $\tilde x<\tilde y$ (Definition \ref{defn of covering poset}). We observe that cyclic posets are special cases of $\NN$-categories (Definition \ref{def: N-categories}, taken essentially from \cite{D}, \cite{vanRoo12}). But we only consider these special cases. 

Given a discrete valuation ring $R$, we define a \emph{$t^\NN$-category} $\cP$ over $R$ to be a small $R$-category with two properties (Definition \ref{defn of nesto}). The first is that $\cP(x,y)\cong R$ for any two elements of the set of objects $X$ of $\cP$. The second is that $\cP(x,y)$ has a generator $f_{xy}$ with the property that, for any three $x,y,z\in X$,
\[
	f_{yz}\circ f_{xy}=t^n f_{xz}
\]
where $n=c(xyz)$ is a nonnegative integer. We call $f_{xy}$ the ``basic morphism'' from $x$ to $y$. When $R=\kk[[t]]$ is the power series ring in one variable over a field $\kk$, the category $\cP(X)$ can also be interpretted as the completed linearization of $X$ over $\kk$: $\cP(X)\cong\widehat{\kk X}$ (Proposition \ref{prop: completed linearization is isomorphic to RX}).

The function $c:X^3\to \NN$ is necessarily a reduced (2)-cocycle where ``reduced'' means $c(xxy)=0=c(xyy)$ for any $x,y\in X$ and the cocycle condition, equivalent to associativity of composition in $\cP$, is
\[
	c(xyz)-c(wyz)+c(wxz)-c(wxy)=0
\]
for any $w,x,y,z\in X$. Conversely, any reduced cocycle $c:X^3\to \NN$ on any set $X$ defines a $t^\NN$-category denoted $\cP(X,c)$ or simply $\cP(X)$. The structure of a (recurrent) cyclic partial ordering on any set $X$ is also determined by a uniquely determined reduced cocycle $c:X^3\to\NN$. Therefore, we can formally define a cyclic poset to be a pair $(X,c)$ where $c$ is a reduced cocycle on $X$.

Suppose $\phi$ is an admissible automorphisms $(X,c)$ (e.g., $\phi=id_X$: see Subsection \ref{subsection: admissible automorphism}). Then $\phi$ extends to an $R$-linear automorphism of $\cP(X)$ and there is a natural transformation $\eta_V:V\to \phi(V)$ given on each component of $V$ by the basic morphism $x\to\phi(x)$. Let $\cF_\phi(X)$ denote the category of all pairs $(V,d)$ where $V$ is an object of $\cP(X)$ and $d$ is an endomorphism of $V$ satisfying the following two properties.
\begin{enumerate}
\item $d^2=t$ is multiplication by $t$.
\item $d$ factors through $\eta_V:V\to \phi(V)$.
\end{enumerate}
If $\phi$ is admissible, then we show that $\cF_\phi(X)$ is a Frobenius category. Let $\cC_\phi(X)$ be the stable category of $\cF_\phi(X)$. Then $\cC_\phi(X)$ is a triangulated category.

The category $\cF_\phi(X)$ is a matrix factorization category, not by definition but by our theorem which says that any object $(V,d)$ in $\cF_\phi(X)$ decomposes in $\cP(X)$ as a direct sum of two projective objects $V=V_0\oplus V_1$ so that $d:V\to V$ has the form
\[
\xymatrix{
d=\mat{0&\b\\\a&0}: & V_0 \ar@/_1pc/[r]^{\a} & 
	V_1\ar@/_1pc/[l]_{\b} 
	}
\]
In other words, $\a:V_0\to V_1,\b:V_1\to V_0$ are morphisms so that $\a\b=t$ and $\b\a=t$. 
In the analogous case when $V_0,V_1$ are replaced by free modules over a commutative ring $R$, such morphisms are given by square matrices $A,B$ so that $AB=BA=tI_n$, i.e., $(A,B)$ is a matrix factorization of $t$ in the traditional sense. In a matrix factorization category, one usually assumes that objects are $\ZZ/2$-graded. For us, the grading is undefined and there is no difference between even and odd morphisms.

In the remainder of the paper we explore examples and special cases of Frobenius categories of the form $\cF_\phi(X)$ and their stable categories $\cC_\phi(X)$. There include:
\begin{enumerate}
\item Continuous Frobenius categories over $R$
\item Cluster categories of type $A_n$ over the field $\kk=R/(t)$ where $1\le n\le\infty$
\item $m$-cluster categories of type $A_\infty$ over $\kk$ for $m\ge3$
\end{enumerate}

This paper was partially written while the second author was at the Mathematical Sciences Research Institute (MSRI) in Berkeley. 

\section{General theory}


\subsection{Cyclic posets}





We will carefully go over the ``covering poset'' description of a cyclic poset structure on a set $X$ and show that this cyclic poset structure is equivalent to choosing a reduced cocycle $c:X^3\to\NN$. First, we recall that a \emph{poset} is a set $X$ with a transitive, reflexive relation $\le$. Two elements of $X$ are \emph{equivalent} and we write $x\approx y$ if $x\le y$ and $y\le x$. We write $x<y$ if $x\le y$ and $x\not\approx y$. A morphism of posets $f:X\to Y$ is a mapping which preserves the relation: $x\le y\then f(x)\le f(y)$.

\subsubsection{\ul{Covering posets}}

\begin{defn}\label{defn of covering poset}
A \emph{covering poset} of set $X$ is a poset $\tilde X$ together with a surjective mapping $\pi:\tilde X\to X$ satisfying the following.
\begin{enumerate}
\item The inverse image $\pi^{-1}(x)\subseteq \tilde X$ of any $x\in X$ is isomorphic as a poset to the set of integers with the usual ordering.
\item There exists an automorphism $\s$ of the poset $\tilde X$ so that, for any $\tilde x\in \tilde X$, $\s \tilde x$ is the smallest element of $\pi^{-1}(\pi(\tilde x))$ which is greater than $\tilde x$. 
\item For every pair $\tilde x,\tilde y\in\tilde X$ there exists an integer $m\ge0$ so that $\tilde x\le\s^m\tilde y\le\s^{2m}\tilde x$.
\end{enumerate}
An \emph{equivalence} of covering posets $f:\tilde X\to \tilde X'$ is a $\s$-equivariant poset isomorphism over $X$, i.e., $\pi'f=\pi$ and $\s f=f\s$.
\end{defn}

\begin{rem}
In the above definition, the automorphism $\s$ of the covering poset $\tilde X$ is uniquely determined.
\end{rem}


\begin{eg}
The simplest example of this is a set with $n$ elements $Z_n=\{1,2,\cdots,n\}$. One covering poset of $Z_n$ is given by the set of integers $\tilde Z_n=\ZZ$ with the usual ordering and the automorphism $\s x=x+n$:
$
	\tilde X:\quad \cdots <1<2<\cdots<n<\s 1<\s 2<\cdots
$. 
\end{eg}

\subsubsection{\ul{Cocycles}}

We will show that covering posets over $X$ are classified by reduced cocycles $c:X^3\to\NN$ as defined below.

\begin{defn}
A \emph{cocycle} on $X$ is a function $c:X^3\to\NN$, written $c(xyz)$, so that
\[
	\delta c(wxyz):=c(xyz)-c(wyz)+c(wxz)-c(wxy)=0
\]
for all $w,x,y,z\in X$. We say that $c$ is \emph{reduced} if $c(xxy)=c(xyy)=0$ for all $x,y\in X$.
\end{defn}

\begin{eg}
Central group extensions $\ZZ\to E\to G$ are classified by elements of $H^2(G;\ZZ)$ and represented by factor sets $f:G^2\to \ZZ$ which may be taken to be reduced in the sense that $f(g,h)=0$ if $g=1$ or $h=1$. If this reduced factor set happens to have only nonnegative values, it gives a reduced cocycle $c$ by the formula $c(xyz)=f(x^{-1}y,y^{-1}z)$. Every such cocycle gives a distinct cyclic poset structure on the underlying set of $G$.
\end{eg}

\begin{eg}
A \emph{cyclic ordering} on a set $X$ is the same as a reduced cocycle on $X$ with values $0,1$ where $c(xyz)=0$ if and only if either $x,y,z\in X$ are in cyclic order with $x\not\approx z$ or $x\approx y\approx z$.
\end{eg}

\begin{defn}
Given a covering poset $\pi:\tilde X\to X$ and a section $\ll:X\to \tilde X$, we define the corresponding \emph{distance function} $b_\ll:X^2\to \ZZ$ by letting $b_\ll(xy)=m$ be the smallest integer so that $\ll (x)\le\s^m\ll (y)$.
\end{defn}

We note that $b_\ll$ is \emph{reduced} in the sense that $b_\ll(xx)=0$ for all $x\in X$.



\begin{lem} Let $\pi:\tilde X\to X$ be a covering poset and $\ll$ a section of $\pi$. Then
\[
	c(xyz):=\delta b_\ll(xyz)=b_\ll(yz)-b_\ll(xz)+b_\ll(yz)
\]
is a reduced cocycle $c:X^3\to\NN$ and $c$ is independent of the choice of sections $\ll:X\to \tilde X$.
\end{lem}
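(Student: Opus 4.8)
The plan is to verify the three assertions in order: that $\delta b_\ll$ is a cocycle, that it takes values in $\NN$, and that it is independent of the section $\ll$. First note there is a typo to unpack: the intended formula is
\[
	c(xyz)=\delta b_\ll(xyz)=b_\ll(yz)-b_\ll(xz)+b_\ll(xy),
\]
the standard alternating-sum boundary of the $1$-cochain $b_\ll$ (note $b_\ll$ need not be symmetric, so the order of arguments matters). That $\delta\circ\delta=0$ is the usual simplicial identity: expanding $\delta c(wxyz)$ in terms of the $b_\ll$-values, each term $b_\ll(\cdot\cdot)$ appears twice with opposite signs and cancels. So the cocycle condition $c(xyz)-c(wyz)+c(wxz)-c(wxy)=0$ is automatic once $c=\delta b_\ll$. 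Reducedness is equally formal: $c(xxy)=b_\ll(xy)-b_\ll(xy)+b_\ll(xx)=b_\ll(xx)=0$ since $b_\ll$ is reduced, and similarly $c(xyy)=b_\ll(yy)-b_\ll(xy)+b_\ll(xy)=0$.

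The substantive point is nonnegativity, i.e.\ that $c$ lands in $\NN$ rather than just $\ZZ$. By definition $b_\ll(xy)$ is the smallest integer $m$ with $\ll(x)\le\s^m\ll(y)$; equivalently $\s^{m-1}\ll(y)<\ll(x)\le\s^m\ll(y)$, which says $\ll(x)$ sits in the half-open "interval" between consecutive lifts of $y$. I would show the triangle-type inequality $b_\ll(xz)\le b_\ll(xy)+b_\ll(yz)$. Put $p=b_\ll(xy)$ and $q=b_\ll(yz)$, so $\ll(x)\le\s^p\ll(y)$ and $\ll(y)\le\s^q\ll(z)$. Applying the poset automorphism $\s^p$ (which preserves $\le$) to the second inequality gives $\s^p\ll(y)\le\s^{p+q}\ll(z)$, and transitivity yields $\ll(x)\le\s^{p+q}\ll(z)$. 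Hence the minimal such exponent satisfies $b_\ll(xz)\le p+q=b_\ll(xy)+b_\ll(yz)$, i.e.\ $c(xyz)=b_\ll(yz)-b_\ll(xz)+b_\ll(xy)\ge0$. Here one must be slightly careful that $b_\ll$ is well-defined, i.e.\ that a minimal $m$ exists: that some such $m$ exists is axiom (3) of Definition \ref{defn of covering poset} (applied to $\tilde x=\ll(x)$, $\tilde y=\ll(y)$, taking $m$ large), and the existence of a \emph{smallest} one follows because $\pi^{-1}(\pi(\ll(y)))$ is order-isomorphic to $\ZZ$ (axiom (1)) and the set of valid exponents is nonempty and bounded below—if $\ll(x)\le\s^m\ll(y)$ for all sufficiently negative $m$, then $\ll(x)$ would be less than every lift of $y$, contradicting that some lift of $x$ dominates some lift of $y$ together with axiom (3) in the other direction. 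I expect this bookkeeping about "minimal exponent exists" to be the only genuinely fiddly part; the inequality itself is a two-line application of $\s$-monotonicity and transitivity.

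For independence of the section, let $\ll'$ be another section. For each $x$, $\ll(x)$ and $\ll'(x)$ lie in the same fiber $\pi^{-1}(x)\cong\ZZ$, so there is an integer $k(x)$ with $\ll'(x)=\s^{k(x)}\ll(x)$. I claim $b_{\ll'}(xy)=b_\ll(xy)+k(y)-k(x)$: indeed $\ll'(x)\le\s^m\ll'(y)$ iff $\s^{k(x)}\ll(x)\le\s^{m+k(y)}\ll(y)$ iff (applying $\s^{-k(x)}$, again an order-automorphism) $\ll(x)\le\s^{m+k(y)-k(x)}\ll(y)$, so the minimal valid $m$ shifts by $k(y)-k(x)$. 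Thus $b_{\ll'}$ differs from $b_\ll$ by the coboundary of the $0$-cochain $x\mapsto -k(x)$ (up to sign conventions), and applying $\delta$ kills it: $\delta b_{\ll'}(xyz)=\delta b_\ll(xyz)$ because the $k$-terms telescope, $\big(k(z)-k(y)\big)-\big(k(z)-k(x)\big)+\big(k(y)-k(x)\big)=0$. This gives $c$ independent of $\ll$, completing the proof.
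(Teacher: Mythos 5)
Your proof is correct and follows essentially the same route as the paper: the cocycle identity from $\delta^2=0$, reducedness from $b_\ll(xx)=0$, and section-independence by writing $\ll'(x)=\s^{a(x)}\ll(x)$ so that $b_{\ll'}$ and $b_\ll$ differ by a coboundary that $\delta$ kills (your sign should be $b_{\ll'}(xy)=b_\ll(xy)+a(x)-a(y)$ rather than $+a(y)-a(x)$, but as you note the discrepancy telescopes away under $\delta$ either way). What you add beyond the paper's brief proof is the explicit verification of $\NN$-valuedness via the triangle inequality $b_\ll(xz)\le b_\ll(xy)+b_\ll(yz)$ and of the well-definedness of $b_\ll$ from axioms (1)--(3), points the paper leaves implicit, together with the correct reading of the typo in the displayed formula (the last term is $b_\ll(xy)$).
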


\begin{proof} Since $\delta^2=0$, it follows that $c$ is a cocycle. Also, $c$ is easily seen to be reduced. To prove uniqueness, suppose that $\ll'$ is another section of $\pi$. Then $\ll'(x)=\s^{a(x)}\ll(x)$ for some function $a:X\to \ZZ$. Then the two distance function are related by $b_{\ll'}(xy)=b_\ll(xy)-a(y)+a(x)$ or $b_{\ll'}=b_\ll-\delta a$. So, $\delta b_{\ll'}=\delta b_\ll-\delta^2a=\delta b_\ll$ as claimed.
\end{proof}

\begin{rem}
Since $c$ is uniquely determined, we refer to it as the \emph{structure cocycle} of the covering poset $\pi:\tilde X\to X$
\end{rem}

We say that $x,y\in X$ are \emph{equivalent} and we write $x\approx y$ if $c(xyx)=c(yxy)=0$. It is easy to see that this is an equivalence relation and that $x\approx y$ if and only if $\tilde x\approx \tilde y$ for some liftings $\tilde x,\tilde y$ of $x,y$ to $\tilde X$.

\begin{lem}
Given any reduced cocycle $c:X^3\to\NN$ on any set $X$, there exists a covering poset $\tilde X\to X$ whose structure cocycle is $c$. Furthermore, $\tilde X$ is unique up to poset isomorphism over $X$.
\end{lem}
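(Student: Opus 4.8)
The plan is to reconstruct $\tilde X$ explicitly from the cocycle data and then verify the three axioms of a covering poset together with uniqueness. First I would fix a basepoint $x_0\in X$ and take the underlying set of $\tilde X$ to be $X\times\ZZ$, thinking of $(x,n)$ as ``the $n$-th lift of $x$'' measured against $x_0$. To define the ordering I need a candidate distance function: set $b(xy):=c(x\,x_0\,y)$, which is a nonnegative integer because $c$ has values in $\NN$, and is reduced in the weaker sense that it behaves like $\delta$ of a $1$-cochain since $c=\delta b_\ll$ abstractly --- concretely one checks $c(xyz)=b(yz)-b(xz)+b(xy)$ directly from the cocycle identity $\delta c(x_0\,x\,y\,z)=0$ and $c(x_0 x_0 z)=0$. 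Then declare $(x,m)\le (y,n)$ iff $b(xy)\le n-m$ (with the convention that this is read as $m+b(xy)\le n$), and let $\s(x,n)=(x,n+1)$.

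The key steps, in order, are: (1) show $\le$ is transitive --- this is exactly where the cocycle condition enters, since $(x,k)\le(y,m)\le(z,n)$ gives $b(xy)\le m-k$ and $b(yz)\le n-m$, and adding these plus the inequality $b(xz)\le b(xy)+b(yz)$ (which follows from $c(xyz)\ge 0$) yields $b(xz)\le n-k$; (2) verify axiom (1) of Definition~\ref{defn of covering poset}, i.e. $\pi^{-1}(x)=\{(x,n):n\in\ZZ\}$ is order-isomorphic to $\ZZ$, which is immediate since $b(xx)=c(xx_0x)\ge 0$ and in fact one must check $b(xx)=0$; this requires knowing $c(xx_0x)=0$, which is \emph{not} part of ``reduced'' --- so here I would instead use the equivalence relation $x\approx y$ and work on equivalence classes, or argue that $b(xx)=0$ follows because $b_\ll(xx)=0$ for the abstract section picture; (3) check axiom (2), that $\s(x,n)=(x,n+1)$ really is the least element of $\pi^{-1}(x)$ strictly above $(x,n)$, which follows once $b(xx)=0$; (4) check axiom (3), recurrence: given $(x,k),(y,m)$, I must produce $j\ge 0$ with $(x,k)\le\s^j(y,m)\le\s^{2j}(x,k)$, i.e. $b(xy)\le m+j-k$ and $b(yx)\le k+2j-m$; taking $j$ large enough handles the first, and the second forces a lower bound, so I pick $j\ge\max(b(xy)+k-m,\ \tfrac12(b(yx)+m-k))$ rounded up --- any $j$ past both thresholds works since both inequalities are ``$j$ large'' once we also note $b(xy),b(yx)\ge 0$. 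Then (5) compute the structure cocycle of this $\tilde X$ using the tautological section $\ll(x)=(x,0)$: its distance function is $b_\ll=b$ and $\delta b_\ll=c$ by the identity from step~(1), so the structure cocycle is $c$. Finally (6), uniqueness: given another covering poset $\pi':\tilde X'\to X$ with structure cocycle $c$, pick any section $\ll'$, note $b_{\ll'}$ and $b$ differ by $\delta a$ for some $a:X\to\ZZ$ (since both have coboundary $c$ and the difference of two reduced $1$-cochains with equal coboundary is $\delta$ of a $0$-cochain --- though I should double check this needs nothing beyond $H^1$-type bookkeeping over $X$ with the basepoint), and then $(x,n)\mapsto\s^{n+a(x)}\ll'(x)$ is a $\s$-equivariant poset isomorphism over $X$; surjectivity and order-preservation are routine from the definitions, and order-reflection uses that $b_{\ll'}$ genuinely records least shifts.

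The main obstacle I anticipate is step~(2)/(3): the hypothesis ``reduced'' only gives $c(xxy)=c(xyy)=0$, which is \emph{not} obviously enough to force $b(xx)=c(xx_0x)=0$ for the specific $b$ I chose via the basepoint. So the delicate point is to choose the distance function correctly --- rather than $b(xy)=c(xx_0y)$ I may need $b(xy)=c(xx_0y)-c(xx_0x)$ or to quotient $X$ by $\approx$ first, build the covering poset on the quotient, and then thicken each class back to a ``constant'' block; equivalently, one shows $c(xyx)+c(yxy)$ governs the $\approx$-classes and that $c$ factors through the quotient. Handling the degenerate fibers (where $x\approx y$ but $x\neq y$) cleanly so that $\pi^{-1}(x)\cong\ZZ$ as a poset --- and not something like $\ZZ$ with repeated elements --- is the one place where I expect to have to be careful rather than routine. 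Everything else (transitivity, recurrence, the structure-cocycle computation, and the uniqueness argument) is a direct unwinding of the definitions using the cocycle identity and nonnegativity.
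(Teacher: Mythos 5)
Your overall strategy is the paper's: realize $\tilde X$ as $X\times\ZZ$ with ordering $(x,j)\le(y,k)$ iff $k\ge j+b(xy)$ for a distance function $b$ built from $c$ via a basepoint, then match distance functions for uniqueness. But your concrete choice $b(xy):=c(x\,x_0\,y)$ is the wrong one, and this is not a harmless slip. The identity you invoke, $\delta c(x_0xyz)=0$, says $c(xyz)=c(x_0yz)-c(x_0xz)+c(x_0xy)$; i.e.\ it proves $\delta b'=c$ for $b'(xy):=c(x_0xy)$ (basepoint in the \emph{first} slot), not for your $b$ --- for your $b$ one gets $\delta b(xyz)=c(xyx_0)-c(xyz)+c(xx_0y)$, which is not $c(xyz)$ in general. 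Likewise the ``obstacle'' you flag, that $b(xx)=c(xx_0x)$ need not vanish, is an artifact of the same misplacement: with $b'(xy)=c(x_0xy)$, reducedness gives $b'(xx)=c(x_0xx)=0$ immediately, and $b'\ge0$, so no quotient by $\approx$, no thickening, and no modified $b$ such as $c(xx_0y)-c(xx_0x)$ is needed (that fallback restores $b(xx)=0$ but you would still have to check $\delta b=c$, which again fails; none of your proposed fixes is verified). Since reflexivity of $\le$, the fibers being isomorphic to $\ZZ$, axiom (2) for $\s$, and the structure-cocycle computation all hinge on $b(xx)=0$ and $\delta b=c$, the proof as written is incomplete at its central step. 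It is repaired by the one-line change $b(xy)=c(x_0xy)$, which is exactly what the paper uses.

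With that correction the rest of your outline goes through and essentially coincides with the paper: transitivity comes from $b(xz)\le b(xy)+b(yz)$ (i.e.\ $c\ge0$), recurrence from taking $j$ large (note the second condition should read $b(yx)\le k+j-m$, since $\s^j(y,m)=(y,m+j)$ is compared with $(x,k+2j)$; any sufficiently large $j$ works either way), and the tautological section has distance function $b$, so the structure cocycle is $\delta b=c$. Your uniqueness step differs only cosmetically from the paper's: you take an arbitrary section $\ll'$ of $\tilde X'$ and correct it by a $0$-cochain $a$ with $b_{\ll'}-b=\delta a$ (the bookkeeping you worried about is fine: $\delta(b_{\ll'}-b)=0$ and $a(x)=(b_{\ll'}-b)(x_0x)$ works), whereas the paper normalizes the section of $\tilde X'$ directly so that its distance function is $b$ on the nose; both versions need the observation, which you do note, that the distance function determines the order on a covering poset.
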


\begin{proof} Choose a base point $x_0\in X$. Then we can express $c$ as $c=\delta b$ where $b:X^2\to \ZZ$ is given by $b(xy)=c(x_0xy)$. A covering poset $\tilde X$ can now be given as follows. As a set, let $\tilde X=X\times \ZZ$ with $\s$ action given by $\s(x,i)=(x,i+1)$. Then the partial ordering of $\tilde X$ is given by: $(x,j)\le (y,k)$ if $k\ge j+b(xy)$. In other words, $b(xy)$ is the distance function for the lifting $\tilde x=(x,0)$. Therefore the structure cocycle of $\tilde X$ is $\delta b=c$.

If $\tilde X'\to X$ is another covering poset with structure cocycle $c$ then a $\s$-equivariant poset isomorphism $f:X\times\ZZ\to \tilde X'$ is given as follows. Choose a fixed lifting $\tilde x_0\in \tilde X'$. Then for each $x\in X$ let $\tilde x\in \tilde X'$ be the unique lifting of $x$ so that $\tilde x_0\le \tilde x$ but $\s\tilde x_0\not\le \tilde x$. Then, for any $x,y\in X$, the smallest integer $j$ so that $\tilde x\le \s^j\tilde y$ is $j=c(x_0xy)=b(xy)$. Therefore, the covering posets $\tilde X'$ and $X\times \ZZ$ have the same distance function which implies that they are isomorphic with isomorphism $f:X\times \ZZ\to \tilde X'$ given by $f(x,j)=\s^j\tilde x$.
\end{proof}

This proves the following theorem.

\begin{thm} For any set $X$, there is a 1-1 correspondence between equivalence classes of covering posets over $X$ and reduced cocycles $c:X^3\to\NN$.
\end{thm}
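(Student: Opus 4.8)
The statement to prove is the theorem asserting a 1--1 correspondence between equivalence classes of covering posets over $X$ and reduced cocycles $c:X^3\to\NN$. The remark preceding it already says ``This proves the following theorem,'' so the plan is simply to assemble the two preceding lemmas into the asserted bijection and check that the two assignments are mutually inverse.

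First I would define the two maps explicitly. In one direction, send an equivalence class of covering posets $\pi:\tilde X\to X$ to its structure cocycle $c=\delta b_\ll$, which by the first lemma is a well-defined reduced cocycle independent of the choice of section $\ll$; since an equivalence of covering posets carries sections to sections and preserves distance functions, the structure cocycle is also invariant under equivalence, so this map is well-defined on equivalence classes. In the other direction, send a reduced cocycle $c$ to the covering poset $\tilde X=X\times\ZZ$ constructed in the second lemma (using a chosen base point $x_0$ and $b(xy)=c(x_0xy)$), which has structure cocycle $c$; I should note that although the construction uses a base point, the second lemma's uniqueness clause shows the resulting equivalence class is independent of that choice, so this map too is well-defined.

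Next I would verify the two compositions are identities. Starting from a reduced cocycle $c$, building $\tilde X=X\times\ZZ$, and taking its structure cocycle returns $\delta b=c$ by direct computation in the second lemma --- so one composite is the identity on cocycles. Starting from a covering poset $\pi:\tilde X\to X$, forming its structure cocycle $c$, and then reconstructing $X\times\ZZ$ from $c$, the uniqueness part of the second lemma provides a $\s$-equivariant poset isomorphism over $X$ between $X\times\ZZ$ and $\tilde X$, i.e. an equivalence of covering posets --- so the other composite is the identity on equivalence classes. Together these give the claimed bijection.

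I do not expect a serious obstacle here: everything needed has been packaged into the two lemmas, and the only thing requiring a moment's care is checking that an \emph{equivalence} of covering posets (a $\s$-equivariant isomorphism over $X$) induces equality of structure cocycles --- this follows because if $f:\tilde X\to\tilde X'$ is such an equivalence and $\ll$ is a section of $\pi$, then $f\circ\ll$ is a section of $\pi'$ with the same distance function, hence the same $\delta b_\ll$. The rest is bookkeeping already done above.
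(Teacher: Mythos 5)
Your proposal is correct and follows the paper's own route: the paper derives the theorem directly from the two preceding lemmas (well-definedness and section-independence of the structure cocycle, plus existence and uniqueness of a covering poset realizing a given reduced cocycle), exactly as you assemble them. You merely spell out the bookkeeping (mutual inverseness and invariance under equivalence) that the paper leaves implicit with the phrase ``This proves the following theorem.''
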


Intuitively, a cyclic poset structure on a set $X$ is an equivalence class of covering posets $\tilde X\to X$. However, because of the above theorem, we use the following simpler definition.

\begin{defn}
A \emph{cyclic poset} is defined to be a pair $(X,c)$ where $X$ is a set and $c:X^3\to\NN$ is a reduced cocycle on $X$.
\end{defn}

\begin{eg} The \emph{product} of cyclic posets $X_1\times X_2$ has reduced cocycle $c(xyz):=c_1(x_1y_1z_1)+c_2(x_2y_2z_2)$ where $c_1,c_2$ are the cocycles of $X_1,X_2$ and $x=(x_1,x_2)$, etc.
 \end{eg}

\begin{eg}\label{cyclic poset XP}
Suppose that $X$ is a cyclic poset with covering poset $\tilde X$ and $P$ is another poset. Then another cyclic poset $X\ast P$ with underlying set $X\times P$ can be constructed as follows. The covering poset $\widetilde{X\ast P}$ of $X\ast P$ is the Cartesian product $\tilde X\times P$ with lexicographic order. So, $(\tilde x,p)\le (\tilde y,q)$ if either $\tilde x<\tilde y$ or $\tilde x\approx \tilde y$ and $p\le q$. The $\s$ action is given only on the first coordinate: $\s(\tilde x,p)=(\s \tilde x,p)$. An important example is $P=\ZZ$. If $X$ is cyclically ordered, then so is $X\ast\ZZ$.
\end{eg}

\subsubsection{$\NN$-categories}

Following, van Roosmalen 1011.6077, p.10 and Drinfeld 0304064, p.5,  (who refers to Besser and Greyson), we note that a cyclic poset structure on a set $X$ is equivalent to a special case of an $\NN$-category structure on (the object set) $X$.
 
\begin{defn}\label{def: N-categories}
An \emph{$\NN$-category} is a category $\cX$ with the property that the additive monoid $\NN$ acts freely on every Hom set
\[
	\NN\times \cX(x,y)\to \cX(x,y)
\]
so that composition satisfies:
\[
	nf\circ mg=(n+m)fg:x\to z
\]
(\emph{Acting freely} means Hom sets are disjoint unions of copies of $\NN$: $\cX(x,y)=\coprod \NN f_i$.)
\end{defn}

\begin{prop}
A cyclic poset structure on a set $X$ is the same as an $\NN$ category $\cX$ with object set $X$ so that every Hom set $\cX(x,y)$ is freely generated by one morphism $f_{xy}$.
\end{prop}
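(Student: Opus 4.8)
The statement is an unwinding of definitions, so the plan is to set up a dictionary between the two structures and check it respects composition on both sides. Starting from a cyclic poset $(X,c)$, I would define a category $\cX$ with object set $X$ by declaring $\cX(x,y) := \NN f_{xy}$, a free copy of $\NN$ on a formal generator $f_{xy}$, with the $\NN$-action being addition on the index. Composition is forced: set $f_{yz}\circ f_{xy} := c(xyz)\, f_{xz}$ and extend by the required bilinearity $nf_{yz}\circ mf_{xy} := (n+m+c(xyz))f_{xz}$. The content to verify is (i) associativity of this composition, (ii) that $f_{xx}$ is a two-sided identity, and (iii) that the resulting category is an $\NN$-category in the sense of Definition \ref{def: N-categories}, i.e.\ the action is free and $nf\circ mg=(n+m)fg$. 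Point (iii) is immediate from the formula; point (i) is exactly the cocycle condition $\delta c(wxyz)=0$ unwound; point (ii) is exactly the reducedness $c(xxy)=c(xyy)=0$ (using also $c(xxx)=0$).

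Conversely, given an $\NN$-category $\cX$ on object set $X$ in which each $\cX(x,y)$ is freely generated by a single morphism $f_{xy}$, I would define $c:X^3\to\NN$ by writing the composite $f_{yz}\circ f_{xy}$, which lies in $\cX(x,z)=\coprod\NN f_{xz}$ and hence equals $n\,f_{xz}$ for a unique $n\in\NN$ (here single-generation is what makes $n$ unique and well defined), and setting $c(xyz):=n$. Associativity of composition in $\cX$, combined with the freeness relation $nf\circ mg=(n+m)fg$, forces $\delta c=0$; the identity morphism $1_x$ must be $f_{xx}$ (since it generates $\cX(x,x)$ and any other generator differs by a strictly positive element of $\NN$, which cannot be invertible), and composing $f_{xy}$ with $1_x=f_{xx}$ or $1_y=f_{yy}$ gives reducedness. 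One then checks the two assignments are mutually inverse: starting from $(X,c)$, building $\cX$, and reading off its cocycle returns $c$; and starting from $\cX$, reading off $c$ and rebuilding the category returns $\cX$ up to the obvious isomorphism fixing objects and sending generators to generators. Finally I would remark that this correspondence is natural, so it matches the earlier identification of cyclic posets with reduced cocycles (the Theorem preceding Definition of cyclic poset), and hence ``cyclic poset structure on $X$'' and ``such an $\NN$-category structure on $X$'' are genuinely the same.

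The only mild subtlety — and the step I expect to absorb most of the care — is the bookkeeping in the freeness axiom: an $\NN$-category a priori allows $\cX(x,y)$ to be a disjoint union of \emph{several} copies of $\NN$, and the hypothesis restricts to one copy, but one must still be careful that the composite of two generators need not be a generator (it is $c(xyz)f_{xz}$, not $f_{xz}$), so the ``single generator'' condition has to be used precisely to pin down $c$ as a function rather than a relation. Everything else is a direct translation: the cocycle identity $\leftrightarrow$ associativity, reducedness $\leftrightarrow$ unitality, and the $\NN$-action $\leftrightarrow$ the monoid action, and I would present it as such without belaboring the arithmetic.
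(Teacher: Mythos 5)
Your proof is correct and is exactly the definitional unwinding the paper intends: the paper states this proposition without proof (offering only the remark that $f_{yz}f_{xy}=nf_{xz}$ defines the cocycle), and the same dictionary---associativity $\leftrightarrow$ cocycle condition, unit $\leftrightarrow$ reducedness, freeness of the $\NN$-action pinning down $c$---is carried out explicitly in the paper only in the multiplicative $t^\NN$-category setting. Your additional care with identifying $1_x=f_{xx}$ and with the single-generator hypothesis fills in precisely what the paper leaves implicit, so there is nothing to correct.
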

Note that, given three objects, $x,y,z\in X$, we have
\begin{equation}\label{def of cocycle}
	f_{yz}f_{xy}=nf_{xz}
	\end{equation}
for some $n\in\NN$.



\subsection{Linearization of cyclic posets} From now on we assume that $R$ is a discrete valuation ring with a fixed uniformizer $t$. We will construct the \emph{linearization} of any cyclic poset $X=(X,c)$ over $R$. 



\subsubsection{\ul{$t^\NN$-categories}}

\begin{defn}\label{defn of nesto}
A \emph{$t^\NN$-category} over $R$ is defined to be a small category $\cP$ with the following two properties:
\begin{enumerate}
\item $\cP(x,y)$ is a free $R$-module with one generator $f_{xy}$ for all $x,y$ in the object set $X$ of $\cP$. Thus $\cP(x,y)=\{rf_{xy}\,|\, r\in R\}$.
\item For any objects $x,y,z\in X$, there is a nonnegative integer $n$ so that for all $r,s\in R$,
\[
	rf_{yz}\circ sf_{xy}=rst^nf_{xz}.
\]
\end{enumerate}
\end{defn}

Note that this is an \emph{$R$-category}: Hom sets are $R$-modules and composition is $R$-bilinear. Note that the integer $n$ in the above equation is uniquely determined since $t^n=t^m$ in $R$ if and only if $n=m$. So, $n$ is a well defined element of $\NN$ which depends only on $x,y,z$.

\begin{lem}
Given a $t^\NN$-category $\cP$ over $R$, there is a unique reduced cocycle $c:X^3\to\NN$ on its set of objects $X$ so that $f_{yz}f_{xy}=t^{c(xyz)}f_{xz}$ for all $x,y,z\in X$.
\end{lem}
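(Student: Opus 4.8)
The plan is to extract the cocycle directly from the composition structure of $\cP$, then verify it is reduced and satisfies the cocycle identity, all of which follow from the associativity and $R$-bilinearity already built into the definition of a $t^\NN$-category. The uniqueness is automatic from the fact that $t^n = t^m$ in a DVR forces $n = m$ (as already noted just before the lemma), so the real content is showing $c$ is well-defined, $\NN$-valued, reduced, and a cocycle.

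\begin{proof}
For any $x,y,z\in X$, property (2) of Definition \ref{defn of nesto} applied with $r=s=1$ gives a nonnegative integer, which we call $c(xyz)$, such that $f_{yz}\circ f_{xy}=t^{c(xyz)}f_{xz}$. As observed after the definition, $c(xyz)$ is uniquely determined by $x,y,z$ because $t^n=t^m$ in $R$ only when $n=m$; this also shows any reduced cocycle with the stated property must equal this $c$, giving uniqueness.

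To see $c$ is reduced, take $y=x$. The identity morphism of $x$ is a generator of $\cP(x,x)\cong R$, so $f_{xx}=u\,\mathrm{id}_x$ for a unit $u\in R^\times$ (a generator of a free rank-one module is a unit multiple of any chosen generator, and $\mathrm{id}_x$ generates since it is the composition unit). Hence $f_{xz}\circ f_{xx}=u f_{xz}$, while by definition this equals $t^{c(xxz)}f_{xz}$; comparing and cancelling $f_{xz}$ (possible since $\cP(x,z)$ is free of rank one) gives $t^{c(xxz)}=u$, a unit, which forces $c(xxz)=0$. The argument $c(xzz)=0$ is symmetric, using $f_{zz}\circ f_{xz}=u'f_{xz}$.

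For the cocycle identity, fix $w,x,y,z\in X$ and expand the triple composition $f_{yz}\circ f_{xy}\circ f_{wx}$ in two ways using associativity in $\cP$. Associating one way, $f_{yz}\circ(f_{xy}\circ f_{wx})=f_{yz}\circ(t^{c(wxy)}f_{wy})=t^{c(wxy)}t^{c(wyz)}f_{wz}$, using $R$-bilinearity of composition. Associating the other way, $(f_{yz}\circ f_{xy})\circ f_{wx}=(t^{c(xyz)}f_{xz})\circ f_{wx}=t^{c(xyz)}t^{c(wxz)}f_{wz}$. Equating and cancelling the generator $f_{wz}$ of the free module $\cP(w,z)$ yields $t^{c(wyz)+c(wxy)}=t^{c(xyz)+c(wxz)}$, hence $c(wyz)+c(wxy)=c(xyz)+c(wxz)$, which is exactly $\delta c(wxyz)=0$. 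Thus $c:X^3\to\NN$ is a reduced cocycle with the required property, and by the uniqueness remark above it is the only one.
\end{proof}

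The step I expect to be the only mild subtlety is the reduced condition: one must notice that $f_{xx}$ need not equal $\mathrm{id}_x$ on the nose, only up to a unit, and that this unit is nonetheless enough to conclude $c(xxz)=0$ since units in $R$ are precisely the elements with $t$-adic valuation zero. Everything else is a formal consequence of associativity and bilinearity, with all cancellations justified by $\cP(x,y)$ being free of rank one over $R$.
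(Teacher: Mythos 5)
Your proof is correct and follows essentially the same route as the paper: read off $c(xyz)$ from property (2) (well-defined since $t^n=t^m$ forces $n=m$), get reducedness from the unit $\mathrm{id}_x$, and get the cocycle identity by associating a triple composition two ways and cancelling the generator $f_{wz}$. The only cosmetic difference is in the reduced step, where the paper writes $\mathrm{id}_x=rf_{xx}$ and composes with $f_{xy}$ to conclude $r=1$ and $c(xxy)=0$ in one stroke, whereas you first assert $f_{xx}=u\,\mathrm{id}_x$ with $u$ a unit; note that this assertion is itself justified by exactly that same composition computation, so your phrasing is just an inverted presentation of the identical argument.
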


\begin{proof} We have already remarked that $n$ gives a well-defined function $X^3\to\NN$ which we will denote by $c$. It remains to show that $c$ is a reduced cocycle.

First note that, if $rf_{xx}$ is the identity map on $x\in X$ then, for any $y\in X$, we must have $f_{xy}=f_{xy}(rf_{xx})=rt^{c(xxy)}f_{xy}$. Therefore, we must have $r=1$ and $c(xxy)=0$. Similarly, $c(xyy)=0$. So, $c$ is reduced. Associativity of composition gives 
\[
	(f_{yz}f_{xy})f_{wx}=t^{c(xyz)}\!f_{xz}f_{wx}=t^{c(xyz)+c(wxz)}\!f_{wz}\]
	\[
=f_{yz}(f_{xy}f_{wx})=t^{c(wxy)}\!f_{yz}f_{wy}=t^{c(wxy)+c(wyz)}\!f_{wz}.
\]
Therefore, $\delta c={c(xyz)+c(wxz)}-{c(wxy)-c(wyz)}=0$. Thus $c$ is a reduced cocycle. 
\end{proof}

\begin{defn}
We call $(X,c)$ the \emph{underlying cyclic poset} of the $t^\NN$-category $\cP$.
\end{defn}

\begin{eg}\label{eg: tN category from distance function b}
Suppose that $b:X^2\to\NN$ is a distance function for $c:X^3\to\NN$ in the sense that $c=\delta b$. Let $\cP$ denote the $R$-category with object set $X$ and morphism sets $\cP(x,y)=(t^{b(xy)})$, the ideal in $R$ generated by $t^{b(xy)}$, with composition given by multiplication. Then composition of any morphisms $x\to y\to z$ will be divisible by $t^{b(xy)+b(yz)}=t^{c(xyz)+b(xz)}$ and therefore will lie in the ideal $(t^{b(xz)})$ as required. Then $\cP$ is a $t^\NN$-category with underlying cyclic poset $(X,c)$.
\end{eg}

By an \emph{isomorphism} of cyclic posets $(X,c)\cong (X',c')$ we mean a bijection $x\leftrightarrow x'$ so that $c(xyz)=c'(x'y'z')$ for all $x,y,z\in X$.

\begin{prop} (a) Given any cyclic poset $(X,c)$, there is a $t^\NN$-category $RX$ with underlying cyclic poset $(X,c)$.

(b) Any $t^\NN$-category $\cP$ is isomorphic to $RX$ if and only if its underlying cyclic poset is isomorphic to $(X,c)$.
\end{prop}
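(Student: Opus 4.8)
The plan is to establish part (a) by an explicit construction, then derive part (b) from the uniqueness lemma for the underlying cyclic poset of a $t^\NN$-category together with an invariance argument.

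For part (a), I would build $RX$ directly from the cocycle $c$ rather than from a choice of distance function (since a global distance function $b$ with $c=\delta b$ need not take values in $\NN$, only in $\ZZ$, so Example \ref{eg: tN category from distance function b} does not immediately apply). Define $RX$ to be the $R$-category with object set $X$, with $RX(x,y)$ a free rank-one $R$-module on a symbol $f_{xy}$, and with composition determined by the rule $f_{yz}\circ f_{xy}=t^{c(xyz)}f_{xz}$, extended $R$-bilinearly. The only thing to check is that this is a well-defined category: associativity of composition and the existence of identity morphisms. Associativity of the triple composite $x\to y\to z\to w$ amounts, after cancelling the common factor $f_{xw}$ and the invertibility of $t$-powers, precisely to the cocycle identity $\delta c(xyzw)=0$, which holds by hypothesis; this is exactly the computation in the lemma preceding Definition \ref{defn of nesto}, run in reverse. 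For identities: since $c$ is reduced, $c(xxy)=0=c(yxx)$ for all $y$, so $f_{xx}\circ f_{yx}=f_{yx}$ and $f_{xy}\circ f_{xx}=f_{xy}$, hence $f_{xx}$ serves as the identity at $x$. Thus $RX$ is a small $R$-category, and by construction it satisfies both axioms of Definition \ref{defn of nesto} with structure cocycle exactly $c$; so its underlying cyclic poset is $(X,c)$.

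For part (b), one direction is immediate: if $\cP\cong RX$ as $R$-categories, the isomorphism is a bijection on objects carrying basic morphisms to unit multiples of basic morphisms, hence it intertwines the composition rules and therefore identifies the structure cocycles, giving an isomorphism $(X_\cP,c_\cP)\cong(X,c)$ of cyclic posets. Conversely, suppose $g:(X_\cP,c_\cP)\xrightarrow{\ \sim\ }(X,c)$ is an isomorphism of cyclic posets, i.e.\ a bijection with $c_\cP(xyz)=c(g(x)g(y)g(z))$. By the lemma, $\cP$ has basic morphisms $f^\cP_{xy}$ with $f^\cP_{yz}f^\cP_{xy}=t^{c_\cP(xyz)}f^\cP_{xz}$. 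Define $F:\cP\to RX$ on objects by $g$ and on morphisms by $F(rf^\cP_{xy})=rf_{g(x)g(y)}$, extended $R$-linearly. This is $R$-linear and bijective on each Hom module; it preserves identities since $g$ is a bijection and both categories have $f_{xx}$ as identity; and it preserves composition because
\[
F(f^\cP_{yz}f^\cP_{xy})=F(t^{c_\cP(xyz)}f^\cP_{xz})=t^{c(g(x)g(y)g(z))}f_{g(x)g(z)}=f_{g(y)g(z)}f_{g(x)g(y)}=F(f^\cP_{yz})F(f^\cP_{xy}),
\]
using $c_\cP(xyz)=c(g(x)g(y)g(z))$. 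Hence $F$ is an isomorphism of $R$-categories $\cP\cong RX$.

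The main obstacle is the well-definedness in part (a): one must resist the temptation to present $RX$ via a distance function and instead verify directly that the assignment $f_{yz}\circ f_{xy}=t^{c(xyz)}f_{xz}$ is associative, which is where the cocycle hypothesis is genuinely used. Everything after that is bookkeeping: the reducedness of $c$ handles identities, and part (b) is a transport-of-structure argument along the cocycle.
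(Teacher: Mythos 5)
Your proof is correct and follows essentially the same route as the paper: part (a) is the same direct construction of $RX$ with composition $f_{yz}\circ f_{xy}=t^{c(xyz)}f_{xz}$, with associativity from the cocycle identity and identities from reducedness, and part (b) is the same transport-of-structure argument, using that an $R$-linear isomorphism sends basic morphisms to unit multiples of basic morphisms so the cocycles must agree. Your side remark that one cannot simply invoke the distance-function example (since a globally $\NN$-valued $b$ with $c=\delta b$ need not exist) is a sound observation and consistent with the paper's choice to define $RX$ directly from $c$.
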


\begin{proof} (a) The $t^\NN$-category $RX$ is given as follows. We take $X$ to be the object set of $RX$ but we denote by $P_x$ the object in $RX$ corresponding to $x\in X$. For any $x,y\in X$ we take $RX(P_x,P_y)\cong R$ to be the free $R$ module generated by the single element $f_{xy}$. Composition of morphisms $P_x\to P_y\to P_z$ is given by $(rf_{yz})\circ (sf_{xy})=rst^{c(xyz)}f_{xz}$. As in the proof of the lemma above, composition is associative since $c$ is a cocycle and $f_{xx}$ is the identity map on $x$ since $c$ is reduced.

(b) Suppose we have a $t^\NN$-category $\cP$ with underlying cyclic poset $(X',c')$ and an isomorphism $\Phi:RX\to \cP$. Then, $\Phi$ gives a bijection between the object set $X$ of $RX$ and the object set $X'$ of $\cP$ which we denote $P_x\mapsto x'$. The basic morphism $f_{xy}:P_x\to P_y$ maps to a generator $r_{xy}f_{x'y'}\in \cP(x',y')$. So, $r_{xy}$ must be a unit in $R$. Since $\Phi$ sends the identity $f_{xx}$ on $P_x$ to the identity on $x'$, we must have $r_{xx}=1$. Finally, the relation $f_{yz}f_{xy}=t^{c(xyz)}f_{xz}$ gives the relation
\[
	r_{yz}r_{xy}f_{y'z'}f_{x'y'}=r_{xz}t^{c(xyz)}f_{x'z'}
\]
which implies that
\[
	r_{yz}r_{xy}t^{c'(x'y'z')}=r_{xz}t^{c(xyz)}
\]
Since the $r$'s are units, we conclude that $c'(x'y'z')=c(xyz)$ for all $x,y,z\in X$.

Conversely, if the underlying cyclic poset $(X',c')$ of $\cP$ is isomorphic to $(X,c)$ then we get a bijection $x'\leftrightarrow P_x$ on the set of objects and this extends to the morphism sets by mapping $f_{x'y'}$ to $f_{xy}$. The cyclic poset isomorphism implies that the composition laws correspond. Therefore, we have an isomorphism of categories $\cP\cong RX$.
\end{proof}

In the special case when $R=\kk[[t]]$, the $t^\NN$-category $RX$ has another description as the completed orbit category of $\kk\tilde X$. This idea comes from \cite{vanRoo12}. First, we recall that, since $\tilde X$ is a poset, it has a linearization $\kk\tilde X$ over $\kk$ defined as follows. The objects of $\kk\tilde X$ are the elements of $\tilde X$ and hom sets $\kk\tilde X(x,y)$ are either $\kk$ or $0$ depending on whether $x\le y$ or not. Composition of morphisms is given by multiplication by scalars. The $\s$-\emph{orbit category} of $\kk\tilde X$ is the category whose objects are the $\s$-orbits $\bigoplus \s^nx\in Add\,\kk\tilde X$ and whose morphisms are $\kk$-linear morphisms $f:\bigoplus \s^nx\to \bigoplus\s^my$ which are $\s$-\emph{equivariant} in the sense that $f\circ \s=\s\circ f$. This is the same as saying that the $(m,n)$ component of the morphism $f$ is equal to the $(m+i,n+i)$ component of $f$ for all $i$. Note that the $(m,n)$ component of $f$ can only be nonzero if $\s^my\ge\s^nx$.

\begin{defn}\label{defn: completed linearization}
We define the \emph{completed linearization} $\widehat{\kk X}$ of a cyclic poset $X$ with covering poset $\pi:\tilde X\to X$ to be the following category. The objects of $\widehat{\kk X}$ are the elements of $X$ and a morphism $f:x\to y$ is defined to be an infinite matrix $f=(f_{ba})$ with entries $f_{ba}\in\kk$ and indexed over all $a\in \pi^{-1}(x)$, $b\in \pi^{-1}(y)$ so that 
\begin{enumerate}
\item $f_{ba}=0$ if $b\not\ge a$.
\item $f_{ba}=f_{\s(b)\s(a)}$ for all $a,b$. 
\end{enumerate}
Composition of morphisms: $f\circ g=h$ is given by
\[
	h_{ca}=\sum_{b\in \pi^{-1}(y)}f_{cb}g_{ba}
\]
\end{defn}

\begin{prop}\label{prop: completed linearization is isomorphic to RX}
When $R=\kk[[t]]$, there is a natural isomorphism of categories $RX\cong \widehat{\kk X}$.
\end{prop}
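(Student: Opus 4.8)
The plan is to construct the isomorphism $\Phi\colon RX\to\widehat{\kk X}$ directly on objects and morphisms and then verify compatibility with composition. On objects, $\Phi$ sends $P_x$ to $x$ (both categories have object set $X$). On morphisms, it suffices to say where the basic morphism $f_{xy}\in RX(P_x,P_y)\cong R$ goes, since $RX(P_x,P_y)$ is free of rank one over $R=\kk[[t]]$ and $\widehat{\kk X}(x,y)$ will turn out to be a free rank-one $R$-module as well once we understand its structure. First I would fix a section $\ll\colon X\to\tilde X$ and hence a distance function $b=b_\ll$, so that for $a=\ll(x)$, $c=\ll(y)$ we have $c\ge \s^m a$ iff $m\ge b(xy)$, and analyze the $R$-module structure on $\widehat{\kk X}(x,y)$: a $\s$-equivariant matrix $(f_{ba})$ with $f_{ba}=0$ unless $b\ge a$ is determined by the single ``diagonal sequence'' $m\mapsto f_{\s^m \ll(y),\ll(x)}\in\kk$, which is supported on $m\ge b(xy)$; so $\widehat{\kk X}(x,y)$ is naturally the set of such sequences, which is a free $R=\kk[[t]]$-module of rank one generated by the sequence $g^{(xy)}$ with a single $1$ in position $m=b(xy)$ and zeros elsewhere. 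Here the action of $t$ must be identified: multiplying by $t$ should shift the diagonal sequence by one, i.e. $t\cdot(f_m)=(f_{m-1})$; I would pin this down by checking it is exactly the effect of composing with the ``shift'' endomorphism, or simply declare the $R$-module structure this way and verify it is compatible with composition.

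The second step is to define $\Phi$ on morphisms by $\Phi(f_{xy}) = g^{(xy)}$ (extended $R$-linearly, where $t$ acts by the shift just described), check it is a bijection on each Hom set (immediate: both sides are free rank-one $R$-modules and $\Phi$ sends a generator to a generator), and check functoriality. For functoriality I need $\Phi(f_{yz})\circ\Phi(f_{xy}) = \Phi(f_{yz}f_{xy}) = \Phi(t^{c(xyz)}f_{xz}) = t^{c(xyz)} g^{(xz)}$. So the heart of the proof is the computation of the composite $g^{(yz)}\circ g^{(xz)}$ — wait, $g^{(yz)}\circ g^{(xy)}$ — in $\widehat{\kk X}$. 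Using the formula $h_{ca}=\sum_{b}f_{cb}g_{ba}$ with $a=\ll(x)$, $c=\s^m\ll(z)$, and $b$ ranging over $\pi^{-1}(y)$, exactly one $b=\s^j\ll(y)$ can contribute: we need $g^{(xy)}_{\s^j\ll(y),\ll(x)}\ne 0$, forcing $j=b(xy)$, and $g^{(yz)}_{\s^m\ll(z),\s^j\ll(y)} = g^{(yz)}_{\s^{m-j}\ll(z),\ll(y)}\ne 0$, forcing $m-j=b(yz)$, hence $m = b(xy)+b(yz) = c(xyz)+b(xz)$. So the composite has a single nonzero diagonal entry, equal to $1$, in position $m=c(xyz)+b(xz)$, which is precisely $t^{c(xyz)}g^{(xz)}$. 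This matches, and $R$-bilinearity of composition on the $\widehat{\kk X}$ side (which follows from the convolution formula together with the shift-description of the $t$-action) then gives $\Phi(r f_{yz}\circ s f_{xy}) = \Phi(r)\Phi(f_{yz})\circ\Phi(s)\Phi(f_{xy})$ for all $r,s\in R$. Identity morphisms correspond since $g^{(xx)}$ has its $1$ in position $b(xx)=0$, i.e. it is the identity matrix on $\pi^{-1}(x)$.

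The remaining point is naturality of the isomorphism, i.e. independence (up to canonical identification) of all choices: the section $\ll$ and the base point implicit in the description of $RX$. By the uniqueness lemma for the structure cocycle, changing $\ll$ changes $b$ by a coboundary $\da a$, which conjugates the diagonal-sequence description by the diagonal units $t^{a(x)}$ and hence induces the identity on the cocycle; I would note this gives a commuting triangle showing the isomorphism is natural. I expect the main obstacle to be purely bookkeeping: carefully setting up the $R=\kk[[t]]$-module structure on $\widehat{\kk X}(x,y)$ so that the $t$-action literally is the diagonal shift (equivalently, identifying the endomorphism of $x$ in $\widehat{\kk X}$ that plays the role of ``multiplication by $t$'' — namely the $\s$-equivariant matrix with $1$'s one step below the diagonal), and then keeping the indexing straight in the convolution so that the single surviving term lands in position $b(xy)+b(yz)$. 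Once that is in place, everything else is formal, and in fact this also re-proves that $\widehat{\kk X}$ is well-defined independent of the chosen covering poset $\tilde X$.
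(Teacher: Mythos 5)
Your proposal is correct and takes essentially the same route as the paper: fix a section $\ll$, use the distance function $b_\ll$, identify a $\s$-equivariant matrix with its diagonal sequence supported in degrees $\ge b_\ll(xy)$, and verify functoriality by the convolution computation together with $c=\delta b_\ll$, which places the single surviving term in position $b(xy)+b(yz)=c(xyz)+b(xz)$. The only difference is cosmetic: the paper defines the functor on an arbitrary series $\sum g_nt^nf_{xy}$ all at once, whereas you define it on the generator $f_{xy}$ and extend $R$-linearly after identifying the $t$-action on $\widehat{\kk X}(x,y)$ with the diagonal shift (i.e.\ composition with the image of $t\cdot id$), which amounts to the same verification.
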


\begin{proof}
To construct the isomorphism, we need to choose a fixed section $\ll:X\to\tilde X$ and let $b_\ll:X^2\to\ZZ$ be the corresponding distance function. Then, $c=\delta b_\ll$ and
\[
	\s^i\ll(x)\le \s^j\ll(y)\iff j-i- b_\ll(x,y)\ge0.
\]
An isomorphism $\Theta:RX\to \widehat{\kk X}$ is given as follows. On objects, $\Theta$ is the identity map: $\Theta x=x$ for all $x\in X$. On each morphism $g=\sum g_n t^n f_{xy}:x\to y$ in $RX$, $g_n\in\kk$, we let $\Theta g:x\to y$ be the morphism in $\widehat{\kk X}$ given by the matrix whose $(\s^j\ll(y),\s^i\ll(x))$-entry is $g_n$ where $n=j-i-b_\ll(x,y)$. Since this describes all possible morphisms $x\to y$ in $\widehat{\kk X}$, $\Theta$ gives a bijection on object sets and a linear isomorphism of Hom sets. It remains only to show that $\Theta$ is a functor.

To see this, take another morphism $h=\sum h_mt^m f_{yz}:y\to z$ in $RX$. Then the matrix of $\Theta h$ has $(\s^k\ll(z),\s^j\ll(y))$-entry equal to $h_m$ where $m=k-j-b_\ll(y,z)$. The product of these two matrices has $(\s^k\ll(z),\s^i\ll(x))$-entry equal to $\sum h_mg_n$ where the sum is over all $n,m$ so that
\begin{equation}\label{first equation for hg}
	n+m=k-i-b_\ll(x,y)-b_\ll(y,z)
\end{equation}
The composition of the morphisms $g,h$ in $RX$ is equal to
\[
	h\circ g=\sum h_mg_nt^{m+n}f_{yz}f_{xy}=\sum h_mg_nt^{m+n+c(xyz)}f_{xz}
\]
Then $\Theta(hg)$ has  $(\s^k\ll(z),\s^i\ll(x))$-entry equal to $\sum h_mg_n$ where the sum is over all $n,m$ so that
\[
	n+m+c(xyz)=k-i-b_\ll(x,z)
\]
However, this is equivalent to \eqref{first equation for hg} since $c=\delta b_\ll$. Thus $\Theta$ commutes with composition. It also takes $id_x$ to $id_x$. So, $\Theta$ is a functor and therefore an isomorphism of categories.
\end{proof}



\subsection{Frobenius category} For any cyclic poset $X$, let $\cP(X)=add\,RX$ denote the additive $R$-category generated by $RX$.

 \begin{defn}\label{defn: MF(X)}
 Let $\cMF(X)$ denote the category of all pairs $(P,d)$ where $P\in\cP(X)$ and $d:P\to P$ so that $d^2=\cdot t$ (multiplication by $t$). Morphism $f:(P,d)\to (Q,d)$ are maps $f:P\to Q$ so that $df=fd$.
 \end{defn}
 
The key property is the \emph{adjunction lemma}:

\begin{lem}
The functor $G:\cP(X)\to \cMF(X)$ given by
\[
\xymatrix{
GP:=\left(
	P\oplus P,\mat{0&t\\ 1&0}
	\right):& P \ar@/_1pc/[r]^{id} & 
	P\ar@/_1pc/[l]_{\cdot t} 
	}
\]
is both left and right adjoint to the forgetful functor $F:\cMF(X)\to \cP(X)$.
\end{lem}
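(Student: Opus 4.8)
The plan is to verify directly, from the definition of adjunction via unit/counit natural transformations, that $G$ is simultaneously a left and a right adjoint of the forgetful functor $F$. Since $F$ on objects sends $(P,d)$ to $P$ and $FG(P) = P\oplus P$, the first task is to exhibit the natural maps $\epsilon_{(P,d)}\colon FG(P,d)\to (P,d)$ (or their analogues on the other side) that will serve as counit/unit. The natural candidates are built from the two structure maps $\mat{0&t\\1&0}$ of $GP$: for the counit $FG(P,d)=(P\oplus P)\to P$ one takes $\mat{d & \cdot t}$ (i.e. $d$ on the first summand, multiplication by $t$ on the second), and for the unit $P\to FG(P,d)=P\oplus P$ one takes $\mat{d \\ \cdot 1}$ — one must then check these are morphisms in $\cMF(X)$, which is where the relation $d^2=\cdot t$ gets used.

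Concretely, to show $G$ is right adjoint to $F$ I would construct a natural bijection
\[
  \cMF(X)\big((Q,d), GP\big)\;\cong\;\cP(X)\big(FQ,\, P\big) = \cP(X)(Q,P),
\]
natural in $(Q,d)\in\cMF(X)$ and $P\in\cP(X)$. Given a map $f\colon Q\to Q\oplus Q = FGP$ in $\cMF(X)$, write $f=\mat{f_0\\f_1}$ with $f_0,f_1\colon Q\to P$ in $\cP(X)$; the commutation relation $\mat{0&t\\1&0}f = f d$ forces $f_0 = f_1 d$ and $t f_1 = f_0 d$, so $f$ is completely determined by $f_1$, and conversely any $f_1\colon Q\to P$ yields such an $f$ with $f_0:=f_1 d$ (the second relation $t f_1 = f_1 d^2 = f_1 d^2$ then holds automatically because $d^2=\cdot t$ on $Q$). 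This gives the bijection $f\mapsto f_1$; naturality in both variables is a routine check. The left-adjoint statement is the mirror image: I would construct a natural bijection $\cMF(X)(GP,(Q,d))\cong \cP(X)(P,Q)$ by the dual bookkeeping, writing a map $GP=(P\oplus P)\to Q$ as a row $\mat{g_0 & g_1}$, using the relation $d\mat{g_0&g_1}=\mat{g_0&g_1}\mat{0&t\\1&0}$ to show it is determined by $g_0$, with $g_1 := d g_0$.

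The only subtlety — and the step I expect to be the main obstacle, though it is still elementary — is checking that the forgetful functor is genuinely faithful and essentially surjective in the needed way, and more importantly that the candidate unit and counit satisfy the triangle identities (so that we really have an adjunction, not merely a hom-set bijection on objects). For the two-sided claim one must also verify that the unit of one adjunction and the counit of the other are compatible, i.e. that $F$ has the \emph{same} two-sided adjoint $G$; this amounts to observing the symmetry $\mat{0&t\\1&0}\leftrightarrow\mat{0&1\\t&0}$ coming from swapping the two summands of $P\oplus P$, together with the fact that this swap is an isomorphism $GP\cong GP$ in $\cMF(X)$ (it conjugates one structure map to the other). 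Once the hom-set bijections above are in place and are checked to be natural, the triangle identities reduce to the single computation that $\mat{d&\cdot t}\circ\mat{d\\\cdot 1} = d^2 + t = \cdot t + \cdot t$ — wait, more precisely one tracks the composite through the explicit formulas and sees it is the identity using $d^2=\cdot t$; I would do this bookkeeping carefully but expect no genuine difficulty.
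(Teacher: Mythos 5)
Your hom-set bijections — writing a map $(Q,d)\to GP$ as a column $\mat{f_0\\ f_1}$ determined by $f_1$ with $f_0=f_1d$, and a map $GP\to(Q,d)$ as a row $\mat{g_0 & g_1}$ determined by $g_0$ with $g_1=dg_0$ — are exactly the paper's argument: the paper proves the twisted version, where the correspondences $f\mapsto(f,\th_W\phi f)$ and $f\mapsto(\phi^{-1}(f\th_V),f)$ specialize at $\phi=\mathrm{id}$, $\th=d$ to your formulas. One simplification: once these bijections are checked to be natural in both variables you already have both adjunctions by definition, so the unit/counit and triangle-identity concerns in your last paragraph are superfluous (and your candidate counit $\mat{d & \cdot t}$ is in any case off — under your own bijection it should be $\mat{\mathrm{id} & d}$).
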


For the proof, see the twisted version below.

 \begin{thm}
 For any cyclic poset $X$, $\cMF(X)$ is a Frobenius category where a sequence \[
 (A,d)\cofib (B,d)\onto (C,d)\]
  is defined to be exact in $\cMF(X)$ if $A\cofib B\onto C$ is (split) exact in $\cP(X)$. Objects isomorphic to $GP$ for some $P$ in $\cP$ are the projective injective objects of $\cMF(X)$. 
 \end{thm}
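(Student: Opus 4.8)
The plan is to derive everything from the biadjunction: $G$ is simultaneously a left and a right adjoint of the faithful forgetful functor $F$ (the adjunction lemma, whose proof is deferred to its twisted form below), and this is exactly the setup that manufactures a Frobenius category --- conceptually, $\cMF(X)$ is the category of ``$\cP(X)$--modules over the Frobenius $R$--algebra $R[d]/(d^2-t)$'', with $G$ the common value of induction and coinduction. To begin, one checks that $\cMF(X)$ is an idempotent--complete additive $R$--category (direct sums and summands are formed in $\cP(X)$ and carry the restricted differential, since $d$ is natural) and that $F$ is faithful. Next, the declared conflations $(A,d)\cofib(B,d)\onto(C,d)$ are honest kernel--cokernel pairs: if the sequence becomes split exact in $\cP(X)$, say with $i$ the mono, $p$ the epi and $r$ a $\cP(X)$--retraction of $i$, then any $g\colon(T,d)\to(B,d)$ in $\cMF(X)$ with $pg=0$ factors in $\cP(X)$ as $g=i\circ(rg)$, and $rg$ automatically commutes with the differentials because $i$ does and is monic; dually for cokernels. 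Quillen's axioms then follow by transporting the split exact structure of $\cP(X)$ along $F$: pushouts of admissible monos and pullbacks of admissible epis along arbitrary maps exist (compute them in $\cP(X)$, where the relevant map splits, then restrict the differential) and are again conflations.

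Now the three Frobenius ingredients. First, every $GP$ is projective and injective: by the adjunction lemma $\cMF(X)(GP,-)\cong\cP(X)(P,F-)$ and $\cMF(X)(-,GP)\cong\cP(X)(F-,P)$, and since $F$ sends conflations to split short exact sequences of $R$--modules, both functors carry conflations to surjections. Second, there are enough projectives and injectives: under the adjunction isomorphism $\cMF(X)(GP,(Q,e))\cong\cP(X)(P,Q)$ (restriction along the inclusion of $P$ as the first summand of $FGP=P\oplus P$), the counit $GF(P,d)=GP\to(P,d)$ is the morphism corresponding to $\mathrm{id}_P$, and a short calculation identifies it with $\mat{\mathrm{id}_P&d}\colon P\oplus P\to P$; this is a split epi in $\cP(X)$, its kernel $\{(-db,b)\}$ is stable under $\mat{0&t\\1&0}$ and isomorphic to $(P,-d)\in\cMF(X)$, so it is an admissible epi onto $(P,d)$ from a projective. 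Dually the unit $(P,d)\to GF(P,d)=GP$ is $\mat{d\\\mathrm{id}_P}\colon P\to P\oplus P$, a split mono in $\cP(X)$ with cokernel $\cong(P,-d)$, hence an admissible mono into an injective. Third, projective equals injective: a projective $(Q,d)$ splits off the admissible epi $GQ\onto(Q,d)$ just constructed, so it is a direct summand of the injective $GQ$ and hence injective; dually injective implies projective.

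It remains to identify the projective--injectives with the objects isomorphic to some $GP$. One inclusion is the first ingredient. Conversely let $(Q,d)$ be projective--injective; by the previous step it is a direct summand of $GQ$. Write $Q=\bigoplus_i P_{x_i}$ in $\cP(X)=\add RX$, a Krull--Schmidt category since each $P_x$ has endomorphism ring $R$; then $GQ\cong\bigoplus_i GP_{x_i}$ because $G$ is additive. A short computation (using that the $P_x$ are torsion--free over $R$) gives $\End_{\cMF(X)}(GP_x)\cong R[s]/(s^2-t)$, with $s$ the class of $\mat{0&t\\1&0}$, and this ring is local: its non--units are exactly the elements of $(s)$, because for $a\in R^\times$ one has $(a+bs)(a-bs)=a^2-b^2t\in R^\times$. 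Thus $GQ$ is a finite direct sum of objects with local endomorphism rings, and Krull--Schmidt--Azumaya forces its direct summand $(Q,d)$ to be isomorphic to $\bigoplus_{i\in S}GP_{x_i}=G\bigl(\bigoplus_{i\in S}P_{x_i}\bigr)$ for some subset $S$; hence $(Q,d)\cong GP'$ with $P'=\bigoplus_{i\in S}P_{x_i}$.

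The formal core --- the three Frobenius ingredients --- is essentially immediate once the adjunction lemma is available, which is why the paper defers that lemma's proof. The step I expect to require real work is the last one: proving that the projective--injectives are \emph{exactly} the $GP$, not merely their direct summands, which rests on the local endomorphism ring computation for each $GP_x$ together with the Krull--Schmidt property of $\add RX$. A recurring nuisance to watch is that every auxiliary object produced along the way (restricted differentials on summands, kernels and cokernels of the unit and counit) genuinely lies in $\cMF(X)$, and that the resulting conflations do split after applying $F$.
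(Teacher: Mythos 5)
Your proof is correct and follows essentially the same route as the paper: the adjunction lemma gives projectivity and injectivity of the $GP$, the unit and counit (adjoint to identities) provide enough projective--injectives, and the splitting argument combined with the local endomorphism rings $R[u]$, $u^2=t$, and Krull--Schmidt identifies the projective--injectives as exactly the objects $GP$. You merely spell out details the paper defers (the exact-structure check cited to \cite{IT09}, the explicit kernels/cokernels $(P,-d)$ of the counit and unit, and the Azumaya step), all of which are consistent with the paper's twisted-version proof specialized to $\phi=\mathrm{id}$.
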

 
The twisted version of this theorem is proved in the next section. The fact that $GP$ is projective and injective follows from the adjunction lemma.

\begin{eg}
The main example of this construction is the \emph{continuous Frobenius category} $\cMF(S^1)$ which comes from the cyclically ordered set $S^1$. The stable category is the \emph{continuous cluster category} $\ul \cMF(S^1)=\cC_\pi$. These are topological categories. The topology is used to define the cluster structure on $\cC_\pi$, namely, two objects $X,Y$ are \emph{compatible} if the ordered pair $(X,Y)$ lies in the closure of the set of all $(X',Y')$ satisfying $\Ext^1(X',Y')=0=\Ext^1(Y',X')$ and a \emph{cluster} is defined to be a maximal set of pairwise compatible indecomposable objects which is also a discrete set (every element lies in an open set containing no other objects of the cluster). See \cite{IT09}, \cite{IT10} for details.
\end{eg}

To obtain other kinds of examples, we need to take a twisted version $\cMF_\phi(X)$ of the Frobenius category.



\subsection{Twisted version}\label{subsection: admissible automorphism} We need to modify the above construction using an admissible automorphism of a cyclic poset.

An automorphism $\phi$ of $(X,c)$ will be called \emph{admissible} if it is covered by a $\s$-equivariant order preserving bijection $\tilde \phi$ of the covering poset $\tilde X$ to itself satisfying the property:
 \[
 	x\le \tilde\phi x \le\tilde\phi^2 x <\sigma x
 \]
 for all $x\in\tilde X$. There is a corresponding additive $R$-linear automorphism of $\cP(X)$, which we also call $\phi$, defined on indecomposable objects by $\phi P_x=P_{\phi x }$ and on basic morphisms by $\phi f_{xy}=f_{\phi x, \phi y }$. In $\cP(X)$ the condition above gives morphisms
 \[
 	P_x\xrarrow{\eta_x}\phi P_x=P_{\phi x }\xrarrow{\xi_x}P_x
 \]
giving natural transformations 
 \[
 	P\xrarrow{\eta_P}\phi P\xrarrow{\xi_P}P
 \]
 whose composition $\xi_P\circ \eta_P:P\to P$ is multiplication by $t$. Since $t$ is not a zero divisor, neither are $\xi_P$ nor $\eta_P$. Therefore, the reverse composition $\eta_P\circ \xi_P:\phi P\to \phi P$ is also multiplication by $t$ since $\xi\eta \xi=t\xi=\xi t$ making $\xi(\eta\xi-t)=0$.
 
 \begin{defn}\label{defn: twisted MF-phi(X)}
 Let $\cMF_\phi(X)$ be the full subcategory of $\cMF(X)$ of all $(P,d)$ where $d:P\to P$ factors through $\eta_P:P\to\phi P$.
 \end{defn}
 
 An example of an object of $\cMF_\phi(X)$ is given by \[
\xymatrix{
G_\phi P:=\left(
	P\oplus \phi P,\mat{0&\xi_P\\ \eta_P&0}
	\right):& P \ar@/_1pc/[r]^{\eta_P} & 
	\phi P\ar@/_1pc/[l]_{\xi_P} 
	}
\]

We will show that $\cMF_\phi(X)$ is a Frobenius category with projective-injective objects given by $G_\phi P$. We use the key observation that $\eta_P,\xi_P$ are not zero divisors. This implies that, for any object $(P,d)$ in $\cMF_\phi(X)$, the factorization
\[
\xymatrix{
P \ar[r]^{\eta_P}\ar@/_1pc/[rr]_{d} & 
	\phi P\ar[r]^{\th_P} &P
	}
\]
is unique. Similarly, $d\th_P=\xi_P:\phi P\to P$ since $d\th_P\eta_P=d^2=\cdot t=\xi_P\eta_P$. 

Furthermore, for all morphisms $f:(V,d)\to (W,d)$, the following commutes.
\begin{equation}\label{eq: xi eta theta diagram}
\xymatrix{
\phi V\ar[r]^{\xi_V}\ar[d]_{\phi f}&V\ar[d]_f \ar[r]^{\eta_V} & 
	\phi V\ar[r]^{\th_V}\ar[d]^{\phi f}&V\ar[d]^f\\
\phi W\ar[r]^{\xi_W}&W \ar[r]^{\eta_W} & 
	\phi W\ar[r]^{\th_W} &W
	}
\end{equation}
This is because $\eta_V$ is not a zero divisor:
$
	(f\th_V-\th_W \phi f)\eta_V=fd-\th_W \eta_W f=fd-df=0
$
implies $f\th_V=\th_W\phi f$. Note that the left two squares in the diagram commute for any morphism $f:V\to W$ in $\cP(X)=add\,RX$ since $\xi,\eta$ are natural transformations.

\begin{lem}
$\cMF_\phi(X)$ is an exact category where a sequence
\[
	(A,d)\cofib (B,d)\onto (C,d)
\] 
in $\cMF_\phi(X)$ is defined to be exact if the underlying sequence $A\to B\to C$ is split exact in $\cP(X)$.
\end{lem}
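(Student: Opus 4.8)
The plan is to verify the axioms of an exact category (in the sense of Quillen) for $\cMF_\phi(X)$ with the declared class of conflations, deducing each one from the corresponding property of split exact sequences in the additive category $\cP(X)=\add\,RX$. So the first step is to observe that $\cMF_\phi(X)$ is an $R$-linear additive category: it is a full subcategory of $\cMF(X)$, and it is closed under finite direct sums because if $d_1$ factors through $\eta_{P_1}$ and $d_2$ factors through $\eta_{P_2}$, then $d_1\oplus d_2$ factors through $\eta_{P_1}\oplus\eta_{P_2}=\eta_{P_1\oplus P_2}$ (using that $\eta$ is a natural transformation, so $\eta_{P\oplus Q}=\eta_P\oplus\eta_Q$). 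It is also closed under direct summands: if $(P,d)=(P',d')\oplus(P'',d'')$ and $d$ factors through $\eta_P$, then composing with the inclusion/projection and using naturality of $\eta$ shows $d'$ factors through $\eta_{P'}$. This makes the notions of kernel/cokernel of split conflations well behaved.

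Next I would check the two defining axioms for the class of conflations. First, that it is closed under isomorphism and contains the split conflations $A\cofib A\oplus C\onto C$ — immediate, since such a sequence is split exact in $\cP(X)$ by construction, and the relevant maps are morphisms in $\cMF_\phi(X)$ because $d_A\oplus d_C$ factors through $\eta$ as above. Second, the pushout/pullback axiom: given a conflation $(A,d)\cofib (B,d)\onto (C,d)$ and a morphism $(A,d)\to (A',d)$, one must produce a pushout conflation $(A',d)\cofib (B',d)\onto (C,d)$ inside $\cMF_\phi(X)$, and dually for pullbacks along maps into $(C,d)$. Since the underlying sequence in $\cP(X)$ is split, one can write $B=A\oplus C$, $B'=A'\oplus C$, the pushout map $B\to B'$ is $\mat{g&0\\0&1}$ composed with a shear $\mat{1&0\\ h&1}$, and the induced differential on $B'$ is $\mat{d_{A'}&\ast\\0&d_C}$ where the off-diagonal entry is determined by commutativity with the maps; the key point is that this differential still factors through $\eta_{B'}$. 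Here is where I would use diagram \eqref{eq: xi eta theta diagram}: the factorizations $d=\eta\,\th$ and $d\,\th=\xi$ are unique (since $\eta_P,\xi_P$ are not zero divisors), and $\th$ is natural for morphisms in $\cMF_\phi(X)$, which lets one write the off-diagonal entry of the new differential as $\eta$ composed with an explicit map built from $\th_{A'}$, $\th_C$, and the components of the original $d$ on $B$. Dualizing gives the pullback statement.

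Then I would record the remaining, easier axioms: compositions of inflations are inflations and compositions of deflations are deflations (obvious from splitness, taking direct sums of the splittings), and the "obscure axiom" (if it is included in the chosen axiomatization) follows formally. Throughout, exactness of a candidate sequence is tested purely in $\cP(X)$, so every verification reduces to linear algebra of split sequences over $R$ together with the bookkeeping that the constructed differentials land in the subcategory.

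The main obstacle is precisely the stability of the subcategory $\cMF_\phi(X)$ under pushouts and pullbacks — i.e. checking that the differential on the new middle term $B'$ still factors through $\eta_{B'}$. The ambient category $\cMF(X)$ is exact for soft reasons (split sequences in an additive category), but $\cMF_\phi(X)$ is a non-full-in-the-exact-structure restriction, so one genuinely needs the "not a zero divisor" hypothesis on $\eta_P,\xi_P$ to transport the factorization. Once the uniqueness of the $(\eta,\th)$-factorization and the naturality square \eqref{eq: xi eta theta diagram} are in hand, the computation is a short $2\times 2$ block-matrix manipulation, but it is the one place where something could go wrong, and it is the reason the lemma is stated separately before the Frobenius structure is established.
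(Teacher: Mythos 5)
Your proposal is correct, and it supplies what the paper deliberately omits: the paper gives no argument for this lemma, saying only that the proof is straightforward and identical to the case $X=S^1$ treated in detail in \cite{IT09}, so a direct verification of the exact-category axioms of the kind you sketch is exactly the intended content. You also isolate the right pressure point: the only non-formal axiom is stability of the condition ``$d$ factors through $\eta$'' under the pushout/pullback constructions, everything else being linear algebra of split sequences in $\cP(X)$. One small correction to your bookkeeping at that point: writing $B\cong A\oplus C$ with $d_B=\mat{d_A&e\\0&d_C}$, the factorization $d_B=\th_B\circ(\eta_A\oplus\eta_C)$ shows, upon reading off the block of $\th_B$ from $\phi C$ to $A$, that the off-diagonal entry already factors as $e=\th_{12}\eta_C$; the pushout differential along $g:(A,d)\to(A',d)$ then has off-diagonal entry $ge=(g\th_{12})\eta_C$, so it is the components of $\th_B$ (rather than $\th_{A'}$ and $\th_C$, as you wrote) that do the work, and for the pullback along $h:(C',d)\to(C,d)$ one uses naturality of $\eta$ on $\cP(X)$ to rewrite $eh=\th_{12}\phi(h)\,\eta_{C'}$. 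With that adjustment the $2\times2$ computation closes up; note also that only the \emph{existence} of such factorizations is needed here, so the uniqueness statement and the right-hand naturality square of \eqref{eq: xi eta theta diagram}, while true, are not actually required for this particular lemma.
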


The proof is straighrforward and identical to the case of $X=S^1$ detailed in \cite{IT09}.

\begin{lem}
The functor $G_\phi:\cP(X)\to \cMF_\phi(X)$ is left adjoint to the forgetful functor $\cMF_\phi(X)\to\cP(X)$ and the functor $G_\phi\circ \phi^{-1}$ is right adjoint to the forgetful functor. In other words, we have natural isomorphisms:
\[
	\cMF_\phi(G_\phi V,(W,d))\cong \cP(V,W)\cong \cMF_\phi((V,d),G_\phi\phi^{-1}W)
\]
\end{lem}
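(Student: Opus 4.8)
The plan is to exhibit the two adjunction bijections explicitly and check naturality, using the structural facts already established: that $\xi_P\eta_P=\eta_P\xi_P=\cdot t$, that $\eta_P$ and $\xi_P$ are not zero divisors, that $\xi,\eta$ are natural transformations $P\to\phi P$ and $\phi P\to P$, and the uniqueness of the factorization $d=\th_P\eta_P$ together with $d\th_P=\xi_P$ for any $(P,d)$ in $\cMF_\phi(X)$. I will treat the left adjointness statement in detail; the right adjointness is the ``mirror'' obtained by replacing $V$ with $\phi^{-1}V$ and swapping the roles of $\eta$ and $\xi$, so I would just indicate the dualization rather than rewriting everything.

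For the left adjoint, given $(W,d)$ in $\cMF_\phi(X)$ and a morphism $\hat f\colon G_\phi V=(V\oplus\phi V,\,\mat{0&\xi_V\\\eta_V&0})\to (W,d)$, write $\hat f=(f_0,f_1)$ with $f_0\colon V\to W$ and $f_1\colon\phi V\to W$. The condition $d\hat f=\hat f\circ\mat{0&\xi_V\\\eta_V&0}$ unpacks into $df_0=f_1\eta_V$ and $df_1=f_0\xi_V$. The first equation, using $d=\th_W\eta_W$ and the naturality square $\eta_W f_0=\phi f_0\,\eta_V$ (the right-hand square of \eqref{eq: xi eta theta diagram} applied with the as-yet-unconstrained $f_0$ — more precisely, $\eta$ natural gives $\eta_W f_0=(\phi f_0)\eta_V$), forces $f_1\eta_V=\th_W(\phi f_0)\eta_V$, hence $f_1=\th_W\,\phi f_0$ since $\eta_V$ is not a zero divisor. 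Conversely, setting $f_1:=\th_W\,\phi f_0$ for \emph{any} $f_0\colon V\to W$ in $\cP(X)$, one checks $df_0=\th_W\eta_W f_0=\th_W(\phi f_0)\eta_V=f_1\eta_V$ and $df_1=d\th_W\,\phi f_0=\xi_W\,\phi f_0=f_0\xi_V$ (the last step is naturality of $\xi$). Thus $\hat f\mapsto f_0$ is a bijection $\cMF_\phi(G_\phi V,(W,d))\xrightarrow{\ \sim\ }\cP(V,W)$ with inverse $f_0\mapsto(f_0,\th_W\phi f_0)$. Naturality in $(W,d)$ follows from the commuting diagram \eqref{eq: xi eta theta diagram} (it says precisely that post-composing with a morphism of $\cMF_\phi(X)$ intertwines $\th$), and naturality in $V$ is the naturality of $\eta$ together with functoriality of $\phi$.

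For the right adjoint, I would run the symmetric argument: a morphism $(V,d)\to G_\phi\phi^{-1}W=(\phi^{-1}W\oplus W,\,\mat{0&\xi_{\phi^{-1}W}\\\eta_{\phi^{-1}W}&0})$ has components $g_0\colon V\to\phi^{-1}W$ and $g_1\colon V\to W$; note $\eta_{\phi^{-1}W}\colon\phi^{-1}W\to W$ and $\xi_{\phi^{-1}W}\colon W\to\phi^{-1}W$ after identifying $\phi(\phi^{-1}W)=W$, and $\eta_{\phi^{-1}W}$ is again a non-zero-divisor. Commutation with the differentials forces $g_0=(\phi^{-1}\th_V)\circ(\text{something})$; the cleanest route is to apply $\phi^{-1}$ to everything and reduce to the left-adjoint computation for the pair $(\phi^{-1}V,d)$ and $(\phi^{-1}W,d)$, using that $\phi^{-1}$ is an autoequivalence of both $\cP(X)$ and $\cMF_\phi(X)$ (here one uses that $\phi$, being an admissible automorphism, is invertible with inverse also admissible, so $\eta,\xi,\th$ transport correctly). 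This yields $\cMF_\phi((V,d),G_\phi\phi^{-1}W)\cong\cP(\phi^{-1}V,\phi^{-1}W)\cong\cP(V,W)$.

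The main obstacle I anticipate is purely bookkeeping rather than conceptual: getting the variances right in the right-adjoint half — keeping straight which object $\phi$ or $\phi^{-1}$ is applied to, and to which of $\eta,\xi,\th$, so that the non-zero-divisor cancellations are legitimate and the two squares of \eqref{eq: xi eta theta diagram} are being invoked in the correct direction. Everything else is forced by the uniqueness of the factorization $d=\th\eta$ and the fact that $\eta,\xi$ are non-zero-divisors, both of which are already in hand. No completeness, cocycle, or Frobenius-exactness input is needed for this lemma; it is a formal consequence of the module-theoretic structure of $\cP(X)$.
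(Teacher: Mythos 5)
Your left-adjoint half is complete and correct, and it is essentially the computation the paper leaves to the reader (``similar but easier''): the two compatibility equations $df_0=f_1\eta_V$, $df_1=f_0\xi_V$, right-cancellation of the non-zero-divisor $\eta_V$, naturality of $\eta,\xi$ for arbitrary morphisms of $\cP(X)$, and the relations $d=\th_W\eta_W$, $d\th_W=\xi_W$ give exactly the bijection $f_0\leftrightarrow(f_0,\th_W\phi f_0)$, with naturality in $(W,d)$ coming from the third square of \eqref{eq: xi eta theta diagram}.

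The gap is in the right-adjoint half, which you defer to a reduction that does not work. Applying the automorphism $\phi^{-1}$ to both arguments sends $\cMF_\phi((V,d),G_\phi\phi^{-1}W)$ to $\cMF_\phi((\phi^{-1}V,\phi^{-1}d),G_\phi\phi^{-2}W)$, since $\phi^{-1}(G_\phi\phi^{-1}W)=G_\phi\phi^{-2}W$: the object $G_\phi(-)$ stays in the second slot, so you obtain another instance of the same right-adjunction problem, not the left-adjoint computation; an autoequivalence cannot interchange ``maps out of $G_\phi V$'' with ``maps into $G_\phi\phi^{-1}W$''. Moreover the justification you offer, that $\phi^{-1}$ is again admissible, is false in general: admissibility requires a lift with $x\le\tilde\psi x\le\tilde\psi^2x<\sigma x$, and for the lift $\sigma\tilde\phi^{-1}$ of $\phi^{-1}$ the last inequality would force $\sigma y<\tilde\phi^{2}y$, contradicting admissibility of $\phi$ (the other lifts fail the first inequality unless $\tilde\phi x\approx x$). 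Fortunately that claim is not needed: $\phi^{\pm1}$ extend to automorphisms of $\cP(X)$ which preserve $\cMF_\phi(X)$ because $\phi^{\pm1}\eta_P=\eta_{\phi^{\pm1}P}$ — but this transport by itself does not prove the adjunction. What is actually required is the direct dual computation, which is what the paper does: the correspondence is $f\mapsto(\phi^{-1}(f\th_V),f)$, and verifying $\phi^{-1}(f\th_V)d=\xi_{\phi^{-1}W}f$ and $\eta_{\phi^{-1}W}\phi^{-1}(f\th_V)=fd$ uses the identity $\th_V\phi d=d\th_V$ (valid because $d$ commutes with itself; remember $\th$ is natural only with respect to morphisms of $\cMF_\phi(X)$, and $\th_W\phi f\neq f\th_V$ for a general $f\in\cP(V,W)$), together with naturality of $\eta,\xi$ and cancellation of $\eta_V$. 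Your ``symmetric argument'' does go through once written out this way, but as proposed the right half rests on a reduction that fails and on an incorrect admissibility claim.
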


\begin{proof}
A morphism $f:V\to W$ corresponds to the morphisms $(f,\th_W\phi f):G_\phi V\to (W,d)$ and $(\phi^{-1}(f\th_V),f):(V,d)\to G_\phi \phi^{-1}W$. This is verified by a calculation in which we should remember that $\th_W \phi f\neq f\th_V$. However, since $d$ commutes with itself, we have $\th_V\phi d=d\th_V$.

For example, to prove right adjunction, we need to verify that $\phi^{-1}(f\th_V)d=(\xi_{\phi^{-1}W})f$ and $\eta_{\phi^{-1}W}\phi^{-1}(f\th_V)=fd$. This follows from:
\[
	f\th_V\phi d=fd \th_V=f\xi_V=\xi_W\phi f
\]
Applying $\phi^{-1}$ to both sides gives $\phi^{-1}(f\th_V)d=\phi^{-1}(\xi_W)f=\xi_{\phi^{-1}W}f$ since $\xi$ is natural. Also
\[
	(\eta_W f\th_V)\eta_V=\eta_W fd=\phi f\eta_Vd=(\phi f\phi d)\eta_V
\]
Cancel $\eta_V$ from both sides and apply $\phi^{-1}$ to get $\eta_{\phi^{-1}W}\phi^{-1}(f\th_V)=fd$.

Verification of left adjunction is similar but easier.
\end{proof}

\begin{prop}\label{G Px is strongly indecomposable}
If $P=P_x$ is indecomposable in $\cP(X)$ then the endomorphism ring of $G_\phi P_x$ is isomorphic to the local ring $R[u]$ where $u^2=t$.
\end{prop}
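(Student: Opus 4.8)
The plan is to compute the endomorphism ring of $G_\phi P_x$ directly, using the adjunction lemma together with the fact that $\eta_{P_x},\xi_{P_x}$ are not zero divisors. By the left-adjunction isomorphism $\cMF_\phi(G_\phi V,(W,d))\cong\cP(V,W)$ applied with $V=W=P_x$ and $(W,d)=G_\phi P_x$ (whose underlying object forgets to $P_x\oplus\phi P_x$, but the adjunction is with the forgetful target $W=P_x$ — so one must be a little careful: the right-hand side of the left adjunction is $\cP(V,W)$ where $W$ is the underlying object of the second argument), I would instead just describe endomorphisms of $G_\phi P_x$ explicitly. An endomorphism is a pair of maps on $P_x\oplus\phi P_x$ commuting with $d=\mat{0&\xi_P\\\eta_P&0}$. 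Writing a general element of $\End_{\cP(X)}(P_x\oplus\phi P_x)$ as a $2\times2$ matrix with entries in the appropriate Hom-modules, the commutation relation $df=fd$ becomes a system of equations among these entries. Since $P_x$ is indecomposable, $\End_{\cP(X)}(P_x)\cong R$ and $\End_{\cP(X)}(\phi P_x)=\End_{\cP(X)}(P_{\phi x})\cong R$, while $\cP(X)(P_x,\phi P_x)$ and $\cP(X)(\phi P_x,P_x)$ are free rank-one $R$-modules generated by $\eta_{P_x}$ and $\xi_{P_x}$ respectively (this uses the admissibility inequality $x\le\tilde\phi x\le\tilde\phi^2 x<\sigma x$, which guarantees the basic morphisms $x\to\phi x$ and $\phi x\to x$ generate, and that $\xi_{P_x}\eta_{P_x}=\cdot t=\eta_{P_x}\xi_{P_x}$).

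Concretely, I would write $f=\mat{a&b\,\xi_P\\c\,\eta_P&e}$ with $a,e\in R$ acting on $P_x$, $P_{\phi x}$ respectively and $b,c\in R$, then impose $df=fd$. Using $\eta_P\xi_P=\cdot t$, $\xi_P\eta_P=\cdot t$ and the naturality squares from \eqref{eq: xi eta theta diagram} (the left two squares, which hold for any morphism in $add\,RX$), the four resulting matrix equations should collapse to the single condition $a=e$ together with $b=c$ being free — i.e. the endomorphism ring is $\{\mat{a&b\xi_P\\b\eta_P&a}: a,b\in R\}$. Setting $u$ to be the endomorphism $\mat{0&\xi_P\\\eta_P&0}=d$ itself, we have $u^2=d^2=\cdot t$, and every endomorphism is $a\cdot\mathrm{id}+b\cdot u$. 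Hence $\End(G_\phi P_x)\cong R[u]/(u^2-t)$. This ring is local: its unique maximal ideal is generated by $u$ and the maximal ideal $\mm$ of $R$, since $R[u]/(u^2-t)$ is a free $R$-module of rank $2$, finite over the complete (or at least local) DVR $R$, and modding out by $(u,\mm)$ gives the residue field $\kk=R/\mm$; a standard argument (any element not in $(u,\mm)$ has unit constant term, hence is a unit after inverting via a geometric series in $u$, using that $u$ is topologically nilpotent or at least that $u^2=t\in\mm$) shows $(u)+\mm R[u]/(u^2-t)$ is the only maximal ideal.

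The one point that needs care — and which I expect to be the main obstacle — is verifying that the off-diagonal entries are forced to be equal (the $b=c$ condition) rather than independent, and more precisely pinning down exactly which $R$-multiples of $\xi_P,\eta_P$ can occur; this is where the admissibility hypothesis is genuinely used, via the inequalities controlling the covering poset, to ensure $\Hom$-spaces are exactly rank one with the stated generators and no unexpected extra morphisms $P_x\to P_x$ of positive $t$-order sneak in to spoil the count. Everything else is a routine bookkeeping computation with $2\times2$ matrices, and the locality of $R[u]/(u^2-t)$ is standard commutative algebra over a DVR (it is the ramified quadratic extension, or a non-domain if $t$ is already a square, but in either case local with residue field $\kk$).
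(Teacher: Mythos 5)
Your overall strategy is sound and lands on the same computation as the paper (every endomorphism is $a\cdot\mathrm{id}+b\cdot d$ with $d^2=t$, giving $R[u]$, $u^2=t$); the paper reaches the same parametrization slightly differently, via the adjunction $\cMF_\phi(G_\phi P_x,(W,d))\cong\cP(P_x,W)$, which identifies $\End(G_\phi P_x)$ with pairs $(r_0,r_1)\in R^2$ and then reads off the multiplication rule $(r_0,r_1)(s_0,s_1)=(r_0s_0+tr_1s_1,\,r_1s_0+r_0s_1)$.

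There is, however, one step of yours that is false as stated and it is exactly the step you flagged as the crux. You claim that $\cP(X)(P_x,\phi P_x)$ and $\cP(X)(\phi P_x,P_x)$ are free rank-one $R$-modules \emph{generated by} $\eta_{P_x}$ and $\xi_{P_x}$. For $\eta$ this is always true ($\eta_x$ is the basic morphism $f_{x,\phi x}$), but for $\xi$ it fails precisely when $x\approx\phi x$ --- in particular for $\phi=\mathrm{id}_X$, which the paper explicitly allows as an admissible automorphism. In that case $\eta_x$ is an isomorphism and $\xi_x=t\,f_{\phi x,x}$, so $\xi_x$ generates only the submodule $t\cdot\cP(\phi P_x,P_x)$. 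Consequently your ansatz $f=\mat{a&b\,\xi_P\\ c\,\eta_P&e}$ is not a priori the general element of $\End_{\cP(X)}(P_x\oplus\phi P_x)$, and your computation as structured only exhibits a subring of $\End(G_\phi P_x)$; admissibility alone does not rescue the generator claim. The gap is easily repaired: write the off-diagonal entries as arbitrary multiples of the basic morphisms, $\beta=b'f_{\phi x,x}$ and $\gamma=c\,f_{x,\phi x}=c\,\eta$, and observe that the $(1,1)$-entry of $df=fd$ reads $\xi\gamma=\beta\eta$, i.e. $(\beta-c\,\xi)\eta=0$; since the basic morphisms are not zero divisors ($R$ is a domain and Hom-sets are free of rank one), this forces $\beta=c\,\xi$, after which your count $a=e$, $\beta=c\xi$, $\gamma=c\eta$ goes through in all cases. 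Alternatively, using the adjunction as the paper does avoids the issue entirely, since it parametrizes \emph{all} endomorphisms without any claim about which multiples of $\xi$ occur. One further small point: your locality argument via a geometric series in $u$ tacitly assumes $R$ is complete; it is cleaner to note $(a+bu)(a-bu)=a^2-b^2t$, which is a unit of $R$ whenever $a\notin\mm$, so the non-units form the ideal $(u)$ and $R[u]/(u^2-t)$ is local for any DVR $R$.
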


\begin{proof}
Morphisms $G_\phi V\to (W,d)$ are given by $(f,\th_W\phi f)$ where $f:V\to W$ is any morphism in $\cP(X)$. If $V=P_x$ and $(W,d)=G_\phi P_x$ then $W=P_x\oplus \phi P_x$ and $f:P_x\to P_x\oplus \phi P_x$ is a sum of two morphisms $f_0=r_0f_{xx}:P_x\to P_x$ and $f_1=r_1f_{x\phi x }:P_x\to\phi P_x$ where $r_0,r_1\in R$. Since $\th_W=id_x\oplus\xi_x'$ we get the following formula for a general endomorphism of $G_\phi P_x$:
\[
	(f,\th_W\phi f)=\mat{f_0 & \xi_x'\phi f_1\\ f_1 & \phi f_0}=\mat{r_0 f_{xx}& tr_1 f_{\phi x x}\\ r_1f_{x\phi x }& r_0f_{\phi x \phi x }}
\]
Since the second column is determined by the first, we can write this as $(r_0,r_1)$. Then composition is given by $
	(r_0,r_1)(s_0,s_1)=(r_0s_0+tr_1s_1,r_1s_0+r_0s_1)
$ which is exactly the multiplication rule for $(r_0,r_1)=r_0+r_1u$ in $R[u]$ where $u^2=t$.
\end{proof}

\begin{cor}
Every object $G_\phi V$ can be expressed uniquely up to isomorphism as a direct sum of indecomposable objects $G_\phi P_{x_i}$ where $P_{x_i}$ are the components of $V$.
\end{cor}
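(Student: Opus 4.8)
The plan is to reduce the statement to two ingredients already in place: the additivity of the functor $G_\phi$ on the level of objects, and the local endomorphism ring computation of Proposition \ref{G Px is strongly indecomposable}. First I would note that $G_\phi$ is an additive functor from $\cP(X)=add\,RX$ to $\cMF_\phi(X)$: directly from the defining formula $G_\phi P=(P\oplus\phi P,\,\mat{0&\xi_P\\\eta_P&0})$, and since $\phi$ is additive and $\eta,\xi$ are natural transformations, one checks $G_\phi(V\oplus W)\cong G_\phi V\oplus G_\phi W$. Hence if $V=\bigoplus_i P_{x_i}$ is the (unique) decomposition of $V$ into indecomposables in $\cP(X)$, then $G_\phi V\cong\bigoplus_i G_\phi P_{x_i}$, which gives the existence half.

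For uniqueness, the key point is that each $G_\phi P_{x_i}$ is indecomposable in $\cMF_\phi(X)$ with local endomorphism ring. By Proposition \ref{G Px is strongly indecomposable}, $\End_{\cMF_\phi(X)}(G_\phi P_{x_i})\cong R[u]/(u^2-t)$, which is local since $R$ is a discrete valuation ring (its maximal ideal is generated by $u$, and $R[u]/(u^2-t)$ is the ring of integers of a ramified extension, or one argues directly that every element not in $(u)$ is a unit because its image in $R[u]/(u)\cong R/(t)$ modulo... rather, one uses that $R[u]/(u^2-t)$ is $t$-adically complete hence local with residue field $R/(t)$). Consequently the objects $G_\phi V$ lie in a category where the Krull--Schmidt theorem applies to them: any object which is a finite direct sum of objects with local endomorphism rings has a decomposition into indecomposables that is unique up to isomorphism and permutation of the summands. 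This is the standard Krull--Schmidt--Azumaya argument, and I would simply cite it.

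Finally, to get the sharper statement that the indecomposable summands of $G_\phi V$ are precisely the $G_\phi P_{x_i}$ with $P_{x_i}$ the components of $V$ — not merely some abstract list of indecomposables — I would invoke uniqueness of the decomposition $V=\bigoplus P_{x_i}$ in $\cP(X)$ (which holds because $\cP(X)=add\,RX$ and each $RX(P_x,P_x)\cong R$ is local) together with the uniqueness just established in $\cMF_\phi(X)$: the two are matched by applying the additive functor $G_\phi$ to the first and comparing with the second.

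I expect the only real obstacle to be purely bookkeeping: verifying that $R[u]/(u^2-t)$ is local (equivalently, that $G_\phi P_x$ is strongly indecomposable) so that Krull--Schmidt applies, and confirming that $\cMF_\phi(X)$ is idempotent-complete enough for the uniqueness statement to be meaningful — but the latter follows since $\cMF_\phi(X)$ is an exact (indeed Frobenius) category and in particular additive and Karoubian on the relevant subcategory of objects of the form $G_\phi V$. Everything else is a formal consequence of additivity of $G_\phi$ and the endomorphism ring computation already carried out.
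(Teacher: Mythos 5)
Your argument is correct and follows the same route as the paper: the paper's (one-line) proof likewise decomposes $V=\bigoplus P_{x_i}$, uses additivity of $G_\phi$ to get $G_\phi V=\bigoplus G_\phi P_{x_i}$, and relies on Proposition \ref{G Px is strongly indecomposable} (local endomorphism ring $R[u]$, $u^2=t$) for indecomposability and, implicitly, Krull--Schmidt--Azumaya uniqueness. Your extra verification that $R[u]$ is local is fine (any $a+bu$ with $a$ a unit is invertible since $a^2-b^2t$ is a unit), so there is no gap.
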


\begin{proof}
If $V=\bigoplus P_{x_i}$ then $G_\phi V=\bigoplus G_\phi P_{x_i}$ where each $G_\phi P_{x_i}$ is indecomposable.
\end{proof}

\begin{thm}\label{thm: MF-phi(X) is Frobenius}
$\cMF_\phi(X)$ is a Frobenius category with projective-injective objects given by $G_\phi V$ for all $V$ in $\cP(X)$.
\end{thm}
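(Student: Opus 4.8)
The plan is to verify the axioms of a Frobenius category one at a time, leaning on the exact structure established in the lemma above and on the two adjunction lemmas. Recall that a Frobenius category is an exact category with enough projectives and enough injectives, in which the classes of projective and injective objects coincide. The exact structure is already in hand, so three things remain: (i) identify the projectives and injectives, (ii) show there are enough of each, and (iii) show the two classes agree, namely that both equal $\add\{G_\phi V\}$.

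First I would show every $G_\phi V$ is projective and injective. Projectivity: given an admissible (split) epimorphism $(B,d)\onto (C,d)$ and a map $G_\phi V\to (C,d)$, apply the left-adjunction isomorphism $\cMF_\phi(G_\phi V,-)\cong\cP(V,-)$; since $B\onto C$ is split epi in $\cP(X)$, the underlying map $V\to C$ lifts to $V\to B$, and naturality of the adjunction translates this lift back into a lift $G_\phi V\to (B,d)$. Injectivity of $G_\phi V$ is the dual argument using the right-adjunction isomorphism $\cMF_\phi(-,G_\phi\phi^{-1}W)\cong\cP(-,W)$ — here one uses that $G_\phi\phi^{-1}W$ ranges over all objects of the form $G_\phi V$ as $W$ ranges over $\cP(X)$, which is exactly why the projectives and injectives will coincide. (Note $\phi$ is an automorphism of $\cP(X)$, so $\phi^{-1}W$ makes sense and $W\mapsto\phi^{-1}W$ is a bijection.)

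Next, enough projectives and enough injectives. For any $(V,d)$ in $\cMF_\phi(X)$ I would produce a natural admissible epimorphism $G_\phi V\onto (V,d)$: the counit of the $(G_\phi,F)$-adjunction. Concretely it is the map $\mat{id_V & \th_V}: V\oplus\phi V\to V$, and one checks it intertwines $\mat{0&\xi_V\\\eta_V&0}$ with $d$ (using $d\th_V=\xi_V$ and $d\,id_V=d=\th_V\eta_V$, both from the lemma) and that its underlying map of $\cP(X)$-objects is split epi with kernel $\phi V$ sitting inside as a direct summand; so the sequence $(\ker)\cofib G_\phi V\onto (V,d)$ is exact in the defined sense. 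Dually, the unit of the $(F,G_\phi\phi^{-1})$-adjunction gives an admissible monomorphism $(V,d)\cofib G_\phi\phi^{-1}(\phi V) = G_\phi V$ — a split mono on underlying $\cP(X)$-objects. Thus $\cMF_\phi(X)$ has enough projective-injectives, all of the form $G_\phi(\text{something})$.

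Finally, I would show these are \emph{all} the projectives and injectives. If $(P,d)$ is projective, the admissible epimorphism $G_\phi P\onto (P,d)$ just constructed must split, exhibiting $(P,d)$ as a direct summand of $G_\phi P$; by Proposition \ref{G Px is strongly indecomposable} and its corollary, $G_\phi P=\bigoplus G_\phi P_{x_i}$ with each summand having local endomorphism ring, so by the Krull--Schmidt property any direct summand of $G_\phi P$ is again of the form $G_\phi V$. The same argument with the admissible monomorphism $(P,d)\cofib G_\phi P$ handles injectives. The main obstacle I anticipate is not any single axiom but the bookkeeping in step (i)/(ii): one must check that the unit and counit of the two (distinct, $\phi$-twisted) adjunctions really are admissible mono/epi in the sense defined — i.e. split on underlying objects — and carefully track the twist by $\phi$ so that the object appearing as the enveloping projective and the object appearing as the enveloping injective are literally the same $G_\phi V$, not merely abstractly isomorphic; the identities $d\th_V=\xi_V$, $\th_V\eta_V=d$, and the commuting diagram \eqref{eq: xi eta theta diagram} are exactly what make this work, and the fact that $\xi_P,\eta_P$ are non-zero-divisors (hence $\th_V$ is uniquely determined and natural) is what prevents the twist from causing inconsistencies.
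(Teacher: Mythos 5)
Your outline follows the paper's own proof: projectivity and injectivity of the $G_\phi V$ via the two adjunctions, enough projective-injectives via the unit and counit, and the identification of \emph{all} projectives and injectives by splitting off of $G_\phi(-)$ and using the corollary to Proposition \ref{G Px is strongly indecomposable}. However, the ``enough injectives'' step fails as you stated it, and it fails precisely at the point you single out as the key bookkeeping issue. The unit of the adjunction $F\dashv G_\phi\phi^{-1}$ at $(V,d)$ corresponds to $id_V\in\cP(V,V)$ (i.e.\ to $W=V$, not $W=\phi V$), so it is the map $(\phi^{-1}\th_V,\,id_V):(V,d)\cofib G_\phi\phi^{-1}V$; its underlying map $V\to \phi^{-1}V\oplus V$ is split mono because of the identity component. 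Your map instead is supposed to land in $G_\phi\phi^{-1}(\phi V)=G_\phi V$, i.e.\ it corresponds to some $f\in\cP(V,\phi V)$ and has the form $(\phi^{-1}(f\th_V),f):(V,d)\to G_\phi V$; in general \emph{no} such map is an inflation. For example, take $Z=\ZZ$ with $\phi$ the successor and $(V,d)=E(x,y)$ with $y>x+1$: every component of every morphism $V\to\phi V$, and hence also of $\phi^{-1}(f\th_V)$ (which factors through such a morphism), is a non-isomorphism, so the composite of $(\phi^{-1}(f\th_V),f)$ with any retraction has diagonal entries in $tR$ and can never be $id_V$. So the enveloping projective and enveloping injective of $(V,d)$ are \emph{not} literally the same object: they are $G_\phi V$ and $G_\phi\phi^{-1}V$ respectively, and this asymmetry is exactly the point of the $\phi$-twist in the right adjoint.

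The error is local and the fix is the paper's statement: use the genuine unit $(V,d)\cofib G_\phi\phi^{-1}V$. Since $G_\phi\phi^{-1}V=G_\phi(\phi^{-1}V)$ is again in the class $\{G_\phi V'\}$ (as you correctly note, $W\mapsto\phi^{-1}W$ is a bijection), the rest of your argument survives verbatim: the projective and injective objects still coincide with $\add\{G_\phi V\}$, and in the last step an injective $(P,d)$ splits off of $G_\phi\phi^{-1}P$ (not of $G_\phi P$), after which the Krull--Schmidt argument via Proposition \ref{G Px is strongly indecomposable} identifies it as a sum of objects $G_\phi P_{x_i}$, exactly as in the paper.
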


\begin{proof}
By the lemma above, $G_\phi V$ is projective for all $V$ and $G_\phi \phi^{-1}W$ is injective for all $W$. To see this note that $\cMF_\phi(G_\phi V,-)=\cP(V,F(-))$ and $\cMF_\phi(-,G_\phi \phi^{-1}W)=\cP(F(-),W)$ are exact since the forgetful functor $F:\cMF_\phi(X)\to\cP(X)$ takes exact sequences to split exact sequences. Letting $W=\phi V$ we see that $G_\phi V$ is projective and injective for all $V$.

For every $(V,d)$ there is a quotient map $G_\phi V\to (V,d)$ adjoint to the identity map on $V$. Similarly there is a cofibration $(V,d)\to G_\phi \phi^{-1}V$. Thus, there are enough projective-injective objects. If $(V,d)$ is projective or injective these morphisms must split, making $(V,d)$ a summand of either $G_\phi V$ or $G_\phi \phi^{-1}V$. By the corollary, this implies $(V,d)$ is a direct sum of $G_\phi P_{x_i}$. Since $\cMF_\phi(X)$ is an exact category with enough projectives all of which are also injective, it is a Frobenius category by definition.
\end{proof}





\section{Cluster categories} 

Recall that the \emph{stable category} $\ul\cF$ of a Frobenius category $\cF$ has the same set of objects as $\cF$ with morphism sets:
\[
	\ul\cF(A,B)=\frac{\cF(A,B)}{A\to P\to B, P\text{ proj-inj}}
\]

\begin{thm}[Happel \cite{HappelBook}]
The stable category of a Frobenius category is triangulated.
\end{thm}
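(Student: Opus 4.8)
The plan is to reproduce Happel's construction of the triangulated structure on $\ul\cF$, using throughout that $\cF$ is Frobenius, so that it has enough projectives and enough injectives and the two classes coincide. First I would define the \emph{suspension} functor $\Sigma\colon\ul\cF\to\ul\cF$. For each object $A$, choose an exact sequence $A\cofib I(A)\onto \Sigma A$ with $I(A)$ projective-injective; such a sequence exists since there are enough injectives. The standard homotopy-uniqueness lemma for Frobenius categories — any two such embeddings are comparable by a morphism of exact sequences, unique up to a map factoring through a projective-injective — shows that $\Sigma A$ is well-defined up to canonical isomorphism in $\ul\cF$ and that $\Sigma$ is a well-defined functor on $\ul\cF$. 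Dually, using enough projectives one defines a loop functor $\Omega$ via exact sequences $\Omega A\cofib P(A)\onto A$ with $P(A)$ projective-injective. The key point, and the place where the Frobenius hypothesis (projective $=$ injective) is essential, is that $\Sigma$ and $\Omega$ are mutually inverse equivalences of $\ul\cF$: splicing the defining sequences of $\Omega\Sigma A$ and $A$ and using that the middle terms become zero in $\ul\cF$ yields natural isomorphisms $\Omega\Sigma\cong\mathrm{id}\cong\Sigma\Omega$.

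Next I would define the \emph{distinguished triangles}. Given an exact sequence $A\xrightarrow{i}B\xrightarrow{p}C$ in $\cF$, form the pushout of $i$ along $A\cofib I(A)$; the resulting commutative diagram has bottom row $C\to\Sigma A$, producing a \emph{standard triangle} $A\to B\to C\to\Sigma A$. A triangle in $\ul\cF$ is declared distinguished if it is isomorphic in $\ul\cF$ to a standard one. I would then verify the axioms in order. TR1 (the triangle on an identity map, and that every morphism of $\ul\cF$ fits into a triangle) is immediate, since any $f\colon A\to B$ in $\ul\cF$ is represented by a monomorphism after replacing $B$ by $B\oplus I(A)$, whose cokernel gives the third term. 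TR2 (rotation) follows by comparing the pushout square defining a standard triangle with the dual pullback square and invoking the equivalence $\Sigma\simeq\Omega^{-1}$, which identifies the rotated triangle with a standard one up to sign. TR3 (the morphism axiom) is handled by lifting a commuting square of the first two vertices to a morphism of the underlying exact sequences in $\cF$ — possible after correcting by a projective-injective summand — and reading off the map on cones.

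The main obstacle, as usual, is the octahedral axiom TR4. Here I would follow Happel's argument: given composable maps $A\to B$ and $B\to C$, realize all three maps by monomorphisms, build the associated exact sequences and their cokernels, and assemble them into a $3\times3$ commutative diagram of exact sequences in $\cF$ (the $3\times3$ lemma for exact categories). Reading this diagram horizontally and vertically produces the four triangles of the octahedron, and one checks that the connecting morphisms to $\Sigma(-)$ agree under the identifications used to define $\Sigma$; the recurring technical point is again that competing choices of embedding, pushout, or lift differ by a morphism through a projective-injective and hence agree in $\ul\cF$. Since this is precisely Happel's theorem and the diagram chases are by now standard, I would present the construction of $\Sigma$ and the standard triangles in detail and refer to \cite{HappelBook} for the remaining verifications.
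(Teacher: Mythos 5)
The paper does not prove this statement; it simply cites Happel \cite{HappelBook}, and your sketch is exactly Happel's standard construction (suspension via embeddings into projective-injectives, standard triangles from pushouts, verification of TR1--TR4), so it is correct and takes the same approach as the cited source.
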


We denote the stable categories of $\cMF(X)$ and $\cMF_\phi(X)$ by $\cC(X)=\ul\cMF(X)$ and $\cC_\phi(X)=\ul\cMF_\phi(X)$. These triangulated categories will have a cluster structure if $X$ and $\phi$ are carefully chosen. \emph{Cluster categories} were first constructed by Buan-Marsh-Reineke-Reiten-Todorov (0402054) as orbit categories. This construction is an alternate construction in type $A$.


\subsection{Krull-Schmidt theorem}

If $X$ is cyclically ordered we will show that $\cMF(X)$ is Krull-Schmidt $R$-category with indecomposable objects $E(x,y)$ defined below with $x,y\in X$. This will imply that $\cC(X)$ is a Krull-Schmidt $\kk$-category with indecomposable objects $E(x,y)$ where $y\not\approx x$. The following easy proposition implies that $\cMF_\phi(X)$ and thus $\cC_\phi(X)$ will also be Krull-Schmidt categories.

\begin{prop}\label{prop: MF-phi(X) is closed under direct summands}
The subcategory $\cMF_\phi(X)$ is closed under direct summands in $\cMF(X)$.
\end{prop}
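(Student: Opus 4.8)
The goal is to show that if $(V,d)$ lies in $\cMF_\phi(X)$ and $(V,d) \cong (A,d_A) \oplus (B,d_B)$ in $\cMF(X)$, then both summands already lie in $\cMF_\phi(X)$. The only thing that needs checking is condition (2) in Definition \ref{defn: twisted MF-phi(X)}: that the differential on each summand factors through the appropriate $\eta$. So first I would fix an idempotent $e:(V,d) \to (V,d)$ in $\cMF(X)$ realizing the summand $(A,d_A)$, so that $A = eV$ and $d_A = d|_A$; the claim then reduces to showing $d_A$ factors through $\eta_A = \eta_V|_A$.

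Next I would exploit the key nondegeneracy feature already established in the excerpt: because $\eta_V$ is not a zero divisor, the factorization $d = \th_V \circ \eta_V$ with $\th_V:\phi V \to V$ is \emph{unique}, and moreover diagram \eqref{eq: xi eta theta diagram} shows that $\th$ is natural with respect to all morphisms in $\cMF_\phi(X)$ — in particular $e\,\th_V = \th_V\,\phi(e)$. Since $\eta$ is a natural transformation on $\cP(X) = add\,RX$, we also have $\eta_V e = \phi(e)\,\eta_V$. I would then simply compute: restricted to $A = eV$, the map $\th_A := \th_V|_{\phi A}$ (which makes sense because $\phi(e)$ is an idempotent cutting out $\phi A$ inside $\phi V$) satisfies
\[
	\th_A \circ \eta_A = e\,\th_V\,\phi(e)\,\eta_V\,e = e\,\th_V\,\eta_V\,e = e\,d\,e = d_A,
\]
using $\phi(e)\eta_V = \eta_V e$ and $e d e = d_A$ (the latter because $e$ commutes with $d$). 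This exhibits $d_A$ as factoring through $\eta_A$, so $(A,d_A) \in \cMF_\phi(X)$, and symmetrically for $(B,d_B)$.

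The only subtlety — and the one place I would be careful — is bookkeeping about which objects the various restrictions live on: $\eta_A$ should be read as the component $A \to \phi A$ of $\eta_V$, using that $\phi$ carries the idempotent $e$ to an idempotent $\phi(e)$ with image $\phi A$ (this is automatic since $\phi$ is an additive functor, so $\phi(V) = \phi(A) \oplus \phi(B)$ compatibly with $\phi(e)$). Once that identification is in place the computation is a one-line consequence of naturality of $\eta$ and $\th$, so I do not anticipate a real obstacle; the proposition is genuinely ``easy'' as the paper claims, and the content is entirely in having already proved that $\eta_V$ is a non-zero-divisor so that $\th_V$ is well-defined and natural.
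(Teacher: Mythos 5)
Your argument is correct and is essentially the paper's own proof recast in idempotent language: the paper simply observes that if $d_1\oplus d_2$ factors through $\eta_1\oplus\eta_2$ then each component $d_i$ factors through $\eta_i$, which is exactly what your computation does after splitting off the summand via $e$. Note also that your displayed calculation only uses naturality of $\eta$ (the appeal to $e\,\th_V=\th_V\,\phi(e)$ from \eqref{eq: xi eta theta diagram} is not actually needed), so it collapses to the same one-line component argument as in the paper.
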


\begin{proof}
Suppose that $(P_1,d_1)\oplus (P_2,d_2)$ lies in $\cMF_\phi(X)$. Then $d_1\oplus d_2:P_1\oplus P_2\to P_1\oplus P_2$ factors through $\eta_1\oplus \eta_2:P_1\oplus P_2\to \phi P_1\oplus \phi P_2$. Then $d_1$ factors through $\eta_1$ and $d_2$ factors through $\eta_2$ making $(P_1,d_1)$ and $(P_2,d_2)$ objects of $\cMF_\phi(X)$.
\end{proof}




To construct the objects $E(x,y)$, we first recall that $c(xyx)=c(yxy)=0$ if and only if $x\approx y$ if and only if $P_x\cong P_y$. 

\begin{defn} We define a sequence of three not necessarilty distinct elements $(x,y,z)$ in $X$ to be in \emph{cyclic order} if there exist liftings $\tilde x,\tilde y,\tilde z\in \tilde X$ (possibly $\tilde x\neq \tilde z$ when $x=z$) so that $\tilde x\le \tilde y\le \tilde z\le \s\tilde x$. For example, $(x,\phi x,\phi^2x)$ is in cyclic order for any admissible automorphism $\phi$ of $X$. 
\end{defn}

It is easy to see that, if $(xyz), (zwx)$ are in cyclic order and $x\not\approx z$ then $(yzw)$ is also in cyclic order since there is a lifting $\tilde w\in \tilde X$ of $w$ so that $\tilde x<\tilde z\le \tilde w\le \s^n\tilde x<\s\tilde z<\s^2\tilde x$ forcing $n=1$. We call this \emph{composition of cyclic order}. When the entire set $X$ is cyclically ordered, $(xyz)$ is in cyclic order if and only if either $c(xyz)=0$ or $x\approx z$.

\begin{defn}
Let $(X,c)$ be a cyclic poset and let $x,y\in X$ so that $c(xyx)=c(yxy)=1$. Then we define $E(x,y)$ to be the object in $\cMF(X)$ given by
\[
	E(x,y):=\left(
	P_x\oplus P_y,\mat{ 0 & f_{yx}\\ f_{xy} &0}
	\right).
\]
If $x\approx y$ then we define $E(x,y)$ and $E(x,y)'$ by
\[
	E(x,y):=\left(
	P_x\oplus P_y,\mat{ 0 & tf_{yx}\\ f_{xy} &0}
	\right),\qquad 	E(x,y)':=\left(
	P_x\oplus P_y,\mat{ 0 & f_{yx}\\ tf_{xy} &0}
	\right)
\]
For example, $E(x,\phi x)=\left(
	P_x\oplus P_{\phi x},\mat{ 0 & \xi_x\\ \eta_x &0}
	\right)=G_\phi P_x$ in $\cMF_\phi(X)$ is the projective-injective object  in $E(x,\phi x)$.
\end{defn}

We observe that $E(x,y)'\cong E(y,x)$ by the isomorphism which switches $P_x,P_y$. This second copy of the same object is convenient for notational symmetry and is used in the proof of Corollary \ref{cor: (X,X0) is a Frobenius cyclic poset} below. (In the notation of \cite{IT09},\cite{IT10}, $E(x,x)'=E(x,x+2\pi)$.)


\begin{lem}\label{lem: E(x,y) = E(y,x)}
$E(x,y)$ is isomorphic to $E(a,b)$ in $\cMF(X)$ if and only if they are isomorphic in $\cP(X)$. In particular, $E(x,y)\cong E(y,x)$.
\end{lem}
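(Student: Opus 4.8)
The statement has two directions. One direction is trivial: an isomorphism in $\cMF(X)$ is in particular a morphism in $\cP(X)$ that is invertible in $\cP(X)$ (since the forgetful functor $F:\cMF(X)\to\cP(X)$ reflects isomorphisms — an inverse in $\cMF(X)$ maps to an inverse in $\cP(X)$), so $E(x,y)\cong E(a,b)$ in $\cMF(X)$ forces $P_x\oplus P_y\cong P_a\oplus P_b$ in $\cP(X)$. The content is the converse: if $P_x\oplus P_y\cong P_a\oplus P_b$ in $\cP(X)$, then we can upgrade this to an isomorphism $E(x,y)\cong E(a,b)$ in $\cMF(X)$, i.e. one that intertwines the two differentials $d$.

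\textbf{Main steps.} First I would reduce to the case where the underlying $\cP(X)$-isomorphism is, up to reordering, either $P_x\cong P_a,\ P_y\cong P_b$ or $P_x\cong P_b,\ P_y\cong P_a$; since $P_u\cong P_v$ iff $u\approx v$, this means $\{x,y\}$ and $\{a,b\}$ match up under $\approx$, and the values $c(xyx)=c(yxy)$ are determined by the $\approx$-class data (they are $0$ or $1$ and in the $x\approx y$ case we moreover must track the $E$ vs.\ $E'$ distinction, but $E(x,y)'\cong E(y,x)$ is already noted, folding that case into the claim $E(x,y)\cong E(y,x)$). So it suffices to prove the two assertions: (i) $E(x,y)\cong E(a,b)$ whenever $x\approx a$ and $y\approx b$ (same ``orientation''), and (ii) $E(x,y)\cong E(y,x)$. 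For (i), pick the basic isomorphisms $g:P_x\to P_a$ and $h:P_y\to P_b$ in $\cP(X)$ (these exist and are unit multiples of the basic morphisms $f_{xa}$, $f_{yb}$ since $x\approx a$, $y\approx b$); I would then check directly that the diagonal map $g\oplus h:P_x\oplus P_y\to P_a\oplus P_b$ intertwines the off-diagonal matrices, possibly after rescaling $g$ or $h$ by a unit in $R$ to absorb the discrepancy between $f_{yx}$-type entries — this works because any two isomorphisms $P_x\to P_a$, $P_x\to P_b$, etc., differ by units, and the relevant compatibility $h^{-1}f_{ba}g = (\text{unit})\,f_{yx}$ can be forced to be an equality by the rescaling. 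For (ii), the isomorphism $E(x,y)\cong E(y,x)$ is the coordinate swap $\mat{0&1\\1&0}:P_x\oplus P_y\to P_y\oplus P_x$, which visibly conjugates $\mat{0&\b\\\a&0}$ to $\mat{0&\a\\\b&0}$; here one just has to observe that $E(y,x)$ is by definition built from $f_{xy},f_{yx}$ in swapped positions (and in the $x\approx y$ case the $t$-factor sits in the matching slot, which is exactly why $E(x,y)'$ was introduced and why $E(x,y)'\cong E(y,x)$).

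\textbf{Anticipated obstacle.} The only delicate point is bookkeeping in the $x\approx y$ degenerate case, where the two off-diagonal entries are $tf_{yx}$ and $f_{xy}$ rather than symmetric, so a naive swap turns $E(x,y)$ into $E(y,x)'$ rather than $E(y,x)$; one must invoke the already-established $E(x,y)'\cong E(y,x)$ (the swap isomorphism) and chain the two. Beyond that, the argument is a routine matrix conjugation computation once the right unit rescalings are chosen, and the non-zero-divisor property of $t$ (hence of the basic morphisms up to units) guarantees all the needed cancellations. I would organize the writeup as: reduce to (i) and (ii) via the structure of $\cP(X)$-isomorphisms, dispatch (ii) by the explicit swap, then do (i) by the rescaled diagonal isomorphism, handling the $\approx$ case by appeal to $E'\cong E(y,x)$.
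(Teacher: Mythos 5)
Your overall route is the same as the paper's: necessity is formal, and sufficiency is handled by matching $\{x,y\}$ with $\{a,b\}$ under $\approx$, using a diagonal map built from basic morphisms in the same-orientation case and the coordinate swap in the flipped case. Two points in your write-up need repair, though. First, in step (i) the justification ``rescale by a unit'' is not the right reason: a priori the discrepancy between $h^{-1}f_{ba}g$ and $f_{yx}$ is a unit times a \emph{power of $t$}, and no unit rescaling can absorb a $t$-power. What actually makes the diagonal map work on the nose is a small cocycle computation: if $u\approx v$ then $c(uvw)=0=c(wuv)$ for every $w$ (apply the cocycle identity to $(u,v,u,w)$ and to $(w,u,v,u)$, use $c(uvu)=0$, the reduced condition, and non-negativity of $c$). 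Hence, taking $g=f_{xa}$ and $h=f_{yb}$, one gets $f_{ab}f_{xa}=f_{xb}=f_{yb}f_{xy}$ exactly, and similarly for the $t$-twisted entries when $x\approx y$ (in which case $a\approx b$), so $f_{xa}\oplus f_{yb}$ intertwines the two differentials with no rescaling at all.

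Second, your treatment of the degenerate case of (ii) is circular as written: the swap gives $E(x,y)\cong E(y,x)'$, while the recorded fact $E(u,v)'\cong E(v,u)$ gives $E(y,x)'\cong E(x,y)$, so chaining these two isomorphisms returns you to $E(x,y)$ rather than producing $E(y,x)$. The repair is immediate and already contained in your step (i): when $x\approx y$ you also have $y\approx x$, so (i) applied with $(a,b)=(y,x)$ gives $E(x,y)\cong E(y,x)$ directly via $f_{xy}\oplus f_{yb}|_{b=x}=f_{xy}\oplus f_{yx}$; both composites around the square are $id_{P_x}$ and $t\cdot id_{P_y}$. This is also exactly how the paper's proof sidesteps the issue: in its flipped case it first observes that $x\not\approx y$ (otherwise one is in the same-orientation case), so the swap is only ever used when both off-diagonal entries are the untwisted $f_{xy},f_{yx}$, where it genuinely commutes with $d$. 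With these two adjustments your argument matches the paper's.
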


\begin{proof}
We only need to prove sufficiency. Suppose that $P_x\oplus P_y\cong P_a\oplus P_b$. Then either $x\approx a$ and $y\approx b$ in $X$ or $x\approx b$ and $y\approx a$. In the first case, it is clear that $E(x,y)\cong E(a,b)$. In the second case we have $x\not\approx y$ (otherwise we are in Case 1). Then the transposition isomorphism $P_x\oplus P_y\cong P_y\oplus P_x$ commutes with the operator $d$ and therefore gives an isomorphism $E(x,y)\cong E(y,x)$. But $E(y,x)\cong E(a,b)$ as in Case 1. So, $E(x,y)\cong E(a,b)$ in both cases.
\end{proof}

\begin{lem}\label{E(x,y) is strongly indecomposable}
The endomorphism ring of $E(x,y)$ is isomorphic to the local ring $R[\sqrt t]$.
\end{lem}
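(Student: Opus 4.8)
The plan is to compute $\End_{\cMF(X)}(E(x,y))$ directly from the definition of morphisms in $\cMF(X)$, exactly as was done for $G_\phi P_x$ in Proposition \ref{G Px is strongly indecomposable}. An endomorphism of $E(x,y)=(P_x\oplus P_y, d)$ is a map $g:P_x\oplus P_y\to P_x\oplus P_y$ in $\cP(X)$ commuting with $d=\mat{0&f_{yx}\\ f_{xy}&0}$ (or the twisted versions when $x\approx y$). Write $g$ as a $2\times 2$ matrix $\mat{g_{00}&g_{01}\\ g_{10}&g_{11}}$ with $g_{00}:P_x\to P_x$, $g_{01}:P_y\to P_x$, etc. Since $\cP(X)(P_a,P_b)=R f_{ab}$, each entry is a scalar multiple of the appropriate basic morphism, and the composition rule $f_{bc}f_{ab}=t^{c(abc)}f_{ac}$ lets me turn the condition $gd=dg$ into explicit equations among the scalars. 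Using $c(xyx)=c(yxy)=1$ (the non-equivalent case), I expect the commutation relations to force $g_{11}$ to be determined by $g_{00}$ and $g_{10}$ to be determined by $g_{01}$ (up to the relevant power of $t$), so that a general endomorphism is parametrized by a pair $(r_0,r_1)\in R^2$, with composition $(r_0,r_1)(s_0,s_1)=(r_0s_0+tr_1s_1,\, r_0s_1+r_1s_0)$. This is precisely the multiplication in $R[u]/(u^2-t)=R[\sqrt t]$, identifying $(r_0,r_1)\leftrightarrow r_0+r_1\sqrt t$.

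I would organize the proof as follows. First, reduce to computing which $g\in\End_{\cP(X)}(P_x\oplus P_y)$ satisfy $dg=gd$; note that a priori one must also check $g$ lands inside $\cMF(X)$, but since $E(x,y)\in\cMF(X)$ by construction and we are looking at endomorphisms of this fixed object, this is automatic. Second, write the four scalar entries and expand $dg=gd$ componentwise; this yields (in the $c(xyx)=c(yxy)=1$ case) relations such as $g_{11}=g_{00}$ (after accounting for the cocycle values, which may introduce factors of $t$ depending on whether $x\approx y$) and $t g_{10}\cdot(\text{stuff}) = g_{01}\cdot(\text{stuff})$; I should do this carefully to see exactly which scalar is free. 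Third, read off the ring structure by composing two such endomorphisms and matching with $R[\sqrt t]$. Fourth, handle the case $x\approx y$ separately: here $d=\mat{0&tf_{yx}\\ f_{xy}&0}$ and $P_x\cong P_y$, and $f_{xy},f_{yx}$ are both units times basic isomorphisms, so the computation is a minor variant giving the same answer. Finally, remark that $R[\sqrt t]$ is local — its unique maximal ideal is $(\sqrt t)$, with residue field $\kk=R/(t)$ — so $E(x,y)$ is (strongly) indecomposable, which is the point of the lemma and is what Lemma \ref{lem: E(x,y) = E(y,x)} and later arguments use.

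The main obstacle I anticipate is bookkeeping with the cocycle values and the distinction between the cases. One must be sure that in the generic case $c(xyx)=c(yxy)=1$ the off-diagonal entries $f_{yx}$ and $f_{xy}$ compose to give $t$ in both orders, i.e. $f_{xy}f_{yx}=t^{c(yxy)}f_{yy}=t\cdot f_{yy}$ and $f_{yx}f_{xy}=t^{c(xyx)}f_{xx}=t\cdot f_{xx}$, so that $d^2=\cdot t$ as required — this is exactly why the hypothesis $c(xyx)=c(yxy)=1$ appears. Then in the relations coming from $gd=dg$ one gets, e.g., from the $(0,0)$-entry $f_{yx}g_{10}=g_{01}f_{xy}$, which reads $r_{10}t^{c(xyx)}f_{xx}=\dots$ wait — one must track whether $g_{10}:P_x\to P_y$ pre-composed with $d$'s component $P_y\to P_x$ etc.; getting the source/target of each basic morphism right is the only real place to slip. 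Once the scalar equations are written correctly, everything collapses to the $R[\sqrt t]$ multiplication with no further difficulty; so the proof is essentially a careful transcription of the $G_\phi P_x$ computation, and I would present it tersely, pointing to Proposition \ref{G Px is strongly indecomposable} for the identical bookkeeping and noting only the one change, that the roles of $\eta_P,\xi_P$ are now played by $f_{xy},f_{yx}$ (or $f_{xy},tf_{yx}$).
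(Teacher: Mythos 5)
Your proposal is correct and is essentially the paper's own argument: the paper proves the lemma by observing that the computation of Proposition \ref{G Px is strongly indecomposable} (writing an endomorphism as a $2\times 2$ matrix commuting with $d$ and reading off the $R[\sqrt t]$ multiplication $(r_0,r_1)(s_0,s_1)=(r_0s_0+tr_1s_1,\,r_0s_1+r_1s_0)$) carries over verbatim to arbitrary $E(x,y)$, which is exactly what you carry out, including the separate $x\approx y$ case. The paper only adds an alternative shortcut you don't need: embed the cyclically ordered subposet $\{x,y\}$ into $S^1$ and quote the corresponding computation from \cite{IT09}.
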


\begin{proof} This is true for $y=\phi x$ by Proposition \ref{G Px is strongly indecomposable} and the same proof works for any $x,y$. Alternatively, note that the condition $c(xyx)\le 1$ implies that the cyclic subposet $\{x,y\}$ of $X$ is cyclically ordered and therefore can be embedded in the circle $S^1$. Then the endomorphism ring of $E(x,y)$ is isomorphic to the endomorphism ring of the corresponding object of $\cMF(S^1)$ which was shown to have endomorphism ring $R[\sqrt t]$ in \cite{IT09}.
\end{proof}

\begin{lem}\label{E(x,y) is in MF(X) iff fx,y,finv x is in cyclic order}
The object $E(x,y)$ of $\cMF(X)$ lies in $\cMF_\phi(X)$ if and only if $(\phi x,y,\phi^{-1}x)$ is in cyclic order.
\end{lem}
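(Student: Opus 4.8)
The plan is to unwind the definition of $\cMF_\phi(X)$ applied to $E(x,y)$ and reduce the factorization condition to an inequality in the covering poset $\tilde X$, which in turn is exactly the statement that $(\phi x, y, \phi^{-1}x)$ is in cyclic order. Recall that $E(x,y)$ has underlying object $P_x\oplus P_y$ with operator $d=\mat{0&f_{yx}\\ f_{xy}&0}$ (or the twisted version with a factor of $t$ when $x\approx y$). By Definition \ref{defn: twisted MF-phi(X)}, $E(x,y)\in\cMF_\phi(X)$ iff $d$ factors through $\eta_{P_x\oplus P_y}=\eta_{P_x}\oplus\eta_{P_y}:P_x\oplus P_y\to \phi P_x\oplus\phi P_y=P_{\phi x}\oplus P_{\phi y}$. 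Since everything is diagonal/antidiagonal and $\cP(X)$-morphisms between indecomposables are scalar multiples of basic morphisms, this factorization condition decomposes into two separate conditions: that $f_{xy}:P_x\to P_y$ factors through $\eta_{P_x}:P_x\to P_{\phi x}$, i.e.\ $f_{xy}=g\circ\eta_x$ for some $g:P_{\phi x}\to P_y$, and symmetrically that $f_{yx}$ factors through $\eta_{P_y}$.

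First I would make the factorization criterion explicit in the $t^\NN$-category language: a morphism $rf_{xy}$ factors through $\eta_x = f_{x,\phi x}$ precisely when $rt^{c(xy)-\text{(something)}}$... more precisely, since $\eta_x = f_{x,\phi x}$ and any $g:P_{\phi x}\to P_y$ is $sf_{\phi x, y}$, we have $g\eta_x = s\,t^{c(x,\phi x,y)} f_{xy}$, so $f_{xy}$ factors through $\eta_x$ iff $c(x,\phi x, y) = 0$ (choosing $s$ a unit). Similarly $f_{yx}$ factors through $\eta_y = f_{y,\phi y}$ iff $c(y,\phi y, x) = 0$. Thus $E(x,y)\in\cMF_\phi(X)$ iff $c(x,\phi x, y) = 0$ and $c(y,\phi y, x) = 0$ (with the obvious modification, that the exponent needed is $\ge 1$, in the twisted $x\approx y$ case — I would note this is handled by the extra $t$).

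Next I would translate these two cocycle vanishings into cyclic-order statements using the covering poset. Lifting to $\tilde X$ with the admissible $\tilde\phi$ satisfying $x\le\tilde\phi x\le\tilde\phi^2 x<\s x$, the condition $c(x,\phi x, y)=0$ says there is a lifting $\tilde y$ with $\tilde x\le\tilde\phi\tilde x\le\tilde y\le\s\tilde x$, and $c(y,\phi y,x)=0$ says there is a lifting with $\tilde y\le\tilde\phi\tilde y\le\tilde x'\le\s\tilde y$ where $\tilde x' = \s\tilde x$ or so — I would pin down the bookkeeping here. The claim is that the conjunction of these two is equivalent to the existence of liftings with $\widetilde{\phi x}\le\tilde y\le\widetilde{\phi^{-1}x}\le\s\widetilde{\phi x}$, i.e.\ $(\phi x, y,\phi^{-1}x)$ in cyclic order. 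One direction: given $(\phi x, y, \phi^{-1} x)$ in cyclic order, apply $\tilde\phi$ (order-preserving, $\s$-equivariant) and use $x\le\tilde\phi x$, $\tilde\phi^2 x< \s x$ to sandwich and get both vanishings; conversely, from the two vanishings, compose the cyclic-order relations (using the "composition of cyclic order" observation from the text) to produce the required lifting of $y$ between $\widetilde{\phi x}$ and $\widetilde{\phi^{-1}x}$.

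The main obstacle I expect is the careful lifting-and-bookkeeping in this last step: making sure the liftings $\tilde x,\tilde y$ chosen to witness $c(x,\phi x,y)=0$ and those witnessing $c(y,\phi y,x)=0$ can be taken compatibly (same $\tilde x$, same $\tilde y$), and then correctly applying $\tilde\phi^{-1}$ and the admissibility inequalities $x\le\tilde\phi x\le\tilde\phi^2 x<\s x$ to land exactly on the relation $\widetilde{\phi x}\le\tilde y\le\widetilde{\phi^{-1}x}\le\s\widetilde{\phi x}$ rather than an off-by-one neighbor. The degenerate cases $x\approx y$ (where one uses $E(x,y)$ with the extra $t$ and the criterion becomes $c\ge 1$ somewhere) and $\phi x\approx y$ or $y\approx\phi^{-1}x$ should be checked separately but are routine given Lemma \ref{lem: E(x,y) = E(y,x)} and the remark that $\{x,y\}$ embeds in $S^1$. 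Everything else is a short computation in the $t^\NN$-category using $f_{yz}f_{xy}=t^{c(xyz)}f_{xz}$.
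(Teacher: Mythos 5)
Your proposal is correct and follows essentially the same route as the paper: reduce membership of $E(x,y)$ in $\cMF_\phi(X)$ to the two componentwise factorizations through $\eta_x$ and $\eta_y$, express these as vanishing of $c(x,\phi x,y)$ and of the second cocycle (the paper writes this as $c(y,\phi^{-1}x,x)=0$, equivalent to your $c(y,\phi y,x)=0$ via the admissibility inequalities), and then pass back and forth to the cyclic-order statement for $(\phi x,y,\phi^{-1}x)$ using composition of cyclic order and the case analysis for $x\approx y$ (resp.\ $\phi x\approx\phi^{-1}x$). The bookkeeping you defer is exactly the content of the paper's short case analysis, so no gap.
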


\begin{proof}
Suppose that $E(x,y)$ lies in $\cMF_\phi(X)$. Then the condition that $f_{xy}:P_x\to P_y$ factors through $\eta:P_x\to P_{\phi x}$ is equivalent to the condition that $c(x,\phi x,y)=0$. The other condition, that $t^nf_{yx}:P_y\to P_x$ factors through $\eta:P_y\to P_{\phi y}$ is equivalent to either $x\approx y$ or $x\not\approx y$ and $c(y,\phi^{-1}x,x)=0$. In the second case we conclude that $(\phi x,y,\phi^{-1}x)$ is in cyclic order. In the first case we must have $x\approx \phi x\approx y$ which also implies that $(\phi x,y,\phi^{-1}x)$ is in cyclic order.

Conversely, suppose that $(\phi x,y,\phi^{-1}x)$ is in cyclic order. Then, either $\phi x\approx \phi^{-1}x$, which implies $x\approx \phi x$ making $c(x,\phi x,y)=0=c(y,\phi^{-1}x,x)$, so that $E(x,y)$ lies in $\cMF_\phi(X)$ or $\phi x\not\approx \phi^{-1}x$, which implies that $x\not\approx y$ and, since $(\phi^{-1}x,x,\phi x)$ is in cyclic order, it also implies that $(x,\phi x,y)$ and $(y,\phi^{-1}x,x)$ are in cyclic order by {composition of cyclic order}.
\end{proof}

\begin{thm}\label{Krull-Schmidt theorem}
Suppose that $(X,c)$ is cyclically order and $\phi$ is an admissible automorphism of $X$. Then every object of $\cMF_\phi(X)$ is isomorphic to a finite direct sum of objects of the form $E(x,y)$ where $(\phi x,y,\phi^{-1}x)$ is in cyclic order. Furthermore, each $E(x,y)$ is indecomposable and the indecomposable direct summands of any object of $\cMF_\phi(X)$ are uniquely determined up to isomorphism. Finally, $E(x,y)$ is projective injective in $\cMF_\phi(X)$ if and only if either $y\approx \phi x$ or $x\approx \phi y$.
\end{thm}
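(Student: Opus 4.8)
The plan is to prove the theorem in three stages, corresponding to its three assertions: (i) every object of $\cMF_\phi(X)$ is a finite direct sum of objects $E(x,y)$ with $(\phi x,y,\phi^{-1}x)$ in cyclic order; (ii) each $E(x,y)$ is indecomposable with uniqueness of summands; (iii) the identification of the projective-injective $E(x,y)$.

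\medskip

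For (i), I would start from an arbitrary $(V,d)$ in $\cMF_\phi(X)$. By Theorem \ref{thm: MF-phi(X) is Frobenius} we know $\cMF_\phi(X)$ is Frobenius, and we have the factorization $d=\th_V\circ\eta_V$ where $\eta_V,\xi_V$ are not zero divisors. Write $V=\bigoplus P_{x_i}$ in $\cP(X)$; then $d$ is a matrix whose $(j,i)$ entry is a morphism $P_{x_i}\to P_{x_j}$, i.e. an element $d_{ji}f_{x_ix_j}$ of $R$. The conditions $d^2=\cdot t$ and "$d$ factors through $\eta_V$" translate, using $f_{yz}f_{xy}=t^{c(xyz)}f_{xz}$, into explicit arithmetic constraints on the $t$-adic valuations of the scalars $d_{ji}$: roughly, $v(d_{ji})+c(x_ix_jx_i)\ge \text{(something)}$ and the sum over $k$ of products $d_{jk}d_{ki}t^{c(x_ix_kx_j)}$ equals $t\,\delta_{ij}$. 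Since $\cMF(X)$ for cyclically ordered $X$ is Krull--Schmidt with indecomposables exactly the $E(x,y)$ (stated just before the theorem as the content of the subsection, which I may assume), $(V,d)$ already decomposes in $\cMF(X)$ as $\bigoplus E(x_i,y_i)$; Proposition \ref{prop: MF-phi(X) is closed under direct summands} then forces each summand $E(x_i,y_i)$ to lie in $\cMF_\phi(X)$, and Lemma \ref{E(x,y) is in MF(X) iff fx,y,finv x is in cyclic order} identifies these as exactly the ones with $(\phi x_i,y_i,\phi^{-1}x_i)$ in cyclic order. So (i) reduces to invoking the already-established Krull--Schmidt property of $\cMF(X)$ plus the two cited results.

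\medskip

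For (ii): indecomposability of $E(x,y)$ is Lemma \ref{E(x,y) is strongly indecomposable}, which says its endomorphism ring is the local ring $R[\sqrt t]$ — a local ring has no nontrivial idempotents, so $E(x,y)$ is indecomposable (in fact strongly indecomposable). Uniqueness of the decomposition up to isomorphism then follows from the general Krull--Schmidt theorem for additive categories in which every object is a finite sum of objects with local endomorphism rings; combined with Lemma \ref{lem: E(x,y) = E(y,x)} (which pins down when two such summands are isomorphic), this gives the stated uniqueness. For (iii): by Theorem \ref{thm: MF-phi(X) is Frobenius} the projective-injectives are exactly the $G_\phi V$, and by the Corollary after Proposition \ref{G Px is strongly indecomposable} these are the finite sums of the indecomposables $G_\phi P_x=E(x,\phi x)$. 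So $E(x,y)$ is projective-injective iff it is isomorphic to some $E(a,\phi a)$; by Lemma \ref{lem: E(x,y) = E(y,x)} this happens iff $P_x\oplus P_y\cong P_a\oplus P_{\phi a}$, i.e. iff $\{x,y\}\approx\{a,\phi a\}$ for some $a$, which is exactly the condition $y\approx\phi x$ or $x\approx\phi y$. One should check the degenerate case $x\approx y$ separately (where both $E(x,y)$ and $E(x,y)'$ appear): here $y\approx\phi x$ iff $x\approx\phi x$, and admissibility gives $x\le\tilde\phi x\le\tilde\phi^2 x<\sigma x$, so $x\approx\phi x$ is possible and one verifies directly that the matrix then matches that of $G_\phi P_x$ up to the isomorphisms already available.

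\medskip

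The main obstacle I expect is cleanly establishing step (i) without simply citing "Krull--Schmidt for $\cMF(X)$" as a black box: if the earlier subsection only sketches that result, I would instead argue directly that any $(V,d)$ in $\cMF_\phi(X)$ decomposes, by induction on the number of components of $V$, peeling off one $E(x,y)$ at a time. The delicate point is showing that the operator $d$, after a change of basis in $\cP(X)$ by an automorphism of $V$ (which is an invertible matrix over $R$ in the appropriate sense), can be brought to block-diagonal form with $2\times 2$ blocks of the prescribed shape; this is where the valuation bookkeeping coming from the cocycle $c$ and the cyclic-order hypothesis on $X$ does the real work, guaranteeing that the "off-diagonal" entries of $d$ can be cleared. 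Everything else — indecomposability, uniqueness, and the projective-injective identification — is then essentially formal given the lemmas already proved.
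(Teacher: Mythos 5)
Parts (ii) and (iii) of your plan coincide with the paper's: indecomposability and uniqueness come from the local endomorphism ring $R[\sqrt t]$ of $E(x,y)$ (Lemma \ref{E(x,y) is strongly indecomposable}), and the projective-injectives are identified by combining Theorem \ref{thm: MF-phi(X) is Frobenius} (indecomposable projective-injectives are $G_\phi P_z=E(z,\phi z)$) with Lemma \ref{lem: E(x,y) = E(y,x)}. Likewise, your use of Proposition \ref{prop: MF-phi(X) is closed under direct summands} and Lemma \ref{E(x,y) is in MF(X) iff fx,y,finv x is in cyclic order} to pass from a decomposition in $\cMF(X)$ to one in $\cMF_\phi(X)$ with the cyclic-order condition is exactly the paper's argument.

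The gap is in step (i), the existence of a decomposition in $\cMF(X)$. You invoke ``$\cMF(X)$ is Krull--Schmidt with indecomposables the $E(x,y)$'' as something already established, but the sentence before the theorem only announces that this \emph{will} be shown; the theorem you are proving (with the admissible choice $\phi=id_X$) \emph{is} that statement, so your primary route is circular within this paper. Your fallback, a direct induction bringing $d$ to block-diagonal form with $2\times 2$ blocks by change of basis and valuation bookkeeping, is precisely where all the work lies, and you leave it as a sketch; carrying it out in general (infinite cyclically ordered $X$, arbitrary cocycle values) is the nontrivial content. The paper sidesteps this: given $(P,d)$ with $P=\bigoplus P_{x_i}$, the finite set $X_0=\{x_i\}$ is cyclically ordered and embeds, preserving cyclic order, into $S^1$, and the decomposition of $(P,d)$ into objects $E(x,y)$ then follows from the case $X=S^1$, $\phi=id_{S^1}$ proved in \cite{IT09}; after that the argument proceeds as you describe. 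So to close your proof you must either perform the matrix reduction you only gesture at, or supply this reduction-to-$S^1$ step (or an equivalent citation) explicitly.
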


\begin{proof}
We use the fact that any finite cyclically order set $X_0$ has a cyclic order preserving embedding $X_0\into S^1$ and the fact that the theorem holds for $X=S^1$ and $\phi=id_{S^1}$ by \cite{IT09}. If $(P,d)$ is any object of $\cMF_\phi(X)$ then $P=\bigoplus P_{x_i}$. Let $X_0=\{x_i\}$. Since this is a finite cyclically ordered set, we conclude that $(P,d)$, as an object of $\cMF(X)$, decomposes into objects $E(x,y)$ with $x,y\in X_0$. By Proposition \ref{prop: MF-phi(X) is closed under direct summands}, the components of $(P,d)$ also lie in $\cMF_\phi(X)$. So, $(\phi x,y\phi^{-1}x)$ is in cyclic order by Lemma \ref{E(x,y) is in MF(X) iff fx,y,finv x is in cyclic order} above.

The uniqueness of decomposition follows from Lemma \ref{E(x,y) is strongly indecomposable}.

By Theorem \ref{thm: MF-phi(X) is Frobenius}, the indecomposable projective-injective objects of $\cMF_\phi(X)$ are $G_\phi P_z= E(z,\phi z)$. By Lemma \ref{lem: E(x,y) = E(y,x)}, the object $E(x,y)$ is isomorphic to such an object if and only if either $y\approx \phi x$ or $x\approx \phi y$.
\end{proof}


\subsection{Frobenius cyclic posets}

We need the following corollaries and definitions as the starting points of our next two papers (\cite{IT11D},\cite{IT11G}).

\begin{defn}\label{defn: Frobenius cyclic poset of Z}
Suppose that $Z$ is a cyclically ordered set with totally ordered covering poset $\tilde Z$ and $\phi$ is an admissible automorphism of $Z$ with corresponding covering automorphism $\tilde\phi:\tilde Z\to\tilde Z$ so that $a\le \tilde\phi a<\s\tilde\phi^{-1}a\le\s a$ for all $a\in\tilde Z$. Then we define $\cX(Z,\phi)$ to be the cyclic poset given as follows. The covering poset of $\cX(Z,\phi)$ is the set
\[
	\tilde \cX=\{(a,b)\in \tilde Z\times\tilde Z\,|\, \tilde\phi a\le b\le\s\tilde\phi^{-1}a\}
\]
with partial ordering $(a,b)\le (a',b')$ if and only if $a\le a'$ and $b\le b'$ and
with automorphism $\s$ defined by $\s(a,b)=(b,\s a)$. Let $\cX_0(Z,\phi)$ be the cyclic subposet of $\cX(Z,\phi)$ given by the covering subposet
\[
	\tilde \cX_0=\{(a,b)\,|\, b\approx \phi a\text{ or }b\approx \phi^{-1}a\}.
\]
\end{defn}

\begin{cor}\label{cor: (X,X0) is a Frobenius cyclic poset} Let $R=\kk[[t]]$. Then the additive $R$-category $\cP(\cX(Z,\phi))=add\,R\cX$ has the structure of a Frobenius category with $\cP(\cX_0(Z,\phi))$ being the full subcategory of projective-injective objects.
\end{cor}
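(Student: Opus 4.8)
The plan is to produce an equivalence of additive $R$-categories $\cP(\cX(Z,\phi))\simeq\cMF_\phi(Z)$ and then to pull back the Frobenius structure of Theorem~\ref{thm: MF-phi(X) is Frobenius}; under this equivalence the subcategory $\cP(\cX_0(Z,\phi))$ should correspond to the projective-injectives described in Theorem~\ref{Krull-Schmidt theorem}. Write $\cX:=\cX(Z,\phi)$ and $\cX_0:=\cX_0(Z,\phi)$. Since $\cMF_\phi$ is defined over an arbitrary discrete valuation ring, and $R=\kk[[t]]$ is obtained from the discrete valuation ring $R_0:=\kk[[t^2]]$ by adjoining a square root of its uniformizer, I would apply the $\cMF_\phi$ construction to $Z$ over $R_0$; then by Lemma~\ref{E(x,y) is strongly indecomposable} each indecomposable $E(x,y)$ has endomorphism ring $R_0\bigl[\sqrt{t^2}\,\bigr]=\kk[[t]]=R$, matching the endomorphism ring of an indecomposable of $\cP(\cX)$. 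One should also record that Definition~\ref{defn: Frobenius cyclic poset of Z} really produces a cyclic poset: the product order on $\tilde\cX\subseteq\tilde Z\times\tilde Z$ together with $\s(a,b)=(b,\s a)$ satisfies the axioms of Definition~\ref{defn of covering poset}, with (1) and (2) immediate from $\s^2(a,b)=(\s a,\s b)$ and the admissibility inequalities $a\le\tilde\phi a<\s\tilde\phi^{-1}a\le\s a$, and (3) following from recurrence of $\tilde Z$ in each coordinate.

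The equivalence should send $P_{(a,b)}$ to $E(\pi a,\pi b)$ for $(a,b)\in\tilde\cX$. This is well defined on $\s$-orbits, since $\s(a,b)=(b,\s a)$ and $E(\pi b,\pi a)\cong E(\pi a,\pi b)$ by Lemma~\ref{lem: E(x,y) = E(y,x)}. The constraint $\tilde\phi a\le b\le\s\tilde\phi^{-1}a$ defining $\tilde\cX$ says exactly that $(\phi\pi a,\pi b,\phi^{-1}\pi a)$ is in cyclic order, so by Lemma~\ref{E(x,y) is in MF(X) iff fx,y,finv x is in cyclic order} each $E(\pi a,\pi b)$ lies in $\cMF_\phi(Z)$; conversely, every indecomposable of $\cMF_\phi(Z)$ arises in this way by Theorem~\ref{Krull-Schmidt theorem}, and the assignment is a bijection on isomorphism classes.

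To promote this to a functor $\Psi\colon\cP(\cX)\to\cMF_\phi(Z)$ one sends the basic morphism $f_{(a,b),(a',b')}$ to an explicit $2\times2$ matrix of basic morphisms of the linearization of $Z$, the matrix being dictated by the relative position of $(a,b)$ and $(a',b')$ in $\tilde Z\times\tilde Z$; checking that such matrices compose in accordance with the structure cocycle of $\cX$ makes $\Psi$ a functor. Fullness and faithfulness concern only finitely many objects at a time, so --- exactly as in the proof of Theorem~\ref{Krull-Schmidt theorem} --- one may assume that all the $Z$-coordinates occurring lie in a finite cyclically ordered subset $Z_0\subseteq Z$, hence inside a circle, and then apply the hom-space computations for $\cMF(S^1)$ of \cite{IT09}, which identify each relevant hom space with a free module of rank one over $R$ on the stated generator.

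Given the equivalence $\Psi$, the Frobenius structure of $\cMF_\phi(Z)$ (Theorem~\ref{thm: MF-phi(X) is Frobenius}) transports to $\cP(\cX)$. By Theorem~\ref{Krull-Schmidt theorem} the indecomposable projective-injectives of $\cMF_\phi(Z)$ are the $E(x,y)$ with $y\approx\phi x$ or $x\approx\phi y$; pulled back through $\Psi$ these are the $P_{(a,b)}$ with $b\approx\tilde\phi a$ or $b\approx\s\tilde\phi^{-1}a$, that is, with $(a,b)\in\tilde\cX_0$ --- and one checks that $\tilde\cX_0$ is $\s$-stable (the two conditions being interchanged by $\s$), so that $\cX_0$ is a genuine cyclic subposet. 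Hence $\cP(\cX_0)=add\,R\cX_0$ is precisely the full subcategory of projective-injective objects of $\cP(\cX)$. I expect the third paragraph to be the main obstacle: making $\Psi$ precise on morphisms and proving it full and faithful is the same as computing all the hom spaces $\cMF_\phi(Z)(E(x,y),E(x',y'))$ between indecomposables together with their composition law, which is where the $\cMF(S^1)$ calculations of \cite{IT09} are needed; everything else above is formal once those results are in place.
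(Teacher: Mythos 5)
Your proposal follows essentially the same route as the paper's proof: pass to $R_0=\kk[[t^2]]$, identify $\add R\cX$ with the additive hull of the indecomposables $E(\pi a,\pi b)$ of $\cMF_\phi(Z,R_0)$ via $(a,b)\mapsto E(\pi a,\pi b)$, transport the Frobenius structure of Theorem \ref{thm: MF-phi(X) is Frobenius}, and match $\cX_0$ with the projective-injectives identified in Theorem \ref{Krull-Schmidt theorem}. The only real difference is bookkeeping at the morphism level: where you defer full faithfulness to a finite-subset/$S^1$ reduction and the hom-space computations of \cite{IT09}, the paper makes the functor explicit by introducing the doubled cyclic poset $\cX\double$ (which cleanly accounts for the even/odd morphisms and the extra objects $E(x,y)'$ in the degenerate case $x\approx\phi x\approx y$) and the completed linearization $\widehat{\kk\cX\double}$, sending even monomial morphisms to diagonal and odd ones to antidiagonal $2\times2$ matrices.
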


In other words, the pair $(\cX(Z,\phi),\cX_0(Z,\phi))$ satisfies the following definition. 

\begin{defn} A \emph{Frobenius cyclic poset} is defined to be a pair $(X,X_0)$ where $X$ is a cyclic poset and $X_0$ is a subset of $X$ with the induced cyclic poset structure such that, for $R=\kk[[t]]$ with $\kk$ any field, $\cP(X)=add\,RX$ has the structure of a Frobenius category with projective injective objects forming the full subcategory $\cP(X_0)$.
\end{defn}

We will see in \cite{IT11D} that the exact structure of such a Frobenius category is uniquely determined. So the words ``has the structure of a Frobenius category'' in the above definition can be replaced with ``has a uniquely determined structure of a Frobenius category''.

In order to prove the corollary it will be useful to ``double'' the cyclic poset $\cX$ since there are two objects in the Frobenius category $\cMF_\phi(Z,R_0)$ (where $R_0=\kk[[t^2]]$) for every object in $R\cX$. 

\begin{defn}\label{defn: doubled Frobenius cyclic poset of Z} For $Z,\tilde Z,\phi,\tilde\phi$ as in Definition \ref{defn: Frobenius cyclic poset of Z}, let $\cX\double=\cX\double(Z)$ denote the cyclic poset given by the covering poset $\tilde\cX\double$ which consists of two copies of $\tilde\cX$:
\[
	\tilde\cX\double=\tilde\cX\times \{+,-\}=\{(a,b)_\e\,|\, a,b\in \tilde Z, \e=\pm, \tilde\phi a\le b\le\s\tilde\phi^{-1}a\}
\]
with partial ordering disregarding the sign: $(a,b)_\e\le (a',b')_{\e'}$ if $a\le a'$ and $b\le b'$. In particular, $(a,b)_+\approx (a,b)_-$. The automorphism $\s$ is given by $\s(a,b)_\e=(b,\s a)_{-\e}$. Let $\cX\double$ be the set of $\s$-orbits in $\tilde\cX\double$ and let $\cX_0\double$ be the subset of $\s$-orbits of $\tilde\cX_0\double=\tilde\cX_0\times\{\pm\}$.
\end{defn}

Clearly, $R\cX\double$ is equivalent to $R\cX$ and this equivalence sends $R\cX_0\double$ to $R\cX_0$.

\begin{proof}[Proof of Corollary \ref{cor: (X,X0) is a Frobenius cyclic poset}]
Let $R_0=\kk[[t^2]]$. This is a discrete valuation ring with uniformizer $v=t^2$ and $R=R_0[\sqrt{v}]$. Let $\cMF_\phi(Z,R_0)$ be the Frobenius category constructed from the cyclic poset $Z$ using $R_0,v$ instead of $R,t$ in Definitions \ref{defn: MF(X)}, \ref{defn: twisted MF-phi(X)}. Let $\cM(Z)$ be the full subcategory of $\cMF_\phi(Z,R_0)$ with objects $E(x,y)$ (where $(\phi x,y,\phi^{-1}x)$ are in cyclic order) and $E(x,y)'$ (where $x\approx \phi x\approx y$) and let $\cM_0(Z)$ be the full subcategory of $\cM(Z)$ of the projective-injective objects $E(x,y)$ (where either $y\approx \phi x$ or $x\approx \phi y$) and $E(x,y)'$ (where $x\approx \phi x\approx y$).

We claim that $\cM(Z)$ is isomorphic to $R\cX\double$ as $R$-categories and that this isomorphism sends $\cM_0(Z)$ to $R\cX_0\double$. This will imply that the additive categories that they generate are also equivalent and this equivalence will give the structure of a Frobenius category to $add\,R\cX\double$ with $add\,R\cX_0\double$ being the full subcategory of projective-injective objects, thereby proving the corollary.

A bijection between the objects of $\cM(Z)$ and the objects of $R\cX\double$ which are the elements of $\cX\double$ is given as follows. Let $\psi:\tilde\cX\double\to Ob\cM(Z)$ be defined by 
\[
	\psi(x,y)_+ = \begin{cases} E(\pi(x),\pi(y))' & \text{if }  y\approx \s\tilde\phi x\approx \s x\\
   E(\pi(x),\pi(y)) & \text{otherwise}
    \end{cases}
\]
and $\psi(x,y)_-=\psi(y,\s x)_+$. Then we claim that $\psi$ induces a bijection $\Psi$ between the set of $\s$-orbits in $\tilde\cX\double$ and the set of objects of $\cM(Z)$. The inverse of $\Psi$ is given as follows. If $x\not\approx y$ then $\Psi^{-1}E(x,y)$ is the $\s$-orbit of $(\tilde x,\tilde y)_+$ where $\tilde x,\tilde y\in\tilde Z$ are any lifting of $x,y$ so that $\tilde\phi\tilde x\le\tilde y\le\tilde\phi^{-1}\tilde x$. If $x\approx y$ then this is not well defined since there are two such $\s$ orbits given by $(\tilde x,\tilde y)_+$ and $(\tilde x,\s\tilde y)_+$ where $\tilde x\approx \tilde y$. In this case we let $\Psi^{-1}E(x,y)=(\tilde x,\tilde y)_+$ and $\Psi^{-1}E(x,y)'=(\tilde x,\s\tilde y)_+$. It is straightforward to show that these are inverse maps.

We will use the completed linearization $\widehat{\kk \cX\double}$ instead of $R\cX\double$ since they are isomorphic. (Definition \ref{defn: completed linearization} and Proposition \ref{prop: completed linearization is isomorphic to RX}.) We also use the fact that $\widehat{\kk Z}\cong R_0Z$. An isomorphism $\Psi:\widehat{\kk \cX\double}\to \cM(Z)$ is given on object above and on morphisms below.

Since morphisms on both sides are given by infinite series of monomials, we will describe the isomorphism only on monomial morphisms. There are two kinds of monomial morphisms: even and odd. An \emph{even} monomial morphism, call it $f$, from the $\s$ orbit of $(x,y)_+$ to that of $(a,b)_+$ is given by a scalar $r$ times inequalities $(\s^nx,\s^ny)_+\le (\s^{n+i}a,\s^{n+i}b)_+$ and $(\s^ny,\s^{n+1}x)_-\le (\s^{n+i}b,\s^{n+i+1}a)_-$ for all $n$ but fix $i$ and an \emph{odd} morphism, call it $g$, between the same two $\s$-orbits is given by a scalar $s$ times the inequalities $(\s^nx,\s^ny)_+\le (\s^{n+j}b,\s^{n+j+1}a)_-$ and $(\s^ny,\s^{n+1}x)_-\le (\s^{n+j+1}a,\s^{n+j+1}b)_+$ for all $n$ but fixed $j$.

The functor $\Psi$ takes the $\s$-orbit of $(x,y)_+$ to $
	\Psi(x,y)_+=(P_{\pi(x)}\oplus P_{\pi(y)},d)$ where $d$ is the counterdiagonal matrix whose entries are given, as morphisms in $\widehat{\kk Z}$, by the inequalities $x\le y$ and $y\le \s x$. The functor $\Psi$ takes the even morphism $f$ to the diagonal morphism $\Psi(x,y)_+\to \Psi(a,b)_+$ given by the $2\times 2$ matrix whose off-diagonal entries are zero and whose diagonal entries are $r$ times the morphism $\pi(x)\to \pi(a)$ in $\widehat{\kk Z}$ given by the inequalities $\s^nx\le\s^{n+i}a$ for all $n$ and $r$ times the morphism $\pi(y)\to \pi(b)$ given by $\s^ny\le \s^{n+i}b$ for all $n$. The functor $\Psi$ also takes the odd morphism $g$ to the matrix whose diagonal entries are zero and whose off-diagonal entries are $s$ times the inequalities $\s^nx\le\s^{n+j}b$ and $\s^ny\le \s^{n+j+1}a$. The functor $\Psi$ commutes with composition since composition is defined component-wise on both sourse and target. I.e., $\Psi$ lies over the isomorphism $add\,\widehat{\kk Z}\to add\,R_0Z$. Therefore, $\Psi$ gives an isomorphism of categories $\Psi:\widehat{\kk \cX\double}\cong \cM(Z)$.
\end{proof}




\subsection{Continuous cluster categories} In this family of examples, $X$ is the cyclically ordered set $X=S^1=\RR/2\pi\ZZ$ with covering poset $\tilde X=\RR$ with $\s(x)=x+2\pi$. For any fixed $0\le \th<\pi$, let $\phi$ be the  automorphism of $S^1$ with lifting $\tilde \phi(x)=x+\th$. Then $\phi$ is admissible and the Frobenius category $\cMF_\phi(S^1)$ is equal to the continuous Frobenius category $\cF_{\pi-\th}$ with stable category $\cC_{\pi-\th}$. (See \cite{IT09,IT10}.) In subsequent papers \cite{IT11D,IT11G} we use the fact (Corollary \ref{cor: (X,X0) is a Frobenius cyclic poset}) that, for $R=\kk[[t]]$ and $R_0=\kk[[t^2]]$, the Frobenius category $\cMF_\phi(S^1,R_0)$ is isomorphic to the completed linearization of the Frobenius cyclic poset $(\cX,\cX_0)$ where
\[
	\tilde\cX=\{(x,y)_\e\in \RR^2\times\{\pm\}\,|\, x+\th\le y\le x+2\pi-\th\}
\]
partially ordered by $(x,y)_\e\le (x',y')_{\e'}$ if $x\le x'$ and $y\le y'$ and $\s(x,y)_\e=(y,x+2\pi)_{-\e}$.

The indecomposable objects of $\cC_{\pi-\th}$ are $E(x,y)$ where $x,y$ are distinct points on the circle subtending an angle more than $\th$. This triangulated category has a cluster structure if and only if $\th=2\pi/(n+3)$ for some positive integer $n$ (Cor. 5.4.4 in \cite{IT10}).


\subsection{Discrete cluster category of type $A$} The basic examples of discrete cluster categories of type $A$ are given by the cyclic posets $Z_n=\{1,2\cdots,n\}$ where $n\ge3$ with $\phi(i)=i+1$ modulo $m$ which give the cluster category of type $A_{n-3}$ and the cyclic poset $\ZZ$, with $\phi(i)=i+1$, which gives the $\infty$-gon of \cite{HJ12}. Both are examples of the following construction.


\begin{thm}\label{thm: C(Z) is a 2-CY cluster category}
Let $Z$ be a cyclically ordered set having at least four elements with totally ordered covering $\tilde Z$ and let $\phi$ be an admissible automorphism of $Z$ satisfying the following.
\begin{enumerate}
\item $x<\tilde\phi x<\tilde\phi^2 x<\s x$ for all $x\in\tilde Z$.
\item There are no elements $z\in\tilde Z$ so that $x<z<\tilde\phi x$ for any $x\in\tilde Z$. 
\end{enumerate}
Then the stable category of the Frobenius category $\cMF_\phi(Z)$ is a $2$-Calabi-Yau cluster category.
\end{thm}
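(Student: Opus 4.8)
The plan is to verify the three defining properties of a 2-Calabi-Yau cluster category for $\cC_\phi(Z)=\underline{\cMF_\phi(Z)}$: that it is a triangulated Krull-Schmidt category with finite-dimensional Hom spaces, that it is 2-Calabi-Yau (i.e. $\Ext^1(A,B)\cong D\Ext^1(B,A)$ naturally), and that it contains a cluster-tilting object. The triangulated and Krull-Schmidt properties are already in hand: $\cC_\phi(Z)$ is triangulated by Happel's theorem applied to the Frobenius category $\cMF_\phi(Z)$ (Theorem \ref{thm: MF-phi(X) is Frobenius}), and it is Krull-Schmidt with indecomposables the images of the $E(x,y)$ with $(\phi x,y,\phi^{-1}x)$ in cyclic order, modulo the projective-injectives (Theorem \ref{Krull-Schmidt theorem}); the local endomorphism ring statement Lemma \ref{E(x,y) is strongly indecomposable} passes to the stable category. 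Conditions (1) and (2) on $\phi$ are exactly what is needed to pin down this indexing set: condition (1) guarantees $\phi$ is admissible with $\phi,\phi^{-1}$ distinct from the identity, and condition (2) forces $\tilde\phi x$ to be the immediate successor of $x$, so that the index set of indecomposables of $\cC_\phi(Z)$ is, concretely, pairs of elements of $\tilde Z$ at ``distance'' at least $2$ and at most $\s$-distance, modulo $\s$ — the arcs of an $\infty$-gon-like combinatorial model.

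First I would compute the shift functor and the $\Ext^1$ groups explicitly. In a stable Frobenius category the shift $[1]$ is the cosyzygy: for $(V,d)$ one takes the cofibration $(V,d)\cofib G_\phi\phi^{-1}V$ and sets $(V,d)[1]$ to be the cokernel. I would carry this out on the generators $E(x,y)$, using the explicit adjunction morphisms from the lemma preceding Proposition \ref{G Px is strongly indecomposable}, to get a formula of the shape $E(x,y)[1]\cong E(\phi^{-1}x,\ldots)$ or $E(\ldots,\phi y)$ — i.e. $[1]$ acts on the combinatorial model of arcs by a simple rotation/reflection of endpoints by $\phi^{\pm1}$. Then I would compute $\Hom_{\cC_\phi(Z)}(E(x,y),E(a,b))$: morphisms in $\cMF_\phi(Z)$ between such objects are $2\times2$ matrices over $\cP(Z)$ commuting with the operators $d$, and each entry is a power of the relevant basic morphism $f$ times a unit; quotienting by morphisms factoring through a $G_\phi P$ kills all but finitely many. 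Since $Z$ has totally ordered covering and condition (2) holds, this Hom space is at most one-dimensional over $\kk=R/(t)$, nonzero precisely when the arcs $\{x,y\}$ and $\{a,b\}$ are in a specific nesting/crossing configuration; this is the same computation done for $S^1$ in \cite{IT09}, now purely combinatorial. From this, $\Ext^1(E(x,y),E(a,b))=\Hom(E(x,y),E(a,b)[1])$ is likewise $0$ or $1$-dimensional, nonzero exactly when the arcs cross. The symmetry of the crossing relation then gives the 2-Calabi-Yau duality $\Ext^1(A,B)\cong D\Ext^1(B,A)$; I would make the isomorphism natural by exhibiting the Serre functor as $[1]$ directly, checking $\Hom(A,B)\cong D\Hom(B,A[1])$ on generators and extending by additivity and the long exact sequences.

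Finally I would exhibit a cluster-tilting object. Fix a lift and take $T=\bigoplus E(z_0,z_i)$ over a suitable ``fan'' of arcs all sharing (up to $\s$) one endpoint $z_0$ — the analogue of a triangulation of a polygon by a fan of diagonals from one vertex — discarding any arc that is projective-injective. Using the crossing criterion for $\Ext^1$ established above, these arcs pairwise do not cross, so $\Ext^1(T,T)=0$; and if $E(a,b)$ is any indecomposable with $\Ext^1(T,E(a,b))=0=\Ext^1(E(a,b),T)$ then $\{a,b\}$ crosses no arc of the fan, which forces $\{a,b\}$ to already be one of them (or projective-injective). That is exactly maximal rigidity, hence (by the standard fact for Hom-finite triangulated Krull-Schmidt 2-CY categories, or by directly verifying that $\Ext^1(T,-)=0$ implies $-\in\add T$ via approximation triangles) $T$ is cluster-tilting. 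Combining: $\cC_\phi(Z)$ is a Hom-finite Krull-Schmidt triangulated 2-Calabi-Yau category with a cluster-tilting object, i.e. a 2-CY cluster category.

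The main obstacle I anticipate is not any single conceptual point but the bookkeeping in the explicit description of $[1]$ and of $\Hom/\Ext^1$ on the $E(x,y)$: one must track the two cases $x\approx y$ versus $x\not\approx y$, the distinction between $E(x,y)$ and $E(x,y)'$, and the boundary behaviour where an arc becomes projective-injective (endpoint hitting $\phi z$ or $\phi^{-1}z$), and check that condition (2) really does collapse the Hom spaces to dimension $\le1$ rather than merely making them finite-dimensional. Once the crossing criterion for $\Ext^1$ is nailed down, the 2-CY property and the construction of the cluster-tilting object are formal and parallel to the $S^1$ case in \cite{IT09}, \cite{IT10}; so I would budget most of the effort for that combinatorial reduction and cite the circle case wherever the algebra is literally identical.
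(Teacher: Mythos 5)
There is a genuine gap, and it lies exactly where the real content of the theorem is. The paper does not prove the statement by exhibiting a cluster-tilting object; it proves that $\cC_\phi(Z)$ carries a \emph{cluster structure} in the sense of \cite{BIRSc}: clusters are defined as maximal compatible collections of indecomposables satisfying an additional \emph{Limit Condition} (formulated via Dedekind cuts of $\tilde Z$), the symmetry of that condition is proved, and then for every cluster $\cT$ and every $T\in\cT$ a unique exchange partner $T^\ast$ is produced together with exchange triangles coming from exact sequences in $\cMF_\phi(Z)$, followed by the no-loops/no-2-cycles and quiver-mutation checks. Your plan replaces all of this by ``exhibit a cluster-tilting object $T=\bigoplus E(z_0,z_i)$ (a fan) and invoke maximal rigidity.'' Two problems. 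First, when $Z$ is infinite (the theorem covers $Z\cong S\ast\ZZ$, e.g.\ the $\infty$-gon and $\ZZ^2$), that infinite direct sum is not an object of $\cC_\phi(Z)$ at all: objects of $\cP(Z)=\add RZ$ are finite sums, so there is no cluster-tilting \emph{object} and one is forced to work with subcategories or with the collection-of-clusters formalism, as the paper does. Second, and more seriously, your fallback ``maximal rigidity, hence cluster-tilting, by the standard fact for Hom-finite 2-CY categories'' is false in precisely this setting: by Holm--J{\o}rgensen (\cite{HJ12}, \cite{HJ12b}) maximal rigid collections for the $\infty$-gon need not be cluster tilting and need not admit mutations (triangulations with a right fountain but no left fountain, leapfrog configurations). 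This failure is the reason the paper's definition of cluster carries the Limit Condition and why the bulk of its proof (the symmetry lemma, existence of clusters, the lemma producing the unique $a,b$ with $E(x,a),E(a,y),E(y,b),E(b,x)$ in $\cT$, and the resulting exchange triangles) is devoted to it. Exhibiting one particular fountain triangulation, even correctly, does not establish that \emph{every} cluster mutates uniquely, which is what ``cluster category'' means in the theorem.

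The parts you share with the paper are on track: the explicit shift ($E(x_0,x_1)[1]=E(x_1^-,x_0^-)$ in the paper), the fact that stable Hom spaces between indecomposables are $\kk$ or $0$, the crossing criterion for $\Ext^1$, and a 2-CY duality built from the crossing symmetry. But note two slips there. The Serre functor in this category is $[2]$, not $[1]$: your displayed condition $\Hom(A,B)\cong D\Hom(B,A[1])$ is the 1-CY condition, not 2-CY. And naturality does not follow from the dimension count plus ``extend by additivity'': the paper obtains it from a trace pairing $\<f,g\>=\Tr(f[1]\circ g)$ together with the sign bookkeeping that $[1]$ preserves basic even morphisms and negates basic odd ones (giving antisymmetry of the pairing); some argument of this kind is needed in your write-up as well.
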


This is a mild generalization of the results of \cite{J04}, \cite{HJ12}. We refer to \cite{HJ12} for many of the proofs. We will give the definitions and statements so that the reader can make the comparison and we give one new example: when $Z=\ZZ^2$ with lexicographic order.


\subsubsection{$2$-Calabi-Yau property}

For convenience of notation we assume that $Z$ is infinite and that no two distinct elements of $Z$ are equivalent. Then we can take $Z$ to be a totally ordered set and $\phi$ to be an automorphism of $Z$ so that $\phi x$ is the smallest element of $Z$ which is greater than $x$ for every $x\in Z$. We call $\phi x$ the \emph{successor} of $x$ and denote it by $x^+$. Similarly, we call $\phi^{-1}x$ the \emph{predecessor} of $x$ and we denote it by $x^-$. 

The indecomposable objects of $\cMF_\phi(Z)$ are $E(x,y)\cong E(y,x)$ where $x,y$ are distinct elements of $Z$. We denote the one with first coordinate small than the second by the set $X=\{x,y\}$. (Thus $X=E(x,y)$ if $x<y$.) Such a subset $X$ will denote a projective-injective object if and only if the larger element of $X$ is the successor of the smaller element.

Finally, we observe that the stable category of $\cMF_\phi(Z)$ depends only on $\kk=R/\mathfrak m$ and is independent of the choice of $R$ since multiplication by $t$ always factors through a projective-injective object. Therefore, we may assume that $R=\kk[[t]]$ and $\cMF_\phi(Z)=\cP(\cX)$ and $\cC_\phi(Z)=\cP(\cX)/\cP(\cX_0)$ with $\cX=\cX(Z),\cX_0=\cX_0(Z)$ given by Definition \ref{defn: Frobenius cyclic poset of Z}. Since $t=0$ in the stable category, $\cC_\phi(X,Y)=\kk$ or $0$ for all indecomposable objects $X,Y$.

\begin{lem}
If $X,Y$ are 2-element subsets of $Z$ which are not projective-injective then, in the stable category $\cC_\phi(Z)$ of $\cMF_\phi(Z)$, we have $\cC_\phi(Z)(X,Y)=\kk$ if
\begin{equation}\label{eq: conditions for nonzero morphism in C(Z)}
	x_0\le y_0<x_1^-,\quad x_1\le y_1<\s x_0^-
\end{equation}
for some liftings $x_i,y_i\in\tilde Z$ of the elements of $X,Y$ and $\cC_\phi(Z)(X,Y)=0$ otherwise. Furthermore, any nonzero morphism $X\to Y$ factors through a 2-element subset $S$ of $Z$ if and only if
\[
	x_0\le s_0\le y_0,\quad x_1\le s_1\le y_1
\]
for some liftings $s_i\in\tilde Z$ of the elements of $S$ (with the $x_i,y_i$ satisfying \eqref{eq: conditions for nonzero morphism in C(Z)} above).
\end{lem}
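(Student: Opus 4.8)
The plan is to combine the identification $\cC_\phi(Z)=\cP(\cX)/\cP(\cX_0)$ (established just above, with $\cX=\cX(Z)$, $\cX_0=\cX_0(Z)$ as in Definition~\ref{defn: Frobenius cyclic poset of Z}) with the composition law of the $t^\NN$-category $\cP(\cX)$. Every Hom module of $\cP(\cX)$ between indecomposables is free of rank one over $R=\kk[[t]]$ on a basic morphism $f_{XY}$, and multiplication by $t$ factors through a projective-injective object; so the morphisms factoring through projective-injectives form a submodule of $\cP(\cX)(X,Y)\cong\kk[[t]]f_{XY}$ containing $tf_{XY}$, and hence $\cC_\phi(Z)(X,Y)$ is $\kk$ if $f_{XY}$ does not factor through $\cP(\cX_0)$ and $0$ otherwise. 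A factorization $f_{XY}=\sum_i(r_i'f_{S_iY})(r_if_{XS_i})=\big(\sum_i r_ir_i't^{\,c_\cX(XS_iY)}\big)f_{XY}$ through $\bigoplus_i P_{S_i}$ forces $\sum_i r_ir_i't^{\,c_\cX(XS_iY)}$ to be a unit of $\kk[[t]]$, so some $c_\cX(XS_jY)=0$ (a sum of strictly positive powers of $t$ is not a unit); hence $f_{XY}$ factors through $\cP(\cX_0)$ iff $c_\cX(XSY)=0$ for a single indecomposable projective-injective $S\in\cX_0$, where $c_\cX$ is the cocycle of $\cX$. The identical computation with $S$ an arbitrary $2$-element subset of $Z$ shows that a nonzero morphism $X\to Y$ factors through $S$ iff $c_\cX(XSY)=0$; this yields the ``furthermore'' clause once the condition is unwound.

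Next I would read $c_\cX(XSY)=0$ off the covering poset $\tilde\cX$. Fix a lifting $\tilde X=(x_0,x_1)\in\tilde\cX$ of $X$ and let $\tilde Y=(y_0,y_1)\in\tilde\cX$ be the smallest lifting of $Y$ with $\tilde X\le\tilde Y$, so $x_0\le y_0$ and $x_1\le y_1$. Since $c_\cX=\delta b_\ll$, one checks that $c_\cX(XSY)=0$ iff the smallest lifting of $S$ above $\tilde X$ lies below $\tilde Y$, equivalently iff $S$ has a lifting $(s_0,s_1)$ with $(x_0,x_1)\le(s_0,s_1)\le(y_0,y_1)$, i.e.\ $x_0\le s_0\le y_0$ and $x_1\le s_1\le y_1$; this is the ``furthermore'' clause. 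Specializing $S$ to be projective-injective, Definition~\ref{defn: Frobenius cyclic poset of Z} tells us $\tilde\cX_0$ is the union of the two families $(a,\tilde\phi a)$ and $(a,\s\tilde\phi^{-1}a)$ (both lie in $\tilde\cX$ by hypothesis (1) of Theorem~\ref{thm: C(Z) is a 2-CY cluster category}). Unwinding $(x_0,x_1)\le(a,\tilde\phi a)\le(y_0,y_1)$ with the product order on $\tilde\cX$ and the inequalities $\tilde\phi u\le v\le\s\tilde\phi^{-1}u$ satisfied by the honest objects $\tilde X$ and $\tilde Y$, one finds such an $a$ exists iff $x_1^-\le y_0$; likewise $(x_0,x_1)\le(a,\s\tilde\phi^{-1}a)\le(y_0,y_1)$ is solvable iff $\s x_0^-\le y_1$. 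Hence $f_{XY}$ factors through $\cP(\cX_0)$ iff $y_0\ge x_1^-$ or $y_1\ge\s x_0^-$, and so $\cC_\phi(Z)(X,Y)=\kk$ iff $x_0\le y_0<x_1^-$ and $x_1\le y_1<\s x_0^-$ for this choice of liftings.

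To finish, I would note that the criterion is independent of the choice of liftings: any $x_i,y_i$ satisfying the displayed inequalities dominate the minimal lifting $(x_0,x_1)\le(y_0,y_1)$ just constructed, hence yield the same conclusion, while any two admissible choices of liftings differ by a simultaneous power of $\s$, which likewise makes the factoring criterion for a $2$-element subset $S$ well defined. Hypothesis (1) of Theorem~\ref{thm: C(Z) is a 2-CY cluster category} is what converts ``$X$ not projective-injective'' into the strict inequalities $\tilde\phi x_0<x_1<\s\tilde\phi^{-1}x_0$ used implicitly above, and hypothesis (2) is what guarantees that, under the running assumptions, the objects of $\cX$ are exactly the $2$-element subsets of $Z$ (so that $S$ ranges as claimed). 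The step I expect to be the main obstacle is the unwinding in the second paragraph: proving the sandwich criterion for $c_\cX(XSY)=0$ from $c_\cX=\delta b_\ll$ (the delicate point being that one must use the \emph{minimal} lifting of $Y$ above $\tilde X$, not an arbitrary one), together with the bookkeeping of strict versus non-strict inequalities when intersecting the two projective-injective families $(a,\tilde\phi a)$ and $(a,\s\tilde\phi^{-1}a)$ with the interval $[(x_0,x_1),(y_0,y_1)]$ in $\tilde\cX$. Everything else is formal.
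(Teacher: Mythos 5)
Your argument is correct, but it is organized rather differently from the paper's. The paper's proof stays inside $\cMF_\phi(Z)$ and argues geometrically with liftings in $\tilde Z$: for necessity it observes that if $\pi^{-1}Y$ meets neither $[x_0,x_1^-)$ nor $[x_1,\s x_0^-)$ then every morphism $X\to Y$ factors through the explicit projective-injective objects $\pi\{x_0,x_1^-\}$, resp.\ $\pi\{x_1,\s x_0^-\}$; it treats the ``furthermore'' clause as clear; and for sufficiency it notes that under \eqref{eq: conditions for nonzero morphism in C(Z)} any $S$ through which the morphism factors satisfies the sandwich condition and hence cannot be projective-injective. You instead exploit the identification $\cC_\phi(Z)=\cP(\cX)/\cP(\cX_0)$ seriously: since $\cP(\cX)(X,Y)=Rf_{XY}$ and $t\cdot\mathrm{id}$ factors through a projective-injective, the stable Hom is $\kk$ or $0$ according to whether $f_{XY}$ factors through $\cP(\cX_0)$, and your unit argument correctly reduces factorization through any (sum of) objects to the single condition $c_\cX(XSY)=0$; translating this via the minimal lifting of $Y$ above a fixed $\tilde X\in\tilde\cX$ gives exactly the sandwich criterion, and intersecting the interval $[\tilde X,\tilde Y]$ with the two families $(a,\tilde\phi a)$ and $(a,\s\tilde\phi^{-1}a)$ does yield $y_0\ge x_1^-$, resp.\ $y_1\ge\s x_0^-$ (I checked these unwindings; they come out as you claim, and the inequalities in \eqref{eq: conditions for nonzero morphism in C(Z)} do force any sandwiched $(s_0,s_1)$ to lie in $\tilde\cX$, so the two formulations of the ``furthermore'' clause agree). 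What your route buys is uniformity: one cocycle criterion proves both clauses at once and makes the Hom computation a mechanical consequence; the cost is more bookkeeping than the paper's two-line geometric argument. One small imprecision: lifts of $X$ to $\tilde\cX$ do not differ by a ``simultaneous power of $\s$'' on the coordinates, since $\s(a,b)=(b,\s a)$ interchanges the roles of the two entries for odd powers; this is harmless because your criterion is equivalent to the manifestly lift-independent statement that $f_{XY}$ factors through $\cP(\cX_0)$ (and the displayed conditions are invariant under the swap $(x_0,x_1,y_0,y_1)\mapsto(x_1,\s x_0,y_1,\s y_0)$), but the phrasing should be corrected.
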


\begin{proof}Choose liftings $x_0,x_1\in\tilde Z$ for the elements of $X$ so that $x_0<x_1<\s x_0$. If $\pi^{-1}Y$ has no elements in the half open interval $[x_0,x_1^-)$ then any morphism $X\to Y$ in $\cMF_\phi(Z)$ will factor through the projective-injective object $\pi\{x_0,x_1^-\}$. So, in order to have a nonzero morphism $X\to Y$ in $\cC_\phi(Z)$, there must be a lifting $y_0\in[x_0,x_1^-)$ of one of the elements of $Y$. Similarly, there must be a lifting $y_1\in[x_1,\s x_0^-)$ of the other element of $Y$. Therefore, \eqref{eq: conditions for nonzero morphism in C(Z)} is necessary to have a nonzero morphism $X\to Y$.

The statement about when a morphism $X\to Y$ factors through $S$ is clear and, assuming \eqref{eq: conditions for nonzero morphism in C(Z)}, $S$ cannot be projective-injective. Therefore $\cC_\phi(Z)(X,Y)\neq0$ when \eqref{eq: conditions for nonzero morphism in C(Z)} hold.\end{proof}

\begin{lem} In the triangulated category $\cC_\phi(Z)$ we have $E(x_0,x_1)[1]=E(x_1^-,x_0^-)$. Furthermore, the shift functor $[1]$ takes basic even morphisms to basic even morphisms and basic odd morphisms to negative basic odd morphisms.
\end{lem}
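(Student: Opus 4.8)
The plan is to compute the shift functor $[1]$ on the indecomposable objects $E(x_0,x_1)$ directly from the Frobenius structure, using the description $\cMF_\phi(Z) = \cP(\cX)$ with projective-injectives $\cP(\cX_0)$, so that a triangle is obtained from a short exact sequence $E(x_0,x_1) \cofib I \onto E(x_0,x_1)[1]$ with $I$ projective-injective. First I would recall that the projective-injective objects are $G_\phi P_z = E(z,\phi z)$, and that for an object $E(x_0,x_1)$ (with $x_0 < x_1 < \s x_0$) there is a natural monomorphism into $G_\phi P_{x_0} = E(x_0, x_0^+)$ obtained from the adjunction $(V,d) \cofib G_\phi\phi^{-1}W$ of the right-adjoint, instantiated appropriately; more concretely one writes down the $2\times 2$ matrix presentation of the map $E(x_0,x_1) \to E(x_0,x_1^-)$ (using that $E(x_0, x_1^-) \cong E(x_1^-, x_0)$, a projective-injective since $x_0 = (x_1^-)^+$) given by $f_{xy}$-type morphisms, and checks it is a cofibration in the exact structure, i.e. split-mono on underlying objects in $\cP(X)$. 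Taking the cokernel in $\cMF_\phi(Z)$ of this inclusion yields the syzygy object, which one then identifies — by reading off the counterdiagonal matrix — as $E(x_1^-, x_0^-)$. This gives $E(x_0,x_1)[1] = E(x_1^-,x_0^-)$.

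The heart of the argument is the bookkeeping with liftings: one must verify that $x_0 < x_1 < \s x_0$ translates, under the successor/predecessor operations, into $x_1^- < x_0^- + \text{(correct $\s$-shift)}$ so that the pair $(x_1^-, x_0^-)$ is again a legitimate indexing pair for an indecomposable of $\cMF_\phi(Z)$, i.e. that $(\phi x_1^-, x_0^-, \phi^{-1}x_1^-)$ is in cyclic order in the sense of Lemma~\ref{E(x,y) is in MF(X) iff fx,y,finv x is in cyclic order}. This is routine once one is careful that the $\s$-action appears in the "wraparound" inequality. The cokernel computation itself is a direct diagram chase using that $d^2 = \cdot t$ and that $t = 0$ in the stable category: the image of the inclusion and the resulting quotient matrix are forced by the requirement that the composite $E(x_0,x_1) \to I \to E(x_0,x_1)[1]$ be zero and that the underlying $\cP(X)$-sequence be split exact.

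For the statement about morphisms, I would track what the shift does to the two "basic" morphisms between $E(x_0,x_1)$ and an adjacent indecomposable. Recall from the previous lemma that a nonzero morphism $E(x_0,x_1) \to E(y_0,y_1)$ is described by a $2\times2$ matrix which is either diagonal ("even", scalar $r$ on both diagonal entries) or counterdiagonal ("odd", scalar $s$ on both off-diagonal entries). Applying the syzygy/cokernel construction functorially — i.e. completing the commuting square with the map into the common projective-injective and passing to cokernels — one gets the induced map $E(x_1^-,x_0^-) \to E(y_1^-,y_0^-)$; for an even morphism the two diagonal scalars are preserved (both equal $r$), so $[1]$ of an even morphism is even, while for an odd morphism the construction swaps the two rows relative to the standard presentation of the target $E(y_1^-,y_0^-)$, which is exactly where a sign is introduced (this is the same sign that appears in the octahedral/Happel formalism, coming from the choice of identification $E(a,b)\cong E(b,a)$ via the transposition isomorphism of Lemma~\ref{lem: E(x,y) = E(y,x)}, whose matrix is $\left[\begin{smallmatrix}0&1\\1&0\end{smallmatrix}\right]$ — not its negative — forcing the discrepancy to be absorbed into the scalar).

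The main obstacle I expect is pinning down the sign convention rigorously: making precise which identifications $E(a,b) \cong E(b,a)$ are in force on source and target, so that "basic odd morphism" is unambiguous, and then verifying that with those conventions the sign is genuinely $-1$ and not $+1$. This is not hard but is the kind of computation where one can easily lose track; the clean way is to fix, once and for all, representatives with $x_0 < x_1 < \s x_0$ and express every morphism as an honest matrix of $\widehat{\kk Z}$-morphisms (via Proposition~\ref{prop: completed linearization is isomorphic to RX}), after which the sign is visible as the determinant of the permutation matrix relating the syzygy presentation to the standard one.
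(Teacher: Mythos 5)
There is a genuine gap at the heart of your construction: the object you embed into is wrong. You claim a cofibration $E(x_0,x_1)\cofib E(x_0,x_1^-)$ with $E(x_0,x_1^-)$ projective-injective ``since $x_0=(x_1^-)^+$''; but $(x_1^-)^+=x_1\neq x_0$, and the indecomposable projective-injectives of $\cMF_\phi(Z)$ are exactly the $E(z,z^+)=G_\phi P_z$, so $E(x_0,x_1^-)$ is not projective-injective in general. More decisively, no such short exact sequence can exist at all: an inflation in $\cMF_\phi(Z)$ is by definition split mono on underlying objects of $\cP(X)$, so ranks add, whereas your proposed sequence $E(x_0,x_1)\cofib E(x_0,x_1^-)\onto E(x_1^-,x_0^-)$ would require $2+2=2$. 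The same objection applies to the variant with a single $G_\phi P_{x_0}=E(x_0,x_0^+)$ as middle term. The adjunction you invoke, applied correctly to $V=P_{x_0}\oplus P_{x_1}$, produces the cofibration $(V,d)\cofib G_\phi\phi^{-1}V$, whose target is the direct sum of \emph{two} indecomposable projective-injectives, $E(x_0^-,x_0)\oplus E(x_1^-,x_1)$; this is precisely the injective envelope the paper uses, with maps $E(x_0,x_1)\to E(x_0,x_0^-)\oplus E(x_1^-,x_1)$ having coefficients $(-1,1)$ and the quotient maps $(1,1)$, so that the composite vanishes and the cokernel is $E(x_1^-,x_0^-)$.

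Because the cokernel computation is broken, the second half of your argument (the sign on odd morphisms) is not established either; it is also vaguer than it needs to be. Once the correct sequence is fixed, naturality with respect to even morphisms is immediate (even morphisms are diagonal and commute with the chosen inclusions), and the sign comes out cleanly by the paper's observation: interchanging $x_0$ and $x_1$ in the chosen exact sequence flips the signs of the two left-hand maps, so the basic odd isomorphism $\eta\colon E(x_0,x_1)\to E(x_1,x_0)$ satisfies $\eta[1]=-\eta$, and hence $[1]$ negates all odd morphisms while preserving even ones. Your instinct that everything hinges on fixing the sign conventions and the identifications $E(a,b)\cong E(b,a)$ is sound, but the concrete mechanism (``swapping rows in the syzygy presentation'') needs to be run on the correct injective envelope, where the sign is forced by the requirement that the composite through the rank-four middle term be zero.
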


\begin{proof}For any $x_0\neq x_1$ in $Z$ we choose the exact sequence in $\cMF_\phi(Z)$:
\[\xymatrixrowsep{0pt}\xymatrixcolsep{30pt}
\xymatrix{
&E(x_0,x_0^-)\ar[dr]^{1}\\
E(x_0,x_1)\ar[dr]_{1}\ar[ur]^{-1} &	\oplus &	E(x_1^-,x_0^-)\\
& 	E(x_1^-,x_1)\ar[ur]_{1}	}
\]
where the middle term is the injective envelope of $X=E(x_0,x_1)$ and the projective cover of $E(x_1^-,x_0^-)$. All four morphisms are even morphisms which are plus or minus a basic morphism as indicated. With this choice of injective envelopes for all indecomposable objects of $\cMF_\phi(Z)$ we obtain $E(x_1^-,x_0^-)=E(x_0,x_1)[1]$ in the stable category $\cC_\phi(Z)$. It is clear that this is natural with respect to even morphisms, i.e., that $[1]$ takes basic even morphisms to basic even morphisms.

If we switch the order of $x_0,x_1$ then the signs on the two morphisms on the left will change. Therefore $\eta[1]=-\eta$ where $\eta:E(x_0,x_1)\cong E(x_1,x_0)$ is the basic odd isomorphism. This implies that $[1]$ changes the sign of all odd morphisms.\end{proof}

\begin{lem}
$\Ext^1(X,Y)\neq0$ if and only if the subsets $X,Y$ of $Z$ are ``crossing'' in the sense that
\[
	x_0<y_0<x_1<y_1<\s x_0
\]
for some liftings $x_i,y_i\in \tilde Z$ of the elements of $X,Y$. In particular, $\Ext^1(X,X)=0$.
\end{lem}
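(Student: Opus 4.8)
The plan is to trade the $\Ext^1$ for an ordinary Hom in $\cC_\phi(Z)$ using the shift, and then invoke the two preceding lemmas. Since $\cC_\phi(Z)$ is the stable category of the Frobenius category $\cMF_\phi(Z)$, one has $\Ext^1(X,Y)\cong\cC_\phi(Z)(X[-1],Y)$. First I would dispose of the degenerate case: if $X$ or $Y$ is projective-injective then it is the zero object of $\cC_\phi(Z)$, so $\Ext^1(X,Y)=0$, and the ``crossing'' configuration cannot occur either --- writing $X=\{a,a^+\}$, an inequality $a<y_0<a^+$ is forbidden by hypothesis~(2) of Theorem~\ref{thm: C(Z) is a 2-CY cluster category}. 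So assume henceforth that neither $X$ nor $Y$ is projective-injective.

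Next I would identify $X[-1]$ using the preceding lemma $E(u,v)[1]=E(v^-,u^-)$: applying $[1]$ to $E(x_0^+,x_1^+)$ yields $E((x_1^+)^-,(x_0^+)^-)=E(x_1,x_0)\cong E(x_0,x_1)=X$, so $X[-1]=E(x_0^+,x_1^+)$, which as a subset of $Z$ is the $2$-element set $\{x_0^+,x_1^+\}$ (not projective-injective, since $X$ is not). Since $x_0<x_1<\s x_0$ forces $x_0^+<x_1^+<\s x_0^+$, these liftings are of the form required by the Hom lemma, which then says $\cC_\phi(Z)(X[-1],Y)$ is nonzero exactly when there exist liftings $b_0,b_1\in\tilde Z$ of the two elements of $Y$ with $x_0^+\le b_0<(x_1^+)^-=x_1$ and $x_1^+\le b_1<\s(x_0^+)^-=\s x_0$. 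By hypothesis~(2), nothing of $\tilde Z$ lies strictly between $x_0$ and $x_0^+$, nor between $x_1$ and $x_1^+$, so $x_0^+\le b_0\iff x_0<b_0$ and $x_1^+\le b_1\iff x_1<b_1$; thus the criterion collapses to the existence of liftings $b_0,b_1$ of the two elements of $Y$ with $x_0<b_0<x_1<b_1<\s x_0$.

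Finally I would observe that these inequalities force $b_0<b_1<\s x_0<\s b_0$, so $(b_0,b_1)$ is itself a compatible lifting of $Y$ (first entry smaller, both inside one $\s$-period); relabelling $y_0:=b_0$, $y_1:=b_1$ gives exactly the crossing condition $x_0<y_0<x_1<y_1<\s x_0$, and conversely any crossing configuration feeds back through the Hom lemma to give $\cC_\phi(Z)(X[-1],Y)=\kk\neq0$. This proves the equivalence. For the last assertion, take $X=\{x_0,x_1\}$ with $x_0<x_1<\s x_0$; the liftings of the two elements of $X$ are the $\s$-translates of $x_0$ and of $x_1$, none of which lies strictly between $x_0$ and $x_1$, so $x_0<b_0<x_1$ has no solution with $b_0$ a lifting of an element of $X$, whence $X$ does not cross itself and $\Ext^1(X,X)=0$. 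I expect no conceptual obstacle; the only delicate part is the bookkeeping with $\s$-orbits of liftings, together with the repeated use of hypothesis~(2) to pass between ``$\le x^+$'' and ``$<x$''.
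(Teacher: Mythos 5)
Your argument is correct and is essentially the paper's own proof spelled out: the paper simply says the lemma ``follows from the previous two lemmas,'' i.e.\ from the Hom criterion and the shift formula, which is exactly what you combine (working with $X[-1]=E(x_0^+,x_1^+)$ rather than $Y[1]$ is an immaterial variant). Your careful use of hypothesis~(2) to convert ``$x^+\le b$'' into ``$x<b$'' is precisely the bookkeeping the paper leaves implicit, and the degenerate projective-injective case you add (where you could also note the symmetric argument when $Y$ is projective-injective) is harmless extra detail.
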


\begin{proof}This follows from the previous two lemmas.\end{proof}

\begin{thm}\label{thm: C(Z) is 2-CY}
$\cC_\phi(Z)$ is $2$-Calabi-Yau, i.e., there is a natural isomorphism 
\[
\Ext^1(X,Y)\cong D\Ext^1(Y,X)
\]
\end{thm}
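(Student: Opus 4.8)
The plan is to reduce to indecomposable objects, observe that the two sides have the same dimension for purely combinatorial reasons, and then upgrade this to a \emph{natural} isomorphism by exhibiting Auslander--Reiten triangles in $\cC_\phi(Z)$ whose translation functor is the shift $[1]$. First I would use that both $\Ext^1(X,Y)$ and $D\Ext^1(Y,X)$ are biadditive in $(X,Y)$ and that, by Theorem \ref{Krull-Schmidt theorem}, $\cC_\phi(Z)$ is Krull--Schmidt with indecomposables the objects $E(x_0,x_1)$ that are not projective--injective, to reduce to the case $X,Y$ indecomposable. For such objects every Hom space in $\cC_\phi(Z)$ is $0$ or $\kk$, so $\Ext^1(X,Y)$ is $0$ or $\kk$, and by the crossing lemma it is $\kk$ precisely when $X$ and $Y$ cross. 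Since crossing of two-element subsets is symmetric --- reading the chain $x_0<y_0<x_1<y_1<\s x_0$ starting at $y_0$ gives $y_0<x_1<y_1<\s x_0<\s y_0$, exhibiting $Y$ crossing $X$ --- we get $\dim_\kk\Ext^1(X,Y)=\dim_\kk\Ext^1(Y,X)$, so the two sides are abstractly isomorphic and the real content is naturality.

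For naturality I would show $\cC_\phi(Z)$ has Auslander--Reiten triangles and that the AR translation is $\tau=[1]$, equivalently that the Serre functor is $S=[2]$. Granting this, Serre duality gives the natural isomorphisms $D\Ext^1(Y,X)=D\,\cC_\phi(Z)(Y,X[1])\cong\cC_\phi(Z)(X[1],SY)=\cC_\phi(Z)(X[1],Y[2])=\cC_\phi(Z)(X,Y[1])=\Ext^1(X,Y)$, which is exactly the asserted statement. To build the AR triangle ending at an indecomposable $X=E(x_0,x_1)$, I would lift to a non-split short exact sequence in $\cMF_\phi(Z)$ whose middle term is the sum of the two ``elementary'' extensions of $X$ and pass to the stable category, obtaining a triangle
\[
X[1]\ \longrightarrow\ E(x_0^-,x_1)\oplus E(x_0,x_1^-)\ \longrightarrow\ X\ \xrightarrow{\ \delta\ }\ X[2],
\]
with the outer terms identified by the formula $E(a,b)[1]=E(b^-,a^-)$ from the lemma above and with some summands of the middle term omitted when they happen to be projective--injective. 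Using the explicit descriptions, in the other lemma above, of which morphisms are nonzero in $\cC_\phi(Z)$ and of which morphisms factor through a given object, one checks that the middle map is right almost split and $\delta\neq0$; hence this is the AR triangle and $\tau X=X[1]$. This is the promised mild generalization of \cite{J04}, \cite{HJ12}, whose arguments I would follow for these verifications. (As an alternative to invoking Serre duality, one could instead define the composition pairing $\Ext^1(X,Y)\otimes\Ext^1(Y,X)\to\Ext^2(X,X)$, use the connecting map $\delta$ of the AR triangle to trivialize $\Ext^2(X,X)\cong\kk$ canonically, and check nondegeneracy directly by the same combinatorics.)

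The main obstacle I expect is twofold. First, signs: the lemma above shows that $[1]$ negates basic odd morphisms, so when the individual almost-split triangles are assembled into the Serre functor (or, in the alternative route, when one checks the composition pairing is well defined and nondegenerate) these signs must be tracked consistently. Second, for infinite $Z$ --- for instance $Z=\ZZ$ or $Z=\ZZ^2$ with lexicographic order --- one must verify that $\cC_\phi(Z)$ is Hom-finite and that every indecomposable genuinely admits an AR triangle; this is precisely where hypothesis (2) of Theorem \ref{thm: C(Z) is a 2-CY cluster category}, that no element of $\tilde Z$ lies strictly between $x$ and $\tilde\phi x$, is used: it forces the projective--injectives to be exactly the ``short'' subsets $\{x,\phi x\}$ and guarantees that the two elementary extensions appearing in the triangle above exist and are indecomposable.
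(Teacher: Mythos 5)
Your route is genuinely different from the paper's. The paper never invokes Auslander--Reiten triangles, the translation $\tau$, or the Reiten--Van den Bergh correspondence in this proof: after noting (as you do) that the crossing lemma already gives $\Ext^1(X,Y)\cong D\Ext^1(Y,X)$ objectwise for indecomposables, it constructs the duality directly as a trace pairing $\<f,g\>=\Tr(f[1]\circ g)$ on $(X,Y[1])\otimes(Y,X[1])$, where $\Tr:(Y,Y[2])\to\kk$ sums the diagonal scalars over a decomposition of $Y$ into indecomposables. Bifunctoriality is then settled by two elementary facts: $[2]$ takes basic morphisms to basic morphisms, so $\Tr(g\circ h)=\Tr(h[2]\circ g)$; and for indecomposable $X$ every morphism $X\to X[2]$ is odd, so in each pairing one factor is even and one is odd, and since $[1]$ negates exactly the odd one, $\<f,g\>=-\<g,f\>$. (The almost-split triangles you propose to build do appear in the paper, but only in the following subsubsection, and they are not input to this theorem.) Your parenthetical alternative --- the composition pairing into $\Ext^2(X,X)$ --- is the closer cousin of what the paper actually does.

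The genuine weak point of your main route is the step ``equivalently that the Serre functor is $S=[2]$.'' What your AR-triangle computation yields is $\tau X\cong X[1]$ for each indecomposable $X$, hence $SX\cong X[2]$ \emph{objectwise}; but the theorem asserts a \emph{natural} isomorphism, and promoting the objectwise identification to an isomorphism of functors is precisely where all the difficulty sits, since the objectwise statement is immediate from the crossing lemma anyway. Concretely, $\tau E(x,y)=E(x^-,y^-)$ while $E(x,y)[1]=E(y^-,x^-)$, so the comparison maps are the odd coordinate-switching isomorphisms, and $[1]$ negates odd morphisms; making these choices coherent against all morphisms of $\cC_\phi(Z)$ is exactly the sign problem you flag as an ``obstacle'' but do not resolve, and closing it amounts to redoing the paper's parity/trace argument (or carrying out your alternative pairing with the same parity bookkeeping). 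Your remaining points --- Hom-finiteness, symmetry of crossing, the shape of the middle term with projective-injective summands dropped, and the role of hypothesis (2) --- are fine and consistent with the paper.
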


\begin{proof}The lemma above implies that $Ext^1(X,Y)\cong DExt^1(Y,X)$ for any two indecomposable objects $X,Y$. It remains to show that the isomorphism is natural. Equivalently, we need to define a natural nondegenerate pairing
\[\<\cdot,\cdot\>:\Ext^1(X,Y)\otimes \Ext^1(Y,X)\to \kk\]
Since $\Ext^1(X,Y)=(X,Y[1])$ by definition, this pairing can be defined as the composition
\[(X,Y[1])\otimes (Y,X[1])\xrarrow{[1]\otimes id}(X[1],Y[2])\otimes (Y,X[1])\xrarrow{\circ}(Y,Y[2])\xrarrow{\Tr}\kk\]
where the trace map $\Tr:(Y,Y[2])\to\kk$ is given by choosing a decomposition of $Y$ into indecomposable objects $Y=\bigoplus Y_i$ and letting $\Tr(f)=\sum f_{ii}$ where $f_{ij}\in\kk$ is the scalar corresponding to the $ij$ component of $f$ (since $(Y_j,Y_i)=\kk$ or 0). Thus
\[	\<f,g\>=\Tr(f[1]\circ g)\]
It is clear that this pairing is natural in $X$. Naturality in $Y$ comes from the fact that $[2]$ takes basic morphisms to basic morphisms (without changing sign) and thus $\Tr(g\circ h)=\Tr(h[2]\circ g)$. Naturality in $Y$ also follows from the antisymmetry of the pairing:
\[\<f,g\>=-\<g,f\>\]
which follows from the fact that, for indecomposable $X$, any morphism $X\to X[2]$ must be odd. So, one of the morphisms $f$ or $g$ is odd and the other is even and $[1],[-1]$ will reverse the sign of the odd morphism and preserve the sign of the even morphism making $\Tr(f[1]\circ g)=-\Tr(g[1]\circ f)$.\end{proof}

\subsubsection{Clusters} Clusters in $\cC_\phi(Z)$ are maximal compatible sets of indecomposable objects which satisfy a certain continuity property which we now explain.

We recall that a \emph{Dedekind cut} in $\tilde Z$ is a nonempty proper subset $S$ of $\tilde Z$ so that if $x<y$ and $y\in S$ then $x\in S$. We consider only \emph{proper} Dedekind cuts, i.e., those which do not have suprema in $\tilde Z$. Note that $\s S\backslash S$ maps bijectively to $Z$ and induces a total ordering on the set $Z$ so that $Z$ does not have a maximum or minimum element. We call this a \emph{Dedekind ordering} on $Z$. We say that a family of elements $z_\a$ in $Z$ \emph{converges to $\infty$} with respect to such an ordering if for all $x\in Z$ there exists $\a$ so that $z_\a>x$. Convergence to $-\infty$ is defined similarly.

Note that any nonempty subset $T$ of $Z$ either has a supremum or converges to $\infty$ with respect to another Dedekind ordering of $Z$. (Take the Dedekind cut $S$ consisting of all $z\in Z$ which are less than some element of $T$.)

For indecomposable $X\cong E(x,y)$, the points $x,y$ are called the \emph{endpoints} of $X$.

\begin{defn}
A \emph{cluster} if $\cC_\phi(Z)$ is defined to be a maximal collection $\cT$ of nonisomorphic indecomposable objects $T_i$ satisfying two properties:
\begin{enumerate}
\item (Compatibility) $\Ext^1(T_i,T_j)=0$ for all $i,j$.
\item (Limit Condition) Suppose that $T_\a$ is a transfinite sequence of objects in $\cT$ so that one end $x_\a$ of $T_\a$ converges to $\infty$ with respect to some Dedekind ordering of $Z$ and the other end of $T_\a$ if fixed at, say $x$, then there is another transfinite sequence $S_\b$ of objects in $\cT$ so that one end of $S_\b$ is fixed at the same point $x$ and the other endpoint of $S_\b$ converges to $-\infty$ with respect to the same Dedekind ordering of $Z$.
\end{enumerate}
\end{defn}

We say that indecomposable $X,Y$ in $\cC_\phi(Z)$ are \emph{compatible} if $\Ext^1(X,Y)=0=\Ext^1(Y,X)$.

Note that Limit Condition (2) is automatically satisfied if $\cT$ is \emph{locally finite} in the sense that, for every $x\in Z$, there are only finitely many objects in $\cT$ with one end at $x$. In the case when $Z=\ZZ$, this definition is equivalent to the condition in \cite{HJ12} Theorem B and the proof that such sets satisfy the definition of a cluster is analogous. So, we omit the proof. However, we need to prove the symmetry of the above Limit Condition: 

\begin{lem}
The Limit Condition implies its converse, i.e., the same statement holds if $\infty$ and $-\infty$ are reversed.
\end{lem}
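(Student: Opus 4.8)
The plan is to prove the symmetry of the Limit Condition by a contrapositive-style argument: assume the cluster $\cT$ satisfies the stated Limit Condition but fails its converse, and derive a contradiction with the maximality of $\cT$. So suppose $S_\b$ is a transfinite sequence in $\cT$ whose endpoints $y_\b$ converge to $-\infty$ with respect to some Dedekind ordering of $Z$, while the other endpoint of every $S_\b$ is fixed at a point $x$, and suppose there is \emph{no} transfinite sequence $T_\a$ in $\cT$ with one end fixed at $x$ and the other end converging to $+\infty$ with respect to the same Dedekind ordering.

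First I would analyze what the failure of the converse says concretely. Consider the set $U$ of all endpoints $u$ such that $E(x,u)\in\cT$ (equivalently $\{x,u\}\in\cT$ using the notation of the section), together with $x$ itself. By the Limit Condition applied in the given direction, and the hypothesis that the converse fails, the set $U$ is bounded above in the Dedekind ordering; let $w$ be its supremum (this exists in some Dedekind ordering of $Z$ by the remark that any nonempty subset of $Z$ either has a supremum or converges to $\infty$, and the non-convergence to $+\infty$ is exactly our failure hypothesis, so after possibly refining the Dedekind ordering we get a supremum $w\in Z$). The point of the argument is then to produce an indecomposable object $E=E(x,w)$ (or a suitable object with one endpoint near $w$) that is compatible with every object of $\cT$ but not in $\cT$, contradicting maximality.

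The key computational step is to check compatibility of this candidate object $E(x,w)$ with an arbitrary $T_i\in\cT$ using the crossing criterion from the $\Ext^1$-Lemma: $\Ext^1(E(x,w),T_i)=0=\Ext^1(T_i,E(x,w))$ iff the endpoint sets do not interleave on the covering poset $\tilde Z$. For $T_i$ with an endpoint fixed at $x$, non-crossing is automatic since they share the endpoint $x$. For a $T_i$ whose endpoints are $y,z$ with neither equal to $x$: because $w$ is the supremum of $U$ and because the $S_\b$'s (which share the endpoint $x$) are already pairwise compatible with the $T_i$'s, the interval structure on one side of $x$ forces $y,z$ to lie on the same side of $\{x,w\}$, so no crossing occurs — here I would use the Dedekind-cut description and the fact that $w=\sup U$ to rule out an endpoint of $T_i$ landing strictly between $x$ and $w$. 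Finally $E(x,w)$ cannot already be in $\cT$, since $w\notin U$ (as $w$ is the supremum of a set not containing its supremum, again because we took a \emph{proper} Dedekind cut / non-convergent situation), yet adding it keeps all $\Ext^1$'s vanishing, contradicting maximality of $\cT$. Hence the converse of the Limit Condition holds.

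The main obstacle I anticipate is the careful bookkeeping in the compatibility check: one has to be precise about which Dedekind ordering is in play, pass to the covering poset $\tilde Z$ to state the crossing condition, and handle the degenerate possibility that $w$ itself is achieved or that $U$ is cofinal in a way that still, after changing the Dedekind ordering, has a supremum. Also one must confirm that the candidate $E(x,w)$ is genuinely a new indecomposable and not projective-injective (i.e.\ $w$ is not the successor of $x$), which follows because the $S_\b$'s already give infinitely many objects $E(x,y_\b)\in\cT$ with $y_\b$ on the far side, so $x$ has non-successor neighbors in $\cT$ and $w$ is a limit point, not the immediate successor. Modulo that case-analysis, the argument is a direct maximality contradiction.
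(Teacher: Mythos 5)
Your overall strategy (contradict maximality of $\cT$ by producing a new object compatible with everything, built from a supremum of endpoints at $x$) is indeed the paper's strategy, but two essential steps are missing or wrong, and they are exactly where the real work lies. The fatal one is your claim that $w\notin U$. Nothing forces the set of upper neighbours $\{y>x: E(x,y)\in\cT\}$ to miss its supremum: in a typical (e.g.\ locally finite) cluster this set has a largest element $y_1$, so $E(x,y_1)$ is already in $\cT$ and your candidate $E(x,w)=E(x,y_1)$ yields no contradiction; your justification (``proper Dedekind cut / non-convergent situation'') is about cuts in $\tilde Z$ and has no bearing on whether $U$ attains its sup. The paper's proof handles precisely this case by an extra move you have no substitute for: with $y_1=\sup\{y>x:E(x,y)\in\cT\}$, it considers the set of $z>y_1$ with $E(y_1,z)\in\cT$, shows (by the same kind of argument as its Claim~1) that this set is nonempty with a maximal element $z_1$, and takes $E(x,z_1)$ as the new object; since $z_1>y_1$ it is genuinely not in $\cT$, yet it is compatible with all of $\cT$, using both the sup property of $y_1$, the maximality of $z_1$, and the objects $T_\alpha$ with ends going to $-\infty$ to rule out crossings from below.

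There are two further gaps. First, you never show that $U$ contains any element $>x$ at all; if it does not, $\sup U$ degenerates to $x$ and there is no candidate object. This is the paper's Claim~1 and it requires an argument: if no $E(x,y)$ with $y>x$ lies in $\cT$, then $E(x,x^{++})\notin\cT$, so maximality forces some $E(x^+,y)\in\cT$ with $y>x^+$; the set of such $y$ must have a supremum $y_0$ (if it converged to $\infty$, the Limit Condition would force objects $E(x^+,z)$ with $z\to-\infty$, which cross the $T_\alpha$), and then $E(x,y_0)$ is compatible with all of $\cT$, contradicting maximality. Second, your existence argument for $w$ is not sound: the dichotomy is ``has a supremum in $Z$'' versus ``converges to $\infty$ with respect to \emph{some other} Dedekind ordering,'' and your failure hypothesis only excludes convergence with respect to the \emph{given} ordering; you cannot ``refine the Dedekind ordering'' to manufacture a supremum in $Z$. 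The paper disposes of the intermediate-cut possibility by invoking the Limit Condition in its given direction (``if it converges to some Dedekind cut from below then it also converges to it from above''), a use of the hypothesis that your sketch omits entirely. So the proposal follows the paper's general line but, as written, fails in the main case and omits the nonemptiness and supremum-existence steps.
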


\begin{proof}Suppose that $T_\a$ is a collection of objects in $\cT$ with one end fixed at $x$ and the other end converging to $-\infty$.

\ul{Claim 1} $\cT$ contains an object isomorphic to $E(x,y)$ for some $y>x$.

If not then, in particular, $E(x,x^{++})$ is not in $\cT$ which is equivalent to saying that $\cT$ has an object isomorphic to $E(x^+,y)$ where $y>x^+$. But the set of all such $y$ must have a supremum. Otherwise, by the Limit Condition, the $y$'s must converge to $\infty$ which is impossible since $E(x^+,z)$ is not compatible with the objects $T_\a$ for any $z<x^+$. Let $y_0$ be this supremum. Then $E(x,y_0)$ is compatible with all objects in $\cT$ and therefore an object in $\cT$ (up to isomorphism). This proved Claim 1.

We want to show that the set of all $y>x$ so that $E(x,y)$ is in $\cT$ (up to isomorphism) converges to $\infty$. If this is not the case then this set must have a supremum since, by the Limit Condition, if it converges to some Dedekind cut from below then it also converges to it from above. So, let $y_1$ be the supremum of this set. Then consider the set of all $z>y_1$ so that $\cT$ contains an object isomorphic to $E(y_1,z)$. As in Claim 1, this set is nonempty and contains a maximal element $z_1$. But then $E(x,z_1)$ is compatible with all objects in $\cT$ but is not contained in $\cT$ since $z_1>y_1$. This contradicts the maximality of $\cT$ and this contradiction proved the lemma.\end{proof}

\begin{lem}
$\cC_\phi(Z)$ contains at least one cluster. Furthermore, for any object $T$ of any cluster $\cT$, there is, up to isomorphism, a unique object $T^\ast$ so that $\cT\backslash T\cup T^\ast$ is a cluster.
\end{lem}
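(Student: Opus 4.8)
The statement has two parts: existence of a cluster, and the mutation (exchange) property. I would handle existence first by a Zorn's lemma argument. The collection of compatible sets of pairwise nonisomorphic indecomposables, ordered by inclusion, has the property that the union of a chain of compatible sets is again compatible (compatibility is a condition on pairs). To produce a \emph{cluster} in the sense of the definition I must be slightly careful: I would first pick any maximal compatible set $\cT_0$ containing, say, the ``fan'' $\{E(x_0,\phi^k x_0) : k\ge 2\}$ for a fixed $x_0$ — or more simply start from a locally finite compatible family such as a triangulation of an $\infty$-gon (cf.\ \cite{HJ12}), which automatically satisfies the Limit Condition — and then enlarge to a maximal compatible set. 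The point is that maximality with respect to compatibility does not by itself force the Limit Condition, so I would instead argue that \emph{within} the class of locally finite compatible families the natural enlargement stays locally finite, or invoke the already-established symmetric Limit Condition lemma together with a transfinite construction: start from a locally finite seed and show no enlargement step can destroy the Limit Condition. This is the place I expect to spend real care.

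For the exchange property, fix a cluster $\cT$ and $T = E(x_0,x_1)\in\cT$ (with liftings $x_0 < x_1 < \s x_0$). The plan is: (1) show there is at most one $T^\ast \not\cong T$ with $(\cT\setminus T)\cup T^\ast$ compatible, by analyzing which indecomposables $E(y_0,y_1)$ are compatible with every member of $\cT\setminus T$ but cross $T$; (2) show such a $T^\ast$ exists. For (1), the crossing lemma says $T^\ast$ must satisfy $x_0 < y_0 < x_1 < y_1 < \s x_0$ for appropriate liftings, i.e.\ its endpoints interleave those of $T$ on the circle. Since $\cT\setminus T$ is maximal among compatible sets not crossed by its own members except possibly at $T$, the ``region'' on each side of $T$ — the arc of $\tilde Z/\s$ strictly between $x_0$ and $x_1$, and between $x_1$ and $\s x_0$ — must be ``maximally subdivided'' by endpoints of objects of $\cT\setminus T$, and the only indecomposable with one endpoint in each region that is not crossed by any of these is the one ``opposite'' $T$: its endpoints are the nearest cut-points determined by $\cT\setminus T$ on either side. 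I would make ``nearest cut-point'' precise using suprema/infima of the relevant endpoint sets, handling the case where these sets converge to $\pm\infty$ via the Limit Condition (this is exactly why the Limit Condition is in the definition — it guarantees $T^\ast$ has well-defined endpoints). Uniqueness then follows because any other candidate would cross one of the existing objects.

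For existence of $T^\ast$ in (2): having identified the candidate endpoints $y_0, y_1$, I would check that $E(y_0,y_1)$ is not projective-injective (it can't be, since $T$ is not, and $T^\ast$ spans ``across'' $T$), is compatible with every object of $\cT\setminus T$ (by the defining property of $y_0,y_1$ as the nearest subdivision points, any object of $\cT\setminus T$ has both endpoints weakly outside the open arcs $(y_0,\cdot)$, $(\cdot,y_1)$ relevant to crossing, so does not cross $E(y_0,y_1)$), and that $(\cT\setminus T)\cup\{E(y_0,y_1)\}$ still satisfies the Limit Condition — which it does because we only swapped one object for another at the same two ``slots'', using the symmetric form of the Limit Condition from the previous lemma. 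Finally, maximality of $(\cT\setminus T)\cup T^\ast$ follows from maximality of $\cT$: any indecomposable compatible with all of $(\cT\setminus T)\cup T^\ast$ and not already in it would, in particular, not cross $T^\ast$, forcing it to lie in one of the two arcs cut off by $T^\ast$, and there the subdivision inherited from $\cT\setminus T$ is already maximal; alternatively such an object would have been addable to $\cT$ itself unless it crosses $T$, and if it crosses $T$ it must \emph{be} $T^\ast$.

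\textbf{Main obstacle.}
The delicate point throughout is controlling the Limit Condition under the Zorn/enlargement arguments: compatibility is finitary and behaves well under unions, but the Limit Condition is a genuine constraint that a naive maximal compatible set need not satisfy. I expect the bulk of the work to be in showing (a) that a locally finite compatible seed can be enlarged to a cluster without leaving the locally-finite-or-limit-controlled class, and (b) that the replacement $(\cT\setminus T)\cup T^\ast$ inherits the Limit Condition — both of which lean on the symmetric Limit Condition lemma already proved, plus the observation that ``nearest cut-point'' suprema/infima exist precisely because of that lemma.
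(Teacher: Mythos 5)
Your treatment of the exchange property is essentially the route the paper itself takes (following \cite{HJ12}): the paper's very next lemma asserts that for $T=E(x,y)\in\cT$ there are unique elements $a,b$ with $\tilde x<\tilde a<\tilde y<\tilde b<\s\tilde x$ such that $E(x,a),E(a,y),E(y,b),E(b,x)$ are zero or in $\cT$, and then $T^\ast=E(a,b)$ with the two exchange sequences $E(x,y)\to E(x,b)\oplus E(a,y)\to E(a,b)$ and $E(a,b)\to E(a,x)\oplus E(y,b)\to E(x,y)$. Your ``nearest cut-points'' $y_0,y_1$, whose existence you derive from suprema/infima controlled by the Limit Condition, are exactly these $a,b$, and uniqueness via the crossing criterion is the same argument; so on this part you have reconstructed the paper's (largely cited) proof.

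The genuine soft spot is the existence of a cluster, and here your proposed repair does not quite work as stated: compatibility passes to unions of chains, but neither local finiteness nor the Limit Condition does, so ``no enlargement step can destroy the Limit Condition'' fails precisely at limit stages of a transfinite or Zorn-type construction, and a maximal compatible set need not be a cluster (this is the whole point of the Limit Condition, as you observe). The intended fix, implicit in the paper's reference to \cite{HJ12} and in the example of Figure 1, is simpler: exhibit a locally finite \emph{complete} triangulation directly (a compatible set in which every complementary region is a triangle, e.g.\ the zig-zag pattern). Such a set is already maximal among compatible collections, so no enlargement and hence no Zorn argument is needed, and local finiteness makes the Limit Condition vacuous. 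With that substitution for your existence step, your outline agrees with the paper's argument.
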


As in \cite{HJ12}, the proof depends on the following.

\begin{lem}
For any object $T=E(x,y)$ in any cluster $\cT$ of $\cC_\phi(Z)$, there are unique elements $a,b\in Z$ so that $\tilde x<\tilde a<\tilde y<\tilde b<\s \tilde x$ for some liftings $\tilde x,\tilde y,\tilde a,\tilde b\in\tilde Z$ of $x,y,a,b$ and so that $E(x,a),E(a,y),E(y,b),E(b,x)$ are either zero or isomorphic to objects in $\cT$.\qed
\end{lem}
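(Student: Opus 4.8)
The plan is to produce the four elements $a,b$ explicitly as suprema/infima of endpoints of objects already in $\cT$, exactly along the lines of the proof of the previous "converse Limit Condition" lemma and of the corresponding argument in \cite{HJ12}. Fix $T=E(x,y)$ in a cluster $\cT$, and choose liftings $\tilde x<\tilde y<\s\tilde x$ in $\tilde Z$. The key structural fact I would use repeatedly is the crossing criterion for $\Ext^1$: $\Ext^1(E(u,v),E(u',v'))\neq0$ precisely when the pairs interleave, $u<u'<v<v'<\s u$ (up to relabeling and $\s$). Thus compatibility of an object $E(x,w)$ with $T$ just says $w$ does not strictly separate $\tilde x$ from $\tilde y$ in the cyclic order, i.e. (after lifting) either $\tilde x\le\tilde w\le\tilde y$ or $\tilde y\le\tilde w\le\s\tilde x$. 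So the objects of $\cT$ sharing the endpoint $x$ with $T$ split into those with "other end in $[\tilde x,\tilde y]$" and those with "other end in $[\tilde y,\s\tilde x]$."

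First I would construct $a$. Consider $A=\{w\in Z:\ E(x,w)\cong$ an object of $\cT$, with a lifting $\tilde w\in(\tilde x,\tilde y)\}$. If $A$ is empty, I claim $a:=x^+$ works: indeed then $E(x,x^+)$ — a projective-injective, hence zero in $\cC_\phi(Z)$ — plays the role of $E(x,a)$, and one checks $E(a,y)=E(x^+,y)$ must then lie in $\cT$ by a maximality argument identical to Claim 1 of the converse-Limit-Condition lemma (if it weren't, some larger object would be compatible with all of $\cT$). If $A$ is nonempty, by the Limit Condition $A$ cannot converge to the upper Dedekind cut at $\tilde y$ (an object $E(x,z)$ with $z$ near $\tilde y$ from above would be incompatible with $T$), so $A$ has a supremum; let $a$ be that supremum. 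Then $E(x,a)$ is compatible with every object of $\cT$ — here I use that any object of $\cT$ crossing $E(x,a)$ would already cross some $E(x,w)$, $w\in A$, by a short interleaving argument — so by maximality $E(x,a)\in\cT$; and $E(a,y)$ is likewise compatible with all of $\cT$, hence in $\cT$ (it is not projective-injective since $\tilde x<\tilde a<\tilde y$ forces $a\neq y,y^-$ generically; the boundary cases $a=y^-$ give the zero object, which is fine). The element $b$ is constructed symmetrically from $B=\{w:\ E(x,w)\cong$ object of $\cT$, lifting in $(\tilde y,\s\tilde x)\}$, using the Limit Condition at the Dedekind cut corresponding to $\s\tilde x$ to get a supremum $b$, and then arguing that $E(y,b)$ and $E(b,x)$ are forced into $\cT$. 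Uniqueness of $a,b$ follows because the supremum of $A$ (resp. $B$) is intrinsically determined by $\cT$ and $T$: any valid $a'$ would have $E(x,a')\in\cT$ with lifting in $[\tilde x,\tilde y]$, so $a'\le a$, and the required compatibility of $E(a',y)$ with $E(x,a)$ forces $a'\ge a$.

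The main obstacle I anticipate is the bookkeeping of the degenerate cases, where some of $a,x,y,b$ collide or become successors of one another, so that one or more of $E(x,a),E(a,y),E(y,b),E(b,x)$ is the zero object or a projective-injective (which is zero in $\cC_\phi(Z)$) — the statement allows "zero or isomorphic to objects in $\cT$", so these must be tracked carefully, especially the case $A=\emptyset$ where $a=x^+$ and $E(x,a)$ is projective-injective. A secondary subtlety is justifying that "$A$ has a supremum rather than converging to $\infty$": this is exactly where the Limit Condition (2) and its converse are invoked, together with the observation that objects $E(x,z)$ with $z$ beyond $\tilde y$ are incompatible with $T$, so runaway to $\infty$ is impossible. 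Once these cases are organized, the rest is the interleaving/crossing computation plus the maximality of $\cT$, both of which are routine given the $\Ext^1$ criterion established above.
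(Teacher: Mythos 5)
Your overall strategy (suprema of endpoint sets, existence via the Limit Condition, membership in $\cT$ via maximality, uniqueness via crossing) is the intended one — the paper itself omits the proof, deferring to the analogous argument in \cite{HJ12} and to its converse-Limit-Condition lemma — but two steps fail as written. First, your justification that $A$ has a supremum is wrong: the parenthetical ``an object $E(x,z)$ with $z$ near $\tilde y$ from above would be incompatible with $T$'' cannot be the contradiction, because $E(x,z)$ shares the endpoint $x$ with $T=E(x,y)$, and two objects with a common endpoint never cross, hence are always compatible. The correct contradiction is internal to $A$: if $A$ had no supremum, its Dedekind cut is proper and lies strictly inside $(\tilde x,\tilde y)$, and the free ends of the family $\{E(x,w):w\in A\}$ converge to $\infty$ for that cut; the Limit Condition then produces objects $E(x,v_\beta)\in\cT$ with $v_\beta$ just above the cut, so eventually $v_\beta\in(\tilde x,\tilde y)$, i.e. $v_\beta\in A$, while $v_\beta$ is an upper bound of $A$ — giving $A$ a maximum, a contradiction. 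This is fixable and is exactly the shape of the argument in the paper's proof of the converse Limit Condition, but the source of the contradiction is not the one you name.

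The more serious error is the construction of $b$: it is not the supremum of $B=\{w:\ E(x,w)\in\cT,\ \tilde w\in(\tilde y,\s\tilde x)\}$. Concretely, let $Z=\ZZ$ (the $\infty$-gon) and let $\cT$ be the locally finite triangulation with arcs $\{-1,1\},\{-1,2\},\{-1,3\},\{-3,-1\},\{-3,3\},\{-3,4\},\{-4,4\},\{-4,5\},\{-5,5\},\dots$, and take $T=E(-1,2)$, so $x=-1$, $y=2$. Then $B=\{3,-3\}$, ordered $3<-3$ inside $(\tilde y,\s\tilde x)$, so your recipe gives $b=-3$; but $E(y,b)=E(2,-3)$ crosses $E(-1,3)\in\cT$, so it is neither zero nor isomorphic to an object of $\cT$, and the lemma's conclusion fails for that choice. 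The true $b$ here is $3$: $E(2,3)$ is projective-injective (zero) and $E(3,-1)\in\cT$. The correct construction of $b$ is the mirror image of your construction of $a$, obtained by applying it to $T$ written as $E(y,x)$ with liftings $\tilde y<\s\tilde x<\s\tilde y$: take the supremum of the ends in $(\tilde y,\s\tilde x)$ of arcs of $\cT$ at $y$ (equivalently, the \emph{infimum} of your $B$), with $b=y^{+}$ in the empty case; the compatibility and maximality arguments then run exactly as in your $a$-half. A minor further point: in the uniqueness step, the degenerate cases ($E(x,a')$ or $E(a',y)$ zero) are not covered by ``$E(x,a')\in\cT$ forces $a'\le a$''; the clean argument is that for $a'\neq a$ in $(\tilde x,\tilde y)$ one of the pairs $E(x,a),E(a',y)$ or $E(x,a'),E(a,y)$ consists of two nonzero crossing objects that would both have to lie in $\cT$.
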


This implies that we have exact sequences $E(x,y)\to E(x,b)\oplus E(a,y)\to E(a,b)$ and $E(a,b)\to E(a,x)\oplus E(y,b)\to E(x,y)$ in the Frobenius category $\cMF_\phi(Z)$ giving the following distinguished triangles in the triangulated category $\cC_\phi(Z)$:
\[
	T=E(x,y)\to E(x,b)\oplus E(a,y)\to T^\ast=E(a,b)\to T[1]
\]
\[
	T[-1]\to T^\ast=E(a,b)\to E(a,x)\oplus E(y,b)\to T=E(x,y)
\]
where, up to isomorphism, $B=E(x,b)\oplus E(a,y)$ is a right $add(\cC_\phi(Z)\backslash T)$-approximation of $T$ and $B'=E(a,x)\oplus E(y,b)$ is a left $add(\cC_\phi(Z)\backslash T)$-approximation of $T$. As in \cite{HJ12} and as outlined below, this implies that $\cC_\phi(Z)$ is a cluster category in the sense that it has a cluster structure according to the following definition from \cite{BIRSc}.

\begin{defn}
Suppose that $\cC$ is a triangulated Krull-Schmidt category. Then a \emph{cluster structure} on $\cC$ is a collection of sets $\cT$ called \emph{clusters} of nonisomorphic indecomposable objects called \emph{variables} satisfying the following conditions.
\begin{enumerate}
\item[(a)] For any cluster variable $T$ in any cluster $\cT$ there is, up to isomorphism, a unique object $T^\ast$ not isomorphic to $T$ so that $\cT^\ast:=\cT\backslash T\cup T^\ast$ is a cluster.
\item[(b)] There are short exact sequences (or distringuished triangles) 
\[
	T^\ast\to B\to T,\qquad T\to B'\to T^\ast
\]
so that $B$ is a minimal right $\add(\cT\backslash T)$-approximation of $T$ and $B'$ is a minimal left $\add(\cT\backslash T)$-approximation of $T$. We write $B=B_\cT(T)$ and $B'=B_\cT'(T)$.
\item[(c)] There are no loops or 2-cycles in the quiver of any cluster $\cT$. (No loops means that any nonisomorphism $T\to T$ factors through $B_\cT(T)$ and no $2$-cycles means that there do not exist cluster variables $T,S$ in $\cT$ so that $S$ is a summand of $B_\cT(T)$ and $T$ is a summand of $B_\cT(S)$.
\item[(d)] The quiver of $\cT^\ast$ is obtained from the quiver of $\cT$ by Fomin-Zelevinski mutation. 
\item[(e)] If $\cT'$ is obtained from $\cT$ by replacing each variable with an isomorphic object then $\cT'$ is a cluster.
\end{enumerate}
\end{defn}

\begin{proof}[Proof of Theorem \ref{thm: C(Z) is a 2-CY cluster category}]
We have already shown conditions (a) and (b) and condition (e) holds by definition. Condition (c) is easy since the only arrows in the quivers of $\cT$  starting or ending at $T=E(x,y)$ and the only arrow in the quiver of $\cT\backslash T\cup T^\ast$ starting or ending at $T^\ast=E(a,b)$ are given in the following diagram.
\[\xymatrixrowsep{10pt}\xymatrixcolsep{10pt}
\xymatrix{
E(x,b)\ar[rr]  & &E(b,y)\ar[dl] && E(x,b)\ar[dr]  & &E(b,y)\ar[dd]\\
& E(x,y)\ar[ul]\ar[dr] &&\leftrightarrow && E(a,b)\ar[dl]\ar[ur]\\
E(x,a)\ar[ru]&&E(a,y)\ar[ll]&&E(x,a)\ar[uu]&&E(a,y)\ar[ul]	}
\]
Some of the terms may be zero and should be deleted. Condition (d) holds by examination of the above diagram. Therefore, $\cC_\phi(Z)$ has a cluster structure. By Theorem \ref{thm: C(Z) is 2-CY}, it is $2$-Calabi-Yau.\end{proof}

\subsubsection{Example} We discuss explicit examples of discrete cluster categories.

\begin{lem}\label{almost split triangles}
$\cC_\phi(Z)$ has almost split triangles given by
\[
	E(x,y)\to E(x,y^+)\oplus E(x^+,y)\to E(x^+,y^+)\to E(y^-,x^-)
\]
where $y\neq x^{++}$ and
\[
	E(x^-,x^+)\to E(x^-,x^{++})\to E(x,x^{++})\to E(x, x^{--})
\]
Up to isomorphism, the irreducible maps are the basic morphisms $E(x,y)\to E(x,y^+)$.
\end{lem}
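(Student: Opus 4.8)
The statement describes the almost split (Auslander–Reiten) triangles in $\cC_\phi(Z)$. The previous lemmas have already done almost all the work: we know $E(x,y)[1]=E(x^-,y^-)$ (hence $\tau = [-1]$ sends $E(x,y)$ to $E(x^+,y^+)$ on the level of objects, so the third term $E(y^-,x^-)=E(x^+,y^+)[1]$ in the displayed triangle is forced), we have the explicit octahedral/mutation triangles $T\to B\to T^\ast\to T[1]$ with $B$ a minimal right $\add(\cC\setminus T)$-approximation, and the ``no loops/no 2-cycles'' quiver picture identifying the irreducible maps out of a given indecomposable. So the plan is to assemble these into a verification that the proposed triangles are genuinely almost split, i.e.\ that the middle map is a minimal right almost split morphism onto the right-hand term (equivalently, the left-hand map is minimal left almost split).

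\emph{First}, I would treat the generic case $y\neq x^{++}$. Using the lemma computing $\cC_\phi(Z)(X,Y)$, I check directly that every nonzero non-isomorphism $E(u,v)\to E(x^+,y^+)$ (with $E(u,v)$ indecomposable, not projective-injective) factors through $E(x,y^+)\oplus E(x^+,y)$: translating the inequality conditions \eqref{eq: conditions for nonzero morphism in C(Z)} for a map into $E(x^+,y^+)$, one of the two endpoints of $E(u,v)$ must lie in a half-open interval forcing the map to pass through $E(x,y^+)$ or through $E(x^+,y)$, exactly as in the factorization criterion of that same lemma. This is the ``right almost split'' property. \emph{Second}, I would verify minimality: the map $E(x,y^+)\oplus E(x^+,y)\to E(x^+,y^+)$ has no nonzero summand that is a split epi and no superfluous summand in the middle — this is immediate from the quiver picture in the proof of Theorem \ref{thm: C(Z) is a 2-CY cluster category}, which shows these are precisely the two irreducible maps into $E(x^+,y^+)$ (some summands dropped when they are zero objects, i.e.\ when an endpoint coincides with a successor, which is the degenerate/projective-injective case excluded by $y\neq x^{++}$). \emph{Third}, I would confirm the connecting map $E(x^+,y^+)\to E(y^-,x^-)=E(x^+,y^+)[1]$ is the correct one by matching against the mutation triangle $T\to B\to T^\ast\to T[1]$ with $T=E(x,y)$: here $T^\ast=E(x^+,y^+)$ and $B=E(x,y^+)\oplus E(x^+,y)$ is exactly the minimal right $\add(\cC_\phi(Z)\setminus T)$-approximation from the earlier lemma, so the triangle is already on record and we only need to recognize it as almost split.

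\emph{Fourth}, the degenerate case $y=x^{++}$: here $E(x,y)=E(x,x^{++})$ and one checks that the naive middle term would involve $E(x,x^+)$ or $E(x^+,x^{++})$, which are projective-injective and hence zero in the stable category, so the triangle collapses to the stated form $E(x^-,x^+)\to E(x^-,x^{++})\to E(x,x^{++})\to E(x,x^{--})$; I would verify this by the same interval-counting argument, now with one of the two arrows into $E(x,x^{++})$ absent, and note that the third term $E(x,x^{--})=E(x,x^{++})[1]$ is again forced by the shift formula. \emph{Finally}, the closing sentence (irreducible maps are the basic morphisms $E(x,y)\to E(x,y^+)$, up to isomorphism) follows by reading off the arrows in the almost split triangles just established, together with $E(x,y)\cong E(y,x)$, so that ``extending an endpoint by one successor'' exhausts all irreducible maps.

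\emph{Main obstacle.} None of the individual steps is deep — the real content was already extracted in the $\Ext^1$ computation and the mutation triangles. The one place requiring genuine care is the \textbf{minimality/irreducibility} bookkeeping in the degenerate case $y=x^{++}$ and, more generally, keeping track of which summands of the middle term vanish (become projective-injective objects, hence $0$ in $\cC_\phi(Z)$) near the ``diagonal''; getting the sign conventions and the precise triangle rotation right, so that the third map is literally the stated $E(x,y)$-shifted object and not its negative or a twist, is the fiddly part. Everything else is a direct translation of the interval inequalities \eqref{eq: conditions for nonzero morphism in C(Z)}.
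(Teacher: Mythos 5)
Your proposal is correct in outline and follows essentially the same route as the paper: the paper obtains the first triangle from the exactness of $E(x,y)\to E(x,y^+)\oplus E(x^+,y)\to E(x^+,y^+)$ in the Frobenius category (the same mechanism underlying the mutation triangles you cite), verifies the almost split property by factoring any non-isomorphism $E(a,b)\to E(x^+,y^+)$ through $E(x,y^+)$ or $E(x^+,y)$ exactly as in your interval argument (and also checks dually that non-isomorphisms out of $E(x,y)$ factor through the middle, though your right-almost-split-plus-minimality version suffices in this Krull--Schmidt setting), treats the second triangle as the case where the would-be summand $E(x,x^+)$ is projective-injective and hence vanishes, and reads off the irreducible maps from the basic morphisms $x\to x^+$ in $\cP(Z)$.

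One bookkeeping slip to fix: you justify the end term of the first triangle by the identity $E(y^-,x^-)=E(x^+,y^+)[1]$ (and, in the degenerate case, $E(x,x^{--})=E(x,x^{++})[1]$). By the shift formula $E(x_0,x_1)[1]=E(x_1^-,x_0^-)$ one has $E(x^+,y^+)[1]\cong E(y,x)\cong E(x,y)$, not $E(y^-,x^-)$. The last object of a triangle $A\to B\to C\to A[1]$ is the shift of the \emph{first} term, so the correct identifications are $E(y^-,x^-)=E(x,y)[1]$ and $E(x,x^{--})=E(x^-,x^+)[1]$; equivalently, since $\tau=[1]$ in this $2$-CY category, the connecting map is $C\to\tau C[1]=C[2]$, not $C\to C[1]$. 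This does not derail your argument, because the fourth object is forced to be the shift of the first term and the earlier shift lemma computes it to be exactly the stated object, but as written your justification of the end term is off by one application of the shift (equivalently, it conflates $\tau$ with $\tau^{-1}$).
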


\begin{proof} The fact that the first sequence is a distinguished triangle (up to the signs of the morphisms) follows from that fact that $E(x,y)\to E(x,y^+)\oplus E(x^+,y)\to E(x^+,y^+)$ is an exact sequence in the Frobenius category. The second sequence is an example of a ``positive triangle'' \cite{IT10}. This is a distinguished triangle where all three morphisms are basic.

It is easy to see that these are almost split triangles: Given any morphism $f:E(a,b)\to E(x^+,y^+)$ which is not an isomorphism, assuming that $f$ is diagonal, either $a\neq x^+$ in which case $f$ factors through $E(x,y^+)$ or $b\neq y$ in which case $f$ factors through $E(x^+,y)$. Similarly, any nonisomorphism $E(x,y)\to E(a,b)$ factors through $E(x,y^+)\oplus E(x^+,y)$. So, the first triangle is almost split. A similar argument shows that the second triangle is almost split. The description of the irreducible morphisms is clear since $x\to x^+$ are the irreducible morphisms in $\cP(Z)$.
\end{proof}

\begin{lem}
Let $(Z,\phi)$ be as in Theorem \ref{thm: C(Z) is a 2-CY cluster category}. Then either $Z\cong Z_n$ with $\phi(i)=i+1$ or $Z\cong S\ast\ZZ$ where $S$ is cyclically ordered and $\phi(s,i)=(s,i+1)$.
\end{lem}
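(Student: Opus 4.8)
The plan is to analyze the combinatorial constraints that conditions (1) and (2) of Theorem \ref{thm: C(Z) is a 2-CY cluster category} place on the pair $(Z,\phi)$ and show these force $Z$ to have one of the two stated forms. Since we are told we may assume no two distinct elements of $Z$ are equivalent and (for convenience) that $Z$ is infinite, I will work with a totally ordered covering poset $\tilde Z$ on which $\s$ and $\tilde\phi$ act. Condition (1) says $x<\tilde\phi x<\tilde\phi^2 x<\s x$, so between $x$ and $\s x$ there are at least the two elements $\tilde\phi x,\tilde\phi^2 x$. Condition (2) says there is no $z\in\tilde Z$ with $x<z<\tilde\phi x$ for any $x$, i.e.\ $\tilde\phi x$ is the immediate successor of $x$ in $\tilde Z$ — every element has a well-defined successor $x^+=\tilde\phi x$.

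The first step is to use the successor structure to understand the ``fibers'' of $\pi:\tilde Z\to Z$ and the interaction with $\s$. Fix $x\in\tilde Z$ and consider the chain $x<x^+<x^{++}<\cdots$ obtained by iterating $\tilde\phi$. By condition (1), $\s x$ lies strictly above $x^{++}=\tilde\phi^2 x$, and $\s x=\tilde\phi(\s\tilde\phi^{-1}x)$ is also a successor, so $\s x$ equals $x^{+\cdots+}$ ($k$ times) for some integer $k\ge 3$; here I should check that the chain $x,x^+,x^{++},\dots$ actually reaches $\s x$, which follows from the recurrence axiom (Definition \ref{defn of covering poset}(3)) together with condition (2): any element between $x$ and $\s x$ is of the form $\tilde\phi^j x$ since successors are unique, and recurrence bounds how far up we must go. So $\s = \tilde\phi^{\,k}$ for this $x$. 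Next I want to show $k$ is the \emph{same} for all $x\in\tilde Z$: since $\s$ commutes with $\tilde\phi$ (both are automorphisms of the covering poset and $\phi$ is admissible), $\s(\tilde\phi^j x)=\tilde\phi^j(\s x)=\tilde\phi^{j+k}x=\tilde\phi^k(\tilde\phi^j x)$, so the function $x\mapsto k(x)$ is $\tilde\phi$-invariant; and because $\tilde\phi$ acts with $\s\tilde\phi = \tilde\phi\s$ and $\tilde Z/\langle\tilde\phi,\s\rangle$-considerations, one sees $k$ is globally constant — more directly, $k(x)$ is locally constant along the successor chain and every element is in the $\tilde\phi$-orbit-closure picture, so $k(x)\equiv n$ is constant.

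Now the dichotomy. \textbf{Case A: the $\tilde\phi$-action is transitive on $\tilde Z$}, i.e.\ $\tilde Z$ is a single successor chain $\{\tilde\phi^i x_0 : i\in\ZZ\}\cong\ZZ$. Then $\s = \tilde\phi^{\,n}$ and $Z = \tilde Z/\langle\s\rangle \cong Z_n$ with $\phi(i)=i+1$, and $n\ge 3$ because $Z$ has at least four elements — wait, actually the hypothesis is $Z$ has at least four elements, and here $|Z|=n$, so $n\ge 4$; in any case this is the $Z\cong Z_n$ alternative. \textbf{Case B: the $\tilde\phi$-action is not transitive.} Then $\tilde Z$ decomposes as a disjoint union of successor chains, each order-isomorphic to $\ZZ$ (each is closed under $\tilde\phi$ and $\tilde\phi^{-1}$, and has no smallest or largest element because the total order on $\tilde Z$ has none once $Z$ is infinite with no equivalent elements — or one adapts). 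Let $S$ index these chains; the total order on $\tilde Z$ induces a total order on $S$ (two chains being either entirely below or entirely above each other, since successors never cross chains), and $\s$ permutes the chains. One checks $\s$ acts on $S$ as a cyclic shift and that $\tilde Z$ is precisely the lexicographic product $S\ast\ZZ$ in the sense of Example \ref{cyclic poset XP}: $\tilde Z\cong\tilde S\times\ZZ$ with $(\tilde s,i)\le(\tilde s',i')$ iff $\tilde s<\tilde s'$ or ($\tilde s=\tilde s'$ and $i\le i'$), $\tilde\phi(\tilde s,i)=(\tilde s,i+1)$, and $\s$ acting only on the first coordinate. Passing to $\s$-orbits gives $Z\cong S\ast\ZZ$ with $\phi(s,i)=(s,i+1)$, as claimed.

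The main obstacle I anticipate is the bookkeeping in showing that $k(x)$ is globally constant in Case A/B and that the chains of $\tilde\phi$-orbits are linearly ordered and shifted cyclically by $\s$ in a way that exactly matches the definition of $S\ast\ZZ$ — this requires care that no element ``between chains'' is missed, which is exactly where condition (2) (uniqueness of successors) does the work, and where the recurrence axiom of the covering poset is needed to guarantee the chains exhaust $\tilde Z$ and that $\s$ is a genuine shift of finite amount. The rest (identifying $Z_n$, verifying $\phi(i)=i+1$, and matching the $S\ast\ZZ$ structure) is then formal. A secondary point to handle cleanly is the reduction at the start: the statement as given does not assume $Z$ infinite or elements pairwise inequivalent, so I would first note that the finite case gives $Z_n$ directly (a finite cyclically ordered set satisfying (1),(2) has $\s=\tilde\phi^n$ on $\ZZ$), and that collapsing equivalent elements does not change $\cC_\phi(Z)$ or the conclusion, so the convenience assumptions are harmless.
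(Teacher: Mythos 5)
Your first step contains the decisive error. You assert that $\s x$ lies in the $\tilde\phi$-successor chain of $x$, ``so $\s=\tilde\phi^{\,k}$ for this $x$,'' justified by the claim that every element between $x$ and $\s x$ is of the form $\tilde\phi^j x$ ``since successors are unique,'' with recurrence ``bounding how far up we must go.'' That claim is false, and it fails exactly in the case the lemma is really about: for infinite $Z$ (e.g.\ $Z=\ZZ$ with $\phi(i)=i+1$, or the paper's example $Z=\ZZ^2$ with lexicographic order) $\s x$ is never in the $\tilde\phi$-orbit of $x$, and the interval $(x,\s x)$ in $\tilde Z$ contains entire other successor chains. Uniqueness of immediate successors in a totally ordered set does not make intervals exhausted by successor chains: $\ZZ\times\ZZ$ with lexicographic order and successor $(a,b)\mapsto(a,b+1)$ is the standard counterexample, and it is precisely what occurs here. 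Worse, your proof is internally inconsistent: if $\s x=\tilde\phi^{k}x$ did hold, then condition (2) shows the whole interval $[x,\s^{2m}x]=[x,\tilde\phi^{2mk}x]$ lies in the chain of $x$, and the recurrence axiom ($x\le\s^m z\le\s^{2m}x$) would then force every $z\in\tilde Z$ into that chain, so $\tilde Z\cong\ZZ$ and your Case B could never occur. The subsequent argument that $k(x)$ is ``globally constant'' is both moot and, as written, hand-waving.

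The correct pivot --- and the content of the paper's one-line proof --- is not ``which power of $\tilde\phi$ is $\s$'' but whether the $\ZZ$-action on $Z$ generated by $\phi$ is free. If some $\s^j x$ ($j\neq0$) lies in the chain of $x$ (equivalently, $\phi$ has a periodic point in $Z$), then the argument just sketched (condition (2) plus recurrence) forces $\tilde Z$ to be a single chain, so $\tilde Z\cong\ZZ$, $\s=\tilde\phi^{\,n}$ and $Z\cong Z_n$ with $\phi(i)=i+1$; in particular infinite $Z$ forces the action to be free, which is what the paper asserts. In the free case your Case B endgame is essentially the paper's proof transported to the covering poset: the chains are linearly ordered by condition (2), they map bijectively to the $\phi$-orbits, $\s$ permutes them with $\s C$ strictly above $C$ and $\s^jC\neq C$ for $j\neq0$, and then $\tilde Z\cong\tilde S\times\ZZ$ lexicographically, giving $Z\cong S\ast\ZZ$ with $\phi(s,i)=(s,i+1)$. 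But as submitted you never establish this freeness (it is exactly what makes ``$\s$ acts only on the first coordinate'' true), and the step you offer in its place is wrong; so there is a genuine gap, even though the intended decomposition into successor chains is the right picture and matches the paper's orbit decomposition.
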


\begin{proof}
We prove only the infinite case. The automorphism $\phi$ gives an action of the additive group $(\ZZ,+)$ on the set $Z$. When $Z$ is infinite, this must be a free action. Let $S$ be the set of orbits of this action. Then $S$ is cyclically ordered and $Z\cong S\ast\ZZ$. 
\end{proof}

\begin{thm} The Auslander-Reiten quiver of $\cC_\phi(Z)\cong \cC_\phi(S\ast\ZZ)$ is a union of $\ZZ A_\infty$ components $C_s$ indexed by the elements of $s\in S$ and $\ZZ A_\infty^\infty$ components $C_{ab}$ indexed by unordered pairs of distinct elements of $S$.
\end{thm}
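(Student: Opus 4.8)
The plan is to describe the Auslander--Reiten quiver of $\cC_\phi(Z)$ explicitly using the indecomposable objects $E(x,y)$, the shift functor, and the almost split triangles already computed. First I would recall that every indecomposable object of $\cC_\phi(Z)$ is of the form $E(x,y)\cong E(y,x)$ for distinct $x,y\in Z$ that are not successors of one another (since $E(x,x^+)$ is projective-injective, hence zero in $\cC_\phi(Z)$), and that such objects are in bijection with $2$-element subsets $\{x,y\}\subseteq Z$ with $y\neq x^+$ and $y\neq x^-$. Using $Z\cong S\ast\ZZ$, write $x=(s,i)$, $y=(s',j)$ with $(s,i),(s',j)\in S\ast\ZZ$; the two cases are $s=s'$ and $s\neq s'$. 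I would then show these two cases are preserved by the AR translate $\tau$ and by irreducible maps, so they decompose the quiver into disjoint families of connected components.

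The key computation is identifying $\tau$ and the mesh structure. By the lemma giving $E(x_0,x_1)[1]=E(x_1^-,x_0^-)$ together with the $2$-Calabi--Yau property (Theorem \ref{thm: C(Z) is 2-CY}, so $\tau\cong[1]$ on $\cC_\phi(Z)$), the AR translate is $\tau E(x,y)=E(y^-,x^-)$. Then Lemma \ref{almost split triangles} gives the almost split triangles: for $y\neq x^{++}$ the middle term of the AR triangle ending at $E(x^+,y^+)$ is $E(x,y^+)\oplus E(x^+,y)$, and there is a second family of AR triangles $E(x^-,x^+)\to E(x^-,x^{++})\to E(x,x^{++})$ handling the ``boundary'' objects $E(x,x^{++})$, which have only one irreducible map into them. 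I would now fix $s\in S$: the objects $E((s,i),(s,j))$ with $j>i+1$ (in the $\ZZ$-ordering on the $s$-fiber) form a component $C_s$; plotting these with coordinates $(i,j)$ and tracking the irreducible maps $E(x,y)\to E(x,y^+)$ and $E(x,y)\to E(x^+,y)$ and the $\tau$-orbit, one sees the boundary objects $E(x,x^{++})$ give a single boundary line, so $C_s$ has exactly one ``edge'' — this is the shape $\ZZ A_\infty$. For a fixed unordered pair $\{a,b\}$ of distinct elements of $S$, the objects $E((a,i),(b,j))$ over all $i,j\in\ZZ$ have no boundary objects at all (the condition $y=x^{++}$ cannot occur across two different fibers once the fibers are separated by at least the $\phi$-gap as in hypothesis (1)–(2) of Theorem \ref{thm: C(Z) is a 2-CY cluster category}), so $C_{ab}$ extends infinitely in both directions — this is $\ZZ A_\infty^\infty$.

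The main steps, in order: (i) enumerate the indecomposables as $2$-subsets $\{x,y\}$ with $y\neq x^\pm$ and note the bijection respects the partition into $s=s'$ vs.\ $s\neq s'$; (ii) compute $\tau E(x,y)=E(y^-,x^-)$ from the shift lemma and $2$-CY; (iii) read off from Lemma \ref{almost split triangles} that the only irreducible maps out of $E(x,y)$ are $E(x,y)\to E(x,y^+)$ and $E(x,y)\to E(x^+,y)$, with the exceptional boundary triangle $E(x^-,x^+)\to E(x^-,x^{++})\to E(x,x^{++})$; (iv) verify connectivity of each $C_s$ and each $C_{ab}$ and that distinct $s$, distinct $\{a,b\}$, and the two types never connect; (v) match each component's translation quiver against the standard pictures of $\ZZ A_\infty$ (one boundary ray) and $\ZZ A_\infty^\infty$ (no boundary). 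I expect the main obstacle to be step (iv)–(v): carefully checking that the family $C_s$ really has a boundary and terminates as $\ZZ A_\infty$ rather than $\ZZ A_\infty^\infty$ — that is, confirming that the objects $E(x,x^{++})$ are genuinely in $C_s$ (not projective-injective, hence nonzero) under hypotheses (1)–(2), and that no further irreducible maps glue a second boundary on, so the shape is exactly right; the cross-fiber case $C_{ab}$ requires checking the analogous absence of any $E(x,x^{++})$-type object, which uses that the two $S$-fibers are ``$\phi$-separated'' and hence $x^{++}$ never jumps from one fiber to the other.
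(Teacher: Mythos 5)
Your proposal is correct and follows essentially the same route as the paper: partition the indecomposables $E(x,y)$ according to whether the two endpoints lie in the same $S$-fiber or in different fibers, use the almost split triangles of Lemma \ref{almost split triangles} to see the irreducible maps stay within each family, and distinguish $\ZZ A_\infty$ from $\ZZ A_\infty^\infty$ by the presence (in $C_s$, via the boundary objects $E((s,i),(s,i+2))$) or absence (in $C_{ab}$) of quasi-simple boundary objects. The paper's proof is just a terser version of your steps (i)--(v).
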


\begin{proof}
For every $s\in S$, the $C_s$ component consists of all objects $E((s,i),(s,j))$ where $i,j\in \ZZ$ with $|i-j|\ge2$. Since these contain the simple objects $E((s,i),(s,i+2))$, they are of type $\ZZ A_\infty$. For $a\neq b\in S$, the component $C_{ab}$ contains all objects $E((a,i),(b,j))$ where $i,j\in\ZZ$. This is a component of type $\ZZ A_\infty^\infty$.
\end{proof}

\begin{center}
\begin{figure}[ht]\label{zig-zag figure}
       \includegraphics[width=5in]{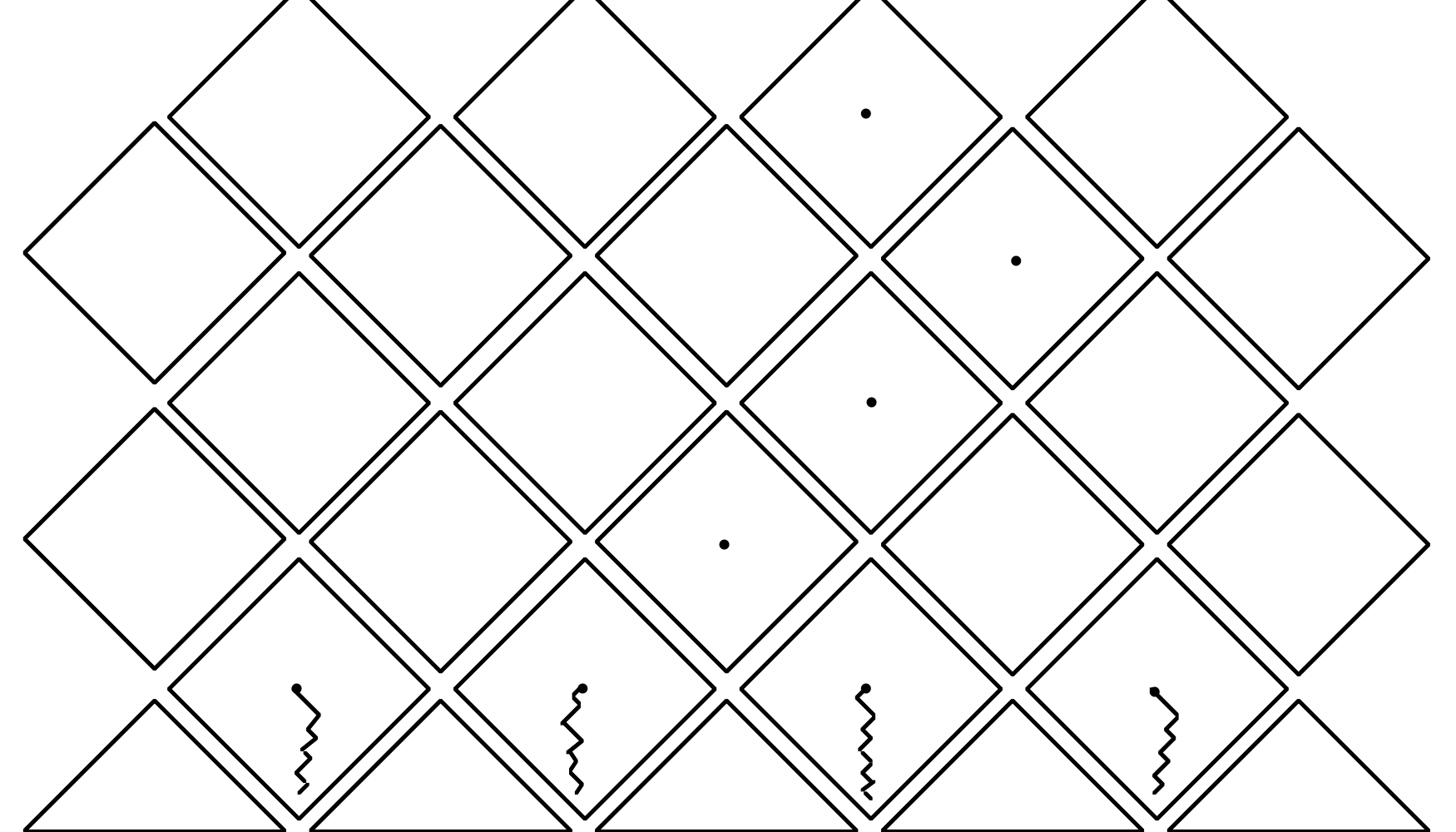}
\caption{An example of a cluster in $\cC_\phi(\ZZ^2)$. In each $C_{i,i+1}$ component, choose a zig-zag starting at the center point ($(0,0)$ followed by an infinite sequence of objects given by adding $(1,0)$ or $(0,-1)$ at each step and converging to $(\infty,-\infty)$). Then take the center points of $C_{ij}$ where $(i,j)$ form another zig-zag patter, such as $(0,2),(0,3),(0,4),(-1,4),\cdots$ (adding $(0,1)$ or $(-1,0)$ at each step and converging to $(-\infty,\infty)$). }
\end{figure}
\end{center}

An example of a cluster in $\cC_\phi(Z)$ for $S=\ZZ$ ($Z=\ZZ^2$ with lexicographic order) is given by taking a ``zig-zag'' in each component $C_{i,i+1}$ starting at the center point $E((i,0),(i+1,0))$ plus a zig-zag pattern of center points. By a \emph{zig-zag} in $C_{ab}$ we mean a sequence of objects $X_0,X_1,\cdots$ so that, if $X_k=E((a,i),(b,j))$ then $X_{k+1}=E((a,i+1),(b,j))$ or $E((a,i),(b,j-1))$ and so that $i\to\infty$ and $j\to-\infty$ as $k\to \infty$. By a \emph{zig-zag pattern of center points} we mean a sequence of objects $Y_0,Y_1,\cdots$ where each $Y_k=E((i,0),(j,0))$ and $Y_{k+1}=E((i-1,0),(j,0))$ or $E((i,0),(j+1,0))$ and so that $Y_0=E((i,0),(i+1,0))$ for some $i$ and so that $i\to-\infty$ and $j\to\infty$ as $k\to\infty$. See Figure 1.


\subsection{$m$-cluster category of type $A_\infty$}

As another example, we will construct a triangulated category $\cC_\Phi(Z_m\ast\ZZ)$ whose \emph{standard objects} form a thick subcategory $\cC_\infty^m$ which satisfies the definition of an $m$-cluster category for $m\ge3$. We will show that the \emph{standard $m$-clusters} are in bijection with the set of partitions of the $\infty$-gon into $m+2$-gons. Such a category has already been constructed in \cite{HJ12b}. We also show that the triangulated category $\cC_\Phi(Z_m\ast\ZZ)$ is $m+1$-Calabi-Yau and that its $m+1$-rigid objects have a partial cluster structure. We call maximal compatible sets of $m+1$-rigid objects \emph{nonstandard clusters}. These correspond to 2-periodic partitions of the double $\infty$-gon into $m+2$-gons plus one $m+3$-gon in the middle. The sides of this middle polygon have more than $m$ mutations. These structures can also be obtained by taking $m$-cluster categories of type $A_n$ as in \cite{BM1} and taking the limit as $n$ goes to $\infty$.


\subsubsection{Definitions}

Let $m\ge3$ and let $Z_\infty^m=Z_m\ast\ZZ$ with elements $(p,k)$ denoted by $x_k^p$ where $p=1,\cdots,m,k\in\ZZ$. Let $\Phi:Z_\infty^m\to Z_\infty^m$ be the automorphism given by
\[
	\Phi(x^p_k)=\begin{cases} x^{p+1}_k & \text{if }p\neq m\\
    x^1_{k+1}& \text{if }p=m
    \end{cases}
\]
Then $\Phi^m(x_k^p)=x_{k+1}^p$ which is the successor of $x_k^p$ for any $x_k^p\in Z_\infty^m$. Since $x,\Phi(x),\Phi^2(x)$ are in cyclic order, $\Phi$ is an admissible automorphism of $Z_\infty^m$. To simplify notation we use the convention that $x_k^{p+m}=x_{k+1}^p$ for all integers $p,k$ and, more generally, $x_k^p=x_j^q$ if $mk+p=mj+q$. In other words, $\ll(x_k^p)=mk+p$ gives a bijection $\ll:Z_\infty^m\to \ZZ$. For example, the formula for $\Phi$ is $\Phi(x^p_k)= x^{p+1}_k$ for all integers $k,m$. Note that $\ll$ does not preserve order.

It is important to observe that $\Phi^m(x_k^p)=x_k^{p+m}=x_{k+1}^p$ is the \emph{successor} of $x_k^p$.

By Theorem \ref{Krull-Schmidt theorem} we have:

\begin{prop} $\cMF_\Phi(Z_\infty^m)$ is a Frobenius category with indecomposable objects $E(x,y)$ where $x,y\in Z_\infty^m$ so that $(\Phi(x),y,\Phi^{-1}(x))$ is in cyclic order, i.e., either $\Phi^{-1}(x)<\Phi(x)\le y$,  $y\le\Phi^{-1}(x)<\Phi(x)$ or $\Phi(x)\le y\le \Phi^{-1}(x)$. Also, we have:
\[
	E(x_i^p,x_j^q)[1]=E(\Phi^{-1}x_j^q,\Phi^{-1}x_i^p)=E(x_j^{q-1},x_i^{p-1})
\]
The shift operator $[1]$ takes basic even morphism to basic even morphisms and basic odd morphisms to $-1$ times basic odd morphisms.
\end{prop}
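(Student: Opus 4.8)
Both halves of the proposition should follow from results already proved, together with one genuine computation — that of the suspension functor.

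\emph{Frobenius structure and indecomposables.} The set $Z_\infty^m=Z_m\ast\ZZ$ is cyclically ordered by Example \ref{cyclic poset XP} (since $Z_m$ is), and $\Phi$ is an admissible automorphism of it: a lift $\tilde\Phi$ satisfies $x\le\tilde\Phi x\le\tilde\Phi^2 x<\s x$ for all $x$ because $m\ge3$, so $(x,\Phi x,\Phi^2 x)$ is in cyclic order (as already observed in the text). Hence Theorem \ref{thm: MF-phi(X) is Frobenius} shows $\cMF_\Phi(Z_\infty^m)$ is a Frobenius category, and Theorem \ref{Krull-Schmidt theorem} identifies its indecomposables with the $E(x,y)$ for which $(\Phi x,y,\Phi^{-1}x)$ is in cyclic order, the three displayed inequalities being the elementary unpacking of that condition. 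Since $\approx$ is trivial on $Z_\infty^m$, the same theorem says the projective--injective indecomposables are exactly the $E(z,\Phi z)=G_\Phi P_z$, i.e. the $E(x_k^p,x_k^{p+1})$.

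\emph{The suspension.} I would first note that $\Phi$, viewed as an additive automorphism of $\cP(Z_\infty^m)$, restricts to an autoequivalence of $\cMF_\Phi(Z_\infty^m)$ — because $\Phi\eta_P=\eta_{\Phi P}$, and likewise for $\Phi^{-1}$ — acting by $\Phi\,E(x,y)=E(\Phi x,\Phi y)$; so $\Phi^{-1}E(x_i^p,x_j^q)=E(x_i^{p-1},x_j^{q-1})\cong E(x_j^{q-1},x_i^{p-1})$ by Lemma \ref{lem: E(x,y) = E(y,x)}, and it suffices to prove $E(x,y)[1]\cong\Phi^{-1}E(x,y)$ in $\cC_\Phi(Z_\infty^m)$. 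For this, for every indecomposable $E(x,y)$ that is not projective--injective I would exhibit a short exact sequence in $\cMF_\Phi(Z_\infty^m)$
\[
	E(x,y)\cofib G_\Phi P_{\Phi^{-1}x}\oplus G_\Phi P_{\Phi^{-1}y}\onto E(\Phi^{-1}y,\Phi^{-1}x),
\]
where $G_\Phi P_{\Phi^{-1}x}=E(\Phi^{-1}x,x)\cong E(x,\Phi^{-1}x)$ and $G_\Phi P_{\Phi^{-1}y}=E(\Phi^{-1}y,y)$, so the middle term is projective--injective; the two maps are (signed) basic even morphisms, the sign on one component of the first map being forced exactly as in the successor case treated above and in \cite{IT09}. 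After the forgetful functor the sequence is split exact in $\cP(Z_\infty^m)$, the cokernel having underlying object $P_{\Phi^{-1}y}\oplus P_{\Phi^{-1}x}$, and a direct $2\times2$-matrix computation identifies its operator $d$ with that of $E(\Phi^{-1}y,\Phi^{-1}x)=\Phi^{-1}E(x,y)$. This cokernel is indecomposable (Lemma \ref{E(x,y) is strongly indecomposable}) and again not projective--injective (that would force $\Phi^{-1}x=y$ or $\Phi^{-1}y=x$, i.e. $E(x,y)$ itself projective--injective), so it is the suspension: $E(x,y)[1]\cong E(\Phi^{-1}y,\Phi^{-1}x)$, which is the asserted formula.

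\emph{The effect on morphisms.} This goes exactly as in the corresponding lemma for the successor case. Taking the sequences above simultaneously for all indecomposables, a basic even morphism $E(x,y)\to E(a,b)$ extends to a morphism of the corresponding sequences and induces on the cokernels a basic even morphism $E(\Phi^{-1}y,\Phi^{-1}x)\to E(\Phi^{-1}b,\Phi^{-1}a)$, so $[1]$ preserves basic even morphisms. Every odd morphism is a composite of basic even morphisms with a basic odd isomorphism $E(x,y)\cong E(y,x)$; interchanging $x$ and $y$ in the sequence above swaps the two summands of the middle term and reverses the sign on one component of the first map, and tracing this through the cokernel shows $[1]$ sends the basic odd isomorphism $E(x,y)\cong E(y,x)$ to $-1$ times the basic odd isomorphism $E(\Phi^{-1}y,\Phi^{-1}x)\cong E(\Phi^{-1}x,\Phi^{-1}y)$. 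Hence $[1]$ negates all odd morphisms.

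\emph{Main obstacle.} The one step that requires real work is verifying that the displayed sequence genuinely lies in $\cMF_\Phi(Z_\infty^m)$: that the basic morphisms appearing actually exist — equivalently, that triples such as $(x,y,\Phi^{-1}x)$ and $(y,\Phi^{-1}x,x)$ are in cyclic order, which follows from $\Phi$ being admissible together with $m\ge3$ — that the two maps commute with the operators $d$ (so they are morphisms of $\cMF_\Phi$, not merely of $\cP$), that the first map becomes split monic under the forgetful functor, and that the induced operator on the cokernel is precisely the one of $E(\Phi^{-1}y,\Phi^{-1}x)$. This is a finite computation with counterdiagonal $2\times2$ matrices and the cocycle of $Z_\infty^m$, entirely parallel to the $S^1$ computation in \cite{IT09} and to the discrete computation done above; everything else in the proposition is then formal.
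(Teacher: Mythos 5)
Your proposal is correct and follows essentially the same route as the paper: the Frobenius structure and the classification of indecomposables are quoted from Theorem \ref{thm: MF-phi(X) is Frobenius} and Theorem \ref{Krull-Schmidt theorem} (the paper literally introduces the proposition with ``By Theorem \ref{Krull-Schmidt theorem} we have''), and the shift formula with its sign behaviour on even and odd morphisms is obtained from the injective-envelope short exact sequence $E(x,y)\cofib E(\Phi^{-1}x,x)\oplus E(\Phi^{-1}y,y)\onto E(\Phi^{-1}y,\Phi^{-1}x)$, which is exactly the construction the paper uses in the corresponding lemma for $\cC_\phi(Z)$ (with $\Phi^{-1}$ replacing the predecessor) including the sign-switch argument for odd morphisms. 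The residual matrix verifications you flag are the same ones the paper leaves to the analogy with \cite{IT09}, so nothing essential is missing.
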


Since $\Phi^{-m}(x_i^p)=x_i^{p-m}=x_{i-1}^p$ is the predecessor of $x_i^p$, we conclude that \[
E(x,y)[m]=E(x^-,y^-)\]

Lemma \ref{almost split triangles} still holds, except that the formula for the shift $[1]$ has changed. The new lemma is:

\begin{lem}
The stable category $\ul\cMF_\Phi(Z_\infty^m)=\cC_\Phi(Z_\infty^m)$ has almost split triangles
\[
	\t E(x,y)\to E(x^-,y)\oplus E(x,y^-)\to E(x,y)\to \t E(x,y)[1]
\]
where $\t E(x,y)=E(x^-,y^-)\cong E(x,y)[m]$.
\end{lem}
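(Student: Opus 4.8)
The plan is to follow the proof of Lemma~\ref{almost split triangles}, the one genuinely new point being that the shift $[1]$ on $\cC_\Phi(Z_\infty^m)$ is now computed from $E(a,b)[1]=E(\Phi^{-1}b,\Phi^{-1}a)$ rather than from the $2$-Calabi--Yau formula. First I would produce the distinguished triangle. Writing $w^{\pm}=\Phi^{\pm m}w$ for the successor and predecessor of $w$, so that $(x^-)^+=x$ and $(y^-)^+=y$, the sequence
\[
\t E(x,y)=E(x^-,y^-)\;\cofib\; E(x^-,y)\oplus E(x,y^-)\;\onto\; E(x,y)
\]
is the reindexing $(u,v)=(x^-,y^-)$ of the short exact sequence $E(u,v)\to E(u,v^+)\oplus E(u^+,v)\to E(u^+,v^+)$ of the proof of Lemma~\ref{almost split triangles}; its exactness is the same explicit matrix computation with basic morphisms, which depends only on the cyclic poset $Z_\infty^m$ and on $w^+$ being the successor of $w$, not on whether we twist by $\phi$ or by $\Phi$. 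Hence it is a conflation in the exact category $\cMF_\Phi(Z_\infty^m)$, and terms that become zero in the stable category --- those that are zero or projective--injective --- are deleted, exactly as in Lemma~\ref{almost split triangles}; when all three maps are basic it is a positive triangle in the sense of \cite{IT10}. Passing to the stable category gives the distinguished triangle
\[
\t E(x,y)\to E(x^-,y)\oplus E(x,y^-)\to E(x,y)\to \t E(x,y)[1],
\]
and since $\t E(x,y)=E(x^-,y^-)=E(x,y)[m]$ by the identity recorded just above, the connecting term equals $\t E(x,y)[1]\cong E(x,y)[m+1]$, as claimed.

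Next I would check that the triangle is \emph{almost split}. The object $E(x,y)$ is indecomposable with endomorphism ring $R[\sqrt t]$ by Lemma~\ref{E(x,y) is strongly indecomposable}, and $E(x^-,y)\oplus E(x,y^-)\to E(x,y)$ is not a split epimorphism: otherwise $E(x,y)$ would be a summand of $E(x^-,y)\oplus E(x,y^-)$ and hence, by Lemma~\ref{lem: E(x,y) = E(y,x)}, isomorphic to $E(x^-,y)$ or to $E(x,y^-)$, forcing $x\approx x^-$ or $y\approx y^-$, which is impossible. It then remains to show that every non-isomorphism $f\colon E(a,b)\to E(x,y)$ in $\cC_\Phi(Z_\infty^m)$ factors through $E(x^-,y)\oplus E(x,y^-)$. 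We may take $E(a,b)$ indecomposable and $f\neq0$; then $E(a,b)\not\cong E(x,y)$, the endomorphism ring of any indecomposable of $\cC_\Phi(Z_\infty^m)$ being $\kk$. Using the description of $\Hom$ in $\cC_\Phi(Z_\infty^m)$ and of its factorizations --- established as for $\cC_\phi(Z)$ above, and, if preferred, by restricting to the finite sub-cyclic-poset spanned by the indecomposable summands of $E(a,b)$ and $E(x,y)$ and comparing with the $S^1$ and type-$A_n$ computations of \cite{IT09},\cite{IT10},\cite{BM1} --- a nonzero $f$ forces liftings of the endpoints with $a\le x$ and $b\le y$, and $f$ factors through $E(s,s')$ whenever $a\le s\le x$ and $b\le s'\le y$ (for suitable liftings); non-isomorphism forces $a<x$ or $b<y$ strictly, hence $a\le x^-$ or $b\le y^-$ (as $x^+$ is the successor), so $f$ factors through the basic morphism $E(x^-,y)\to E(x,y)$ or through $E(x,y^-)\to E(x,y)$ in the triangle. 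This establishes the right almost-split property, so the triangle is the Auslander--Reiten triangle ending at $E(x,y)$ with AR translate $\t E(x,y)=E(x,y)[m]$ --- consistent with $\cC_\Phi(Z_\infty^m)$ being $(m+1)$-Calabi--Yau.

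The part I expect to be the real work is not this formal manipulation but the bookkeeping it rests on: pinning down the indecomposables and the projective--injectives $E(z,\Phi z)$ of $\cMF_\Phi(Z_\infty^m)$, deducing from them the $\Hom$-groups of $\cC_\Phi(Z_\infty^m)$ and which basic morphisms survive in the stable quotient, and handling the degenerate positions of $x$ and $y$ for which one or more of $E(x^-,y)$, $E(x,y^-)$, $E(x^-,y^-)$ collapses to zero or to a projective--injective, forcing terms of the triangle to be trimmed.
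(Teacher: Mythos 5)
Your proposal is correct and follows essentially the same route as the paper, which gives no separate argument here: it simply declares that Lemma \ref{almost split triangles} still holds with the new shift formula $E(x_i^p,x_j^q)[1]=E(x_j^{q-1},x_i^{p-1})$ (so $E(x,y)[m]=E(x^-,y^-)$), i.e.\ the exact sequence $E(x^-,y^-)\to E(x^-,y)\oplus E(x,y^-)\to E(x,y)$ in the Frobenius category yields the distinguished triangle, and non-isomorphisms into $E(x,y)$ factor through the middle term exactly as in that earlier proof. Your additional bookkeeping (non-split epi via Krull--Schmidt, trimming of projective-injective summands, the $\Hom$/factorization description for $\cC_\Phi$) is precisely the implicit content of the paper's one-line reference, so there is nothing methodologically different to compare.
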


\begin{thm}
The triangulated category $\cC_\Phi(Z_\infty^m)$ is $m+1$-Calabi-Yau, i.e., there is a natural isomorphism
\[
	\Ext^k(X,Y)\cong D\Ext^{m+1-k}(Y,X)
\]
for all objects $X,Y$.
\end{thm}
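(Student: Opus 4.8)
The plan is to derive the $(m+1)$-Calabi--Yau property from the existence of Auslander--Reiten triangles together with the identification of the AR translation $\t$ with the $m$-th power of the shift. The structural ingredients are already available: by Theorem \ref{Krull-Schmidt theorem} the stable category $\cC_\Phi(Z_\infty^m)$ is a $\kk$-linear Krull--Schmidt triangulated category, it is Hom-finite because $\cC_\Phi(Z_\infty^m)(X,Y)$ is $\kk$ or $0$ for indecomposable $X,Y$, and by the lemma above giving the almost split triangles it has AR triangles
\[
	\t E(x,y)\to E(x^-,y)\oplus E(x,y^-)\to E(x,y)\to \t E(x,y)[1]
\]
with $\t E(x,y)=E(x^-,y^-)\cong E(x,y)[m]$.

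First I would invoke the Reiten--Van den Bergh theorem: a Hom-finite Krull--Schmidt triangulated $\kk$-category admits a Serre functor $\mathbb{S}$ if and only if it has AR triangles, and then $\mathbb{S}\cong\t\circ[1]$ as triangulated autoequivalences. So it suffices to prove $\t\cong[m]$ as functors. On objects this is precisely the displayed isomorphism $\t E(x,y)\cong E(x,y)[m]$. To promote it to an isomorphism of functors I would check naturality on the basic irreducible morphisms $E(x,y)\to E(x,y^+)$: the functor $[m]=[1]^m$ sends such a morphism to a basic morphism by the Proposition (since $[1]$ preserves basic even morphisms and only flips the sign of basic odd ones), and $\t$ sends it to a basic morphism by rotating the AR triangles; a diagram chase through the AR triangles then matches the two up to a scalar that can be absorbed into the choice of the isomorphisms. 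This gives $\mathbb{S}\cong[m]\circ[1]=[m+1]$.

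Granting $\mathbb{S}\cong[m+1]$, Serre duality supplies a bifunctorial isomorphism $\Hom(X,Y)\cong D\Hom(Y,X[m+1])$, whence for every $k$
\[
	\Ext^k(X,Y)=\Hom(X,Y[k])\cong D\Hom(Y[k],X[m+1])=D\Hom(Y,X[m+1-k])=D\Ext^{m+1-k}(Y,X),
\]
and all the isomorphisms are natural in $X$ and $Y$. This is the claimed $(m+1)$-Calabi--Yau isomorphism.

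I expect the main obstacle to be exactly the passage from $\t\cong[m]$ on objects to an isomorphism of functors, and in particular the sign bookkeeping: $[1]$ negates basic odd morphisms, so $[m]$ contributes a factor $(-1)^m$ and $[m+1]$ a factor $(-1)^{m+1}$, and one must check these match the signs built into the AR triangles (as in the proof of Theorem \ref{thm: C(Z) is 2-CY}, where a similar sign discrepancy forced the antisymmetry $\langle f,g\rangle=-\langle g,f\rangle$). An alternative route that avoids the Serre-functor machinery, parallel to that proof, is to define the pairing directly: for $f\in\Hom(X,Y[k])$ and $g\in\Hom(Y,X[m+1-k])$ set $\langle f,g\rangle=\Tr\bigl(f[m+1-k]\circ g\bigr)$, where $\Tr:\Hom(Y,Y[m+1])\to\kk$ sums the diagonal scalar components relative to a decomposition of $Y$ into indecomposables; nondegeneracy follows from the classification of morphisms in $\cC_\Phi(Z_\infty^m)$, and naturality and (anti)symmetry follow from the fact that $[m+1]$ takes basic morphisms to basic morphisms up to a uniform sign. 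The same sign analysis is the crux in either approach.
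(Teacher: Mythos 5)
Your proposal is correct in substance, but your primary route is genuinely different from the paper's. The paper proves the theorem the way your ``alternative route'' does: it uses the almost split triangle $\t E(x,y)\to E(x^-,y)\oplus E(x,y^-)\to E(x,y)\to \t E(x,y)[1]$ only to note that the connecting morphism $E(x,y)\to E(x,y)[m+1]$ is a hammock, so that the composition pairing $\Hom(X,Y)\otimes\Hom(Y,X[m+1])\to\Hom(X,X[m+1])\cong\kk$ is nondegenerate, and then settles naturality exactly as in Theorem \ref{thm: C(Z) is 2-CY}, i.e.\ by the trace pairing being symmetric up to sign because $[m+1]$ carries basic morphisms to basic morphisms up to a uniform sign. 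Your main route instead imports the Reiten--Van den Bergh equivalence (Serre functor $\Leftrightarrow$ AR triangles) and reduces everything to the functorial identification $\mathbb{S}\cong\t\circ[1]\cong[m+1]$; this is legitimate here since $\cC_\Phi(Z_\infty^m)$ is Hom-finite Krull--Schmidt (Theorem \ref{Krull-Schmidt theorem}, one-dimensional Hom spaces between indecomposables) and has AR triangles both ending and starting at every indecomposable, and it buys you the full Serre-duality statement with naturality in both variables for free once $\mathbb{S}\cong[m+1]$ is established. The one caveat is the step you yourself flag: checking naturality of $\t\cong[m]$ only on irreducible morphisms is not enough, because irreducible maps never leave an AR component, while $\cC_\Phi(Z_\infty^m)$ has plenty of basic (in particular odd) morphisms between distinct components $C_{pq}$ that are not composites of irreducibles; since all Hom spaces are $\kk$ or $0$, the verification amounts to matching signs on all basic even and odd morphisms, which is precisely the sign bookkeeping the paper performs through the trace pairing, so your second route is the cleaner way to discharge it.
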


\begin{proof}
The lemma implies that the basic morphism $E(x,y)\to \t E(x,y)[1]=E(x,y)[m+1]$ is a \emph{hammock} which means that for any morphism $E(x,y)\to Y$ for any object $Y$, there is a morphism $Y\to E(x,y)[m+1]$. The duality is given by the fact that the nondegenerate pairing
\[
	(X,Y)\otimes (Y,X[m+1])\to (X,X[m+1])\cong \kk
\]
As in the proof of Theorem \ref{thm: C(Z) is 2-CY}, naturality of the pairing follows from the fact that it is symmetric up to sign.
\end{proof}



\subsubsection{Auslander-Reiten quiver of $\cC_\Phi(Z_\infty^m)$}

As in the previous section, we have the following.

\begin{thm}
The Auslander-Reiten quiver of $\cC_\Phi(Z_\infty^m)$ is a union of $\binom m2$ components $C_{pq}=C_{qp}$ where $p,q$ are distinct integers modulo $m$. Of these, the $m$ components $C_{p,p+1}$ are of type $\ZZ A_\infty$ and the others are of type $\ZZ A_\infty^\infty$. 
\end{thm}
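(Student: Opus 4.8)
The plan is to run the argument in parallel with that of the preceding theorem on $\cC_\phi(S\ast\ZZ)$, the one new ingredient being the superscript $p \bmod m$ attached to an element $x^p_k$ of $Z_\infty^m$. First I would attach a residue pair to every indecomposable: by the Proposition above every indecomposable of $\cC_\Phi(Z_\infty^m)$ is of the form $E(x^p_i,x^q_j)$, with $E(x^p_i,x^q_j)\cong E(x^q_j,x^p_i)$ and otherwise pairwise non-isomorphic (since $\lambda$ is a bijection and equivalence is trivial). Because $\Phi^m(x^p_k)=x^p_{k+1}$, passing to the successor $x^{+}=\Phi^{m}x$ or predecessor $x^{-}=\Phi^{-m}x$ leaves the superscript unchanged, so the \emph{unordered} pair $\{p,q\}$ is constant along all terms of the almost split triangle $\tau E(x^p_i,x^q_j)=E(x^p_{i-1},x^q_{j-1})\to E(x^p_{i-1},x^q_j)\oplus E(x^p_i,x^q_{j-1})\to E(x^p_i,x^q_j)$ established above. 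A short check of the defining condition that $(\Phi x,y,\Phi^{-1}x)$ be in cyclic order rules out $p\equiv q\pmod m$: no lift of $x^p_j$ can fit between the relevant lifts of $\Phi x^p_i$ and $\Phi^{-1}x^p_i$. Thus each indecomposable carries a well-defined pair $\{p,q\}$ of \emph{distinct} residues mod $m$, partitioning the indecomposables into at most $\binom m2$ classes $C_{pq}=C_{qp}$.

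Next I would show that each Auslander--Reiten component lies in a single $C_{pq}$. Since $\cC_\Phi(Z_\infty^m)$ is Krull--Schmidt (Theorem \ref{Krull-Schmidt theorem}) and has Auslander--Reiten triangles (the almost split triangles above), every irreducible morphism occurs as a component of the middle map of such a triangle; by the previous paragraph those maps never join objects of different classes, so no component meets two classes.

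It then remains to identify $C_{pq}$, and here the behaviour depends only on whether $q\equiv p\pm1\pmod m$. If $q\equiv p+1$, the cyclic-order condition forces a valid $E(x^p_i,x^q_j)$ to have $i<j$, while $E(x^p_i,x^{p+1}_i)=E(x^p_i,\Phi x^p_i)$ is projective-injective (Theorem \ref{Krull-Schmidt theorem}) hence zero in $\cC_\Phi(Z_\infty^m)$; reading off the irreducible maps $E(x^p_i,x^q_j)\to E(x^p_i,x^q_{j+1})$ and $E(x^p_i,x^q_j)\to E(x^p_{i+1},x^q_j)$ (the latter present only when $i+1<j$) together with $\tau(i,j)=(i-1,j-1)$, and setting $(a,b)=(i,j-i)$ with $b\ge1$, one recognizes the Auslander--Reiten quiver $\ZZ A_\infty$. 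There are exactly $m$ such pairs $\{p,p+1\}$, $p\in\ZZ/m$, and by symmetry these are all the ``consecutive'' classes. If $q\not\equiv p\pm1$, the same check shows \emph{every} $E(x^p_i,x^q_j)$, $(i,j)\in\ZZ^2$, is valid and none projective-injective; the identical local picture now has $b=j-i$ running over all of $\ZZ$, giving $\ZZ A_\infty^\infty$, and these account for the remaining $\binom m2-m$ classes. Finally, since $\ZZ A_\infty$ and $\ZZ A_\infty^\infty$ are connected, each $C_{pq}$ is a single component, completing the proof.

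The hard part will be the cyclic-order bookkeeping in the first and third steps: one must pin down, for each residue pair $\{p,q\}$, exactly which index pairs $(i,j)$ yield a valid non-projective-injective indecomposable, and locate precisely the ``boundary'' objects that vanish because they are projective-injective. This is exactly what distinguishes the truncated quiver $\ZZ A_\infty$ (consecutive residues) from $\ZZ A_\infty^\infty$ (non-consecutive), and doing it cleanly requires care with the conventions $x^{p+m}_k=x^{p+1}_{k+1}$ and $\Phi^m=$ successor. Everything else is a transcription of the $\cC_\phi(S\ast\ZZ)$ argument.
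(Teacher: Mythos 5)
Your proposal is correct and follows essentially the same route as the paper: partition indecomposables $E(x_i^p,x_j^q)$ by the pair of blocks $\{p,q\}$ (necessarily distinct), note that $\tau$ and the irreducible maps only shift the subscripts by $1$ and so preserve the block pair, and then read off that the consecutive pairs (where $j>i$ after discarding the projective-injectives $E(x,\Phi x)$) give the $m$ components of type $\ZZ A_\infty$ while the non-consecutive pairs, with $i,j$ arbitrary, give $\ZZ A_\infty^\infty$. Your version just spells out the cyclic-order bookkeeping and the connectedness step that the paper leaves implicit.
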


\begin{proof}
The points in $Z_m\ast \ZZ$ form $m$ \emph{blocks} given by fixing the first coordinate. Objects $E(x,y)$ must have ends $x,y$ in different blocks. We let $C_{pq}$ be the set of all $E(x_i^p,x_j^q)$. These form a component of the Auslander-Reiten quiver since irreducible maps will change the subscripts by $1$. If $q=p+1$ then we must have $j>i$. So, $C_{p,p+1}$ is of type $\ZZ A_\infty$. The others have type $\ZZ A_\infty^\infty$ since $i,j$ are arbitrary.
\end{proof}

\begin{figure}[htbp]
\begin{center}
%
{
\setlength{\unitlength}{1in}
{\mbox{
\begin{picture}(4,1.5)
      \thicklines
%
\put(.5,1.5){\Ainftydown{$C_{54}$}} 
\put(1.5,1.5){\Ainftydown{$C_{15}$}}
\put(2.5,1.5){\Ainftydown{$C_{21}$}}
\put(0,1){\Ainftyinfty{$C_{53}$}} 
\put(1,1){\Ainftyinfty{$C_{14}$}} 
\put(2,1){\Ainftyinfty{$C_{25}$}} 
\put(3,1){\Ainftyinfty{$C_{31}$}} 
\put(.5,.5){\Ainftyinfty{$C_{13}$}} 
\put(1.5,.5){\Ainftyinfty{$C_{24}$}}
\put(2.5,.5){\Ainftyinfty{$C_{35}$}}
\put(0,0){\Ainftyup{$C_{12}$}} 
\put(1,0){\Ainftyup{$C_{23}$}}
\put(2,0){\Ainftyup{$C_{34}$}}
\put(3,0){\Ainftyup{$C_{45}$}}
\end{picture}}
}}
\caption{When $m=5$, there are $\binom m2=10$ components in the AR-quiver of $\cC_\Phi(Z_\infty^m)$. They are labeled $C_{pq}=C_{qp}$. When $p,q$ are consecutive, $C_{pq}$ has type $\ZZ A_\infty$. Otherwise $C_{pq}$ has type $\ZZ A_\infty^\infty$. Four components are repeated.}
\label{AR-quiver of Zm-infty}
\end{center}
\end{figure}
%



\subsubsection{Standard objects and $m$-clusters}

We define the \emph{standard objects} of $\cC_\Phi(Z_\infty^m)$ to be those objects all of whose indecomposable summands lie in the $\ZZ A_\infty$ components $C_{p,p+1}$ of the Auslander-Reiten quiver of $\cC_\Phi(Z_\infty^m)$. In other words, the standard indecomposable objects are $E(x_i^p,x_j^{p+1})$ where $i<j$.

The main properties of standard objects are given below. We recall that an indecomposable object $X$ in an $m+1$-Calabi-Yau triangulated category if \emph{$m+1$-rigid} if $\Ext^k(X,X)=0$ for $1\le k\le m$. Two such objects are \emph{compatible} if $\Ext^k(X,Y)=0$ for $1\le k\le m$.

\begin{thm}\label{thm: main thm on standard m-clusters} The full subcategory of $\cC_\Phi(Z_\infty^m)$ consisting of standard objects is a thick subcategory. This subcategory is a true \emph{$m$-cluster category} in the following sense.
\begin{enumerate}
\item All standard indecomposable objects $X$ are {$m+1$-rigid}.
\item Define a \emph{standard $m$-cluster} to be a maximal compatible set of indecomposable standard objects of $\cC_\Phi(Z_\infty^m)$. Then for any such set $\cT$ and any object $T\in\cT$, there are exactly $m$ other standard objects $T_i^\ast$ up to isomorphism so that $\cT\backslash T\cup T_i^\ast$ is a standard $m$-cluster.
\item The objects $T_i^\ast$ with $T_0^\ast=T$ are uniquely determined up to isomorphism by the fact that there is a distinquished triangle
\[
	T_i^\ast\to B_i\to T_{i+1}^\ast\to T_i^\ast[1]
\]
where $B_i$ is the left $add\,\cT\backslash T$-approximation of $T_i^\ast$.
\end{enumerate}
Furthermore, the Verdier quotient $\cC(Z_\infty^m)/\cC_\infty^m$ is triangle equivalent to the standard cluster category of type $A_{m-3}$.
\end{thm}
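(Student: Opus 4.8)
The plan is to reduce every assertion to the combinatorics of arcs on the covering poset of $Z_\infty^m$, using the description of the indecomposables $E(x,y)$, the shift formula $E(x_i^p,x_j^q)[1]=E(x_j^{q-1},x_i^{p-1})$, the almost split triangles, and a ``Hom-nonvanishing'' criterion for $\cC_\Phi(Z_\infty^m)$ obtained by the same computation as the one in the $2$-Calabi--Yau subsection (with $\Phi^{\pm1}$ in place of the predecessor/successor maps). The standard indecomposables $E(x_i^p,x_j^{p+1})$, $i<j$, are exactly the arcs whose endpoints have $\ll$-values differing by $\equiv 1\pmod m$; I will call them \emph{standard arcs}, and the goal is to match the subcategory $\cC_\infty^m$ they generate with the $m$-cluster category of type $A_\infty$ of \cite{HJ12b}. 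For thickness: from $E(x_i^p,x_j^{p+1})[1]=E(x_j^p,x_i^{p-1})$ the endpoints of a standard arc remain in consecutive blocks, so $\cC_\infty^m$ is closed under $[\pm1]$; it is closed under summands since it is by definition $\add$ of a set of indecomposables in the Krull--Schmidt category $\cMF_\Phi(Z_\infty^m)$ (Theorem~\ref{Krull-Schmidt theorem}). For closure under cones I would use the Hom-description of $\cC_\Phi(Z_\infty^m)$ to show that the cone of any morphism between standard indecomposables is a direct sum of standard arcs or zero, reading the cone off an explicit exact sequence in $\cMF_\Phi(Z_\infty^m)$ refining the morphism, much as the exchange sequences are built in the $2$-Calabi--Yau case; this gives the thick-subcategory claim.

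For $(m+1)$-rigidity I would use $\Ext^k(X,X)=\cC_\Phi(Z_\infty^m)(X,E(\Phi^{-k}x_i^p,\Phi^{-k}x_j^{p+1}))$ for $X=E(x_i^p,x_j^{p+1})$. For $1\le k\le m$ the target is the $k$-fold parallel shift of $X$, and the Hom-criterion shows this space vanishes: a standard arc cannot cross any of its first $m$ shifts, because a crossing would force its two endpoints to interleave inside a window of $\ll$-length $<m$, impossible for an arc of length $\equiv1\pmod m$; the $(m+1)$-Calabi--Yau duality $\Ext^k\cong D\Ext^{m+1-k}$ halves the number of cases. The same computation shows that two standard arcs are compatible precisely when they do not cross, which is the combinatorial model of \cite{HJ12b}.

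Parts (2) and (3) then follow the pattern of the $2$-Calabi--Yau proof. A standard $m$-cluster $\cT$ is a maximal noncrossing set of standard arcs, i.e. an $(m+2)$-angulation of the $\infty$-gon; for $T\in\cT$ the two $(m+2)$-gon cells sharing the edge $T$ let one rotate $T$ through a cycle of standard arcs $T=T_0^\ast,T_1^\ast,\dots,T_m^\ast$ with $T_{m+1}^\ast=T_0^\ast$, exactly $m$ of which are $\neq T$. For each $i$ I would build, as in the exchange triangles displayed before the cluster-structure definition, an exact sequence in $\cMF_\Phi(Z_\infty^m)$ whose distinguished triangle is $T_i^\ast\to B_i\to T_{i+1}^\ast\to T_i^\ast[1]$ with $B_i$ the minimal left $\add(\cT\setminus T)$-approximation of $T_i^\ast$; $(m+1)$-rigidity together with this triangle pins down $T_{i+1}^\ast$ uniquely up to isomorphism, and maximality of $\cT$ (no standard arc other than the $T_i^\ast$ is compatible with all of $\cT\setminus T$, since any such arc would cross one of the $T_i^\ast$) shows each $\cT\setminus T\cup T_i^\ast$ is again a standard $m$-cluster. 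For the combinatorial backbone — clusters $\leftrightarrow$ $(m+2)$-angulations, and the limit from type $A_n$ — I would cite \cite{HJ12b} and \cite{BM1}, and for thickness plus these triangles being \emph{exactly} the axioms of the $m$-cluster category of \cite{HJ12b} I would invoke that paper's definition.

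For the Verdier quotient: in $\cC_\Phi(Z_\infty^m)/\cC_\infty^m$ every standard arc is zero, and a non-standard indecomposable $E(x_i^p,x_j^q)$ (with $p,q$ in nonadjacent blocks) fits, by the exchange triangles of the $2$-Calabi--Yau subsection with the auxiliary endpoints chosen in the two blocks flanking $x_i^p$, into a distinguished triangle two of whose terms are standard arcs; hence it becomes isomorphic to the third term, and iterating collapses each $\ZZ A_\infty^\infty$ component $C_{pq}$ to a single object. This leaves exactly $\binom{m}{2}-m=\tfrac{m(m-3)}{2}$ indecomposables, one per unordered pair of nonadjacent blocks, i.e. one per diagonal of an $m$-gon; checking that the induced shift, AR translation, and Hom spaces on the quotient agree with those of the cluster category of type $A_{m-3}$ — equivalently, that this quotient is the $n\to\infty$ limit of the analogous quotients of the $m$-cluster categories of type $A_n$ in \cite{BM1} — identifies the quotient with the standard cluster category of type $A_{m-3}$. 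The step I expect to be the main obstacle is precisely this one: a Verdier quotient is delicate, and one must control the morphisms in the localization well enough to see that the quotient functor is full and faithful onto the cluster category of type $A_{m-3}$, not merely that the object sets and $\Hom$-dimensions match; closely related is verifying that the mutation cycle around $T$ has \emph{exactly} $m+1$ vertices with no degeneracy, which again rests on the full noncrossing analysis of \cite{HJ12b}.
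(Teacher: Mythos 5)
Your treatment of thickness, $(m+1)$-rigidity via the noncrossing criterion, and parts (2)--(3) via the two $m{+}2$-gons merging into a $2m{+}2$-gon with $m+1$ diagonals is essentially the paper's argument (the paper also reduces closure under cones to the case of indecomposable \emph{domain} by the octahedral axiom and then tracks blocks: if $X\in C_{p,p+1}$ then $Y$ may be taken in $C_{p,p+1}\cup C_{p-1,p}$ and $X[1]\in C_{p-1,p}$, so the endpoints of the cone lie in blocks $p-1,p,p+1$ and any nonstandard component would have to map nontrivially to $X[1]$, which it cannot; your version only addresses cones of morphisms between indecomposables, so you still owe the reduction step).

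The genuine gap is the Verdier quotient, and you have correctly identified it yourself. Matching the number of indecomposables ($\binom{m}{2}-m=m(m-3)/2$, one per pair of nonadjacent blocks) and the Hom-dimensions does not produce a triangle equivalence, and ``taking the limit of the quotients of \cite{BM1}'' is not an argument until a functor is exhibited. The paper's missing ingredient is structural: the projection of cyclic posets $p:Z_m\ast\ZZ\to Z_m$ onto the first coordinate satisfies $\phi\circ p=p\circ\Phi$, hence induces an exact functor of Frobenius categories $\cMF_\Phi(Z_\infty^m)\to\cMF_\phi(Z_m)$ carrying projective-injectives to projective-injectives; the inverse image of the projective-injectives of $\cMF_\phi(Z_m)$ consists exactly of the standard objects (plus the projective-injectives), and the inverse images of the indecomposables of $\cMF_\phi(Z_m)$ are the subcategories mapping onto the AR-components $C_{pq}$. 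This functor therefore kills precisely $\cC_\infty^m$ on stable categories and descends to the comparison functor $\cC_\Phi(Z_\infty^m)/\cC_\infty^m\to\cC_\phi(Z_m)\simeq\cC(A_{m-3})$, with your collapsing argument (explicit exact sequences such as $E(x_i^p,x_j^q)\to E(x_k^p,x_j^q)\oplus E(x_i^p,x_N^{p+1})\to E(x_k^p,x_N^{p+1})$ showing all objects of a fixed $C_{pq}$ become isomorphic in the quotient) supplying essential surjectivity on indecomposables. Without constructing this functor, the final clause of the theorem remains unproved in your proposal.
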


If $X=E(x,y)$ then let $\ll_1X<\ll_2X$ be the integers $\ll(x),\ll(y)$ in increasing order and let $\ll X=(\ll_1X,\ll_2X)$. We say that $X$ \emph{crosses} $Y$ if $\ll_1X<\ll_1Y<\ll_2X<\ll_2Y$.

\begin{lem}\label{lem: standard objects are compatible iff noncrossing}
Two standard objects $X,Y$ are compatible if and only if they are noncrossing, i.e., they do not cross each other.
\end{lem}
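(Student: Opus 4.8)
The plan is to collapse the $m$ Ext‑vanishings defining compatibility into a single combinatorial condition, by using the shift formula to replace $\Ext^k$ by $\Ext^1$ and using that $\Phi$ acts as translation by $1$ on the labeling $\ll$. So first I would rewrite ``$X,Y$ compatible''. Since $\Ext^k(X,Y)=\Hom_{\cC_\Phi(Z_\infty^m)}(X,Y[k])=\Ext^1(X,Y[k-1])$, the condition $\Ext^k(X,Y)=0$ for $1\le k\le m$ is equivalent to $\Ext^1(X,Y[j])=0$ for $0\le j\le m-1$. By the shift formula $E(x,y)[1]=E(\Phi^{-1}y,\Phi^{-1}x)$, the object $Y[j]$ is obtained from $Y$ by applying $\Phi^{-j}$ to both endpoints; since $\Phi$ is admissible it induces an autoequivalence of $\cC_\Phi(Z_\infty^m)$ sending $E(x,y)$ to $E(\Phi x,\Phi y)$, so $\Ext^1(X,Y[j])\cong\Ext^1(\Phi^{j}X,Y)$. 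Thus $X,Y$ are compatible iff $\Ext^1(\Phi^{j}X,Y)=0$ for every $j=0,\dots,m-1$, and since $\ll\circ\Phi=\ll+1$ the $\ll$-coordinates of $\Phi^{j}X$ are just $(\ll_1X+j,\ll_2X+j)$. (Note compatibility is automatically symmetric in $X,Y$ by the $(m+1)$-Calabi--Yau property, matching the symmetry of non-crossing.)

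The main ingredient is then a description of when $\Ext^1(A,B)\neq0$ for standard $A,B$, in $\ll$-coordinates. This is the analogue of the $\Ext^1$-criterion used for the $2$-Calabi--Yau categories $\cC_\phi(Z)$, but it must be re-derived because $Z_\infty^m$ fails hypothesis (2) of Theorem \ref{thm: C(Z) is a 2-CY cluster category} (there are elements strictly between $x$ and $\Phi x$). I would compute $\Hom_{\cMF_\Phi(Z_\infty^m)}(E(x,y),E(a,b))$ from the matrix-factorization presentation (a morphism is a compatible pair of maps in $\cP(Z_\infty^m)=\add\,RZ_\infty^m$), determine which of these factor through a projective--injective $E(z,\Phi z)$ as in the $S^1$-case of \cite{IT09}, and combine the result with the shift formula; this yields, for $\ll A=(\alpha,\beta)$ and $\ll B=(\gamma,\delta)$, an explicit inequality system, sensitive to $\alpha,\gamma$ modulo $m$ (i.e. to which AR-components $A,B$ lie in), equivalent to $\Ext^1(A,B)\neq0$. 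Equivalently, since $X$, $Y$ and all the $Y[j]$ have endpoints in a fixed finite cyclic subposet $Z_m\ast I\subset Z_\infty^m$ ($I$ a finite interval), one may deduce the required Hom/Ext computation from the known structure of the $m$-cluster category of type $A_n$, \cite{BM1},\cite{HJ12b}.

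With this criterion the lemma becomes a finite check: one must show that
\[
\exists\,j\in\{0,\dots,m-1\}\ \text{with}\ \Ext^1(\Phi^{j}X,Y)\neq0
\quad\Longleftrightarrow\quad
a<c<b<d\ \text{ or }\ c<a<d<b ,
\]
where $\ll X=(a,b)$, $\ll Y=(c,d)$. For each residue of $c-j$ modulo $m$ at most one $j\in\{0,\dots,m-1\}$ is relevant, and only a bounded number of residues of $c-j$ relative to $a$ actually produce an extension; solving the corresponding inequality systems and taking the union over $j$ shows that the residue-sensitive ``margins'' present in each individual criterion cancel, leaving precisely the crossing locus $\{a<c<b<d\}\cup\{c<a<d<b\}$. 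One also handles the degenerate configurations separately — $X,Y$ sharing an endpoint, or some $Y[j]$ equal to a projective--injective $E(z,\Phi z)$ (hence zero in $\cC_\Phi$) — where both sides are visibly trivial.

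The hard part is the middle step. In $Z_\infty^m$ the naive criterion ``the endpoints of $X$ and $Y$ alternate in the cyclic order of $Z_\infty^m$'' over-counts extensions — a tight crossing produces a morphism that dies in the stable category because $\Phi$ is not the successor — so the $\Phi$-margins must be tracked precisely; and one must keep straight that the relabeling $\ll$ interchanges the two coordinates of the covering poset $\widetilde{Z_\infty^m}\cong\ZZ^2$ and therefore does not preserve the cyclic order. Once the exact inequalities are in place the remaining combinatorics is routine.
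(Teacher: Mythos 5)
Your opening reduction is sound and is essentially the paper's bookkeeping in different clothes: since $E(x,y)[1]=E(\Phi^{-1}y,\Phi^{-1}x)$ and $E(x,y)\cong E(y,x)$, one has $Y[j]\cong \Phi^{-j}Y$, and because $\Phi$ induces an exact automorphism of $\cMF_\Phi(Z_\infty^m)$ preserving projective-injectives with $\ll\circ\Phi=\ll+1$, compatibility of $X,Y$ is indeed equivalent to $\Ext^1(\Phi^jX,Y)=0$ for $j=0,\dots,m-1$; the symmetry remark via the $(m+1)$-Calabi--Yau property is also correct and replaces the paper's explicit use of Serre duality in Lemma \ref{lem: Y crosses X iff not compatible}.

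The genuine gap is that everything after this reduction -- which is where the lemma actually lives -- is promised rather than carried out. You need (i) a precise criterion, in $\ll$-coordinates, for nonvanishing of $\cC_\Phi(A,B)$ (or $\Ext^1(A,B)$) between standard objects, sensitive to the AR-components involved, and (ii) the verification that the union over $j\in\{0,\dots,m-1\}$ of the resulting nonvanishing loci is exactly the crossing locus $\{a<c<b<d\}\cup\{c<a<d<b\}$. For (i) you only say you ``would compute'' the Hom spaces from the matrix-factorization presentation and that this ``yields an explicit inequality system''; no inequalities are stated (the paper at least states the criterion precisely, $\ll_1X\le\ll_1Y<\ll_2X-1\le\ll_2Y-1$ within a component, in Lemma \ref{lem: condition for Hom(X,Y) not zero X,Y standard}, even though its proof is left to the reader). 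For (ii) you assert that ``the residue-sensitive margins cancel,'' which is exactly the delicate point you yourself call the hard part: in the paper this is done by converting the existence of a suitable shift into the floor-function inequality \eqref{eq: first technical condition for compatibility} and using the equivalence $a\le\lfloor x\rfloor<b\le\lfloor y\rfloor\iff a\le x<b\le y$ (Lemma \ref{lem: X crosses Y iff not compatible}), with the remaining shifts handled by duality (Lemma \ref{lem: Y crosses X iff not compatible}). Without your analogues of these two steps -- the explicit criterion and the explicit case/cancellation analysis -- the proposal is a plausible strategy outline, not a proof of the lemma.
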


Since $X$ never crosses itself, this implies that standard objects are $m+1$-rigid. Lemma \ref{lem: standard objects are compatible iff noncrossing} also implies the following theorem.

\begin{thm}\label{thm: standard m-clusters give partitions of infinity-gon}
Isomorphism classes of standard $m$-clusters are in 1-1 correspondence with the partitions of the $\infty$-gon into $m+2$-gons.
\end{thm}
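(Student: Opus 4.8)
The plan is to translate the statement into the combinatorics of diagonals of the $\infty$-gon (with vertex set $\ZZ$) via the bijection $\ll\colon Z_\infty^m\to\ZZ$, $\ll(x_k^p)=mk+p$. First I would record the dictionary: by Lemma \ref{lem: E(x,y) = E(y,x)} and the fact that no two distinct elements of $Z_\infty^m$ are $\approx$-equivalent, the isomorphism class of the standard indecomposable $E(x_i^p,x_j^{p+1})$ (with $i<j$) is determined by the unordered pair $\{\ll(x_i^p),\ll(x_j^{p+1})\}=\{mi+p,\,mj+p+1\}$, whose two entries differ by $m(j-i)+1$. Thus isomorphism classes of standard indecomposables are in bijection with pairs $\{a,b\}$, $a<b$, satisfying $b-a\equiv 1\pmod m$ and $b-a\ge m+1$. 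These are exactly the \emph{allowable} diagonals of the $\infty$-gon, i.e. those that can occur in a dissection into $(m+2)$-gons: a diagonal $\{a,b\}$ cuts off the $(b-a+1)$-gon on the vertices $a,a+1,\dots,b$, a finite $k$-gon admits a dissection into $(m+2)$-gons exactly when $k\equiv 2\pmod m$, and $b-a+1\equiv 2\pmod m$ together with $b-a\ge 2$ and $m\ge3$ gives precisely the stated conditions.

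Next I would invoke Lemma \ref{lem: standard objects are compatible iff noncrossing}: two standard indecomposables are compatible if and only if the corresponding diagonals do not cross, since ``$X$ crosses $Y$'' was \emph{defined} to mean $\ll_1X<\ll_1Y<\ll_2X<\ll_2Y$, which is the usual geometric crossing of the diagonals $\{\ll_1X,\ll_2X\}$ and $\{\ll_1Y,\ll_2Y\}$. Consequently, after passing to isomorphism classes, a standard $m$-cluster $\cT$ is precisely a \emph{maximal} collection $\cD$ of pairwise non-crossing allowable diagonals, and it remains to match such collections with dissections of the $\infty$-gon into $(m+2)$-gons.

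For this equivalence I would argue in two directions. Any dissection into $(m+2)$-gons is a set of pairwise non-crossing allowable diagonals (each is allowable by the congruence computation above), and it is maximal, since a further allowable diagonal would lie in a single region, while a proper diagonal of an $(m+2)$-gon cuts off a $k$-gon with $3\le k\le m+1$, which never satisfies $k\equiv 2\pmod m$ when $m\ge3$. Conversely, let $\cD$ be maximal and $R$ a region it determines; granting that every region of a maximal $\cD$ is bounded (the delicate point at infinity, see below), $R$ has vertices $v_0<\dots<v_{k-1}$ with bounding edges the consecutive pairs $\{v_i,v_{i+1}\}$ together with the long arc $\{v_0,v_{k-1}\}$. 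Each bounding edge spans $\equiv 1\pmod m$ vertices (original edges span $1$, diagonals in $\cD$ span $\equiv 1$), so $v_{k-1}-v_0\equiv k-1\pmod m$ forces $k\equiv 2\pmod m$ and $R$ is a $(km+2)$-gon. If $k\ge2$, the chord of $R$ joining two of its vertices that are $m+1$ bounding-edges apart spans $\equiv 1\pmod m$ vertices, hence is an allowable diagonal non-crossing with $\cD$ and not in $\cD$, contradicting maximality; therefore every region is an $(m+2)$-gon. Passing back through the dictionary of the first paragraph gives the asserted bijection, with the count of $m$ mutations in Theorem \ref{thm: main thm on standard m-clusters}(2) matching the polygon-mutation count on the dissection side.

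The step I expect to be the main obstacle is the behavior ``at infinity'': one must verify that the \emph{pairwise} maximality built into the definition of a standard $m$-cluster really forces geometric completeness of the dissection (no unbounded or infinitely-many-sided region survives, and any accumulation of diagonals is accounted for by the correct notion of a partition of the $\infty$-gon). For this I would follow the corresponding analysis for $Z=\ZZ$ in \cite{HJ12b} essentially verbatim, the congruence bookkeeping above being the only new ingredient.
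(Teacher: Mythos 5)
Your route is the same as the paper's: convert standard indecomposables into chords of the $\infty$-gon via $\lambda$, use Lemma \ref{lem: standard objects are compatible iff noncrossing} to identify compatibility with noncrossing, and then argue combinatorially that maximal noncrossing families of allowable chords (difference $\equiv 1 \pmod m$ and $\ge m+1$) coincide with $(m+2)$-angulations. Your handling of the bounded regions — the mod-$m$ count of the spans of the sides, plus the direct contradiction with maximality when a region has more than $m+2$ sides, and the observation that no proper diagonal of an $(m+2)$-gon is allowable — is a perfectly good variant of the paper's argument, which instead inducts on the longest side of a region by contracting an innermost $(m+2)$-gon cut off by the shortest chord.

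The step you flag as the main obstacle is, however, a genuine gap, and it cannot be closed by importing the $Z=\ZZ$ analysis of \cite{HJ12}, \cite{HJ12b} ``verbatim,'' because the statement you need — that pairwise maximality forces every complementary region to be a bounded polygon spanned by vertices — is false. For instance, for $m=3$ the allowable chords $\{3,3k+4\}$, $k\ge1$, together with $\{-3k-1,0\}$, $k\ge1$, are pairwise noncrossing, and one checks that every other allowable chord crosses one of them (any chord avoiding both endpoints $0$ and $3$ has at least three consecutive integers strictly between its endpoints, hence strictly contains an endpoint of some chord of the family); so the corresponding standard objects form a standard $m$-cluster in the paper's sense, yet the complement of these chords has a region with corners $0,1,2,3$ and the accumulation point of the vertex set $V$, which is not an $(m+2)$-gon. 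This ``split fountain'' phenomenon is exactly the weak-cluster-tilting versus cluster-tilting issue that \cite{HJ12b} documents rather than removes, so to obtain a clean bijection one must either impose a condition at infinity on standard $m$-clusters (compare the Limit Condition the paper builds into its definition of cluster in the $2$-Calabi--Yau section) or read ``partition into $(m+2)$-gons'' in a correspondingly weak sense. To be fair, the paper's own proof is equally silent here: it passes from ``noncrossing'' to ``partition into polygons'' and then only analyzes regions possessing a longest chord side, so your proposal is not weaker than the printed argument — but as written, the deferral to \cite{HJ12b} does not complete the proof, and you should state explicitly how the behavior at infinity is to be legislated away.
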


\begin{defn}\label{defn: infinity-gon}
By the \emph{$\infty$-gon} we mean the convex hull $P_\infty$ of the set $V$ of all points on the unit circle in $\CC$ of the form $e^{i\th}$ where $\cot \frac\th2\in\ZZ$. This gives a bijection $V\cong\ZZ$. We call $V$ the \emph{vertex set} of $P_\infty$. By a \emph{partition} of $P_\infty$ we mean a set of noncrossing chords in the circle with both endpoints in the vertex set $V$. (See \cite{HJ12}.)
\end{defn}

\begin{proof} For any standard object $X=E(x,y)$, take the chord in the $\infty$-gon with endpoints corresponding to $\ll(x),\ll(y)\in\ZZ$. If $X,Y$ are compatible then the corresponding chords do not cross. Therefore, any standard $m$-cluster gives a partition of the $\infty$-gon $P_\infty$ into polygons. Take one of these polygons, say $P$ and let $(a,b)=\ll X$ be the longest side of $P$. Since $X$ is standard, $b-a\equiv 1$ mod $m$ and $b\ge a+m+1$. If $b=a+m+1$ then the other sides of $P$ are sides of $P_\infty$ and $P$ is an $m+2$-gon. Otherwise, take the shortest chord $C=(v,w)$ inside the closed interval $[a,b]$. Then $C$ has length $m+1$ and bounds an $m+2$-gon. Contract $C$ to the side $(v,v+1)$ on $P_\infty$ and subtract $m$ from the position of all vertices $\ge w$. Then $b-a$ decreases by $m$ and we conclude by induction on $b-a$ that $P$ is an $m+2$-gon. The converse argument is similar.
\end{proof}

To prove Lemma \ref{lem: standard objects are compatible iff noncrossing}, we examine which standard objects map to each other. 

\begin{lem}\label{lem: condition for Hom(X,Y) not zero X,Y standard}
Let $X,Y$ be two standard objects in the same component of the AR-quiver. Then $\cC_\Phi(X,Y)\neq 0$ if and only if
\begin{equation}\label{eq: condition for Hom(X,Y) not zero X,Y standard}
	\ll_1X\le \ll_1Y<\ll_2X-1\le \ll_2Y-1
\end{equation}
\end{lem}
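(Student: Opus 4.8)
The plan is to compute the morphism space in the Frobenius category $\cMF_\Phi(Z_\infty^m)$ and then divide out the morphisms that vanish in the stable category. Write $X=E(x_i^p,x_j^{p+1})$ and $Y=E(x_{i'}^p,x_{j'}^{p+1})$ with $i<j$ and $i'<j'$, the two general objects of the component $C_{p,p+1}$; thus $\ll_1X=mi+p$, $\ll_2X=mj+p+1$, and similarly for $Y$. By Theorem \ref{Krull-Schmidt theorem} they have the presentations $X=(P_{x_i^p}\oplus P_{x_j^{p+1}},d_X)$ and $Y=(P_{x_{i'}^p}\oplus P_{x_{j'}^{p+1}},d_Y)$ with counterdiagonal differentials $d_X,d_Y$. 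Since $Z_\infty^m=Z_m\ast\ZZ$ is cyclically ordered, its structure cocycle $c$ takes only the values $0$ and $1$, and on the two blocks $p$ and $p+1$ it is computed from an explicit distance function on the covering poset; I would first record these finitely many values of $c$.

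Next I would write an arbitrary morphism $X\to Y$ as a $2\times2$ matrix of scalar multiples of basic morphisms and solve the commutation equations $g\,d_X=d_Y\,g$. Using the recorded values of $c$, these reduce to the pair of conditions that the two off-diagonal scalars coincide and that the two diagonal scalars agree up to a power of $t$ determined by whether $i\le i'$ and whether $j\le j'$; in particular $\cMF_\Phi(Z_\infty^m)(X,Y)$ is a free $R$-module with an explicit basis consisting of one \emph{even} (diagonal) and one \emph{odd} (off-diagonal) morphism.

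Then I would pass to $\cC_\Phi(Z_\infty^m)$ by killing the morphisms that factor through a projective-injective object. Set $I_X:=G_\Phi\Phi^{-1}(P_{x_i^p}\oplus P_{x_j^{p+1}})=E(x_i^{p-1},x_i^p)\oplus E(x_j^p,x_j^{p+1})$; this is an injective object into which $X$ embeds via a cofibration (Theorem \ref{thm: MF-phi(X) is Frobenius}), so every morphism $X\to Y$ factoring through a projective-injective already factors through $I_X\to Y$, and it suffices to compute the image of $\cMF_\Phi(Z_\infty^m)(I_X,Y)$ in $\cMF_\Phi(Z_\infty^m)(X,Y)$. The two summands of $I_X$ are the projective-injectives sitting at $\ll$-coordinates $(\ll_1X-1,\ll_1X)$ and $(\ll_2X-1,\ll_2X)$; the crucial point is that a morphism $X\to Y$ is absorbed by the upper summand $E(x_j^p,x_j^{p+1})$ exactly when no end of $Y$ lies strictly below its lower end $\ll_2X-1$, that is when $\ll_1Y\ge\ll_2X-1$, whereas the lower summand $E(x_i^{p-1},x_i^p)$ contributes nothing once $\ll_2X\le\ll_2Y$. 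After removing these, the morphism space in the stable category is at most one-dimensional and is nonzero precisely when $\ll_1X\le\ll_1Y$, $\ll_2X\le\ll_2Y$ and $\ll_1Y<\ll_2X-1$, i.e. exactly \eqref{eq: condition for Hom(X,Y) not zero X,Y standard}; this also reproves that $\cC_\Phi(Z_\infty^m)(X,Y)$ is $\kk$ or $0$.

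I expect the main obstacle to be this last step: extracting the \emph{strict} inequality $\ll_1Y<\ll_2X-1$ (rather than $\le$), which is precisely the assertion that a morphism meeting $X$ only near its top end is swallowed by $E(x_j^p,x_j^{p+1})$, together with checking that the $t$-divisibility coming from the two different cocycle values on blocks $p$ and $p+1$ is consistent with this. A cross-check is available from the shape of the component: $C_{p,p+1}$ is a $\ZZ A_\infty$ Auslander--Reiten component whose AR-translate is $[m]$ and whose simple objects are the $E(x_i^p,x_{i+1}^{p+1})$, so $\dim\cC_\Phi(Z_\infty^m)(X,Y)\le1$ and its support is the forward hammock of $X$; propagating the mesh relations from $X$ using the almost split triangles of the preceding lemma yields the same region. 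Finally, since all four endpoints lie in the two adjacent blocks $p$ and $p+1$, passing between the covering-poset order (which governs $c$) and the $\ll$-order (in which the statement is phrased) is harmless here.
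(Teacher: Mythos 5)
Your overall strategy is sound, and in fact the paper offers no argument to compare against: it declares this lemma ``straightforward and left to the reader,'' and the natural way to fill that in is exactly what you propose -- compute $\cMF_\Phi(X,Y)$ explicitly (it is free of rank $2$ over $R$, one even and one odd generator, with the even generator carrying a factor $t$ in one diagonal entry when $\ll_1Y<\ll_1X$ or $\ll_2Y<\ll_2X$), then kill the image of $\cMF_\Phi(I_X,Y)$ for the cofibration $X\to I_X=E(x_i^{p-1},x_i^p)\oplus E(x_j^p,x_j^{p+1})$. Carried out carefully this does yield \eqref{eq: condition for Hom(X,Y) not zero X,Y standard}, and it also shows the odd generator \emph{always} factors through a projective-injective (via $E(x_{i'}^p,x_{i'}^{p+1})$, say), which you need but do not address when asserting the stable Hom is at most one-dimensional.

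The genuine gap is in your ``crucial point.'' As stated -- absorption by the upper summand \emph{exactly when} $\ll_1Y\ge\ll_2X-1$, and the lower summand irrelevant once $\ll_2X\le\ll_2Y$ -- your criteria would force $\cC_\Phi(X,Y)\ne0$ whenever $\ll_1Y<\ll_2X-1\le\ll_2Y-1$, with no role for the inequality $\ll_1X\le\ll_1Y$; so, taken literally, your intermediate step proves a different (false) statement, and the conditions $\ll_1X\le\ll_1Y$ and $\ll_2X\le\ll_2Y$ are only pasted into the conclusion. What actually happens is that the even generator of $\cMF_\Phi(X,Y)$ factors through the single summand $E(x_j^p,x_j^{p+1})$ in \emph{every} case except $\ll_1X\le\ll_1Y<\ll_2X-1\le\ll_2Y-1$: when $\ll_1Y<\ll_1X$ (resp.\ $\ll_2Y<\ll_2X$) the generator is $\mathrm{diag}(f,tf)$ (resp.\ $\mathrm{diag}(tf,f)$), and composing the cofibration component $\mathrm{diag}(f_{x_i^px_j^p},\mathrm{id})$ with the corresponding generator of $\cMF_\Phi(E(x_j^p,x_j^{p+1}),Y)$ reproduces it on the nose, because the relevant cocycle value $c(x_i^p,x_j^p,x_{i'}^p)$ vanishes precisely when $i'<i$ (and similarly on the other coordinate); only in the surviving case does every such composite acquire a factor $t$ in both entries. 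So the case analysis you postpone as ``the main obstacle'' is not just the strict inequality $\ll_1Y<\ll_2X-1$: you must also run the same factorization check in the cases $\ll_1Y<\ll_1X$ and $\ll_2Y<\ll_2X$, where the $t$-divisibility of the even generator interacts with the cocycle values. With that correction (and the verification for the odd generator) your computation closes the argument, and it is consistent with the paper's proof of the analogous statement for $\cC_\phi(Z)$ in the $2$-Calabi--Yau section, which is phrased via the same ``no end of $Y$ in the relevant half-open interval forces factorization through a projective-injective'' mechanism.
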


The proof is straightforward and left to the reader. Note that these four integers are congruent to each other modulo $m$.

\begin{lem}\label{lem: X crosses Y iff not compatible}
Let $X,Y$ be standard objects. Then the following two statements are equivalent.
\begin{enumerate}
\item There exists $1\le k\le m$ so that $Y[k]$ lies in the same AR-component as $X$ and so that $\cC_\Phi(X,Y[k])\neq0$.
\item $X$ crosses $Y$, i.e., $\ll_1X<\ll_1Y<\ll_2X<\ll_2Y$.
\end{enumerate}
\end{lem}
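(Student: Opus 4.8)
The statement to be proved (Lemma \ref{lem: X crosses Y iff not compatible}) relates a homological crossing condition $(1)$ to the combinatorial crossing condition $(2)$ on the integer labels $\ll_1, \ll_2$. The plan is to use the already-established machinery: the shift formula $E(x_i^p,x_j^q)[1]=E(x_j^{q-1},x_i^{p-1})$ from the preceding proposition, the Hom-nonvanishing criterion of Lemma \ref{lem: condition for Hom(X,Y) not zero X,Y standard}, and the description of the AR-components $C_{pq}$ (where $X=E(x_i^p,x_j^{p+1})$ lives in $C_{p,p+1}$, indexed by $\ll_1 X, \ll_2 X$ modulo $m$). First I would write both $X$ and $Y$ in standard form, $X = E(x_a^{p},x_b^{p+1})$ and $Y = E(x_c^{q},x_d^{q+1})$, so $\ll_1 X = a$, $\ll_2 X = b$ (and similarly for $Y$), noting that $b \equiv a+1$ and $d \equiv c+1$ modulo $m$. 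Applying $[k]$ repeatedly to $Y$ via the shift formula, one computes $\ll(Y[k]) = (\ll_1 Y - k, \ll_2 Y - k)$ as an unordered pair (the shift subtracts $1$ from each label, after reindexing by the convention $x_k^{p+m}=x_{k+1}^p$). So the labels of $Y[k]$ are $\{c-k, d-k\}$.

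Next, the condition that $Y[k]$ lies in the same AR-component as $X$ is a congruence condition: $C_{p,p+1}$ is determined by the residues of $\ll_1, \ll_2$ mod $m$, so $Y[k]$ is in $C_{p,p+1}$ exactly when $\{c-k, d-k\} \equiv \{a,b\} \pmod m$, i.e. $c - k \equiv a$ (since then automatically $d-k \equiv a+1 \equiv b$), i.e. $k \equiv c - a \pmod m$. Since $1 \le k \le m$, this pins down a \emph{unique} value of $k$ in that range (call it $k_0$). For that $k_0$, I apply Lemma \ref{lem: condition for Hom(X,Y) not zero X,Y standard} to $X$ and $Y[k_0] = E$ with labels $\{c - k_0, d - k_0\}$, ordered as $\ll_1(Y[k_0]) < \ll_2(Y[k_0])$: nonvanishing of $\cC_\Phi(X, Y[k_0])$ is equivalent to
\[
	\ll_1 X \le \ll_1(Y[k_0]) < \ll_2 X - 1 \le \ll_2(Y[k_0]) - 1.
\]
The remaining work is to translate this chain of inequalities — together with $k_0 \equiv c-a \pmod m$ and the spacing constraints $b \ge a + m + 1$, $d \ge c + m + 1$ forced by standardness (each standard object has $\ll_2 - \ll_1 \ge m+1$) — into the plain crossing inequality $a < c < b < d$, i.e. $\ll_1 X < \ll_1 Y < \ll_2 X < \ll_2 Y$, and conversely.

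The main obstacle I anticipate is exactly this last translation: one must be careful about \emph{which} of $c - k_0$ and $d - k_0$ is the smaller (this can flip depending on $k_0$), and one must show that among all residue-compatible shifts, the crossing of $X$ with $Y$ itself corresponds to the unique $k_0$ in $[1,m]$ producing a nonzero Hom, while the non-crossing case produces zero Hom for that $k_0$ (and, by Lemma \ref{lem: standard objects are compatible iff noncrossing}'s setup, for all $k$ in range). Concretely, I expect to split into the cases according to whether $Y$'s labels interleave, nest inside, or lie outside $X$'s interval $[a,b]$, and check that interleaving is the only case compatible with the Hom-criterion inequalities after the unique shift — using the $m+1$ spacing bound to rule out the degenerate near-equalities. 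Once the bookkeeping of residues and orderings is done, the equivalence of $(1)$ and $(2)$ drops out, and this also feeds directly into the proof of Lemma \ref{lem: standard objects are compatible iff noncrossing} via the Calabi–Yau duality $\Ext^k(X,Y) = \cC_\Phi(X,Y[k])$.
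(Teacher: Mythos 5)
Your setup matches the paper's proof: you use the shift formula to get $\ll_i(Y[k])=\ll_iY-k$, observe that membership of $Y[k]$ in the AR-component of $X$ is a congruence condition singling out a unique $k_0\in[1,m]$ with $k_0\equiv \ll_1Y-\ll_1X \pmod m$, and then invoke Lemma \ref{lem: condition for Hom(X,Y) not zero X,Y standard} for $X$ and $Y[k_0]$. Up to that point everything is correct (one small point: the order of the two labels of $Y[k_0]$ can never flip, since both decrease by the same $k_0$, so the case distinction you worry about there does not arise).

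The genuine gap is that the decisive step --- showing that $\ll_1X\le \ll_1Y-k_0<\ll_2X-1\le \ll_2Y-k_0-1$ (for the unique admissible $k_0$) is equivalent to $\ll_1X<\ll_1Y<\ll_2X<\ll_2Y$ --- is only announced as ``remaining work'' with a proposed case split, not carried out; and the tool you propose for it (the spacing bound $\ll_2-\ll_1\ge m+1$ for standard objects) is not what makes it go through. What actually closes it is the observation, recorded after Lemma \ref{lem: condition for Hom(X,Y) not zero X,Y standard}, that the four integers $\ll_1X,\ \ll_1Y-k_0,\ \ll_2X-1,\ \ll_2Y-k_0-1$ are all congruent modulo $m$: hence a strict inequality between two of them means a gap of at least $m$, which absorbs the shift $k_0\le m$, while $k_0\ge1$ handles the weak inequalities. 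The paper packages exactly this as the floor-function equivalence (WLOG $m\mid\ll_1X$, then $a\le\lfloor x\rfloor<b\le\lfloor y\rfloor$ for integers $a,b$ iff $a\le x<b\le y$), which converts Condition (1) directly into $\ll_1X\le\ll_1Y-1<\ll_2X-1\le\ll_2Y-2$, i.e.\ the crossing condition. So your approach would succeed, but as written the central arithmetic equivalence is missing and the hinted route to it points at the wrong ingredient.
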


\begin{proof} Without loss of generality we may assume that $\ll_1 X$ is divisible by $m$. Thus, when $Y[k]$ lies in the same AR-component as $X$,  $\ll_1Y[k]$ will also be divisible by $m$ and $\ll_1Y[k]/m=\lfloor (\ll_1 Y-1)/m\rfloor$. Similarly, $\ll_2Y[k]-1$ is divisible by $m$ and $(\ll_2Y[k]-1)/m=\lfloor (\ll_2 Y-2)/m\rfloor$. So, by \eqref{eq: condition for Hom(X,Y) not zero X,Y standard}, Condition (1) is equivalent to the integer inequality:
\begin{equation}\label{eq: first technical condition for compatibility}
	\frac{\ll_1 X}m\le \left\lfloor\frac{\ll_1Y-1}m\right\rfloor<\frac{\ll_2 X-1}m\le \left\lfloor\frac{\ll_2Y-2}m\right\rfloor
\end{equation}
For all integers $a,b$ and real numbers $x,y$, the integer inequality $a\le \lfloor x\rfloor<b\le \lfloor y\rfloor$ is equivalent to the real inequality $a\le x<b\le y$. So, \eqref{eq: first technical condition for compatibility} is equivalent to the condition
\[
	\ll_1 X\le \ll_1Y-1<\ll_2X-1\le \ll_2Y-2
\]
which is equivalent to Condition (2).
\end{proof}

\begin{lem}\label{lem: Y crosses X iff not compatible}
Let $X,Y$ be standard objects. Then the following two statements are equivalent.
\begin{enumerate}
\item There exists $1\le k\le m$ so that $\cC_\Phi(X,Y[k])\neq0$ but $Y[k]$ does not lie in the same AR-component as $X$.
\item $Y$ crosses $X$, i.e., $\ll_1Y<\ll_1X<\ll_2Y<\ll_2X$.
\end{enumerate}
\end{lem}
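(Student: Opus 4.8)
The plan is to follow the proof of Lemma~\ref{lem: X crosses Y iff not compatible}, with one extra move: for the shifts $Y[k]$ that do \emph{not} land in the Auslander--Reiten component of $X$, Lemma~\ref{lem: condition for Hom(X,Y) not zero X,Y standard} does not apply directly, so I would first transport the Hom-space into the ``good'' half of the shift range via the $(m+1)$-Calabi--Yau duality. Since $X$ and $Y$ are standard they lie in $\ZZ A_\infty$ components $C_{p,p+1}$ and $C_{q,q+1}$, and there is a unique $k_0\in\{1,\dots,m\}$ with $Y[k_0]$ in the component of $X$ (namely $k_0\equiv q-p\bmod m$); thus condition~(1) concerns $k\in\{1,\dots,m\}\setminus\{k_0\}$. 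Let $k_1\in\{1,\dots,m\}$ represent $k_0+1\bmod m$; since $m\ge 3$ we have $k_1\neq k_0$. Using the autoequivalence of $\cC_\Phi(Z_\infty^m)$ obtained by conjugating with the admissible automorphism $\Phi$ (legitimate since $\Phi$ commutes with itself; it preserves standardness, commutes with the shift, and raises every $\ll$-value by $1$), I may normalise $\ll_1 Y\equiv 0\bmod m$ without affecting~(1) or~(2).

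The heart is the case $k=k_1$. By the $(m+1)$-Calabi--Yau property,
\[
	\cC_\Phi(X,Y[k])\;\cong\;D\,\cC_\Phi(Y,X[m+1-k])\qquad(1\le k\le m),
\]
and a short index count shows that $X[m+1-k]$ lies in the component of $Y$ exactly when $k=k_1$. For $k=k_1$, $Y$ and $X[m+1-k_1]$ are standard and in the same component, so Lemma~\ref{lem: condition for Hom(X,Y) not zero X,Y standard} gives
\[
	\cC_\Phi(X,Y[k_1])\neq 0\iff \ll_1 Y\le \ll_1(X[m+1-k_1])<\ll_2 Y-1\le \ll_2(X[m+1-k_1])-1 .
\]
Since $[j]$ drops each $\ll$-value by $j$ and, under our normalisation, $m+1-k_1$ is the representative of $\ll_1 X\bmod m$ in $\{1,\dots,m\}$, the right side rewrites as
\[
	\tfrac{\ll_1 Y}{m}\le\Big\lfloor\tfrac{\ll_1 X-1}{m}\Big\rfloor<\tfrac{\ll_2 Y-1}{m}\le\Big\lfloor\tfrac{\ll_2 X-2}{m}\Big\rfloor ,
\]
which, exactly as in the proof of Lemma~\ref{lem: X crosses Y iff not compatible} (replacing the integer inequality $a\le\lfloor x\rfloor<b\le\lfloor y\rfloor$ by the real inequality $a\le x<b\le y$), is equivalent to $\ll_1 Y<\ll_1 X<\ll_2 Y<\ll_2 X$, i.e.\ to~(2). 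This establishes that~(2) is equivalent to ``(1) holds with $k=k_1$''; in particular it gives $(2)\Rightarrow(1)$, and it gives $(1)\Rightarrow(2)$ whenever the witness for~(1) may be taken to be $k_1$.

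What remains --- and what I expect to be the only nontrivial point --- is to show that for every $k\in\{1,\dots,m\}\setminus\{k_0,k_1\}$ one has $\cC_\Phi(X,Y[k])=0$, so that no such $k$ can witness~(1); equivalently, that for standard objects $A,B$, if $\cC_\Phi(A,B)\neq 0$ then $B$ lies in the component of $A$ or in the component of $A[m+1]$. I would prove this by a direct inspection of the morphisms of $\cMF_\Phi(Z_\infty^m)$: a morphism $E(x,y)\to E(a,b)$ is a $2\times 2$ matrix of basic morphisms of $\cP(Z_\infty^m)$ compatible with the two differentials, and its nontriviality in the stable category is controlled by a handful of values of the structure cocycle of $Z_\infty^m$ (equivalently, of the cocycle of the Frobenius cyclic poset $(\cX,\cX_0)$ of Definition~\ref{defn: Frobenius cyclic poset of Z}); one checks that such a morphism always factors through a projective--injective object $E(z,\Phi z)$ once $B$ is a standard object outside the two components above --- equivalently, once some element of $\tilde\cX_0$ separates the corresponding lifts in $\tilde\cX$. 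Granting this vanishing statement, any witness for~(1) must be $k_1$, so $(1)\Rightarrow(2)$ follows from the previous paragraph, which together with $(2)\Rightarrow(1)$ completes the proof.
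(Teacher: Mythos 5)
Your argument is correct and is essentially the paper's own proof: both convert $\cC_\Phi(X,Y[k])$ into $\cC_\Phi(Y,X[m+1-k])$ via the $(m+1)$-Calabi--Yau duality and then reduce to the same-component criterion, the paper by citing Lemma \ref{lem: X crosses Y iff not compatible} with the roles of $X$ and $Y$ interchanged, you by re-running its floor-function computation for the single relevant shift $k_1$. The vanishing statement you only sketch (a nonzero map between standard objects forces the target into the AR-component of the source or of its $[m+1]$-shift) is exactly the fact the paper also uses without proof when it asserts that Condition (1) forces $\ll_1Y[k]\equiv\ll_1X-1\pmod m$, so your proposal is correct and at the same level of detail as the published argument.
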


\begin{proof} 
Since $\cC_\Phi$ is $m+1$-CY, the condition $\cC_\Phi(X,Y[k])\neq0$ is equivalent to the condition that $\cC_\Phi(Y,X[m+1-k])\neq0$. In Condition (1) we must also have $\ll_1Y[k]=\ll_1Y-k\equiv\ll_1X-1$ modulo $m$. So, $\ll_1X[m+1-k]\equiv\ll_1X-1+k\equiv \ll_1Y$ mod $m$. Thus, Condition (1) is equivalent to the condition that there exists $1\le\el\le m$ so that $X[\el]$ lies in the same AR-component as $Y$ and $\cC_\Phi(Y,X[\el])\neq0$. By the previous lemma, this is equivalent to Condition (2).
\end{proof}

\begin{proof}[Proof of Lemma \ref{lem: standard objects are compatible iff noncrossing}] There are two ways that $X,Y$ might be not compatible. By
Lemmas \ref{lem: X crosses Y iff not compatible} and \ref{lem: Y crosses X iff not compatible} these correspond exactly to the two different ways that $X,Y$ can cross. Therefore, they are compatible iff they don't cross.
\end{proof}

\begin{lem}\label{morphisms between compatible standard objects}
Suppose $X,Y$ are nonisomorphic compatible standard objects in $\cC_\Phi(Z_\infty^m)$. Then $\cC_\Phi(X,Y)\neq0$ if and only if one of the following holds.
\begin{enumerate}
\item $\ll_1X=\ll_1 Y$ and $\ll_2X<\ll_2Y$.
\item $\ll_1X<\ll_1 Y$ and $\ll_2X=\ll_2Y$.
\item $\ll_1X=\ll_2Y$. 
\end{enumerate}
\end{lem}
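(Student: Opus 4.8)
The plan is to reduce everything to the combinatorial description of morphisms in the stable category that underlies Lemma~\ref{lem: condition for Hom(X,Y) not zero X,Y standard}. Write $X=E(x_i^p,x_j^{p+1})$ with $i<j$, so that $X$ lies in the $\ZZ A_\infty$ component $C_{p,p+1}$ and $\ll_2X-\ll_1X=m(j-i)+1\ge m+1\ge 4$, and let $Y$ lie in $C_{q,q+1}$. Recall that ``compatible'' means ``noncrossing'' (Lemma~\ref{lem: standard objects are compatible iff noncrossing}). The engine is the following criterion, which is exactly the ``straightforward'' computation the paper leaves to the reader for Lemma~\ref{lem: condition for Hom(X,Y) not zero X,Y standard} and which I would first write out in full: in the totally ordered covering poset of $Z_m\ast\ZZ$, one has $\cC_\Phi(E(a,b),E(c,d))\ne 0$ if and only if one can choose liftings $\tilde a<\tilde b<\s\tilde a$ together with liftings of $c$ and $d$ lying in the half-open intervals $[\tilde a,\widetilde{\Phi^{-1}b})$ and $[\tilde b,\widetilde{\Phi^{-1}\s a})$ respectively; the $\Phi^{-1}$-shifts record non-factorization through the projective-injective objects $E(z,\Phi z)$. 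Establishing this criterion, and the projective-injective bookkeeping inside it, is the one genuinely technical step; the rest is elementary.

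Now take liftings $\tilde a,\tilde b$ of the two endpoints of $X$: they lie in consecutive blocks, of residues $p$ and $p+1$ under $\ll$, so the interval $[\tilde a,\widetilde{\Phi^{-1}b})$ consists solely of lifts of residue-$p$ elements. Hence $\cC_\Phi(X,Y)\ne 0$ forces one endpoint of $Y$ to have residue $p$, and since $Y$ is standard with endpoint residues $\{q,q+1\}$ we obtain the dichotomy $q\equiv p$ or $q\equiv p-1\pmod m$; in particular this already excludes (1) and (2) in the second case and (3) in the first. If $q\equiv p$ then $X$ and $Y$ lie in the same component, Lemma~\ref{lem: condition for Hom(X,Y) not zero X,Y standard} gives $\cC_\Phi(X,Y)\ne 0\iff\ll_1X\le\ll_1Y<\ll_2X-1\le\ll_2Y-1$, and intersecting with noncrossing and $X\not\cong Y$ leaves precisely (1) (when $\ll_1X=\ll_1Y$: then $\ll_2X\ne\ll_2Y$ and $\ll_2X\le\ll_2Y$ give $\ll_2X<\ll_2Y$) and (2) (when $\ll_1X<\ll_1Y$: then $\ll_1X<\ll_1Y<\ll_2X\le\ll_2Y$, and noncrossing forbids strictness in the last inequality, so $\ll_2X=\ll_2Y$). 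If $q\equiv p-1$ then the residue-$p$ endpoint of $Y$ is its larger one, $\ll_2Y$; spelling out the two interval conditions in terms of $\ll$ yields $\ll_1X\le\ll_2Y<\ll_2X-1$ together with $\ll_1Y<\ll_1X$, hence $\ll_1Y<\ll_1X\le\ll_2Y<\ll_2X$, and since $X$ does not cross $Y$ the inequality $\ll_1X\le\ll_2Y$ cannot be strict, so $\ll_1X=\ll_2Y$, which is (3). (The case $q\equiv p-1$ can alternatively be routed through the $(m+1)$-Calabi-Yau isomorphism $\cC_\Phi(X,Y)\cong D\,\cC_\Phi(Y,X[m+1])$: since $[1]$ sends standard objects to standard objects and lowers both labels by one, $X[m+1]$ is standard, lies in $C_{p-1,p}=C_{q,q+1}$, and has $\ll(X[m+1])=(\ll_1X-m-1,\ll_2X-m-1)$, so Lemma~\ref{lem: condition for Hom(X,Y) not zero X,Y standard} applies to the pair $(Y,X[m+1])$ and, together with the congruence $\ll_1X\equiv\ll_2Y\pmod m$ and noncrossing, again forces $\ll_1X=\ll_2Y$.)

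For the reverse implications, each of (1), (2), (3) produces liftings meeting the criterion: (1) and (2) are the two extreme instances of Lemma~\ref{lem: condition for Hom(X,Y) not zero X,Y standard}, the required strict inequalities holding because $\ll_2-\ll_1\ge m+1$ for a standard object; and in case (3) one checks directly (or via the Calabi-Yau isomorphism) that $\tilde c=\tilde a$ and a suitable $\tilde d$ in $[\tilde b,\widetilde{\Phi^{-1}\s a})$ work, the needed inequalities reducing to $\ll_2Y-\ll_1Y\ge m+1$ and $\ll_2X-\ll_1X\ge m+1$. Thus the argument has four parts: (i) the morphism criterion together with its projective-injective bookkeeping --- the hard part; (ii) the residue/block count giving the dichotomy $q\equiv p$ or $q\equiv p-1$; (iii) the same-component case, handled by Lemma~\ref{lem: condition for Hom(X,Y) not zero X,Y standard}; and (iv) the cross-component case, where noncrossing collapses the interval conditions to (3). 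I expect step (i) to be the only real obstacle, and it is a routine-but-careful extension of the computation the paper already deferred for Lemma~\ref{lem: condition for Hom(X,Y) not zero X,Y standard}.
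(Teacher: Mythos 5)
Your proposal is correct, and its two halves line up with the paper's short proof as follows. For $X,Y$ in the same AR-component you argue exactly as the paper does: Lemma \ref{lem: condition for Hom(X,Y) not zero X,Y standard} plus noncrossing and $X\not\cong Y$ yields precisely (1) and (2). For the cross-component case the paper instead invokes the $(m+1)$-Calabi-Yau isomorphism $\cC_\Phi(X,Y)\cong D\,\cC_\Phi(Y,X[m+1])$, notes $Y$ and $X[m+1]$ share a component, applies the same lemma and tightens the inequalities using congruence mod $m$; your primary route is a direct lifting computation from a covering-poset interval criterion (with $\Phi^{-1}$ replacing the predecessor of the earlier section), and the inequalities you obtain, $\ll_1Y<\ll_1X\le\ll_2Y<\ll_2X$ with noncrossing forcing $\ll_1X=\ll_2Y$, are literally the same as the paper's, so the two routes are equivalent (and you note the CY route as an alternative). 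Two points in your favor: your residue/block dichotomy explicitly shows $\cC_\Phi(X,Y)=0$ whenever $Y$ lies in neither $C_{p,p+1}$ nor $C_{p-1,p}$, a step the paper's wording leaves implicit; and your converse checks for (1)--(3) using $\ll_2-\ll_1\ge m+1$ are spelled out. The burden you flag in step (i) --- establishing the stable-Hom criterion and the projective-injective bookkeeping --- is the same burden the paper defers when it leaves Lemma \ref{lem: condition for Hom(X,Y) not zero X,Y standard} to the reader, so relying on it does not put you below the paper's level of rigor; your criterion is consistent with that lemma and with the CY computation, as your own cross-checks show. One trivial slip: in the cross-component case the crossing being excluded is ``$Y$ crosses $X$'' ($\ll_1Y<\ll_1X<\ll_2Y<\ll_2X$), not ``$X$ crosses $Y$''; since compatibility forbids both, the conclusion is unaffected.
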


\begin{proof}
If $X,Y$ are in the same AR-component, $\cC_\Phi(X,Y)\neq0$ is equivalent to (1) or (2) by Lemma \ref{lem: condition for Hom(X,Y) not zero X,Y standard} given that $X,Y$ are compatible and thus noncrossing. If $X,Y$ are not in the same AR-component then $\cC_\Phi(X,Y)\neq0$ is equivalent to $\cC_\Phi(Y,X[m+1])\neq0$ with $Y,X[m+1]$ being in the same AR-component. By Lemma \ref{lem: condition for Hom(X,Y) not zero X,Y standard} this is equivalent to:
\[
	\ll_1Y\le \ll_1X-m-1<\ll_2Y-1\le \ll_2X-m-2.
\]
Since these integers are congruent to each other modulo $m$, this is equivalent to:
\[
	\ll_1Y< \ll_1X-1\le\ll_2Y-1< \ll_2X-2,
\]
a condition equivalent to (3) given that $X,Y$ are noncrossing.
\end{proof}

\begin{proof}[Proof of Theorem \ref{thm: main thm on standard m-clusters}]
We are now ready to prove the main theorem about standard $m$-clusters. 

(1) We have already observed that standard objects are $m+1$-rigid since they do not cross themselves.

(2) Every element $T$ of every standard $m$-cluster $\cT$ represents a chord in $P_\infty$ which separates two $m+2$-gons. When the chord is removed, we have a $2m+2$-gon $G$ which has $m+1$ diagonals. So, there are $m$ other diagonals corresponding to $m$ other standard objects $T_i^\ast$ so that $\cT\backslash T\cup T_i^\ast$ forms a standard $m$-cluster.

(3) By Lemma \ref{morphisms between compatible standard objects}, morphisms between compatible standard objects correspond to counterclockwise pivots of the corresponding chords in $P_\infty$. So, the left $add\,\cT\backslash T$-approximation of $T_i^\ast$ is given by the direct sum of the two objects $B_i',B_i''$ (corresponding to chords) which form the sides of the $2m+2$-gon $G$ which are adjacent to $T_i^\ast$ and are clockwise from the endpoints of $T_i^\ast$ as shown in the figure. The sequence 
\[
	T_i^\ast\xrarrow{\mat{1\\-1}} B_i'\oplus B_i''\xrarrow{[1,1]} T_{i+1}^\ast
\]
is exact in $\cMF_\Phi(Z_\infty^m)$ and therefore forms a distinguished triangle in the stable category $\cC_\Phi(Z_\infty^m)$. Thus the objects $T_i^\ast$ form the diagonals of the $2m+2$-gon $G$ ordered clockwise.
\begin{figure}[htbp]
\begin{center}
%
{
\setlength{\unitlength}{.5in}
{\mbox{
\begin{picture}(3,2)
      \thicklines
\put(0,.5){\qbezier(0,0)(1.5,.5)(3,1)}
\put(2.1,1.4){$T_i^\ast$}
\put(.6,1.4){$T_{i+1}^\ast$}
\put(0,1.5){\qbezier(0,0)(1.5,-.5)(3,-1)}
\put(0,.5){\line(0,1)1}
\put(-.5,.9){$B_i'$}
\put(3,.5){\line(0,1)1}
\put(3.15,.9){$B_i''$}
\put(0,1.5){\qbezier(0,0)(.4,.25)(.8,.5)}
\put(0,.5){\qbezier(0,0)(.4,-.25)(.8,-.5)}
\put(3,1.5){\qbezier(0,0)(-.4,.25)(-.8,.5)}
\put(3,.5){\qbezier(0,0)(-.4,-.25)(-.8,-.5)}
\end{picture}}
}}
\caption{The objects $B_i'$ and $B_i''$ share an endpoint with $T_i^\ast$ and are the objects in $\cT\backslash T$ closest to $T_i^\ast$ in the counterclockwise direction. So, their sum $B_i=B_i'\oplus B_i''$ form the left $\cT\backslash T$-approximation to $T_i^\ast$ and $T_i^\ast\to B_i'\oplus B_i''\to T_{i+1}^\ast$ is a distinguished triangle in $\cC_\Phi(Z_\infty^m)$.}
\label{Figure3}
\end{center}
\end{figure}

To show that the standard objects form a thick subcategory $\cC_\infty^m$ of $\cC_\Phi(Z_\infty^m)$, it suffices to show that $\cC_\infty^m$ is a triangulated full subcategory, in other words, for any morphism of standard objects $f:X\to Y$, the third object in the distinguished triangle $X\to Y\to Z\to X[1]$ is also standard. By the octahedral axiom it suffices to consider the case when $X$ is indecomposable. So, suppose that $X\in C_{p,p+1}$. Then, dropping all of the components of the map $f:X\to Y$ which are zero, we may assume that $Y$ lies in $C_{p,p+1}\cup C_{p-1,p}$. Since $X[1]$ lies in $C_{p-1,p}$, all of the endpoints of the object $Z$ must lie in blocks $p-1,p,p+1$. Therefore, the only possible nonstandard components of $Z$ lie in $C_{p-1,p}$. But this is impossible since none of these objects map nontrivially to $X[1]\in C_{p-1,p}$. So, all components of $Z$ are standard. 

Finally, to identify the Verdier quotient category $\cC_\Phi(Z_\infty^m)/\cC_\infty^m$, we claim that the objects of this quotient are the components of the AR-quiver of $\cC_\Phi(Z_\infty^m)$. This follows from two considerations:
\begin{enumerate}
\item Any two indecomposable objects in the same component $C_{pq}$ are equivalent in $\cC_\Phi(Z_\infty^m)/\cC_\infty^m$.
\item There is an exact morphism of Frobenius categories $\cMF_\Phi(Z_\infty^m)\to \cMF_\phi(Z_m)$ so that the inverse image of the indecomposable objects of $\cMF_\phi(Z_m)$ are subcategories of $\cMF_\Phi(Z_\infty^m)$ which map onto the AR-components of $\cC_\Phi(Z_\infty^m)$ and the inverse image of the projective-injective objects are exactly the standard objects (and the projective-injective objects) of $\cMF_\Phi(Z_\infty^m)$. 
\end{enumerate}
The first statement is easy to see. For example, there is an exact sequence \[
E(x_i^p,x_j^q)\to E(x_k^p,x_j^q)\oplus E(x_i^p,x_N^{p+1})\to E(x_k^p,x_N^{p+1})
\]
for sufficiently large $N$ showing that $E(x_i^p,x_j^q)\sim E(x_k^p,x_j^q)$ modulo standard objects and, similarly, $E(x_i^p,x_j^q)\sim E(x_i^p,x_k^q)$.

The exact morphism in the second statement is induced by the morphism of cyclic posets $p:Z_m\ast \ZZ\to Z_m$ given by projection to the first coordinate. This morphism is compatible with $\Phi$ and $\phi$ since $\phi\circ p=p\circ\Phi$. Therefore, it induces an exact functor $\cMF_\Phi(Z_\infty^m)\to \cMF_\phi(Z_m)$ sending projective-injective objects to projective injective objects and it is easy to see that it has the stated properties.
\end{proof}

We saw that the endpoints of a standard object lie in two consecutive \emph{blocks}: $x^p_\ast$ and $x^{p+1}_\ast$. We now consider nonstandard objects (with blocks a distance of at least 2 apart).



\subsubsection{Nonstandard objects and periodic $m$-clusters}

\begin{prop}
An indecomposable object $X$ of $\cC_\Phi(Z_\infty^m)$ is $m+1$-rigid iff it is one of the following.
\begin{enumerate}
\item $X$ is standard.
\item $X=E(x^p_i,x^{p+2}_j)$ where $i>j$.
\end{enumerate}
\end{prop}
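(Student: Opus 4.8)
The plan is to decide, for an indecomposable $X=E(x,y)$ of $\cC_\Phi(Z_\infty^m)$, when all the self-extension groups $\Ext^k(X,X)=\cC_\Phi(X,X[k])$, $1\le k\le m$, vanish. By the formula for the shift functor given above, $X[k]=E(\Phi^{-k}x,\Phi^{-k}y)$, so every shift $X[k]$ is simply the $\Phi^{-k}$-translate of $X$: in the coordinate $\ll$ of $Z_\infty^m=Z_m\ast\ZZ$ this moves each endpoint down by $k$, and $X[k]$ lies in an Auslander--Reiten component of the same type ($\ZZ A_\infty$ or $\ZZ A_\infty^\infty$) as $X$. Two reductions should come for free: for every object $Y$ one has $\cC_\Phi(Y,\tau Y)=0$, since morphisms in a $\ZZ A_\infty$- or $\ZZ A_\infty^\infty$-component do not run backwards along $\tau$, so $\Ext^m(X,X)=\cC_\Phi(X,\tau X)=0$, and via the $(m+1)$-Calabi--Yau isomorphism $\cC_\Phi(X,X[1])\cong D\,\cC_\Phi(X[1],X[m+1])=D\,\cC_\Phi(X[1],\tau X[1])$ also $\Ext^1(X,X)=0$, for every $X$; the same isomorphism gives $\Ext^k(X,X)\cong D\,\Ext^{m+1-k}(X,X)$, so only $2\le k\le\lceil m/2\rceil$ need be examined.

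Next I would separate cases by the two blocks of $Z_m\ast\ZZ$ containing the endpoints of $X=E(u,v)$, with liftings chosen so that $\ll(u)<\ll(v)$. If these blocks are consecutive, $X$ lies in a $\ZZ A_\infty$-component $C_{p,p+1}$, hence $X$ is standard (or projective--injective) and is $(m+1)$-rigid by Theorem~\ref{thm: main thm on standard m-clusters}(1). Otherwise $X$ lies in a $\ZZ A_\infty^\infty$-component, and I would evaluate $\cC_\Phi(X,X[k])$ using the hammock description of morphisms in such a component --- the analogue, for an arbitrary $\ZZ A_\infty^\infty$-component, of Lemma~\ref{lem: condition for Hom(X,Y) not zero X,Y standard}, together with its cross-component version coming from Calabi--Yau duality, exactly as in Lemmas~\ref{lem: X crosses Y iff not compatible}--\ref{morphisms between compatible standard objects}. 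Because $X[k]$ is a translate of $X$, non-vanishing of $\cC_\Phi(X,X[k])$ for a given $k$ should reduce to a single inequality depending only on $k$ and on the residue $r(X):=\bigl(\ll(v)-\ll(u)\bigr)\bmod m$. Carrying this out, I expect some $\Ext^k(X,X)$ with $2\le k\le m-1$ to be nonzero unless $r(X)\in\{1,m-2\}$, and $r(X)=m-2$ is exactly the condition that $X\cong E(x^p_i,x^{p+2}_j)$ with $i>j$. Conversely, for such $X$ the same hammock estimate, with $\cC_\Phi(X,\tau X)=0$ and the Calabi--Yau symmetry, should show all $\Ext^k(X,X)$, $1\le k\le m$, vanish, giving $(m+1)$-rigidity.

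I expect the hard part to be the combinatorial bookkeeping in the $\ZZ A_\infty^\infty$ case: tracking how the shift $[k]$ moves $X$ around the cyclically arranged family of $\ZZ A_\infty^\infty$-components --- it returns to the starting component only when $k\equiv0\bmod m$, where $X[m]=\tau X$ --- and converting the covering-order crossing condition controlling $\cC_\Phi(X,X[k])\ne0$ into the $\ll$-coordinate, keeping in mind that $\ll$ is not order-preserving, so that the $\ZZ A_\infty$ and $\ZZ A_\infty^\infty$ cases behave differently near the ``boundary''. The Calabi--Yau symmetry and the automatic vanishing $\cC_\Phi(X,\tau X)=0$ should roughly halve the work; the crux will be to pin down, inside each distance-$2$ component, the exact dividing line ($i>j$ versus $i\le j$) between the rigid and non-rigid indecomposables.
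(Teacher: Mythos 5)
Your plan is correct and follows essentially the same route as the paper: both reduce to computing $\cC_\Phi(X,X[k])$ for $1\le k\le m$ from the explicit shift formula and the combinatorial Hom criteria, splitting by the block distance of the endpoints, with standard objects already disposed of by the earlier noncrossing results. The paper's proof just carries out the bookkeeping you defer: for endpoints two blocks apart the only degree that can contribute is $k=m-1$ (your $k=2$ after Calabi--Yau duality), where $\cC_\Phi(X,X[m-1])\neq0$ iff $i-1<j$, and for endpoints at least three blocks apart $\cC_\Phi(X,X[m-1])\neq0$ always.
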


\begin{rem}
We call objects of the second kind \emph{nonstandard $m+1$-rigid objects}. Since $\ll(x^p_i)>\ll(x^{p+2}_j)$, the nonstandard $m+1$ rigid objects can also be described as those objects $Y$ where $\ll_1Y-\ll_2Y\equiv 2$ mod $m$.
\end{rem}

\begin{proof}
Suppose first that the endpoints of $X$ lie in blocks at least 3 apart: $X=E(x^p_i,x^q_j)$ where $p+3\le q\le p+m-3$. Then $X[m-1]=E(x^{p+1}_{i-1},x^{q+1}_{j-1})$ and $\cC_\Phi(X,X[m-1])\neq0$. So, $X$ is not rigid. Next, suppose that $X=E(x^p_i,x^{p+2}_j)$. Then the only possible self-extension of $X$ is $\cC_\Phi(X,X[m-1])$. Since $X[m-1]=E(x^{p+1}_{i-1},x^{p+3}_{j-1})$ we have $\cC_\Phi(X,X[m-1])\neq0$ iff $i-1<j$. So we need $j<i$ for $X=E(x^p_i,x^{p+2}_j)$ to be rigid.
\end{proof}

\begin{cor}
All objects of $\cC_\Phi(Z_\infty^m)$ are $m+1$ rigid iff $m\le 4$.
\end{cor}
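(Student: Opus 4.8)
The plan is to reduce the statement to the classification of $m+1$-rigid indecomposables just obtained, rephrased through the bijection $\ll$. I read ``all objects are $m+1$-rigid'' as ``every indecomposable object is $m+1$-rigid'': a nonzero decomposable object is rigid exactly when all its summands are rigid \emph{and} pairwise compatible, and crossing pairs of standard objects exist already when $m=3$, so the literal reading would fail for every $m\ge 3$. First I would restate the preceding Proposition and the Remark following it as the single criterion: an indecomposable $X$ is $m+1$-rigid if and only if $\ll_2X-\ll_1X$ is congruent to $1$ or to $m-2$ modulo $m$. Standard objects give the residue $1$ and nonstandard $m+1$-rigid objects give the residue $m-2$; for the converse one uses $E(x^p_i,x^q_j)\cong E(x^q_j,x^p_i)$ together with the identification $x^p_i=x^{p+m}_{i-1}$ to rewrite an object whose $\ll$-difference is $\equiv m-2$ in the form $E(x^{p'}_{i'},x^{p'+2}_{j'})$ with $i'>j'$, and one whose $\ll$-difference is $\equiv 1$ in standard form.

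Next I would compute which residues of $\ll_2X-\ll_1X$ modulo $m$ actually occur among indecomposables. By the theorem describing the Auslander--Reiten quiver of $\cC_\Phi(Z_\infty^m)$, every indecomposable lies in one of the components $C_{pq}$, indexed by unordered pairs of distinct blocks $p,q$ modulo $m$. When $p,q$ are consecutive, $C_{pq}$ has type $\ZZ A_\infty$ and consists of the standard objects $E(x^p_i,x^{p+1}_j)$, all with $\ll_2X-\ll_1X\equiv 1$. When $q\equiv p+r$ with $2\le r\le m-2$, the component $C_{pq}$ has type $\ZZ A_\infty^\infty$ and contains $E(x^p_i,x^q_j)$ for all integers $i,j$, and a short computation with $\ll(x^p_i)=mi+p$ gives $\ll_2X-\ll_1X\equiv r\pmod m$ when $i\le j$ and $\equiv m-r\pmod m$ when $i>j$. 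Hence the set of residues realised by indecomposables is exactly $\{1,2,\dots,m-2\}$.

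Putting the two steps together, every indecomposable of $\cC_\Phi(Z_\infty^m)$ is $m+1$-rigid if and only if $\{1,2,\dots,m-2\}\subseteq\{1,m-2\}$, i.e. iff that set has at most two elements, i.e. iff $m\le 4$. The delicate point — the one I would write out most carefully — is the converse half of the first step: showing the residue condition is \emph{sufficient}, not merely necessary, for rigidity, since this is where one must keep straight the two names of a single indecomposable object and apply $x^p_i=x^{p+m}_{i-1}$ correctly. Everything else is bookkeeping with the structure theorems already proved.
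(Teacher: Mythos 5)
Your proof is correct and is essentially the argument the paper intends: the corollary is stated without its own proof as an immediate consequence of the preceding proposition, and your residue bookkeeping (rigid residues $\{1,m-2\}$ versus residues $\{1,\dots,m-2\}$ realized across the AR-components) is just a careful spelling-out of that deduction. In particular, your use of $E(x^p_i,x^q_j)\cong E(x^q_j,x^p_i)$ with $x^p_i=x^{p+m}_{i-1}$ to handle the case $m=4$ is exactly the identification the paper itself invokes later when it says $C_{p-1,p+1}$ should be construed as the objects with $i>j$.
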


Define a \emph{nonstandard $m$-cluster} to be a maximal collection of pairwise compatible $m+1$-rigid indecomposable objects of $\cC_\Phi(Z_\infty^m)$ containing at least one nonstandard object.

Let $B=\RR\times[0,1]$ and let $\d B^{(2)}$ be the set of all two element subsets of the boundary $\d B=\RR\times\{0,1\}$. Let $\ll_0,\ll_1:Z_\infty^m\to \d B^{(2)}$ be the two embeddings given by $\ll_0x=(\ll(x),0)$, $\ll_1x=(-\ll(x),1)$.


\begin{thm}\label{thm: nonstandard m-clusters and 2-periodic decompositions}
Let $\Psi$ be the mapping which sends any standard object $E(x,y)$ to the pair of pairs $\Psi E(x,y)=\{(\ll_0x,\ll_0y),(\ll_1x,\ll_1y)\}$ and any nonstandard $m+1$-rigid object $E(x,z)$ to $\Psi E(x,z)=\{(\ll_0x,\ll_1z),(\ll_1x,\ll_0z)\}$. Then a collection of rigid indecomposable objects of $\cC_\Phi(Z_\infty^m)$ form a nonstandard $m$-cluster iff it is mapped by $\Psi$ to a 2-periodic decomposition of the $A_\infty^2$-gon into $m+2$-gons (except for the one in the middle).
\end{thm}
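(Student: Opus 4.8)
The plan is to reduce the statement to a purely combinatorial classification of maximal noncrossing families of chords of the $A_\infty^2$-gon, running parallel to the proof of Theorem~\ref{thm: standard m-clusters give partitions of infinity-gon}. Write $\iota\colon B\to B$ for the involution $(s,t)\mapsto(-s,1-t)$; it exchanges $\ll_0x$ with $\ll_1x$ for every $x\in Z_\infty^m$, so the two chords that $\Psi$ assigns to a rigid indecomposable object always form a single $\iota$-orbit $\{D,\iota D\}$ of \emph{diagonals} of the $A_\infty^2$-gon. First I would make this target precise: using the Proposition classifying the $m+1$-rigid indecomposable objects (the standard objects, together with the $E(x^p_i,x^{p+2}_j)$ with $i>j$) and the residue description in the Remark, one checks that $D$ ranges exactly over the chords of $B$ of two kinds --- a chord joining two vertices of the same boundary component at distance $\equiv 1\pmod m$ and at least $m+1$, with $\iota D$ its mirror on the other component, coming from a standard object; and a chord joining a vertex of one boundary component to a vertex of the other whose two endpoint positions sum to $\equiv -2\pmod m$, with $\iota D$ its mirror, coming from a nonstandard rigid object. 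Hence $\Psi$ is a bijection from the $m+1$-rigid indecomposable objects onto the set of $\iota$-orbits of diagonals, and it automatically carries any collection $\cS$ of objects to an $\iota$-invariant --- i.e. ``2-periodic'' --- set of diagonals.

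Second I would establish the dictionary between the algebra and the geometry: two rigid indecomposables $X,Y$ are compatible, i.e. $\Ext^k(X,Y)=0=\Ext^k(Y,X)$ for $1\le k\le m$, if and only if no diagonal of $\Psi(X)$ crosses a diagonal of $\Psi(Y)$ inside $B$. For two standard objects this is Lemma~\ref{lem: standard objects are compatible iff noncrossing}. For the remaining cases one runs the same argument used there: the nonvanishing of $\cC_\Phi(X,Y[k])$ is controlled by an integer inequality exactly as in Lemma~\ref{lem: condition for Hom(X,Y) not zero X,Y standard} (the almost split triangle description behind that lemma never used that the ends of $X,Y$ lie in consecutive blocks), the $(m+1)$-Calabi--Yau duality reduces to testing $k=1,\dots,m$ with $Y[k]$ in a prescribed family of AR-components, and as $k$ runs through those values $Y[k]$ sweeps out precisely the translates of $\Psi(Y)$ that could meet $\Psi(X)$. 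Therefore a collection $\cS$ of rigid indecomposables is pairwise compatible if and only if $\Psi(\cS)$ is a pairwise noncrossing (necessarily $\iota$-invariant) set of diagonals, and $\cS$ is maximal among such collections if and only if $\Psi(\cS)$ is maximal among such sets.

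Finally I would identify the maximal $\iota$-invariant noncrossing sets of diagonals. As in Theorem~\ref{thm: standard m-clusters give partitions of infinity-gon}, such a set cuts $B$ into regions, and the congruence constraints on the diagonals force every region to be an $m+2$-gon except for the one distinguished region containing the fixed point $(0,\tfrac12)$ of $\iota$; maximality is the statement that no region admits a further admissible diagonal. If $\Psi(\cS)$ contains a cross-strip diagonal --- equivalently, $\cS$ contains a nonstandard object --- then by $\iota$-invariance it contains a mirror pair of such diagonals bounding this central region as a finite polygon, and tracking the residues of the vertex labels around its boundary, where the passage from one boundary component to the other contributes an extra unit, shows the central region is an $m+3$-gon; if $\Psi(\cS)$ has no cross-strip diagonal, then it is merely an $m+2$-angulation of one boundary $\infty$-gon together with its mirror, the ``central region'' is unbounded, and $\cS$ is an ordinary standard $m$-cluster. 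Assembling the three steps, the nonstandard $m$-clusters are exactly the collections $\cS$ for which $\Psi(\cS)$ is a $2$-periodic decomposition of the $A_\infty^2$-gon into $m+2$-gons apart from one finite polygon in the middle, as asserted. I expect this last step --- isolating the central polygon and verifying maximality region by region once cross-strip diagonals appear --- to be the principal obstacle; the earlier steps are careful but essentially routine translations of the inequalities in the lemmas above into crossing conditions in $B$.
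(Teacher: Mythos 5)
Your overall route is the same as the paper's: show that compatibility of $m+1$-rigid indecomposables translates, case by case (standard/standard, standard/nonstandard, nonstandard/nonstandard), into noncrossing of the $\Psi$-images, and then run the contraction/maximality argument of Theorem \ref{thm: standard m-clusters give partitions of infinity-gon} on the strip. That is exactly how the paper proceeds (Lemma \ref{lem: standard objects are compatible iff noncrossing} together with Propositions \ref{prop: compatibility of standard with nonstandard} and \ref{prop: compatibility of two nonstandard objects}, which are themselves deduced from a list of Hom-nonvanishing lemmas stated without proof). One caveat on your second step: the inequalities are \emph{not} uniformly "the same as" Lemma \ref{lem: condition for Hom(X,Y) not zero X,Y standard}; e.g.\ for two nonstandard objects the compatibility criterion is that one of the intervals $[\ll_1Y,\ll_2Y]$, $[\ll_1Z,\ll_2Z]$ contains the other (nesting, not noncrossing of intervals), and it is only after passing to the cross-chord picture via $\Psi$ that this becomes a noncrossing statement. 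So the case analysis genuinely has to be redone, as the paper does in its sequence of lemmas, although your claimed end result of that analysis is the correct one.

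The concrete error is in your final step: the central region is \emph{not} an $(m+3)$-gon; by Proposition \ref{central polygon in nonstandard m-cluster} it has $2m-2$ sides. Your residue bookkeeping ("an extra unit for each passage between boundary components") cannot be right: by $2$-periodicity the involution $\iota$ acts freely on the sides of the central polygon, so its number of sides is even, which already rules out $m+3$ whenever $m$ is even. The correct count: the innermost nonstandard object contributes the two cross chords bounding the central region, and its endpoints satisfy $\ll_2Y-\ll_1Y\equiv m-2 \pmod m$; maximality forces $i=j+1$, i.e.\ after contracting the standard chords nested under it the bottom (and, symmetrically, top) stretch of the central boundary has exactly $m-2$ sides, giving $(m-2)+(m-2)+2=2m-2$. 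You were presumably misled by the introduction's phrase "$m+3$-gon" and by the worked example, where $m=5$ and $2m-2=m+3=8$ coincide. The literal statement of the theorem does not depend on the size of the middle polygon, so the equivalence you are after survives, but since you single out the isolation of the central polygon as the principal obstacle, this step of your argument needs to be repaired along the lines above (and one also has to note, as the paper does implicitly, that maximality forces the innermost cross chord to be minimal, so that no further chord fits inside the $2m-2$-gon).
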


The proof follows the same pattern as the proof of Theorem \ref{thm: standard m-clusters give partitions of infinity-gon}. We state the corresponding sequence of lemmas without proof. The central polygon is discussed in Proposition \ref{central polygon in nonstandard m-cluster}.

\begin{lem}
Suppose that $X=E(x_a^p,x_b^{p+1})$ and $Y=E(x^{p-1}_i,x^{p+1}_j)$ with $i>j$. Then the following are equivalent.
\begin{enumerate}
\item $\cC_\Phi(Y,X)\neq0$
\item $a<j\le b$
\item $\ll_1X<\ll_1Y-1\le\ll_2X-1$
\end{enumerate}
\end{lem}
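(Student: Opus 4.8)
The plan is to prove $(2)\Leftrightarrow(3)$ by translating into $\ll$-coordinates, and then to get $(1)\Leftrightarrow(2)$ from the combinatorial description of nonzero maps in $\cC_\Phi(Z_\infty^m)$. For the first equivalence: since $\ll(x_k^q)=mk+q$, and using $a<b$, $i>j$, $m\ge3$, one has $\ll_1X=ma+p$, $\ll_2X=mb+p+1$, $\ll_1Y=mj+p+1$, $\ll_2Y=mi+p-1$, so the inequality of $(3)$, namely $\ll_1X<\ll_1Y-1\le\ll_2X-1$, reads $ma+p<mj+p\le mb+p$; as these three integers are congruent mod $m$ this is equivalent to $a<j\le b$, which is $(2)$.

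For $(1)\Leftrightarrow(2)$ I would first note that $Y\in C_{p-1,p+1}$ and $X\in C_{p,p+1}$ lie in different AR-components (their ``outer'' endpoints sitting in the consecutive blocks $p-1$ and $p$), so the Hom must be computed across components. I would use the criterion for non-vanishing of $\Hom$'s between indecomposables of $\cC_\Phi(Z_\infty^m)$: viewing objects as arcs, $\cC_\Phi(E(u,v),E(u',v'))$ is one-dimensional exactly when, for one of the two labelings of the endpoints of $E(u,v)$, there are liftings to the covering poset $\widetilde{Z_\infty^m}$ with
\[
\tilde u\le\tilde u'<\tilde\Phi^{-1}\tilde v,\qquad \tilde v\le\tilde v'<\s\tilde\Phi^{-1}\tilde u,
\]
and is zero otherwise. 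This is the exact analog, with the admissible automorphism $\Phi$ in place of the successor, of the criterion used in the $S^1$ and $2$-Calabi--Yau cases (it is what underlies Lemma \ref{lem: condition for Hom(X,Y) not zero X,Y standard}); it can equally be read off $\cMF_\Phi$ directly, since a map $E(u,v)\to E(u',v')$ is a $2\times2$ matrix over $\cP(Z_\infty^m)$ commuting with the two counterdiagonal operators, and it survives in the stable category iff it avoids factoring through a projective-injective $G_\Phi P_z=E(z,\Phi z)$ --- which is what produces the $\tilde\Phi^{-1}$ in the bounds.

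Applying this with $(u,v)=(x_i^{p-1},x_j^{p+1})$ and $(u',v')=(x_a^p,x_b^{p+1})$, and using the explicit covering poset $\widetilde{Z_\infty^m}\cong\ZZ\times\ZZ$ (lexicographic, $\s$ translating the first coordinate by $m$, $\tilde\Phi$ the ``slow'' shift by one), each of the four endpoint-comparisons is between lifts in non-congruent blocks and so reduces to a comparison of first coordinates. Tracking the four lifting-shift parameters then shows the four inequalities are simultaneously satisfiable precisely when all four shifts coincide, which forces exactly $a<j$ and $j\le b$; the other labeling of the endpoints of $Y$ produces an inconsistent chain of inequalities and contributes nothing. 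Hence $\cC_\Phi(Y,X)\ne0\iff a<j\le b$, giving $(1)\Leftrightarrow(2)$.

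The hard part is making this Hom-criterion precise and checking it in the twisted $m$-Calabi--Yau setting --- equivalently, carrying out the even/odd-morphism and factoring-through-projectives bookkeeping in $\cMF_\Phi(Z_\infty^m)$ with source and target in distinct AR-components. The residual cases where $p\equiv0,1,2\pmod m$ (where some $\tilde\Phi^{-1}$ also shifts the second coordinate) need separate but routine checks, and one should keep in mind that the statement is only substantive for $m\ge4$.
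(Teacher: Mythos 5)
First, a point of comparison: the paper gives no argument for this lemma at all (it is one of the lemmas stated ``without proof'' after Theorem \ref{thm: nonstandard m-clusters and 2-periodic decompositions}), so the only question is whether your plan actually constitutes a proof. Your step $(2)\Leftrightarrow(3)$ is complete and correct: with $a\le b$, $i>j$ and $m\ge3$ the values $\ll_1X=ma+p$, $\ll_2X=mb+p+1$, $\ll_1Y=mj+p+1$, $\ll_2Y=mi+p-1$ are as you say, and $(3)$ becomes $ma+p<mj+p\le mb+p$, i.e. $a<j\le b$. Your route to $(1)\Leftrightarrow(2)$ is also the natural one and is in the spirit of the paper's own proved special case (the lemma with conditions \eqref{eq: conditions for nonzero morphism in C(Z)} and Lemma \ref{lem: condition for Hom(X,Y) not zero X,Y standard}): indeed, when one carries out the lifting comparison with $\tilde u_0=(p-1,i)$, $\tilde u_1=(p+1,j)$, $\tilde w_0=(p,a)$, $\tilde w_1=(p+1,b)$ (after using $\Phi$-equivariance to normalize $p$ so no $\tilde\Phi^{-1}$ wraps), the two chains reduce to $a<j$ and $j\le b$, exactly condition $(2)$, for $m\ge4$.

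The genuine gap is that the two substantive steps are announced rather than done. You postulate the stable-Hom criterion ``$\tilde u\le\tilde u'<\tilde\Phi^{-1}\tilde v$ and $\tilde v\le\tilde v'<\s\tilde\Phi^{-1}\tilde u$'' and explicitly defer its verification (``the hard part''), and you never actually track the four shift parameters. The criterion is true, and the cleanest justification is via Corollary \ref{cor: (X,X0) is a Frobenius cyclic poset}: a basic morphism given by $(\tilde u_0,\tilde u_1)\le(\tilde w_0,\tilde w_1)$ in $\tilde\cX$ dies in the stable category iff there is an intermediate pair of the form $(z,\tilde\Phi z)$ or $(z,\s\tilde\Phi^{-1}z)$, which (since the covering is totally ordered and $\tilde\Phi\tilde u_0\le\tilde u_1$, $\tilde\Phi\tilde w_0\le\tilde w_1$) happens iff $\tilde\Phi^{-1}\tilde u_1\le\tilde w_0$ or $\s\tilde\Phi^{-1}\tilde u_0\le\tilde w_1$; higher-winding maps vanish stably because multiplication by $t$ factors through projective-injectives. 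Two smaller inaccuracies: (i) your claim that ``the other labeling produces an inconsistent chain'' is only correct if you keep the target labeling fixed — swapping the target's endpoints instead gives a consistent chain (with shifts $0$ and $1$, so they need not ``coincide''), but it reproduces the same condition $a<j\le b$, so no harm; (ii) the small-$m$ caveat is not merely cosmetic: for $m=3$ the term $\s\tilde\Phi^{-1}\tilde u_0$ shifts the second coordinate and the correct condition acquires the extra inequality $b<i-1$, so the lemma as stated really does require $m\ge4$ (consistent with the fact that for $m=3$ these $Y$ are standard or projective and outside the intended scope). So: right strategy, correct endgame, but the proof of the Hom-criterion and the explicit case check are missing as written.
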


\begin{lem}
Suppose that $X=E(x_a^{p-2},x_b^{p-1})$ and $Y=E(x^{p-1}_i,x^{p+1}_j)$ with $i>j$. Then the following are equivalent.
\begin{enumerate}
\item $\cC_\Phi(Y,X)\neq0$
\item $a<i\le b$
\item $\ll_1X+1<\ll_2Y\le\ll_2X$
\end{enumerate}
\end{lem}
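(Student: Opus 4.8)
The plan is to handle the two equivalences separately: $(2)\Leftrightarrow(3)$ is pure arithmetic, while $(1)\Leftrightarrow(2)$ is a computation in the Frobenius category $\cMF_\Phi(Z_\infty^m)$, carried out in the same way as for the preceding companion lemma and for Lemma~\ref{lem: condition for Hom(X,Y) not zero X,Y standard} (whose proof is also only sketched).

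First I would dispose of $(2)\Leftrightarrow(3)$ by passing to $\ll$--coordinates. Since $X=E(x_a^{p-2},x_b^{p-1})$ is standard we have $a<b$, so $\ll_1X=\ll(x_a^{p-2})=ma+p-2$ and $\ll_2X=\ll(x_b^{p-1})=mb+p-1$; and since $i>j$ the larger endpoint of $Y=E(x_i^{p-1},x_j^{p+1})$ is $x_i^{p-1}$, so $\ll_2Y=\ll(x_i^{p-1})=mi+p-1$. Then $\ll_1X+1<\ll_2Y\le\ll_2X$ reads $ma+p-1<mi+p-1\le mb+p-1$, i.e. $a<i\le b$, which is $(2)$; the strict and weak inequalities translate cleanly because $ma+p-1$, $\ll_2Y$ and $\ll_2X$ are all congruent mod $m$.

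For $(1)\Leftrightarrow(2)$ I would unwind $\cC_\Phi(Y,X)$ inside the stable category of $\cMF_\Phi(Z_\infty^m)$. By the matrix description, a morphism $Y=E(x_i^{p-1},x_j^{p+1})\to X=E(x_a^{p-2},x_b^{p-1})$ is given by its two components in $\cP(Z_\infty^m)$, and since each $\cP(P_u,P_v)\cong R$, commuting with the two counterdiagonal differentials cuts out a free rank--one $R$--submodule in the \emph{even} part (pairing $P_{x_i^{p-1}}$ with $P_{x_a^{p-2}}$) and likewise in the \emph{odd} part (pairing the two block--$(p-1)$ summands $P_{x_i^{p-1}}$ and $P_{x_b^{p-1}}$); a short computation with the cocycle identity together with $c(x,y,x)=c(y,x,y)=1$ for the endpoints of $X$ and of $Y$ shows these submodules are rank one, not zero. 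Passing to the stable category kills exactly the morphisms factoring through a projective--injective $E(z,\Phi z)$, and since each part is one--dimensional over $\kk$ after that reduction, $\cC_\Phi(Y,X)\neq0$ iff the distinguished generator of the odd part (the matching pairing the common block--$(p-1)$ summands, which is the only one of the two that can survive) does not factor through such an $E(z,\Phi z)$. Finally I would translate ``does not factor through $E(z,\Phi z)$'' into the covering poset $\wt Z_\infty^m\cong\ZZ\times\ZZ$ (lexicographic, $\tilde\Phi(c,k)=(c+1,k)$, $\sigma=\tilde\Phi^{m}$), using liftings $x_i^{p-1}\leftrightarrow(p-1,i)$, $x_j^{p+1}\leftrightarrow(p+1,j)$, $x_a^{p-2}\leftrightarrow(p-2,a)$, $x_b^{p-1}\leftrightarrow(p-1,b)$: the generator survives precisely when the two endpoints of $X$ lie in the appropriate half--open intervals cut out by the endpoints of $Y$ and their $\tilde\Phi$-- and $\sigma$--translates, the same mechanism as in the earlier Lemma computing $\cC_\phi(Z)(X,Y)$ for the $2$--Calabi--Yau examples. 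Running this out, the interval conditions collapse to $a<i\le b$, giving $(1)\Leftrightarrow(2)$.

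The hard part will be this last step: deciding which of the even/odd matchings can contribute, and pinning the half--open interval bounds so that one gets exactly $a<i\le b$ rather than an off--by--one variant, plus handling the small--$m$ degeneracies (for $m=3$, $Y$ itself lies in a $\ZZ A_\infty$ component and is covered by the standard--object analysis instead). This is routine but fiddly, which is presumably why the paper records this lemma and its companion without proof; once both are in hand they feed into the proof of Theorem~\ref{thm: nonstandard m-clusters and 2-periodic decompositions} exactly as Lemma~\ref{lem: condition for Hom(X,Y) not zero X,Y standard} feeds into Theorem~\ref{thm: standard m-clusters give partitions of infinity-gon}.
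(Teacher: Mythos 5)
First, a point of reference: the paper itself records this lemma (and its companions following Theorem \ref{thm: nonstandard m-clusters and 2-periodic decompositions}) \emph{without proof}, so there is no argument of the authors to compare yours against; your proposal has to stand on its own. The part that does stand is $(2)\Leftrightarrow(3)$: with $a<b$ and $i>j$ one indeed has $\ll_1X=ma+p-2$, $\ll_2X=mb+p-1$, $\ll_2Y=mi+p-1$, and since all three relevant integers are congruent mod $m$ the inequalities $\ll_1X+1<\ll_2Y\le\ll_2X$ translate exactly to $a<i\le b$. Your structural reduction of $(1)$ is also essentially right: the Hom space in $\cMF_\Phi$ splits into two rank-one pieces, and only the matching that pairs the two block-$(p-1)$ endpoints can survive in the stable category (the other matching is excluded by block bookkeeping, which you assert but do not check).

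The gap is that the entire content of $(1)\Leftrightarrow(2)$ — determining the precise half-open windows cut out by the projective-injectives $E(z,\Phi z)$ and verifying that they collapse to $a<i\le b$ — is exactly the step you defer ("running this out\dots"), and the scaffolding you set up for it would mislead you if followed literally. Your covering-poset model is inconsistent: with liftings $x^q_k\leftrightarrow(q,k)$ and $\tilde\Phi(c,k)=(c+1,k)$ you cannot have $\s=\tilde\Phi^m$, because $\s=\tilde\Phi^m$ forces $\Phi^m=\mathrm{id}$ on the quotient, whereas here $\Phi^m$ is the successor $x^p_k\mapsto x^p_{k+1}$; with your liftings the deck transformation is $\s(c,k)=(c+m,k-1)$ (equivalently, keep $\s(c,k)=(c+m,k)$ but then $\tilde\Phi$ must bump the second coordinate when the first crosses a multiple of $m$). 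This is precisely the "off-by-one" danger you flag, and it is not resolved. Likewise you cannot cite "the same mechanism" as the unlabeled lemma computing $\cC_\phi(Z)(X,Y)$ in the $2$-Calabi-Yau subsection: that proof uses hypothesis (2) of Theorem \ref{thm: C(Z) is a 2-CY cluster category} (that $\phi$ is the successor), so the factoring objects there are $E(x_0,x_1^-)$-type and the windows are bounded by predecessors; here the projective-injectives are $E(z,\Phi z)$ and the windows are bounded by $\tilde\Phi^{-1}$- and $\s\tilde\Phi^{-1}$-translates of the lifted endpoints of the source $Y$. Carrying that out in the corrected model (source $Y$ with lifts $(p-1,i)$ and $(p+1,j)$) one needs a lift of $x_b^{p-1}$ in $[\,(p-1,i),\tilde\Phi^{-1}(p+1,j)\,)$ and a lift of $x_a^{p-2}$ in $[\,(p+1,j),\s\tilde\Phi^{-1}(p-1,i)\,)$, which for $m\ge4$ gives $i\le b$ and $a<i$ — the same answer both lemmas record (and consistent with Lemma \ref{lem: condition for Hom(X,Y) not zero X,Y standard}), but this computation, not its announcement, is the proof. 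As written, $(1)\Leftrightarrow(2)$ remains unproved.
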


In both lemmas, the three integers in (3) are congruent to each other modulo $m$. The condition $i>j$ is equivalent to $Y$ being $m+1$-rigid when $m\ge5$.

\begin{lem} Let $Y=E(x^{p-1}_i,x^{p+1}_j)$ with $i>j$ and let $X$ be a standard indecomposable object. 
\begin{enumerate}
\item There exists $1\le k\le m$ so that $X[k]\in C_{p,p+1}$ and $\Ext^k(Y,X)\neq0$ if and only if
\[
\ll_1X<\ll_1Y<\ll_2X.\]
\item There exists $1\le k\le m$ so that $X[k]\notin C_{p,p+1}$ and $\Ext^k(Y,X)\neq0$ if and only if
\[
\ll_1X<\ll_2Y<\ll_2X.\]
\end{enumerate} 
\end{lem}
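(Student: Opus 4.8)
The plan is to derive both parts from the two lemmas just stated — which describe $\cC_\Phi(Y,X)$ when $X$ lies in $C_{p,p+1}$, respectively in $C_{p-2,p-1}$ — applied to the shifted object $X[k]$, followed by the same floor‑function bookkeeping used in the proof of the earlier lemma on when $X$ crosses $Y$.

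First I would fix notation. Write $X=E(x_a^r,x_b^{r+1})$ with $a<b$, so $X\in C_{r,r+1}$ with $\ll_1X=ma+r$ and $\ll_2X=mb+r+1$, and observe that, since $i>j$ and $m\ge3$, one has $\ll_1Y=mj+p+1$ and $\ll_2Y=mi+p-1$. For every $k$ the object $X[k]$ is again standard, lies in the component $C_{r-k,\,r+1-k}$ (block indices mod $m$), and satisfies $\ll_1X[k]=\ll_1X-k$ and $\ll_2X[k]=\ll_2X-k$. Hence there is exactly one $k_1\in\{1,\dots,m\}$ with $X[k_1]\in C_{p,p+1}$, namely the representative in $\{1,\dots,m\}$ of the residue class of $\ll_1X-\ll_1Y+1\pmod m$ (which is $\equiv r-p$), and exactly one $k_2$ with $X[k_2]\in C_{p-2,p-1}$, namely that of $\ll_1X-\ll_2Y+1\pmod m$ (which is $\equiv r-p+2$).

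For part (1), since $k_1$ is the only shift landing in $C_{p,p+1}$, the displayed condition is simply $\cC_\Phi(Y,X[k_1])\ne0$; applying the first preceding lemma to $X[k_1]$ turns this into $\ll_1X-k_1<\ll_1Y-1\le\ll_2X-k_1-1$, i.e.\ $N<k_1\le N+m(b-a)$ where $N=\ll_1X-\ll_1Y+1$. Writing $k_1=N-m\lfloor(N-1)/m\rfloor$, the left inequality becomes $\lfloor(N-1)/m\rfloor<0$, equivalently $N\le0$, equivalently $\ll_1X<\ll_1Y$; the right one becomes $\lfloor(N-1)/m\rfloor\ge a-b$, equivalently $N\ge m(a-b)+1$, equivalently $\ll_1Y<\ll_2X$. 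Part (2) is the identical computation with $\ll_1Y$ replaced throughout by $\ll_2Y$ and the second preceding lemma applied to $X[k_2]$, giving $\ll_1X<\ll_2Y<\ll_2X$ — \emph{provided} one knows that a nonzero $\Ext^k(Y,X)$ with $X[k]\notin C_{p,p+1}$ forces $X[k]\in C_{p-2,p-1}$, i.e.\ $k=k_2$. (The converse direction of (2) needs nothing extra: if $\ll_1X<\ll_2Y<\ll_2X$ the computation produces $\cC_\Phi(Y,X[k_2])\ne0$, and $X[k_2]\in C_{p-2,p-1}\ne C_{p,p+1}$ exhibits the required $k$.)

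The step I expect to be the main obstacle is exactly that extra input: the vanishing $\cC_\Phi(Y,Z)=0$ for a \emph{standard} object $Z$ lying in neither $C_{p,p+1}$ nor $C_{p-2,p-1}$. I would prove it either directly — a morphism $Y\to Z$ in $\cMF_\Phi(Z_\infty^m)$ is a $2\times2$ matrix of basic morphisms intertwining the two differentials, and for such $Z$ one checks each entry is forced to be divisible by $t$, so the morphism dies in the stable category — or by transporting across the $(m+1)$‑Calabi--Yau duality $\cC_\Phi(Y,Z)\cong D\,\cC_\Phi(Z,Y[m+1])$ and invoking the companion statement, proved exactly as the two preceding lemmas, describing when a standard object maps \emph{into} the nonstandard rigid object $Y[m+1]\in C_{p-2,p}$. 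The remaining arithmetic is the same routine floor manipulation already used for the crossing lemma.
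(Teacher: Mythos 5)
The paper never proves this lemma --- it belongs to the ``corresponding sequence of lemmas'' that the authors explicitly state without proof --- so there is no official argument to compare against; your route is exactly the intended one, namely the nonstandard analogue of the proofs of Lemmas \ref{lem: X crosses Y iff not compatible} and \ref{lem: Y crosses X iff not compatible}, fed by the two Hom-criteria lemmas stated just before the statement. Your bookkeeping checks out: $X[k]\in C_{r-k,r+1-k}$ with $\ll_iX[k]=\ll_iX-k$, there is a unique $k_1\equiv r-p$ and a unique $k_2\equiv r-p+2$ in $\{1,\dots,m\}$ landing in $C_{p,p+1}$ and $C_{p-2,p-1}$, and substituting $X[k_1]$, $X[k_2]$ into those criteria and eliminating $k$ by your floor manipulation (the inequalities $N<k_i\le N+m(b-a)$ and their translations) does give $\ll_1X<\ll_1Y<\ll_2X$ and $\ll_1X<\ll_2Y<\ll_2X$, and the converse directions are handled correctly.

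The step you flag is indeed the only missing content, but your first proposed repair would not work as stated. It is false that every $\cMF_\Phi$-morphism from $Y$ to a standard object in a ``far'' component has entries divisible by $t$: for example with $m=5$, $p=3$, $Y=E(x_i^2,x_j^4)$ and $Z=E(x_a^5,x_b^6)\in C_{5,1}$, the even intertwining relations are $u\,t^0=v\,t^0$ and $u\,t^1=v\,t^1$ (the relevant cocycle values are $0,0$ and $1,1$), so there is a morphism with unit coefficients; it is stably zero because it factors through a projective-injective $E(z,\Phi z)$, not because it is a multiple of $t$. The honest proof of the needed vanishing $\cC_\Phi(Y,Z)=0$ for standard $Z\notin C_{p,p+1}\cup C_{p-2,p-1}$ is the $\Phi$-analogue of the window argument behind \eqref{eq: conditions for nonzero morphism in C(Z)} --- the same argument that proves the two preceding lemmas: a stably nonzero map out of $Y=E(x_i^{p-1},x_j^{p+1})$ needs one endpoint of the target in $[x_i^{p-1},\Phi^{-1}x_j^{p+1})=[x_i^{p-1},x_j^{p})$, which meets only blocks $p-1,p$, and the other endpoint in $[x_j^{p+1},\s\Phi^{-1}x_i^{p-1})$, which meets only blocks $p+1,\dots,p-2$; since a standard object has its endpoints in two consecutive blocks, only $C_{p,p+1}$ and $C_{p-2,p-1}$ can occur. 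Your alternative via the $(m+1)$-Calabi-Yau duality (the device used in Lemma \ref{lem: Y crosses X iff not compatible}) is also viable, but the companion computation of maps from standard objects into $Y[m+1]$ is likewise absent from the paper, so on either route you must actually carry out one further Hom/window computation rather than cite it.
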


\begin{prop}\label{prop: compatibility of standard with nonstandard}
Suppose that $X,Y$ are $m+1$-rigid indecomposable objects where $X$ is standard and $Y$ is nonstandard. Then $X,Y$ are compatible if and only if neither $\ll_1Y$ not $\ll_2Y$ lies between $\ll_1X$ and $\ll_2X$. Equivalently, $X,Y$ are compatible if and only if $\Psi X,\Psi Y$ do not cross.
\end{prop}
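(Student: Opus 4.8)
The plan is to reduce the statement to the three lemmas immediately above the proposition, using the $(m+1)$-Calabi--Yau property to halve the work. Recall that $X,Y$ being \emph{compatible} means $\Ext^k(X,Y)=0=\Ext^k(Y,X)$ for every $1\le k\le m$. Since $\cC_\Phi(Z_\infty^m)$ is $(m+1)$-Calabi--Yau, $\Ext^k(X,Y)\cong D\Ext^{m+1-k}(Y,X)$, and $k\mapsto m+1-k$ is a bijection of $\{1,\dots,m\}$; hence $\Ext^k(X,Y)=0$ for all $k$ in this range if and only if $\Ext^k(Y,X)=0$ for all such $k$. Thus compatibility of $X,Y$ is equivalent to the single requirement that $\Ext^k(Y,X)=0$ for $1\le k\le m$.

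First I would write $Y=E(x^{p-1}_i,x^{p+1}_j)$ with $i>j$, so that $Y$ lies in the component $C_{p-1,p+1}$, and order the endpoints of the standard object $X$ so that $\ll_1X<\ll_2X$. The last of the three preceding lemmas then says: there is a $k$ with $1\le k\le m$ and $\Ext^k(Y,X)\neq0$ exactly when $\ll_1X<\ll_1Y<\ll_2X$ (the sub-case $X[k]\in C_{p,p+1}$) or $\ll_1X<\ll_2Y<\ll_2X$ (the sub-case $X[k]\notin C_{p,p+1}$); these two sub-cases exhaust all shifts $X[k]$ in this range that can receive a nonzero map from $Y$, since such an $X[k]$ must share a block with $Y\in C_{p-1,p+1}$. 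Negating and combining with the previous paragraph, $X$ and $Y$ are compatible if and only if neither $\ll_1Y$ nor $\ll_2Y$ lies strictly between $\ll_1X$ and $\ll_2X$, which is the first assertion.

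It remains to match this with the crossing condition for $\Psi X$ and $\Psi Y$. By the definition of $\Psi$, the standard object $X=E(x',y')$ gives the pair of chords consisting of the bottom-edge chord joining $(\ll_1X,0)$ to $(\ll_2X,0)$ and the top-edge chord joining $(-\ll_1X,1)$ to $(-\ll_2X,1)$, while the nonstandard object $Y$ gives the pair of bottom-to-top chords joining $(\ll_2Y,0)$ to $(-\ll_1Y,1)$ and $(\ll_1Y,0)$ to $(-\ll_2Y,1)$; the bottom endpoints of the latter are $\ll_1Y,\ll_2Y$ and the top endpoints are $-\ll_1Y,-\ll_2Y$. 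A chord lying in the bottom edge with endpoints $u<v$ crosses a bottom-to-top chord exactly when that chord's bottom endpoint lies strictly between $u$ and $v$, and the analogous statement holds for the top edge. Applying this to both chords of $\Psi X$ shows that $\Psi X$ and $\Psi Y$ cross iff $\ll_1Y$ or $\ll_2Y$ lies strictly between $\ll_1X$ and $\ll_2X$ (for the top chord, the condition $-\ll_2X<-\ll_1Y<-\ll_1X$ or $-\ll_2X<-\ll_2Y<-\ll_1X$ becomes, after multiplying by $-1$, the same condition as for the bottom chord). This is precisely the negation of compatibility, giving the ``equivalently'' clause.

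The real work is carried in the three preceding lemmas (whose proofs the paper leaves to the reader); granting those, the present proposition is pure bookkeeping. The one point that needs care is the claim that the two sub-cases ``$X[k]$ in, resp. not in, $C_{p,p+1}$'' really account for every shift $X[k]$, $1\le k\le m$, that can admit a nonzero morphism from $Y$ --- i.e., that no third AR-component is involved --- together with keeping the strictness of all inequalities consistent, so that objects sharing exactly one endpoint are treated as noncrossing and hence compatible.
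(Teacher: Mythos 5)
Your argument is correct and is essentially the derivation the paper intends: the proposition is exactly the negation of the preceding lemma (whose two sub-cases, $X[k]\in C_{p,p+1}$ or not, are tautologically exhaustive), combined with the observation that by the $(m+1)$-Calabi--Yau property vanishing of $\Ext^k(Y,X)$ for $1\le k\le m$ already gives compatibility, plus the routine translation of the chord picture under $\Psi$. The paper leaves this bookkeeping implicit, so there is nothing substantive to add or correct.
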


\begin{lem}
Suppose that $Y=E(x_i^{p-1},x_j^{p+1}),Z=E(x_a^{q-1},x_b^{q+1})$ where $i>j$ and $a>b$.
\begin{enumerate}
\item Suppose $q=p$. Then $\cC_\Phi(Y,Z)\neq0$ iff $\ll_1Y\le \ll_1Z$ and $\ll_2Y\le \ll_2Z$.
\item Suppose $q=p+1$. Then $\cC_\Phi(Y,Z)\neq0$ iff $\ll_2Z< \ll_1Y-1$.
\end{enumerate} 
\end{lem}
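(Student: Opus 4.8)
The plan is to compute $\cC_\Phi(Y,Z)$ by hand, following the same pattern as the proofs of Lemma~\ref{lem: condition for Hom(X,Y) not zero X,Y standard} and of the two lemmas immediately preceding the present statement. Recall that $\cC_\Phi(Z_\infty^m)$ is the stable category of the Frobenius category $\cMF_\Phi(Z_\infty^m)$, whose indecomposables are the $E(x,y)$ of Theorem~\ref{Krull-Schmidt theorem}: a morphism $E(x_0,x_1)\to E(y_0,y_1)$ in $\cMF_\Phi(Z_\infty^m)$ is a pair of basic morphisms of $\cP(Z_\infty^m)$, and it survives to a nonzero map in $\cC_\Phi(Z_\infty^m)$ exactly when it does not factor through any projective--injective object $E(z,\Phi z)$. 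Fixing liftings of the four endpoints of $Y$ and $Z$ in the covering poset $\wt{Z_\infty^m}$ realizing the defining cyclic-order inequalities of the two objects, this non-factorization condition becomes a pair of half-open-interval membership conditions on the liftings, one for each end. Translating these through the labeling $\ll$ and using that the $\ll$-values involved are mutually congruent modulo $m$ --- so that strict and non-strict integer inequalities may be traded freely, exactly as in the proof of Lemma~\ref{lem: X crosses Y iff not compatible} --- collapses the raw conditions to the clean forms stated in the lemma.

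For part (1), where $q=p$, the objects $Y=E(x^{p-1}_i,x^{p+1}_j)$ and $Z=E(x^{p-1}_a,x^{p+1}_b)$ have their endpoints in the same pair of blocks, $p-1$ and $p+1$, so they lie in the same Auslander--Reiten component $C_{p-1,p+1}$; since both are nonstandard $m+1$-rigid ($i>j$, $a>b$), in each object the endpoint in block $p-1$ carries the larger $\ll$-label. A morphism inside a single component advances each endpoint, and the two interval conditions, once the congruences modulo $m$ are cleared, read $\ll_1Y\le\ll_1Z$ (i.e., $j\le b$) and $\ll_2Y\le\ll_2Z$ (i.e., $i\le a$), which is the asserted criterion.

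For part (2), where $q=p+1$, the object $Z=E(x^p_a,x^{p+2}_b)$ lies in the component $C_{p,p+2}$, which interleaves with $Y$'s component $C_{p-1,p+1}$: block $p+1$ of $Y$ lies between the two blocks of $Z$, and block $p$ of $Z$ lies between the two blocks of $Y$. In this configuration the interval analysis produces a single governing inequality rather than a pair; carrying it out across the one-block separation --- or, equivalently, invoking the $m+1$-Calabi--Yau duality $\cC_\Phi(Y,Z)\cong D\cC_\Phi(Z,Y[m+1])$ and moving $Y$ by means of $E(x,y)[m]=E(x^-,y^-)$ and $E(x,y)[1]=E(\Phi^{-1}y,\Phi^{-1}x)$ --- one obtains after clearing congruences the single condition $\ll_2Z<\ll_1Y-1$ (i.e., $a<j$); the ``$-1$'' records that the relevant end of $Z$ sits in block $p$ while the corresponding end of $Y$ sits in block $p+1$.

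The work is purely bookkeeping, which is why the statement is listed together with its neighbours without a written-out proof. The only delicate point is the choice of liftings together with the direction and strictness of the resulting inequalities: because $\ll$ does not preserve the order of $\wt{Z_\infty^m}$, one must first record which endpoint of each object sits in which block before converting interval membership into inequalities among $\ll_1$ and $\ll_2$, and it is exactly this that accounts for the difference in shape between the two cases --- a pair of parallel non-strict inequalities when $q=p$ versus a single strict inequality with a one-block shift when $q=p+1$. Beyond this there is no conceptual obstacle.
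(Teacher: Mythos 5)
The paper gives no argument for this lemma at all---it is one of the statements explicitly ``stated without proof''---so your proposal has to stand on its own. Your general strategy is certainly the intended one (fix liftings in the covering poset, describe $\cMF_\Phi(Y,Z)$, and decide which morphisms factor through the projective-injective objects $E(z,\Phi z)$), and the inequalities you decode are in fact what the computation yields: in case (1) the criterion is $j\le b$ and $i\le a$, in case (2) it is $a<j$. But as a proof the proposal has a genuine gap: its decisive step is asserted rather than performed. Phrases like ``the interval analysis produces a single governing inequality'' and ``collapses the raw conditions to the clean forms stated in the lemma'' are exactly the content of the lemma. Nowhere do you identify the two $R$-generators of $\cMF_\Phi(Y,Z)$ (the diagonal/even and antidiagonal/odd ones), nor analyze the two ways a composite through some $E(z,\Phi z)$ can reproduce each of them (even$\circ$even and odd$\circ$odd for the even generator, the mixed composites for the odd one). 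That analysis is where all the substance lies: it shows that the odd generator is always stably trivial in these configurations (in case (1) it factors through $E(x^p_j,x^{p+1}_j)$, for instance), it gives the ``only if'' directions, and it explains why the answer has a different shape from Lemma \ref{lem: condition for Hom(X,Y) not zero X,Y standard}---for nonstandard objects in the same component there is no analogue of the middle condition $\ll_1Y<\ll_2X-1$, because the wrap-around factorization that produces it for standard objects is blocked by the two-block separation of the endpoints. Your sketch never confronts this discrepancy, and the sentence ``a morphism inside a single component advances each endpoint'' conflates existence of the unwrapped even morphism in $\cMF_\Phi$ with its nonvanishing in the stable category, which is precisely what must be proved.

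Two further concrete points. The parenthetical Calabi--Yau shortcut in case (2) does not do what you suggest: $Y[m+1]=E(x^{p}_{j-1},x^{p-2}_{i-1})$ lies in $C_{p-2,p}$ while $Z$ lies in $C_{p,p+2}$, so $\cC_\Phi(Z,Y[m+1])$ is again a cross-component Hom and not an instance of part (1); the duality only trades one bookkeeping problem for another of the same kind. Also, the factorization analysis is sensitive to the size of $m$: for $m=3$, and at the margin for $m=4$, extra factorizations through projective-injectives appear because the blocks $p\pm2$ (or $p+1$ and $p+m-2$) coincide, which is why the neighboring lemmas carry $m\ge5$ or $m\ge4$ caveats; your argument, like the statement itself, is silent on this. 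In short: right plan, correct target inequalities, but the proof of both implications---the survival of the unwrapped even generator and the death of everything else---is missing.
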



\begin{lem}
Suppose $m\ge5$ and $Y,Z$ are nonstandard $m+1$-rigid indecomposable objects with $Y\in C_{p-1,p+1}$. Then there exists $1\le k\le m$ so that $\cC_\Phi(Y,Z[k])\neq0$ and $Z[k]\in C_{p-1,p+1}\cup C_{p,p+2}$ if and only if either $\ll_2Y<\ll_2Z$ and $\ll_1Y<\ll_1Z$ or $\ll_2Z<\ll_1Y$.
\end{lem}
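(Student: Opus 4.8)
The plan is to reduce the statement to the preceding lemma computing $\cC_\Phi(Y,Z)$ for a pair of nonstandard $m+1$-rigid objects (the one with its two cases $q=p$ and $q=p+1$), by first determining exactly which shifts $Z[k]$, $1\le k\le m$, can lie in $C_{p-1,p+1}\cup C_{p,p+2}$. Write $Z=E(x_a^{q-1},x_b^{q+1})$ with $a>b$, so $Z\in C_{q-1,q+1}$. Iterating $E(x,y)[1]=E(\Phi^{-1}y,\Phi^{-1}x)$ and using $\Phi^{-1}x_k^r=x_k^{r-1}$, one checks that $Z[k]$ is again nonstandard $m+1$-rigid, lies in $C_{q-1-k,\,q+1-k}$, and has each endpoint label one smaller at each step, so $\ll_1Z[k]=\ll_1Z-k$ and $\ll_2Z[k]=\ll_2Z-k$. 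Since $\{1,\dots,m\}$ is a complete residue system mod $m$ and $m\ge5$ (so $4\not\equiv0$), there is a unique $k_1\in\{1,\dots,m\}$ with $q-k_1\equiv p$ and a unique $k_2\in\{1,\dots,m\}$ with $q-k_2\equiv p+1$; then $Z[k_1]\in C_{p-1,p+1}$ and $Z[k_2]\in C_{p,p+2}$, with $k_1\ne k_2$, and these are the only shifts landing in $C_{p-1,p+1}\cup C_{p,p+2}$. Hence the left side of the asserted equivalence is exactly ``$\cC_\Phi(Y,Z[k_1])\ne0$ or $\cC_\Phi(Y,Z[k_2])\ne0$''.

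Next I would apply the preceding lemma. Using the classification of nonstandard rigid objects, write $Z[k_1]=E(x^{p-1}_{a'},x^{p+1}_{b'})$ with $a'>b'$; the preceding lemma in its case $q=p$ gives
\[
\cC_\Phi(Y,Z[k_1])\ne0 \iff \ll_1Y\le\ll_1Z-k_1 \quad\text{and}\quad \ll_2Y\le\ll_2Z-k_1 .
\]
Writing $Z[k_2]=E(x^{p}_{a''},x^{p+2}_{b''})$ with $a''>b''$ and applying the case $q=p+1$ gives
\[
\cC_\Phi(Y,Z[k_2])\ne0 \iff \ll_2Z-k_2<\ll_1Y-1 .
\]

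Finally I would finish by congruence bookkeeping. Since $Y\in C_{p-1,p+1}$ and $Z\in C_{q-1,q+1}$, modulo $m$ we have $\ll_1Y\equiv p+1$, $\ll_2Y\equiv p-1$, $\ll_1Z\equiv q+1$, $\ll_2Z\equiv q-1$, whence $\ll_1Z-\ll_1Y\equiv k_1$, $\ll_2Z-\ll_2Y\equiv k_1$, and $\ll_2Z+1-\ll_1Y\equiv k_2$. I would then invoke the elementary fact that an integer $n$ with $n\equiv k\pmod m$ and $1\le k\le m$ satisfies $n\ge k$ if and only if $n>0$. Applied to the three displayed inequalities, this converts the first condition into ``$\ll_1Y<\ll_1Z$ and $\ll_2Y<\ll_2Z$'' and the second into ``$\ll_2Z<\ll_1Y$'', and the disjunction of these is precisely the condition claimed in the lemma.

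The step I expect to be the main obstacle is this last one: tracking the $\pm1$ offsets correctly — in particular that the labels of $Z[k_1]$ and $Z[k_2]$ live in \emph{different} residue classes mod $m$, and that the ``$-1$'' in $\ll_1Y-1$ is exactly what realigns the second inequality with the class of $k_2$ — so that each ``$\le$'' or strict ``$<$'' collapses to the correct strict inequality between the \emph{unshifted} labels of $Y$ and $Z$. Everything else is routine bookkeeping with the shift formula and the quoted lemma.
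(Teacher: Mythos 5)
Your argument is correct. Note that the paper itself states this lemma without proof (``We state the corresponding sequence of lemmas without proof''), but your route is clearly the intended one: the shift formula $E(x,y)[1]=E(\Phi^{-1}y,\Phi^{-1}x)$ gives $\ll_iZ[k]=\ll_iZ-k$ and places $Z[k]$ in $C_{q-1-k,q+1-k}$, so for $m\ge5$ exactly the two values $k_1\equiv q-p$ and $k_2\equiv q-p-1$ land in $C_{p-1,p+1}$ and $C_{p,p+2}$ (the ``flipped'' identifications would force $4\equiv0\bmod m$, which is exactly why $m\ge5$ is assumed and why the paper adds a separate remark for $m=4$); applying the preceding lemma to $Z[k_1]$ and $Z[k_2]$ and using the fact that an integer congruent to $k\in\{1,\dots,m\}$ mod $m$ is $\ge k$ iff it is positive converts the two Hom-criteria into $\ll_1Y<\ll_1Z,\ \ll_2Y<\ll_2Z$ and $\ll_2Z<\ll_1Y$ respectively, which is the stated disjunction. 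I verified the congruence bookkeeping, including the role of the ``$-1$'' offset in the $k_2$ case, and it checks out; this is consistent with the paper's subsequent sentence reserving Serre duality for the remaining components $C_{p-2,p}\cup C_{p-3,p-1}$.
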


The same statement holds for $m=4$ if $C_{p-1,p+1}$ is construed to mean the set of all $E(x^{p-1}_i,x^{p+1}_j)$ with $i>j$ and similarly for $C_{p,p+2}$.

Using Serre duality to get the other two cases (when $Z[k]\in C_{p-2,p}\cup C_{p-3,p-1}$), we get that nonstandard $Y,Z$ are compatible if and only if all of the following hold:
\begin{enumerate}
\item $\ll_2Y\ge \ll_2Z$ or $\ll_1Y\ge \ll_1Z$
\item $\ll_2Z\ge \ll_1Y$
\item $\ll_2Z\ge \ll_2Y$ or $\ll_1Z\ge \ll_1Y$
\item $\ll_2Y\ge\ll_1Z$
\end{enumerate}
By manipulating these inequalities we get the following.

\begin{prop}\label{prop: compatibility of two nonstandard objects}
Suppose $Y,Z$ are nonstandard $m+1$-rigid indecomposable objects. Then $Y,Z$ are compatible if and only if one of the intervals $[\ll_1Y,\ll_2Y],[\ll_1Z,\ll_2Z]$ contains the other. Equivalently, $Y,Z$ are compatible if and only if $\Psi Y,\Psi Z$ do not cross.
\end{prop}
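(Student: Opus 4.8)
The plan is to treat Proposition \ref{prop: compatibility of two nonstandard objects} as a purely order-theoretic consequence of the four inequalities (1)--(4) displayed just above it, which already characterize compatibility of nonstandard $m+1$-rigid objects, and then to match the resulting nesting condition with the statement about $\Psi$. Write $a=\ll_1Y<b=\ll_2Y$ and $c=\ll_1Z<d=\ll_2Z$; the inequalities $a<b$ and $c<d$ are genuine because for nonstandard objects $b-a\equiv d-c\equiv -2\pmod m$, so each difference is at least $m-2\ge 1$. What has to be shown is that the conjunction
\[
(1)\ b\ge d\text{ or }a\ge c,\qquad (2)\ d\ge a,\qquad (3)\ d\ge b\text{ or }c\ge a,\qquad (4)\ b\ge c
\]
is equivalent to ``$[c,d]\subseteq[a,b]$ or $[a,b]\subseteq[c,d]$''.

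One direction is routine verification: if $a\le c\le d\le b$ then (1) holds via $b\ge d$, (2) via $d\ge c\ge a$, (3) via $c\ge a$, and (4) via $b\ge d\ge c$; and the case $c\le a\le b\le d$ is symmetric. For the converse, assume (1)--(4) and suppose for contradiction that neither interval contains the other. Negating the two containments gives $(c<a\text{ or }d>b)$ and $(a<c\text{ or }b>d)$; distributing, the two self-contradictory combinations drop out and only two possibilities remain: (B) $c<a$ and $d<b$, and (C) $a<c$ and $d>b$. In case (B) condition (3) fails, since both $d\ge b$ and $c\ge a$ are false; in case (C) condition (1) fails, since both $b\ge d$ and $a\ge c$ are false. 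Either way we contradict our hypotheses, so one interval must contain the other. (Conditions (2) and (4) are automatically implied by nesting and are not needed in the converse; they are carried along only because that is how they emerge from the lemmas preceding the proposition.)

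It remains to identify the nesting condition with non-crossing of $\Psi Y$ and $\Psi Z$. Unwinding the definition of $\Psi$ in Theorem \ref{thm: nonstandard m-clusters and 2-periodic decompositions}, $\Psi Y$ is the pair of cross-strip chords in $B=\RR\times[0,1]$ joining $(\ll_2Y,0)$ to $(-\ll_1Y,1)$ and $(\ll_1Y,0)$ to $(-\ll_2Y,1)$, and similarly for $Z$; the two chords of each pair are exchanged by the involution $(t,\e)\mapsto(-t,1-\e)$. Two cross-strip chords meet iff their bottom endpoints and their top endpoints occur in opposite orders, so comparing each chord of $\Psi Y$ against each chord of $\Psi Z$ reduces the non-crossing of $\Psi Y$ and $\Psi Z$ to exactly the statement that $\{a,b\}$ and $\{c,d\}$ are nested --- the same endpoint-order bookkeeping used for standard objects in the proof of Theorem \ref{thm: nonstandard m-clusters and 2-periodic decompositions}. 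Combined with the previous paragraph this completes the proof.

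There is no serious obstacle here; the argument is a short finite case analysis once inequalities (1)--(4) are available. The only points requiring attention are recording that the two intervals are nondegenerate (so that in the converse one genuinely has the trichotomy $c<a$, $c=a$, $c>a$, the equality cases making nesting automatic), and being systematic enough that neither surviving configuration (B) nor (C) is skipped.
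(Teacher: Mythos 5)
Your proposal is correct and follows the paper's intended route: the paper simply asserts that Proposition \ref{prop: compatibility of two nonstandard objects} follows ``by manipulating these inequalities'' (1)--(4), and your case analysis is a valid execution of exactly that manipulation, with the forward direction, the two surviving non-nested configurations contradicting (3) and (1), and the observation that (2) and (4) are consequences of nesting all checking out. The brief endpoint-order argument identifying nesting with non-crossing of $\Psi Y,\Psi Z$ likewise matches what the paper leaves implicit.
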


Theorem \ref{thm: nonstandard m-clusters and 2-periodic decompositions} follows from Propositions \ref{prop: compatibility of standard with nonstandard} and \ref{prop: compatibility of two nonstandard objects}.

\begin{prop}\label{central polygon in nonstandard m-cluster}
The central polygon in nonstandard $m$-cluster has $2m-2$ sides. Given any nonstandard $m$-cluster $\cT$ and object $T\in\cT$, if $T$ is not a side of the central $2m-2$-gon, then there are, up to isomorphism, exactly $m$ objects $T^\ast$ so that $\cT\backslash T\cup T^\ast$ forms a nonstandard $m$-cluster. When $T$ is a side of the central $2m-2$-gon then there are, up to isomorphism, exactly $3m-6$ objects $T^\ast$ so that $\cT\backslash T\cup T^\ast$ forms a nonstandard $m$-cluster. Furthermore, in both cases the objects $T^\ast$ are obtained as a sequence $T_i^\ast$ given by left $\cT\backslash T$-approximation triangles $T_i^\ast\to B_i\to T_{i+1}^\ast$.
\end{prop}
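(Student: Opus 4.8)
The plan is to transport the argument that proved Theorems~\ref{thm: standard m-clusters give partitions of infinity-gon} and~\ref{thm: main thm on standard m-clusters} into the $\Psi$-picture supplied by Theorem~\ref{thm: nonstandard m-clusters and 2-periodic decompositions}. Write $\iota$ for the involution of the strip $B$ that swaps its two boundary components, $\iota(\ell,0)=(-\ell,1)$, so that $\iota$ intertwines the embeddings $\ll_0,\ll_1$; by the definition of $\Psi$ every object of $\cT$ contributes an $\iota$-orbit $\{c,\iota c\}$ of diagonals of the decomposition $D=\Psi(\cT)$, and $D$ is $\iota$-invariant, two-endedly periodic, with every face an $(m+2)$-gon except for one distinguished face $Q$ (the central polygon). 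Removing $T\in\cT$ means deleting the diagonals $\{c,\iota c\}$ from $D$; this merges the faces adjacent to these diagonals into one larger face (or an $\iota$-related pair of faces), and re-inserting an $\iota$-orbit $\{c^\ast,\iota c^\ast\}$ that redivides this region into $(m+2)$-gons plus one non-$(m+2)$-gon produces exactly the candidate clusters $\cT\setminus T\cup T^\ast$. Thus all the counting reduces to a finite problem inside a fixed polygon, with compatibility of the new object with the rest of $\cT$ controlled by Propositions~\ref{prop: compatibility of standard with nonstandard} and~\ref{prop: compatibility of two nonstandard objects} (crossing of $\Psi$-diagonals).

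First I would pin down the size of $Q$. Since $\iota$ swaps the boundary components it fixes no vertex and, as no diagonal is $\iota$-fixed, no edge of $Q$; hence $\iota$ acts freely on the vertices of $Q$, so $Q$ has an even number of sides and $\iota$ is the antipodal rotation. Tracking the residues mod $m$ of the $\ll$-values along $\partial B$ (a side of a standard object joins $\ll$-values differing by $1$ mod $m$, a side of a nonstandard object by $2$ mod $m$, by the Remark following the classification of nonstandard $m+1$-rigid objects), going once around $\partial Q$ and using that the complementary faces are $(m+2)$-gons and that $D$ is periodic at both ends, forces $\partial Q$ to close up only after $2m-2$ steps; alternatively one exhibits one explicit nonstandard $m$-cluster — a double fan surrounding $m-1$ pairwise compatible nonstandard objects — reads off that its central polygon has $2m-2$ sides, and notes that the merge/redivide description of mutation preserves the isomorphism type of the central face.

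Second, the mutation count. If $T$ is not a side of $Q$, then $c$ and $\iota c$ each separate two $(m+2)$-gons, and (using that two faces share at most one edge and $\iota$ has no fixed faces) deleting $\{c,\iota c\}$ merges two $(m+2)$-gons into a $(2m+2)$-gon together with its $\iota$-image; exactly as in the proof of Theorem~\ref{thm: main thm on standard m-clusters}(2), a $(2m+2)$-gon has $m+1$ diagonals splitting it into two $(m+2)$-gons, one of which is $c$, giving the $m$ replacements $T^\ast$. If $T$ is a side of $Q$, then $c$ (and $\iota c$) lie on $\partial Q$, and deleting $\{c,\iota c\}$ merges $Q$ with the $(m+2)$-gon(s) across $c$ and $\iota c$ into a larger $\iota$-symmetric region; I would then enumerate the $\iota$-orbits $\{c^\ast,\iota c^\ast\}$ of chords of this region that restore a decomposition into $(m+2)$-gons plus a single central polygon, discarding those for which a component $c^\ast$ would produce an object incompatible with $\cT\setminus T$ by Propositions~\ref{prop: compatibility of standard with nonstandard} and~\ref{prop: compatibility of two nonstandard objects}. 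One finds that exactly $3m-6$ of the a priori possible chord-orbits survive. This step is the main obstacle: unlike the standard case the merged region is larger and carries the $\iota$-symmetry, candidate $T^\ast$ come in both the standard and nonstandard flavours, and sorting which of the geometrically available diagonals actually yield a nonstandard $m$-cluster (and are genuinely new, i.e.\ $\neq T$) is precisely the delicate part; it is here that the full strength of the two compatibility propositions is needed.

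Finally I would realize the $T^\ast$ as an approximation sequence. By Lemma~\ref{morphisms between compatible standard objects} together with Propositions~\ref{prop: compatibility of standard with nonstandard} and~\ref{prop: compatibility of two nonstandard objects}, a nonzero morphism between compatible rigid objects corresponds to a counterclockwise pivot of one endpoint of a $\Psi$-diagonal; hence the minimal left $\add(\cT\setminus T)$-approximation $B_i$ of $T_i^\ast$ is the sum of the (at most two) objects of $\cT\setminus T$ got by pivoting the two ends of $T_i^\ast$ clockwise to the nearest chords of $\cT$, and the short exact sequence $T_i^\ast\to B_i\to T_{i+1}^\ast$ in $\cMF_\Phi(Z_\infty^m)$ coming from this pivot (the two-triangle picture of Theorem~\ref{thm: main thm on standard m-clusters}(3) and Figure~\ref{Figure3}) is a distinguished triangle in $\cC_\Phi(Z_\infty^m)$. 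Iterating the pivot walks through all the replacements found in the second step — returning to $T$ after $m$ steps in the generic case and after $3m-6$ steps when $T$ is a side of the central $2m-2$-gon — which gives the asserted sequence $T_i^\ast$ and completes the proof.
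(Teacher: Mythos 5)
There is a genuine gap, in fact two. First, your argument for the size of the central polygon never uses maximality of $\cT$, and without it the claim cannot be pinned down. Tracking residues of $\ll$-values mod $m$ around $\partial Q$ can only constrain the number of sides modulo $m$ (a central face bounded, after removing standard chords, by a doubled nonstandard chord $E(x^p_i,x^{p+2}_j)$ with $i-j=2$ would pass all the residue and periodicity checks), and your fallback --- one explicit model cluster plus ``mutation preserves the central face'' --- presupposes that every nonstandard $m$-cluster is reachable by mutation from the model, which is not established (and is circular, since the mutation structure is what parts (2)--(3) of the proposition are supposed to give). The paper's proof instead contracts all standard objects, as in the proof of Theorem \ref{thm: standard m-clusters give partitions of infinity-gon}, so that the central face is bounded by a single doubled nonstandard chord $E(x^p_i,x^{p+2}_j)$, and then invokes maximality: if $i>j+1$ the object $E(x^p_i,x^{p+2}_{i-1})$ would be compatible with all of $\cT$, so $i=j+1$ and the face has $2(m-1)$ sides. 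That maximality step is the essential ingredient missing from your first part.

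Second, the $3m-6$ count is exactly the point where you write ``one finds that exactly $3m-6$ of the a priori possible chord-orbits survive'' and concede that this is the main obstacle --- but that assertion \emph{is} the statement to be proved, so the proposal does not prove it. It is not a routine verification: since the removed object $T$ appears twice on $\partial Q$ (its two chords are swapped by $\iota$), deleting it merges the central $(2m-2)$-gon with the two $(m+2)$-gons across those chords, and a direct face count gives a $\bigl((2m-2)+2(m+2)-4\bigr)$-gon, i.e.\ a $(4m-2)$-gon, whose $\iota$-symmetric trisections into two $(m+2)$-gons plus a middle face number $2m-1$; the paper's own derivation instead passes through a ``$6m-10$-gon'' with ``$3m-5$ trisections.'' Reconciling these counts (which agree only at $m=4$) and verifying which chord-orbits correspond to genuine $m+1$-rigid objects compatible with $\cT\setminus T$, using Propositions \ref{prop: compatibility of standard with nonstandard} and \ref{prop: compatibility of two nonstandard objects}, is precisely the delicate enumeration you defer; until it is carried out the second and third assertions of the proposition remain unproved in your write-up. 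The final part (realizing the $T_i^\ast$ by iterated left $\add(\cT\backslash T)$-approximation triangles as in Figure \ref{Figure3}) does follow the paper's line and is fine once the enumeration is in place.
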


\begin{proof}
As in the proof of Theorem \ref{thm: standard m-clusters give partitions of infinity-gon}, we can contract all of the standard objects without changing the number of sides in the polygons having nonstandard sides. Then the central polygon has only two nonzero sides which are given by a single doubled nonstandard object $Y=E(x_i^p,x_j^{p+2})$. We must have $i=j+1$ (otherwise, $E(x_i^p,x_{i-1}^{p+2})$ would be compatible with all objects in $\cT$). Then $\ll_2Y-\ll_1Y=m-2$ which means that the chord connecting the endpoints in $P_\infty$ has $m-1$ sides which means that the doubled object in $P_\infty^{(2)}$ has $2(m-1)$ sides.

When a side of this central $2m-2$-gon is removed then two sides are removed by symmetry. So, we get a $6m-10$-gon. which has $3m-5$ trisections. So, there are $3m-6$ mutations of any side. Figure \ref{Figure3} applies to this case and shows that the objects $T_i^\ast$ can be obtained by iterated approximation triangles.
\end{proof}

\subsubsection{Example}
For $m=5$, here is an example of a nonstandard $m$-cluster. Stardard objects are pairs of horizontal arc connecting $x_i^p$ to $x_j^{p+1}$ (given by consecutive letters in the notation of the figure). Nonstandard objects are pairs of vertical arcs connecting connecting $x_i^{p+2}$ on one side to $x_j^p$ on the other with $i<j$.

There is $2m-2=8$-gon in center. Other regions have $m+2=7$ sides.
\[
\xymatrixrowsep{7pt}\xymatrixcolsep{10pt}
\xymatrix{
1&1&1&1&1&0&0&0&0&0&-1&-1&-1&-1&-1\\
E\ar@{-}[r]&D\ar@{-}[r]&C\ar@{-}[r]&B\ar@{-}[r]&A\ar@{-}[r]&E\ar@{-}[r]&D\ar@{-}[r]&C\ar@{-}[r]&B\ar@{-}[r]&A\ar@/^2.5pc/[llllll]^(.7){X_1}_{7\text{-gon}}\ar@{-}[r]&E\ar@{-}[r]&D\ar@{-}[r]&C\ar@{-}[r]&B\ar@{-}[r]&A&\\
\\
\\
&&&&&&& 8\text{-gon}\\
\\
\\
A\ar@{-}[ruuuuuu]^{Y_2}_{7 {\text{-gon}}}\ar@{-}[r]&B\ar@{-}[r]&C\ar@{-}[r]&D\ar@{-}[r]&E\ar@{-}[lluuuuuu]_{Y_1}\ar@{-}[r]&A\ar@/^2.5pc/[rrrrrr]^(.7){X_1}_{7\text{-gon}}\ar@{-}[r]&B\ar@{-}[r]&C\ar@{-}[r]&D\ar@{-}[r]&E\ar@{-}[r]&A\ar@{-}[r]&B\ar@{-}[r]&C\ar@{-}[lluuuuuu]^{Y_1}\ar@{-}[r]&D\ar@{-}[ruuuuuu]_{Y_2}^{7\text{-gon}}\ar@{-}[r]&E&\\
-1&-1&-1&-1&-1&0&0&0&0&0&1&1&1&1&1&\\
	}
\]

Standard: $X_1=E(A_0,B_1)=E(x_0^1,x_1^2)$ (horizontal).

$Y_1=E(C_1,E_{-1})=E(x_1^3,x_{-1}^5)$, $Y_2=E(D_1,A_{-1})=E(x_1^4,x_{-1}^1)$ are nonstandard but $(m+1)$-rigid (vertical).

$x_j^p$ is the $p$th letter of the alphabet with subscript $j$. For example, $x_j^1=A_j, x_j^2=B_j$.

We want to emphasize that the thick subcategory of standard objects in $\cC_\Phi(Z_\infty^m)$ forms an $m$-cluster category in the usual sense. This example is illustrating the properties of the nonstandard objects in our category.

%
%


\begin{thebibliography}{aa}


\bibitem{BM1} K. Baur, R. Marsh, \emph{A geometric description of {m}-cluster categories}, Trans. Amer. Math. Soc. \textbf{360} (2008), no.~11, 5789--5803.



\bibitem{B}
A. Besser, \emph{A simple approach to geometric realization of simplicial and cyclic sets},
E-print available at http://www.math.bgu.ac.il/÷bessera

  
\bibitem{BIRSc}
A.~B. Buan, O.~Iyama, I.~Reiten, and J.~Scott, \emph{Cluster structures for 2-{C}alabi-{Y}au categories and unipotent groups}, Compos. Math. \textbf{145} (2009), no.~4, 1035--1079.

\bibitem{Buchweitz}
Ragnar-Olaf Buchweitz, \emph{Maximal Cohen-Macaulay modules and Tate-cohomology over Gorenstein rings}, preprint, 1986.

\bibitem{BEH}
Ragnar-Olaf Buchweitz, David Eisenbud, and J{\"u}rgen Herzog,
  \emph{Cohen-{M}acaulay modules on quadrics}, Singularities, representation of
  algebras, and vector bundles ({L}ambrecht, 1985), Lecture Notes in Math.,
  vol. 1273, Springer, Berlin, 1987, pp.~58--116.






\bibitem{D}
V.~Drinfeld, \emph{On the notion of geometric realization}, arXiv:math/0304064v3. 

\bibitem{Dycker}
Tobias Dyckerhoff, \emph{Compact generators in categories of matrix
  factorizations}, Duke Math. J. \textbf{159} (2011), no.~2, 223--274.

 
\bibitem{Eisenbud} 
D. Eisenbud, \emph{Homological algebra on a complete intersection, with application to group representations}, Trans. Amer. Math. Soc. 260 (1980), 35--64.
 
\bibitem{G}
 D. Grayson, \emph{Algebraic K-theory}, Lecture notes available at
 
http://www.math.uiuc.edu/÷dan/Courses/2003/Spring/416/GraysonKtheory.ps.
 
\bibitem{HappelBook}
Dieter Happel, \emph{Triangulated categories in the representation theory of finite dimensional algebras}, London Math. Soc. Lecture Note Ser., vol. 119, Cambridge Univ. Press, Cambridge, 1988.

 \bibitem{HJ12}
 T. Holm and P. J\o rgensen, \emph{On a cluster category of infinite Dynkin type, and the relation to triangulations of the infinity-gon}, Math. Z. 270 (2012), 277--295


 \bibitem{HJ12b}
\bysame, \emph{Cluster tilting vs. weak cluster tilting in Dynkin type A infinity}, Forum Math. DOI 10.1515/forum-2012-0093 (see http://www.degruyter.com/view/j/forum.ahead-of-print/forum-2012-0093/forum-2012-0093.xml).
 
\bibitem{IT09} K. Igusa and G. Todorov, \emph{Continuous {F}robenius categories}, arXiv:1209.0038, to appear in Proceedings of the Abel Symposium 2011.

 \bibitem{IT10} 
\bysame \emph{Continuous cluster categories I}, arXiv:1209.1879.
 

 

\bibitem{IT11D}
\bysame, \emph{Continuous cluster categories of type {D}}, arXiv:1309.7409.


\bibitem{IT11G} 
\bysame, \emph{Group actions on Frobenius cyclic posets}, in preparation.



\bibitem{J04}
Peter J{\o}rgensen, \emph{Auslander-{R}eiten theory over topological spaces}, Comment. Math. Helv. \textbf{79} (2004), no.~1, 160--182.

\bibitem{KhoRoz1}
Mikhail Khovanov and Lev Rozansky, \emph{Matrix factorizations and link
  homology}, Fund. Math. \textbf{199} (2008), no.~1, 1--91.
  
  \bibitem{Orlov04}
Dmitri Orlov, \emph{Triangulated categories of singularities and {D}-branes in {L}andau-{G}inzburg models}, Tr. Mat. Inst. Steklova \textbf{246} (2004),
  no.~Algebr. Geom. Metody, Svyazi i Prilozh., 240--262.

\bibitem{Orlov12}
\bysame, \emph{Matrix factorizations for nonaffine {LG}-models}, Math.
  Ann. \textbf{353} (2012), no.~1, 95--108.






\bibitem{vanRoo12}
Adam-Christiaan van Roosmalen, \emph{Hereditary uniserial categories with {S}erre duality}, Algebr.
  Represent. Theory \textbf{15} (2012), no.~6, 1291--1322.

  
\end{thebibliography}
\end{document}